% This is a joint paper of H. Sumi and M. Urbanski.
% Hiroki submitted rds-7.dvi to the journal, and this is 
% rds-8.tex, which is submitted to arXiv.
\documentclass[12pt]{amsart}
\usepackage{graphicx}
%\usepackage{amscd}
%\usepackage{amssymb}
%\usepackage{amsthm}
%\usepackage{showkeys}
% "American" setting:
\setlength{\topmargin}{0in} \setlength{\oddsidemargin}{0in}
\setlength{\evensidemargin}{0in} \setlength{\textwidth}{6.4in}
\setlength{\textheight}{8.6in}

% "French" setting:
%\setlength{\topmargin}{1.5cm} \evensidemargin 0.25truein
%\oddsidemargin 0.25truein \textheight7.9truein \textwidth5.8truein

\numberwithin{equation}{section}

%%%%%%%%%%%%%%%%%%%%%%%%%%%%%
% Parabolic, fractal measures
%
%%%%%%%%%%%%%%%%%%%%%%%%%%%%%
%\input epsf

\setlength{\itemsep}{0in}

\newcommand{\ben}{\begin{enumerate}}
\newcommand{\een}{\end{enumerate}}

\newcommand{\bea}{\begin{eqnarray}}
\newcommand{\ba}{\begin{array}}
\newcommand{\bean}{\begin{eqnarray*}}
\newcommand{\ea}{\end{array}}
\newcommand{\eea}{\end{eqnarray}}
\newcommand{\eean}{\end{eqnarray*}}
\newcommand{\beq}{\begin{equation}}
\newcommand{\eeq}{\end{equation}}
\newcommand{\bthm}{\begin{thm}}
\newcommand{\ethm}{\end{thm}}
\newcommand{\blem}{\begin{lem}}
\newcommand{\elem}{\end{lem}}
\newcommand{\bprop}{\begin{prop}}
\newcommand{\eprop}{\end{prop}}
\newcommand{\bcor}{\begin{cor}}
\newcommand{\ecor}{\end{cor}}
\newcommand{\bdfn}{\begin{dfn}}
\newcommand{\edfn}{\end{dfn}}
\newcommand{\brem}{\begin{rem}}
\newcommand{\erem}{\end{rem}}
\newcommand{\bpf}{\begin{proof}}
\newcommand{\epf}{\end{proof}}
\newcommand{\bfact}{\begin{fact}}
\newcommand{\efact}{\end{fact}}

\alph{enumii} \roman{enumiii}

\newtheorem{thm}{Theorem}[section]
\newtheorem{prop}[thm]{Proposition}
\newtheorem{lem}[thm]{Lemma}

\newtheorem{cor}[thm]{Corollary}
\newtheorem{dfn}[thm]{Definition}
\newtheorem{rem}[thm]{Remark}
\newtheorem{fact}[thm]{Fact}
\newtheorem{ex}[thm]{Example}
%********************** Letters "cal" :  \cXXX   *******************************

\def\N{{\mathbb N}}                      \def\R{{\mathbb R}}
\def\C{{\mathbb C}}                  \def\oc{{\hat \C}}
\def\CC{{\mathbb C}} 
\def\CCI{{\hat \CC }}
\def\NN{{\mathbb N}}
\def\RR{{\mathbb R}}
\def\ZZ{{\mathbb Z}}

\def\1{1\!\!1}
\def\and{\text{ and }}

        \def\diam{\text{\rm {diam}}}

      \def\Exp{\text{{\rm Exp}}}
\def\Epb{\text{{\rm Epb}}}

     \def\HD{\text{{\rm HD}}}

                \def\b{\beta}             
                         
\def\g{\gamma}                           \def\l{\lambda}
              \def\om{\omega}           
\def\Sg{\Sigma}               \def\sg{\sigma}

\def\({\bigl(}                \def\){\bigr)}

\def\ld{\ldots}                        \def\^{\tilde}

\def\es{\emptyset}

\def\om{\omega}

%\def\1{1\!\!1}

%%%%%%%%%%%%%%%%%%%%%%%%%%%%%%%%%%%%%%%%%%%%%%%%%%%

%\newcommand{\jul}{{\mathcal J}_f}

% Perron-Frobenius operator:
%\newcommand{\pf}{\mathcal{L}_\ph}

%******** derivees partielles complexes *************************
 
%************************ Theorems etc.  *********************
%***********************************************
%************************ Dfnts for this particular document:  *********************

%*********************************************************************************
\begin{document}
%***********************************************

\title[]
{\bf\large {\Large T}ransversality family  of Expanding 
Rational Semigroups}
\date{}
% % author information %
\author[\sc Hiroki SUMI]{\sc Hiroki SUMI}
%\address{\ni Hiroki Sumi 
%Department of Mathematics,
%Graduate School of Science,
%Osaka University, 
%1-1 Machikaneyama,
%Toyonaka,
%Osaka, 560-0043, 
%Japan}
%\email{sumi@math.sci.osaka-u.ac.jp\newline\hspace*{0.3cm} 
%Web: http://www.math.sci.osaka-u.ac.jp/$\sim$sumi/}
%
\author[\sc Mariusz URBA\'NSKI]{\sc Mariusz URBA\'NSKI}
%\address{Mariusz Urba\'nski, Department of Mathematics,
% University of North Texas, Denton, TX 76203-1430, USA}
%\email{urbanski@unt.edu\newline \hspace*{0.3cm} Web:
%http://www.math.unt.edu/$\sim$urbanski}
%
% dedication %
%\dedicatory{}
%
% AMS information %
\thanks{The first author thanks University of North Texas for support and kind hospitality. 
The research of the first author was partially supported by JSPS KAKENHI 21540216.  
The research of the second author was supported in part by the
NSF Grant DMS 1001874.}
\thanks{\ \newline 
\noindent Hiroki Sumi\newline 
Department of Mathematics,
Graduate School of Science,
Osaka University, 
1-1 Machikaneyama,
Toyonaka,
Osaka, 560-0043, 
Japan\newline 
E-mail: sumi@math.sci.osaka-u.ac.jp\newline 
Web: http://www.math.sci.osaka-u.ac.jp/$\sim$sumi/\newline
\ \newline 
Mariusz Urba\'nski\newline Department of Mathematics,
 University of North Texas, Denton, TX 76203-1430, USA\newline  
E-mail: urbanski@unt.edu\newline
Web: http://www.math.unt.edu/$\sim$urbanski/}

\keywords{Complex dynamical systems, rational semigroups, expanding semigroups,
Julia set, transversality condition, Hausdorff dimension, 
Bowen parameter, random complex dynamics, random iteration, iterated function systems with overlaps, self-similar sets} 
%\subjclass{Primary: 37F35; Secondary: 37F15.}
\begin{abstract}
We study finitely generated expanding semigroups of rational maps with overlaps on the Riemann sphere. 
We show that if a $d$-parameter family of such semigroups satisfies the transversality condition, 
then for almost every parameter value the Hausdorff dimension of the Julia set 
is the minimum of $2$ and the zero of the pressure function. 
Moreover, the Hausdorff dimension of the exceptional set of parameters is 
estimated. We also show that if the zero of the pressure function is greater than $2$, 
then typically the $2$-dimensional Lebesgue measure of the Julia set 
is positive. Some sufficient conditions for a family to satisfy the transversality conditions are given. 
We give non-trivial examples of families of semigroups of non-linear polynomials with the transversality condition 
for which the Hausdorff dimension of the Julia set is typically equal to the zero of the pressure function and is less than $2$. 
We also show that a family of small perturbations of the Sierpinski gasket system 
satisfies that for a typical parameter value, the Hausdorff dimension of the Julia set (limit set) 
is equal to the zero of the pressure function, which is equal to the  similarity dimension.  
Combining the arguments on the transversality condition, thermodynamical formalisms and potential theory, 
we show that for each $a\in \CC $ with $|a|\neq 0,1$, 
 the family of small perturbations of the semigroup generated by $\{ z^{2}, az^{2}\} $ satisfies that  
for a typical parameter value,  the $2$-dimensional Lebesgue measure of the Julia set 
is positive.    
\end{abstract}
\maketitle
\noindent  Mathematics Subject Classification (2001). Primary 37F35; 
Secondary 37F15.\\ Date: January 8, 2013. Published in Adv. Math. 234 (2013) 697--734. 
\section{Introduction}
\label{Introduction}
A {\bf rational semigroup} 
is a semigroup generated by a family of 
non-constant rational maps $g:\oc \rightarrow \oc $, 
where $\oc $ denotes the Riemann sphere, 
with the semigroup operation being functional composition. 
A polynomial semigroup is a semigroup generated by a 
family of non-constant polynomial maps on $\oc .$ 
The work on the dynamics of rational semigroups was initiated 
by A. Hinkkanen and G. J. Martin (\cite{HM}), 
who were interested in the role of the dynamics of polynomial semigroups 
while studying various one-complex-dimensional moduli spaces for discrete 
groups of M\"{o}bius transformations, and by F. Ren's group 
(\cite{ZR}), who studied such semigroups from the perspective 
of random dynamical systems. 
  
 The theory of the dynamics of rational semigroups on $\oc $ 
has developed in many directions since the 1990s (\cite{HM, ZR, Stan, 
     sumihyp1, sumihyp2,  
hiroki1, sumi1, sumi2, sumi06, sumirandom,
 su1, sumid1, sumid3, SS, sumiintcoh, sumiprepare, sumisurvey, sumicp,sumird}). 
We recommend \cite{Stan} as an introductory article.     
For a rational semigroup $G$, we denote by $F(G)$ the maximal open
subset of $\CCI $ where  
$G$ is normal. The set $F(G)$ is called the Fatou set of $G$. 
The complement $J(G):= \CCI \setminus F(G)$ is called 
the Julia set of $G.$ 
Since the Julia set $J(G)$ of a rational semigroup 
$G=\langle f_{1},\ldots ,f_{m}\rangle $ 
generated by finitely many elements $f_{1},\ldots ,f_{m}$ 
has {\bf backward self-similarity} i.e. 
\begin{equation}
\label{bsseq}
 J(G)=f_{1}^{-1}(J(G))\cup \cdots \cup f_{m}^{-1}(J(G)),
\end{equation}  
(see \cite{sumihyp1, hiroki1}), rational semigroups 
can be viewed as a significant generalization and extension of 
both the theory of iteration of rational maps (see \cite{M,Be}) 
and conformal 
iterated function systems (see \cite{MU}). 
Indeed, because of (\ref{bsseq}), 
the analysis of the Julia sets of rational semigroups somewhat
resembles 
 ``backward iterated functions systems'', however since each map 
$f_{j}$ is not in general injective (critical points), some 
qualitatively different extra effort in the case  of semigroups is needed.
The theory of the dynamics of 
rational semigroups borrows and develops tools 
from both of these theories. It has also developed its own 
unique methods, notably the skew product approach 
(see \cite{hiroki1, sumi1, sumi2, sumi06, sumid1, sumid2, sumid3, sumiprepare,
sumisurvey, sumicp, sumird, suetds1,su1, subowen}).

The theory of the dynamics of rational semigroups is intimately 
related to that of the random dynamics of rational maps. 
The first study of random complex dynamics was given in \cite{FS}. 
In \cite{BBR, GQL}, random dynamics of quadratic polynomials were
investigated. The paper \cite{msu} develops the thermodynamic
formalism of random distance expanding maps and, in particular,
applies it to random polynomials.  
%For the study of random complex dynamics, the reader may 
%consult \cite{FS,BBR,GQL, sumiprepare,sumicp,sumird}. 
The deep relation between these fields 
(rational semigroups, random complex dynamics, and (backward) IFS) 
is explained in detail in the subsequent papers
 (\cite{sumirandom, sumid1, sumid2, sumid3,  sumiintcoh, sumiprepare,sumisurvey, sumicp, sumird}) of the first author. 
For a random dynamical system generated by a family of 
 polynomial maps on $\CCI $, 
 let $T_{\infty }:\CCI \rightarrow [0,1]$ 
be the function of probability of tending to $\infty \in \CCI .$ 
In \cite{sumiprepare, sumicp, sumird} 
it was shown that under certain conditions, 
$T_{\infty }$ is continuous on $\CCI $ and varies only on the Julia set of the associated rational semigroup 
(further results were announced in \cite{sumisurvey}).  
For example, for a random dynamical system in Remark~\ref{r:t0tinfty}, 
$T_{\infty }$ is continuous on $\CCI $ and the set of varying points of $T_{\infty }$ 
is equal to the Julia set of Figure \ref{fig:ConnOSC1}, 
which is a thin fractal set with Hausdorff dimension strictly less than $2$.  
From this point of view also, it is very interesting and important to 
investigate the figure and the dimension of the Julia sets of rational semigroups. 

 In this paper, 
for an expanding finitely generated rational semigroup $\langle f_{1},\ldots ,f_{m}\rangle $, 
we deal at length with  the relation between 
the Bowen parameter $\delta (f)$ (the unique zero of the pressure function, see Definition~\ref{d:Bpara}) 
of the multimap $f=(f_{1},\ldots ,f_{m})$ 
and the Hausdorff dimension of the Julia set 
of $\langle
f_{1},\ldots ,f_{m}\rangle $.   
In the usual iteration of a single expanding rational map, 
it is well known that the Hausdorff dimension of the Julia set is equal to 
the Bowen parameter and they are strictly less than two. For a general expanding finitely generated rational semigroup 
$\langle f_{1},\ldots ,f_{m}\rangle $,  
it was shown that the Bowen parameter is larger than or equal to the
Hausdorff dimension of the Julia set  
(\cite{sumihyp2,sumi2}). If we assume further that the semigroup satisfies
the ``open set condition'' (see Definition~\ref{d:osc}),  
then it was shown that they are equal (\cite{sumi2}). 
However, if we do not assume the open set condition, then 
there are a lot of examples for which the Bowen  parameter is strictly larger than 
the Hausdorff dimension of the Julia set. In fact, 
the Bowen parameter can be  strictly larger than two (\cite{sumi2, subowen}).  
Thus, it is very natural to ask when we have this situation and what
happens if we have such a case.  
Let Rat be the set of non-constant rational maps on $\CCI $ 
endowed with distance $\overline{d}$ defined by 
$\overline{d}(h_{1},h_{2}):= \sup _{z\in \CCI }\hat{\rho }(h_{1}(z),h_{2}(z))$,  
where $\hat{\rho }$ denotes the spherical distance on $\CCI .$ 
%
%topology of uniform convergence on $\CCI .$  
For each $m\in \NN $, we set 
$$
\Exp(m):= \{ (g_{1},\ldots ,g_{m})\in (\mbox{Rat})^{m}: \langle
g_{1},\ldots ,g_{m}\rangle \mbox{ is expanding}\} .
$$
Note that $\Exp(m)$ is an open subset of $(\mbox{Rat})^{m}$ (see Lemma~\ref{expopenlem}). 
Let $U$ be a non-empty bounded open subset of $\RR ^{d}$. 
For each $\l \in U$, let $f_{\l }= (f_{\l ,1},\ldots ,f_{\l ,m})$ be an element in 
$\Exp(m)$. We set 
$$
G_{\l }:=\langle f_{\l ,1},\ldots ,f_{\l ,m}\rangle
.
$$   
We assume that the map $\l \mapsto f_{\l ,j}\in \mbox{Rat}, \l \in U,$
is continuous for each $j=1,\ldots ,m.$   
For every $\l \in U$, let $s(\l )$ be the zero of the pressure function for the 
system generated by $f_{\l }.$ Note that the function $\l \mapsto s(\l ), \l \in
U,$ is continuous (see Theorem~\ref{t:delrp}).   
For a family $\{ f_{\l }\} _{\l \in U}$ in $\Exp(m)$,  
we define the {\bf transversality condition} (see Definition~\ref{d:tc}). 
The transversality condition was introduced and investigated 
for a family of contracting IFSs in 
\cite{PeS} (one of first studies of transversality type conditions and applications to Bernoulli convolutions), 
\cite{PS} (case of IFSs in $\RR $), 
\cite{Ram} (case of finite IFSs of similitudes in general Euclidean spaces $\RR ^{d}, d\geq 1$), 
\cite{SSU} (case of infinite hyperbolic or parabolic IFSs in $\RR $), 
\cite{SoUr} (case of finite parabolic IFSs in $\RR $), 
and 
\cite{MiU} (case of skew products and application to Bowen formulas, examples, partial derivative conditions, etc.).  
Among these papers there are several types of definitions of the transversality condition. 
Our definition of the transversality condition is similar to that
given in \cite{SSU}, though in the present paper we work on a family of
semigroups of rational maps which are not contracting and are not
injective.  Note that there are many works of contracting IFSs with overlaps. See the above 
papers and \cite{PPS, F}, etc. Some results of this paper are applicable to the study of contracting IFSs with overlaps 
and infinitely many new examples of contracting families
of IFSs that satisfy the transversality condition are found (see
Theorem~\ref{t:iautc},  
Examples~\ref{ex:iSierpatc}, \ref{ex:SFatc}, \ref{ex:Pentaatc}, \ref{ex:nkun}, Remarks~\ref{r:autc-cci},\ref{r:autcci}).  
 
For any $p\in \NN $, we denote by Leb$_{p}$ the $p$-dimensional Lebesgue
measure on a $p$-dimensional manifold.   
In this paper, we prove the following. 
\begin{thm}[Theorem~\ref{t:tcdimj}]
\label{t:itcdimj}    
Let $\{ f_{\l }\} _{\l \in U}$ be a family in $\Exp(m)$ as above. 
Suppose that $\{ f_{\l }\} _{\l \in U}$ satisfies the transversality condition. Then  
we have all of the following. 
\begin{itemize}
\item[(1)] $\HD (J(G_{\l }))=\min \{ s(\l ),2\} $ for {\em
    Leb}$_{d}$-a.e. $\l \in U$,  where $\HD $ denotes the Hausdorff dimension. 
\item[(2)] For {\em Leb}$_{d}$-a.e. $\l \in \{ \l\in U: s(\l )>2\} $
  we have that  {\em Leb}$_{2}(J(G_{\l }))>0.$  
\end{itemize}
\end{thm}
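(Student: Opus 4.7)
The upper bound $\HD(J(G_\l)) \le \min\{s(\l),2\}$ is automatic: $\HD \le 2$ because $J(G_\l) \sbt \CCI$, while $\HD(J(G_\l)) \le s(\l)$ is Sumi's estimate from \cite{sumihyp2, sumi2} recalled in the introduction. The plan is therefore to prove the matching lower bound for $\text{Leb}_d$-a.e. $\l$ in part (1), and $\text{Leb}_2(J(G_\l)) > 0$ for $\text{Leb}_d$-a.e. $\l$ on $\{s > 2\}$ in part (2). Both should reduce to a single Frostman-type energy estimate driven by the transversality condition.

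\emph{Step 1: symbolic model and conformal measures.} I would code inverse orbits via the one-sided shift $\Sg_m := \{1,\ldots,m\}^{\NN}$, using Sumi's skew-product formalism for expanding rational semigroups, and write $\pi_\l \colon \Sg_m \to J(G_\l)$ for the natural projection. Fix $\l_0 \in U$ and $t$ with $t < \min\{s(\l_0),2\}$; by continuity of $\l \mapsto s(\l)$ (Theorem~\ref{t:delrp}) choose an open ball $V \sbt U$ about $\l_0$ on which $s > t$ uniformly. Thermodynamic formalism for the expanding skew product furnishes, for each $\l \in V$, a Gibbs/$t$-conformal probability measure $\nu_{\l,t}$ on $\Sg_m$, and uniform expansion on $V$ lets one take the bounded-distortion and Gibbs constants uniform in $\l \in V$.

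\emph{Step 2: the energy estimate and completion of (1).} Introduce the $t$-energy
\begin{equation*}
I_t(\l) := \iint_{\Sg_m \times \Sg_m} \frac{d\nu_{\l,t}(\om)\, d\nu_{\l,t}(\tau)}{|\pi_\l(\om) - \pi_\l(\tau)|^t}.
\end{equation*}
Fubini combined with the transversality condition should produce a pointwise bound of the form
\begin{equation*}
\int_V |\pi_\l(\om) - \pi_\l(\tau)|^{-t}\, d\l \le C_t \, r(\om,\tau)^{-t},
\end{equation*}
where $r(\om,\tau)$ denotes the inverse-branch contraction factor of $f_{\l_0}$ along the common initial block $\om \wedge \tau$. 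Grouping the double symbolic integral by $n = |\om \wedge \tau|$ and inserting the Gibbs estimate $\nu_{\l,t}([\eta]) \asymp r(\eta)^{t} e^{-n P(\l,t)}$ reduces $\int_V I_t(\l)\, d\l$ to a geometric-type series $\sum_n e^{-n P(\l,t)}$ that converges because $P(\l,t)$ is strictly positive and uniform on $V$. Consequently $I_t(\l) < \infty$ for $\text{Leb}_d$-a.e. $\l \in V$, and Frostman's lemma applied to $(\pi_\l)_\ast \nu_{\l,t}$ gives $\HD(J(G_\l)) \ge t$. Covering $U$ by countably many such balls and letting $t$ range through a countable sequence $t_k \uparrow \min\{s(\l),2\}$ (using continuity of $s$) finishes part (1).

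\emph{Step 3: Lebesgue positivity when $s(\l) > 2$, and main obstacle.} On the open set $U_2 := \{\l \in U : s(\l) > 2\}$, repeating Step 2 with $t = 2$ and the standard identity relating the $L^2$ norm of the density to the $2$-energy should give
\begin{equation*}
\int_V \Bigl\| \frac{d(\pi_\l)_\ast \nu_{\l,2}}{d\text{Leb}_2} \Bigr\|_{L^2(\CCI)}^{2}\, d\l \le C \int_V I_2(\l)\, d\l < \infty,
\end{equation*}
so that for $\text{Leb}_d$-a.e. $\l \in U_2$ the pushforward $(\pi_\l)_\ast \nu_{\l,2}$ has an $L^2$ density on $\CCI$. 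Since this measure sits on $J(G_\l)$, we conclude $\text{Leb}_2(J(G_\l)) > 0$, proving (2). The hard part is not the thermodynamic formalism itself, which is by now routine for expanding skew products, but two parameter-dependent verifications this argument hinges on: first, producing a \emph{uniform} Gibbs/distortion structure for $\{\nu_{\l,t}\}_{\l \in V}$ despite the critical points and overlaps of the generators $f_{\l,j}$, so that the Fubini interchange and the telescoping over cylinders are legitimate; and second, turning the abstract transversality condition into the pointwise bound on $\int_V |\pi_\l(\om)-\pi_\l(\tau)|^{-t} d\l$ used above. The latter is precisely what the definition of transversality is designed to supply, while the former is where the semigroup (rather than single-map) nature of $f_\l$ and the non-injectivity of the generators make the proof more delicate than in the classical IFS setting.
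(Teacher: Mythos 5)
Your overall strategy (an energy/Frostman estimate driven by transversality for the lower bound, plus an $L^{2}$-density version of the same estimate when $s(\l)>2$) is indeed the strategy of the paper, but the machinery you build it on does not exist in this setting, and that is a genuine gap rather than a technicality. You posit a coding map $\pi_{\l}\colon \Sg_{m}\to J(G_{\l})$ and Gibbs measures $\nu_{\l ,t}$ on $\Sg_{m}$ with cylinder estimates $\nu_{\l ,t}([\eta ])\asymp r(\eta )^{t}e^{-nP}$. For a rational semigroup the generators are non-injective maps of degree $\ge 2$, the fiber Julia sets $J_{\om }(\tilde{f}_{\l })$ are typically uncountable (e.g.\ $J_{j^{\infty }}(\tilde{f}_{\l })=J(f_{\l ,j})$), so cylinders do not shrink to points and no single-valued projection $\Sg_{m}\to J(G_{\l })$ exists; likewise there is no ``inverse-branch contraction factor $r(\om \wedge \tau )$'' attached to a symbolic word alone. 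Moreover the transversality condition of Definition~\ref{d:tc} is stated for pairs $(\om ,z),(\om ',z')\in J(\tilde{f}_{\l _{0}})$ and the conjugacy maps $\overline{h}_{\l }$, so even to invoke it you are forced into the skew-product/conjugacy framework. The paper's proof lives there: it fixes the single $s(\l _{0})$-conformal measure $\mu $ on $J(\tilde{f}_{\l _{0}})$ (parameter-independent, so Fubini is immediate, unlike your $\l $-dependent family $\nu _{\l ,t}$), studies the energy of $(\overline{h}_{\l })_{\ast }\mu $ with exponent $\min \{ s(\l _{0}),2\} -\epsilon $, and decomposes the off-diagonal pairs not by the length of the common symbolic prefix but by the first time $n$ at which \emph{either} the symbols differ \emph{or} the fiber points $\pi _{2}(\tilde{f}_{\l _{0}}^{n}(\om ,z)),\pi _{2}(\tilde{f}_{\l _{0}}^{n}(\om ',z'))$ become $v$-separated (the sets $E_{n}$ in Lemma~\ref{l:djms2e}); your grouping by $n=|\om \wedge \tau |$ cannot see pairs with identical itineraries but distinct fiber coordinates, which carry positive $\mu \otimes \mu $-mass here.

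Once the argument is set up on $J(\tilde{f}_{\l _{0}})$, the remaining ingredients you list do go through essentially as you expect: Koebe distortion plus the uniform-in-$\l $ comparison of $\| \tilde{f}_{\l }'\| $ with $\| \tilde{f}_{\l _{0}}'\| $ (Lemma~\ref{l:vd}) replaces your ``uniform Gibbs structure,'' Lemma~\ref{l:tcintc} is exactly your conversion of transversality into $\int _{U}\hat{\rho }(\overline{h}_{\l }(\om ,z),\overline{h}_{\l }(\om ',z'))^{-\alpha }\, d\l \le C$, conformality of $\mu $ makes the sum over $n$ geometric, and part (1) is finished by a Lebesgue density point argument together with continuity of $s(\l )$ (Theorem~\ref{t:delrp}), while part (2) is the same computation at exponent $2+\frac{\epsilon }{2}$ bounding $\int \underline{D}((\overline{h}_{\l })_{\ast }\mu ,x)\, d(\overline{h}_{\l })_{\ast }\mu (x)$, as in Lemma~\ref{l:posmeas}. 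So the repair is to discard the symbolic coding and $\l $-dependent Gibbs measures, work with the conjugacies $h_{\l }$ of Setting $(\ast )$ and the base-point conformal measure, and redo your separation decomposition fiberwise.
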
 
It is very interesting to investigate the Hausdorff dimension of the 
exceptional set of parameters in the above theorem. In order to do that, we 
define the {\bf strong transversality condition} (see Definition~\ref{d:stc}), and 
we prove the following. 
\begin{thm}[Theorem~\ref{t:stcmain}]
\label{t:istcmain}
Let $\{ f_{\l }\}_{\l \in U}$ be a family in $\Exp (m)$ as above.
Suppose that $\{ f_{\l }\} _{\l \in U}$ satisfies the strong transversality condition. 
Let $G$ be a subset of $U$.  Let $\xi \geq 0$. 
Suppose $\min \{ \xi , \sup _{\lambda \in G}s(\lambda )\} +d-2\geq 0.$ 
Then we have 
$$\HD (\{ \l \in G: \HD (J(G_{\l }))<\min \{ \xi ,s(\l )\} \} )
\leq \min \{ \xi ,\sup _{\l \in G}s(\l )\} +d-2.
$$ 
\end{thm}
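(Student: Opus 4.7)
The plan is to adapt the potential-theoretic (energy) method of Simon--Solomyak--Urba\'nski to the skew-product coding of $J(G_\lambda)$. The first move is a standard reduction: writing
$$
\{\lambda\in G : \HD(J(G_\lambda)) < \min\{\xi, s(\lambda)\}\} = \bigcup_{t\in \Q\cap [0,\,\min\{\xi,\sup_G s\})} A_t, \qquad A_t := \{\lambda\in G : \HD(J(G_\lambda)) < t < s(\lambda)\},
$$
countable stability of Hausdorff dimension reduces the task to proving $\HD(A_t)\le t + d - 2$ for each such rational $t$. The standing hypothesis $\min\{\xi,\sup_G s\} + d - 2 \ge 0$ guarantees the target bound is non-negative.

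Suppose for contradiction that $\HD(A_t) > t + d - 2$. By Frostman's lemma I choose a compactly supported Borel probability measure $\sigma$ on $A_t$ with $\sigma(B(\lambda,r)) \le C r^\alpha$ for some fixed $\alpha > t + d - 2$. By continuity of $s(\cdot)$ (Theorem~\ref{t:delrp}), I further restrict $\sigma$ so that $s(\lambda) \ge t + \eta$ uniformly on $\supp(\sigma)$ for some $\eta > 0$. Fix a reference parameter $\lambda_0 \in \supp(\sigma)$ and let $\mu$ be the Gibbs/equilibrium state on $\Sigma_m^+ = \{1,\dots,m\}^{\NN}$ for the geometric potential $-t\,\varphi_{\lambda_0}$ associated to $f_{\lambda_0}$; then $P_{\lambda_0}(t) > 0$. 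The skew-product construction provides H\"older coding maps $\pi_\lambda : \Sigma_m^+ \to J(G_\lambda)$ that depend continuously on $\lambda$.

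The core computation is to bound the triple energy
$$
\mathcal{E} := \int \iint \bigl|\pi_\lambda(\omega) - \pi_\lambda(\tau)\bigr|^{-t}\, d\mu(\omega)\, d\mu(\tau)\, d\sigma(\lambda).
$$
Fubini puts the $\lambda$-integral first. Strong transversality (Definition~\ref{d:stc}) gives quantitative non-degeneracy of the $\lambda$-differential of $\pi_\lambda(\omega) - \pi_\lambda(\tau)$ of order $\Delta(\omega,\tau)$, where $\Delta(\omega,\tau)$ is the diameter of the longest common-prefix cylinder for $f_{\lambda_0}$. Hence the sublevel set $\{\lambda : |\pi_\lambda(\omega) - \pi_\lambda(\tau)| < r\}$ lies in a $(r/\Delta(\omega,\tau))$-neighborhood of a codimension-$2$ subset of $U$; covering by balls and applying the Frostman bound yields
$$
\sigma(\{\lambda : |\pi_\lambda(\omega)-\pi_\lambda(\tau)| < r\}) \le C\bigl(r/\Delta(\omega,\tau)\bigr)^{\alpha - (d-2)},
$$
and a layer-cake integration (convergent exactly when $\alpha > t + d - 2$) produces
$$
\int \bigl|\pi_\lambda(\omega) - \pi_\lambda(\tau)\bigr|^{-t}\, d\sigma(\lambda) \le C' \,\Delta(\omega,\tau)^{-t}.
$$

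Substituting back into $\mathcal{E}$ and grouping pairs $(\omega,\tau)$ by common-prefix length $n$, the Gibbs estimate $\mu([u])\asymp e^{-nP_{\lambda_0}(t)}\,|f_{\lambda_0, u}'|^{-t}$ together with bounded distortion $\Delta([u])\asymp |f_{\lambda_0, u}'|^{-1}$ collapse the resulting double sum to the geometric series $\sum_n e^{-nP_{\lambda_0}(t)}$, which converges because $P_{\lambda_0}(t) > 0$. Thus $\mathcal{E} < \infty$, so for $\sigma$-a.e.\ $\lambda$ the $t$-energy of $(\pi_\lambda)_*\mu$ is finite; Frostman's theorem then forces $\HD(J(G_\lambda)) \ge t$, contradicting $\supp(\sigma) \subseteq A_t$. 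The main technical obstacle is the third step: correctly formulating strong transversality so as to yield the codimension-$2$ geometric estimate, and pairing it precisely with the $\sigma$-Frostman bound so that the energy argument absorbs exactly under the sharp hypothesis $\alpha > t + d - 2$.
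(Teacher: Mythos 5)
Your overall strategy --- a countable reduction, a Frostman measure of exponent $\alpha>t+d-2$ on the exceptional set, and a triple energy whose $\lambda$-integral is controlled by strong transversality, leading to finite $t$-energy of a projected measure for $\sigma$-a.e.\ $\lambda$ --- is the same skeleton as the paper's proof (Frostman's lemma combined with Lemmas~\ref{l:stcintc} and~\ref{l:sdjms}). However, the step you yourself flag as the crux is where the argument, as written, does not close, and for a reason specific to this setting: \emph{localization in the parameter}. Your displayed bound $\sigma(\{\lambda:|\pi_\lambda(\omega)-\pi_\lambda(\tau)|<r\})\le C\,(r/\Delta(\omega,\tau))^{\alpha-(d-2)}$, with $\Delta(\omega,\tau)$ a cylinder diameter computed at the fixed reference parameter $\lambda_0$, is not what Definition~\ref{d:stc} gives. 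STC is a covering-number condition (not a non-degeneracy condition on the $\lambda$-differential --- that is the \emph{analytic} transversality condition), and it only applies to pairs whose first symbols differ. For a pair with common prefix of length $n$ you must first apply $\tilde f^n$, use STC on the shifted pair, and then pull back; the pullback introduces $\| f_{\lambda,\omega|_n}'\|$ at the parameter $\lambda$, not at $\lambda_0$. Replacing it by $\| f_{\lambda_0,\omega|_n}'\|$ with sub-exponential error is only possible for $\lambda$ in a small ball $B_\delta(\lambda_0)$ (Lemma~\ref{l:vd}); over all of $\supp \sigma\subset A_t$ the ratio can grow like $e^{Cn}$ with $C$ not small, and your geometric series need not converge. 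This is exactly why the paper's proof of Theorem~\ref{t:stcmain} first reduces, using continuity of $s(\lambda)$, to exceptional sets inside small balls $B_\delta(\lambda_1)$ and only then invokes Frostman's lemma and the energy estimate of Lemma~\ref{l:sdjms}. Your proposal needs the same localization (e.g.\ take $\lambda_1$ such that every neighborhood of $\lambda_1$ meets $A_t$ in a set of dimension exceeding the target), and your use of continuity of $s$ only to secure $s\ge t+\eta$ on $\supp\sigma$ does not substitute for it.

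A second gap is the coding itself: in this setting there is no map $\pi_\lambda:\Sigma_m\to J(G_\lambda)$, because the generators are not injective and the fiber Julia sets $J_\omega(\tilde f_\lambda)$ are in general nondegenerate continua. The energy must be set up on $J(\tilde f_{\lambda_0})$ using the fiberwise conjugacies $\overline{h}_\lambda$ and a conformal or equilibrium measure of the skew product, and the decomposition of pairs cannot be purely by common-prefix length: one stops either when the symbols first disagree or when the base points become $v$-separated (the sets $E_n$ in the proofs of Lemmas~\ref{l:djms2e} and~\ref{l:sdjms}), and the second alternative must be handled separately since STC says nothing about pairs with equal first symbols. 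Your replacement of the $s(\lambda_1)$-conformal measure by a Gibbs state at exponent $t$ with $P(t)>0$ is a harmless variant (it yields the convergence factor $e^{-nP(t)}$ in place of the paper's $\eta^{-\epsilon n/4}$), but the two adjustments above are precisely where the rational semigroup setting departs from the contracting IFS argument of Simon--Solomyak--Urba\'nski that you are importing, and without them the proof is incomplete.
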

Since $\HD (J(G_{\l }))\leq s(\l )$ for each $\l \in U$, if we
further assume $\sup _{\l \in U}s(\l )<2$ in the above theorem, then  
$$ \HD (\{ \l \in U: \HD (J(G_{\l }))\neq s(\l )\} )<\HD (U)=d.$$ 
It is very important to study sufficient conditions for a family of
expanding semigroups
to satisfy the strong transversality condition. 
Let $U$ be a bounded open subset of $\CC ^{d}$. 
We say that a family $\{ f_{\l }\} _{\l \in U}$ in $\Exp(m)$ as above
is a holomorphic family in $\Exp(m)$  
if $(z,\l ) \mapsto f_{\l ,j}(z)\in \CCI , (z,\l )\in \CCI \times U,$
is holomorphic for each $j.$  
For a holomorphic family in $\Exp(m)$, we define the {\bf analytic
  transversality condition} (see Definition~\ref{d:analtc}).     
We prove the following.
\begin{prop}[Proposition~\ref{p:atctc}]
\label{p:iatctc}
Let $\{ f_{\l }\} _{\l \in U}$ be a holomorphic family in $\Exp(m)$.  
Suppose that $\{ f_{\l }\} _{\l\in U}$ satisfies the 
analytic transversality condition. 
Then for each non-empty, relatively compact, open subset $U'$ of $U$, the 
family $\{ f_{\l }\} _{\l \in U'}$ satisfies the strong transversality
condition and, hence, the transversality condition.  
\end{prop}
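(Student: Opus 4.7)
The plan is to unpack the analytic transversality condition as a pointwise simple--zeros statement about a family of holomorphic ``cylinder--difference'' functions on $U$, then use the implicit function theorem to convert simple zeros into a quadratic Lebesgue bound on sublevel sets, and finally uniformize the constant in this bound across all cylinder pairs using normality and distortion estimates for the inverse branches of an expanding rational semigroup.

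First I would set up the cylinder--difference functions. For any two admissible words $\omega ,\tau $ over $\{ 1,\ldots ,m\} $ (finite or infinite) with $\omega _{1}\neq \tau _{1}$, the expanding hypothesis and holomorphic dependence of the generators on $\l \in U$ provide well--defined holomorphically varying inverse--branch germs $\psi _{\l }^{\omega }$ and $\psi _{\l }^{\tau }$ on a neighbourhood of the relevant portion of $J(G_{\l })$; for infinite words these converge to coded points in $J(G_{\l })$. Their difference $\Phi _{\omega ,\tau }(\l )$ is then holomorphic in $\l \in U$, and the analytic transversality condition (Definition~\ref{d:analtc}) is, in essence, the assertion that $D_{\l }\Phi _{\omega ,\tau }(\l _{0})\neq 0$ whenever $\Phi _{\omega ,\tau }(\l _{0})=0$.

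Next, I would convert the single simple--zeros property into a quantitative measure estimate. For a holomorphic $\Phi :U\to \CC $ whose zeros inside a compact set $K\subset U$ are all simple, the implicit function theorem gives $|\Phi (\l )|\asymp \dist (\l ,\Phi ^{-1}(0))$ near each zero and exhibits $\Phi ^{-1}(0)\cap K$ as a smooth complex subvariety of codimension one. Identifying $\CC ^{d}\simeq \RR ^{2d}$, this yields a tubular--neighbourhood estimate
\[
\text{Leb}_{2d}\bigl( \{ \l \in K:|\Phi (\l )|<r\} \bigr) \leq C(\Phi ,K)\,r^{2}, \qquad r>0,
\]
with $C(\Phi ,K)$ controlled by a positive lower bound for $|D_{\l }\Phi |$ along $\Phi ^{-1}(0)\cap K$, an upper bound on $|\Phi |$ over $K$, and the Euclidean geometry of $K$. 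Taking $K=\ov{U'}$ and $\Phi =\Phi _{\omega ,\tau }$, this is already the sublevel--set bound demanded by the strong transversality condition for the single pair $(\omega ,\tau )$.

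The main obstacle is to produce a constant $C$ in the last display that is uniform over the infinite family of admissible pairs $(\omega ,\tau )$. Here I would use the uniform expansion of the generators over $\ov{U'}$ together with Koebe--type distortion estimates for their holomorphic inverse branches to show that, after normalization by the scale of the relevant inverse--branch images, the family $\{ \Phi _{\omega ,\tau }\} $ is a normal family of holomorphic functions on a fixed open neighbourhood of $\ov{U'}$; the standard device of cancelling common prefixes through a distortion lemma reduces everything to pairs with $\omega _{1}\neq \tau _{1}$. A compactness argument -- arguing by contradiction that a failure of the uniform lower bound for $|D_{\l }\Phi _{\omega ,\tau }|$ at its zeros would, after extraction, yield a limiting cylinder--difference function with a higher--order zero, contrary to analytic transversality -- then upgrades the pointwise simple--zeros property into a single uniform lower bound valid for every admissible $(\omega ,\tau )$ and every $\l \in \ov{U'}$. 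Feeding this uniformity back into the tubular--neighbourhood estimate yields the strong transversality condition on $U'$, and the ordinary transversality condition follows a fortiori.
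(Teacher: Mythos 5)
Your route is essentially the paper's: read the analytic transversality condition as the statement that the holomorphic difference functions $g_{\om ,z,\om ',z'}(\l )=\overline{h}_{\l }(\om ,z)-\overline{h}_{\l }(\om ',z')$ have nonvanishing gradient at their zeros, apply the holomorphic implicit function theorem near each zero to get a codimension-two tubular description of the sublevel sets, and use compactness of $\{ (\om ,z,\om ',z')\in J(\tilde{f}_{\l _{0}})^{2}:\om _{1}\neq \om '_{1}\} \times \overline{U'}$ to make the constants uniform. The only real difference is how uniformity is obtained: the paper makes the one-variable inverse-function step uniform in the indexing data $(\om ,z,\om ',z')$ directly (the Ahlfors argument, with an explicit local parametrization $\Psi $ of the sublevel set, whose image is then covered by $r$-balls), whereas you extract a uniform lower bound on the gradient at zeros by a normal-families contradiction argument; both work, and your normalization/prefix-cancellation device is unnecessary here since only pairs with $\om _{1}\neq \om '_{1}$ occur in the definitions.

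There is, however, one genuine gap as written: the strong transversality condition (Definition~\ref{d:stc}) is a covering-number bound, $N_{r}(\{ \l \in U': \hat{\rho }(\overline{h}_{\l }(\om ,z),\overline{h}_{\l }(\om ',z'))\leq r\} )\leq C r^{2-2d}$ (the parameter space has real dimension $2d$), not a Lebesgue bound. Your displayed estimate $\mbox{Leb}_{2d}(\{ \l : |\Phi (\l )|<r\} )\leq Cr^{2}$ is exactly the ordinary transversality condition, and, as Remark~\ref{r:stctc} points out, the TC is not known to imply the STC; so the claim that this ``is already the sublevel-set bound demanded by the strong transversality condition'' is where the argument falls short of the stated conclusion. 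The repair lives inside your own framework: the uniform gradient bound gives $|\Phi (\l )|\geq c\,\dist (\l ,\Phi ^{-1}(0))$ on $\overline{U'}$ (and $|\Phi |\geq c>0$ off a fixed tubular neighborhood, uniformly over pairs), so for small $r$ the $r$-sublevel set lies in an $O(r)$-neighborhood of a compact piece of the real-codimension-two zero variety, which can be covered by $O(r^{2-2d})$ balls of radius $r$ (large $r$ being trivial since $U'$ is bounded); this covering count is precisely what the paper carries out with its maps $\Psi _{x,y,x',y'}$. State that covering estimate instead of the measure estimate (and note that $\hat{\rho }$ and the Euclidean distance are comparable on the compact set containing the sets $J(G_{\l })$, $\l \in \overline{U'}$), and the proof is complete.
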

By using Proposition~\ref{p:iatctc},  some calculations involving 
partial derivatives of  conjugacy maps with respect to the parameters
(Lemma~\ref{l:hfdiff}--Corollary~\ref{c:hfdiff3}), and  
some  observation about  the combinatorics of  the Julia set
(Lemma~\ref{l:genprin}), we can produce an
abundance  of examples of holomorphic families satisfying the analytic
transversality condition, and hence 
the strong transversality condition and ultimately the
transversality condition.  
Combining the above and some further observations, we prove 
Theorem~\ref{t:id1d2ex} which is formulated below.    
We consider the space 
$$
{\mathcal P}:= \{ g: g \mbox{ is a polynomial},\,
\deg (g)\geq 2\} 
$$ 
endowed with the relative topology from Rat. 
We are interested in families of small perturbations of elements in
the boundary of  
the parameter space ${\mathcal A}$ in $\Exp(m)$, where 
$${\mathcal A}:=
\{ (g_{1},\ldots ,g_{m})\in \Exp(m):  
g_{i}^{-1}(J(\langle g_{1},\ldots ,g_{m}\rangle ))\cap
g_{j}^{-1}(J(\langle g_{1},\ldots ,g_{m}\rangle ))=\emptyset  
\mbox{ if } i\neq j \} .$$ 
\begin{thm}[Theorem~\ref{t:d1d2ex}]
\label{t:id1d2ex}
Let $(d_{1},d_{2})\in \NN ^{2}$ be such that  $d_{1},d_{2}\geq 2$ and
$(d_{1},d_{2})\neq (2,2).$  
Let $b=ue^{i\theta }\in \{ 0<|z|<1\}$, where $0<u<1$ and $\theta \in [0,2\pi ).$ 
Let $\alpha \in [0,2\pi )$ be a number such that 
there exists a number $n\in \ZZ $ with $d_{2}(\pi +\theta )+\alpha
=\theta +2n\pi .$    
Let $\b _{1}(z)=z^{d_{1}}.$ For each $t>0$, 
let $g_{t}(z)=te^{i\alpha }(z-b)^{d_{2}}+b.$ 
%For each $\l \in \CC \setminus \{ 0\} $, 
%let $f_{\l }=(z^{d_{1}},\l e^{i\alpha }(z-b)^{d_{2}}+b)\in {\mathcal P}^{2}.$ 
Then there exists a point $t_{1}\in (0,\infty )$ and an open 
neighborhood $U$ of $0$ in $\CC $ such that 
the family $\{ f_{\l }=(\b_{1},g_{t_{1}}+\l g_{t_{1}}')\} _{\l \in U}$
with $\l _{0}=0$   
%with $\l _{0}=0$
satisfies all of the following conditions {\em (i)--(iv)}. 
\begin{itemize}
\item[(i)] 
$\{ f_{\l }\} _{\l \in U}$ is a holomorphic family in $\Exp(2)$
satisfying the analytic transversality condition,  
the strong transversality condition  
and the transversality condition. 

\item[(ii)] 
For each $\l\in U$, $s(\l )<2$. 
%, where we recall that $s(\l )=\delta (f_{\l }).$  

\item[(iii)] 
%For {\em Leb}$_{2}$-a.e. $\l \in U$, 
There exists a subset $\Omega $ of $U$ with 
$\HD (U\setminus \Omega )<\HD (U)=2$ such that for each $\l \in \Omega $, 
$$1<\frac{\log (d_{1}+d_{2})}{\sum
  _{j=1}^{2}\frac{d_{i}}{d_{1}+d_{2}}\log (d_{i})}< \HD (J(G_{\l
}))=s(\l )<2.$$    

\item[(iv)] 
$J(G_{\l _{0}})$ is connected and $\HD (J(G_{\l _{0}}))=s(\l _{0})<2.$ 
Moreover, $G_{\l _{0}}$ satisfies the open set condition.
Furthermore, for each $t\in (0,t_{1})$, the semigroup  
$\langle \b_{1},g_{t}\rangle $ satisfies the open set condition, 
$\b _{1}^{-1}(J(\langle \b _{1},g_{t}\rangle ))\cap 
g_{t}^{-1}(J(\langle \b _{1},g_{t}\rangle ))=\emptyset $, the Julia
set $J(\langle \b _{1},g_{t}\rangle )$ is disconnected, 
and 
$$1<\frac{\log (d_{1}+d_{2})}{\sum _{j=1}^{2}\frac{d_{i}}{d_{1}+d_{2}}\log (d_{i})}< 
\HD (J(\langle \b _{1},g_{t}\rangle ))=\delta (\b _{1},g_{t})<2,$$
where $\delta (\b _{1}, g_{t})$ denotes the Bowen parameter of $(\b
_{1},g_{t}).$   
%\item[(v)] 
%For each neighborhood $V$ of $\l _{0}$ in $\CC $ there exists a
%non-empty open set $W$ in $V$  
%such that for each $\l \in W$, 
%$J(G_{\l })$ is connected. 
\end{itemize}
Moreover, there exists an open neighborhood $Y$ of $(\b _{1},g_{t_{1}})$ in ${\mathcal P}^{2}$ 
such that the family $\{ \g=(\g_{1},\g_{2})\} _{\g\in Y}$ 
satisfies all of the following conditions  {\em (v)--(viii)}.
\begin{itemize}
\item[(v)] 
$\{ \g=(\g_{1},\g_{2})\} _{ \g\in Y} $ is a holomorphic family in $\Exp(2)$ 
%in $\Epb (2)$ 
satisfying the analytic transversality condition,
the strong transversality condition and
 the transversality condition.
\item[(vi)] 
For each $\g\in Y$, $\delta (\g)<2$, where $\delta (\g )$ is the Bowen parameter of $\g =(\g _{1},\g _{2}).$  
\item[(vii)] 
%For a.e. $\g =(\g_{1},\g_{2}) \in Y$ with respect to the Lebesgue measure on ${\mathcal P}^{2}$, 
There exists a subset $\Gamma $ of $Y$ with $\HD (Y\setminus \Gamma )<\HD (Y)=2(d_{1}+d_{2}+2)$ 
such that for each $\l \in \Gamma $, 
$$ 1<\frac{\log (d_{1}+d_{2})}{\sum _{j=1}^{2}\frac{d_{i}}{d_{1}+d_{2}}\log (d_{i})}< 
\HD (J(\langle \g_{1} ,\g_{2}\rangle ))=\delta (\g)<2.$$ 
\item[(viii)] 
For each neighborhood $V$ of $(\b _{1},g_{t_{1}})$ in $Y$ there exists a non-empty 
open set $W$ in $V$ such that 
for each $\g =(\g_{1},\g_{2})\in W$, we have that 
$\gamma _{1}^{-1}(J(\langle \gamma _{1},\gamma _{2}\rangle ))\cap \gamma _{2}^{-1}(J(\langle \gamma _{1},\gamma _{2}\rangle ))\neq 
\emptyset $ and that 
$J(\langle \g _{1},\g_{2}\rangle )$ is connected. 
\end{itemize}

\end{thm}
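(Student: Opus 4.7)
The plan is to split the argument into three stages: (I) explicit identification of the boundary parameter $t_{1}$ and verification of the geometry, expansiveness, and dimension bounds in (iv); (II) verification of the analytic transversality condition for the one-parameter family $\{f_{\lambda}\}_{\lambda\in U}$ and derivation of (i)--(iii); (III) extension to the $\mathcal{P}^{2}$-neighborhood $Y$ to obtain (v)--(viii). The central obstacle is stage (II): since the family perturbs only one of the two generators, one must show non-vanishing of a certain cocycle derivative uniformly over all pairs of admissible symbolic words.

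For stage (I), the angle hypothesis $d_{2}(\pi+\theta)+\alpha\equiv\theta\pmod{2\pi}$ collapses $g_{t}(-b)=te^{i\alpha}(-2b)^{d_{2}}+b$ to $(t\cdot 2^{d_{2}}u^{d_{2}}+u)e^{i\theta}$, and setting $t_{1}:=(1-u)/(2^{d_{2}}u^{d_{2}})$ places $g_{t_{1}}(-b)$ exactly on $J(\beta_{1})=\{|z|=1\}$. This marker identifies $(\beta_{1},g_{t_{1}})$ as the boundary value beyond which the two inverse branches begin to overlap: for $t\in(0,t_{1})$, $g_{t}(-b)$ lies strictly inside the unit disk; since both critical points of the generators ($0$ for $\beta_{1}$ and $b$ for $g_{t}$) are superattracting fixed points in the Fatou set, a short check based on (\ref{bsseq}) yields $\beta_{1}^{-1}(J(\langle\beta_{1},g_{t}\rangle))\cap g_{t}^{-1}(J(\langle\beta_{1},g_{t}\rangle))=\emptyset$, so $\langle\beta_{1},g_{t}\rangle$ is expanding, satisfies OSC, and has disconnected Julia set; at $t=t_{1}$ the two preimage sets first touch while expansiveness and OSC persist, and $J(G_{\lambda_{0}})$ becomes connected. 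The numerical bound $1<\log(d_{1}+d_{2})/\sum_{j=1}^{2}(d_{j}/(d_{1}+d_{2}))\log d_{j}$ follows from strict concavity of $\log$, while its counterpart $<2$ reduces to $(d_{1}+d_{2})\log(d_{1}+d_{2})<2(d_{1}\log d_{1}+d_{2}\log d_{2})$, which fails only at $(d_{1},d_{2})=(2,2)$---exactly the excluded case. Under OSC, the Bowen parameter equals $\HD(J(\langle\beta_{1},g_{t}\rangle))$ by the results cited from \cite{sumi2}, and pressure comparison with the Bernoulli symbolic system on $\{1,2\}^{\NN}$ pins both quantities between the two explicit bounds.

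In stage (II), the perturbation direction $g_{t_{1}}'$ is chosen because $g_{t_{1}}(z)+\lambda g_{t_{1}}'(z)=g_{t_{1}}(z+\lambda)+O(\lambda^{2})$, so $\lambda$ encodes an authentic first-order pre-translation of the second generator. Lemmas~\ref{l:hfdiff}--\ref{c:hfdiff3} supply an explicit cocycle formula for the $\lambda$-derivative of the conjugacy maps parameterizing backward orbits of $f_{\lambda}$, and Lemma~\ref{l:genprin} reduces the analytic transversality condition to non-vanishing of this derivative at the first symbol where two admissible words differ. Because only the second coordinate of $f_{\lambda}$ varies, the cocycle contribution reduces to $g_{t_{1}}'$ evaluated along backward orbits; since $b$ is a superattracting fixed point of $g_{t_{1}}$ lying in the Fatou set, $g_{t_{1}}'(z)\neq 0$ for all $z\in J(G_{\lambda_{0}})$, and the cocycle cannot accidentally vanish. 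Proposition~\ref{p:iatctc} then upgrades analytic transversality on $U$ to the strong and plain transversality conditions after restriction to a relatively compact sub-neighborhood, giving (i). Continuity of $s(\cdot)$ together with $s(\lambda_{0})<2$ from stage (I) gives (ii); and (iii) follows from Theorem~\ref{t:istcmain} applied with any $\xi>\sup_{\lambda\in U}s(\lambda)$ still less than $2$, combined with the universal bound $\HD(J(G_{\lambda}))\leq s(\lambda)$.

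For stage (III), take $Y$ to be a small open neighborhood of $(\beta_{1},g_{t_{1}})$ in $\mathcal{P}^{2}$, parameterized by the $d_{1}+d_{2}+2$ complex coefficients of the pair and hence of real dimension $2(d_{1}+d_{2}+2)$. The stage-(II) analytic transversality argument, applied coordinatewise in each direction of coefficient perturbation---the non-vanishing of the derivative of the perturbed second generator on the Julia set persists under small perturbation and continues to block cocycle cancellation---yields (v), and (vi)--(vii) follow exactly as in (ii)--(iii). For (viii), observe that $(\beta_{1},g_{t_{1}})\in\partial\mathcal{A}$, so any $\mathcal{P}^{2}$-neighborhood $V$ of $(\beta_{1},g_{t_{1}})$ in $Y$ contains parameters on both sides of $\partial\mathcal{A}$; on the open side where $\gamma_{1}^{-1}(J(G))\cap\gamma_{2}^{-1}(J(G))\neq\emptyset$, the invariance (\ref{bsseq}) forces $J(\langle\gamma_{1},\gamma_{2}\rangle)$ to be connected, providing the required open set $W\subset V$.
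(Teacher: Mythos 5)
Your stage (I) is concretely wrong, and it is the load-bearing part of the theorem. The transition value cannot be $t_{1}=(1-u)/(2^{d_{2}}u^{d_{2}})$: take $d_{1}=3$, $d_{2}=2$, $b=0.1$ (so $\theta =\alpha =0$), which gives $t_{1}=22.5$; but already at $t=10\in (0,22.5)$ one has $J(g_{10})=\{ |z-0.1|=0.1\}$, which passes through $0$, the critical value of $\b _{1}$, so $P(\langle \b _{1},g_{10}\rangle )\cap J(\langle \b _{1},g_{10}\rangle )\neq \emptyset$, the semigroup is not hyperbolic and hence not expanding, the Bowen parameter is not even defined, and the claims of (iv) fail for this $t$. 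The point $-b$ plays no role in the geometry. What degenerates at the true $t_{1}$ is the middle compact inclusion in the nested chain $K(\b _{1})\subset \b _{1}^{-1}(K(g_{t}))\subset \subset g_{t}^{-1}(K(\b _{1}))\subset \subset K(g_{t})$; the paper has no closed formula, defines $t_{1}$ as the supremum of the $t\in [0,(1+|b|)^{-(d_{2}-1)}]$ for which the chain holds, shows $t_{1}>0$ and $t_{1}<(1+|b|)^{-(d_{2}-1)}$ (Claim 1), and then needs the Green's function computation of Claim 3 --- which is where the hypothesis $(d_{1},d_{2})\neq (2,2)$ is actually used --- to get $\b _{1}^{-1}(J(g_{t_{1}}))\neq g_{t_{1}}^{-1}(J(\b _{1}))$; that, not degree arithmetic, is what makes $J(G_{\l _{0}})$ a proper subset of $\overline{A}$ and yields $\HD (J)=\delta <2$ via the open set condition and \cite[Theorem 1.25]{sumi06}. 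Likewise the middle inequality $\log (d_{1}+d_{2})/\sum _{i}\frac{d_{i}}{d_{1}+d_{2}}\log d_{i}<\delta$ is not a Bernoulli pressure comparison; it is quoted from \cite[Theorem 3.15]{subowen}, and your observation that the degree quantity is $<2$ gives no upper bound on $\delta$ whatsoever.

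Stage (II) has a gap at exactly the decisive step. After Lemma~\ref{l:genprin} and Corollary~\ref{c:hfdiff3}, the quantity to control at a point $z\in \b _{1}^{-1}(J(g_{t_{1}}))\cap g_{t_{1}}^{-1}(J(\b _{1}))$ is the series $\frac{\partial }{\partial \l }(\overline{h}_{\l }(21^{\infty },z)-\overline{h}_{\l }(12^{\infty },z))|_{\l =0}=-1-\sum _{n\geq 2}\frac{-1}{(g_{t_{1}}^{n-2}\circ \b _{1})'(z)}$, and the fact that $g_{t_{1}}'\neq 0$ on the Julia set only says each summand is nonzero --- it in no way prevents the infinite sum from vanishing. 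The paper's argument is quantitative: the perturbation direction $g_{t_{1}}'$ is chosen so that the cocycle terms telescope to $-1/(g_{t_{1}}^{n-2}\circ \b _{1})'(z)$, and then $|g_{t_{1}}'|\equiv d_{2}$ on the invariant circle $J(g_{t_{1}})$ together with $|z|>1$ and $d_{1}\geq 2$ gives $\sum _{n\geq 2}|(g_{t_{1}}^{n-2}\circ \b _{1})'(z)|^{-1}\leq \frac{d_{2}}{d_{2}-1}\cdot \frac{1}{d_{1}|z|^{d_{1}-1}}<1$, so the derivative cannot vanish; nothing of this sort appears in your proposal. Your stage (III) inherits the same defect and adds two more: for (v) the paper does not redo the estimate ``coordinatewise'' but invokes Lemma~\ref{l:atcemb} (analytic transversality for an embedded one-parameter subfamily passes to the ambient family); and in (viii), connectedness of $J(\langle \g _{1},\g _{2}\rangle )$ does not follow from (\ref{bsseq}) once the preimages intersect --- the sets $\g _{i}^{-1}(J)$ need not themselves be connected --- the paper deduces it from postcritical boundedness via \cite[Theorems 1.5(2), 1.7]{sumiintcoh}, and it obtains the open set $W$ on which $\g _{1}^{-1}(J)\cap \g _{2}^{-1}(J)\neq \emptyset$ by constructing a nonconstant holomorphic marker $\g _{2}\mapsto \g _{2}\circ \xi \circ \zeta (\g _{2})$ and applying the open mapping principle, not by a soft ``both sides of $\partial {\mathcal A}$'' argument.
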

\begin{rem}
\label{r:t0tinfty}
For each $\g =(\g _{1},\g_{2})\in {\mathcal P}^{2}$ and 
$p=(p_{1},p_{2})\in (0,1)^{2}$ with $p_{1}+p_{2}=1$, 
we consider the random dynamical system such that 
for each step, we choose $\g _{i}$ with probability $p_{i}.$ 
For each $z\in \CCI $, let $T_{\infty ,\g ,p} (z)$ be the 
probability of tending to $\infty $ starting with the initial value $z.$  
Then the function $T_{\infty ,\g ,p}:\CCI \rightarrow [0,1] $ is locally constant on
$F(\langle \g _{1},\g_{2}\rangle ).$  
Moreover, this function provides a lot of information about the random
dynamics generated by $(\g ,p).$ (See \cite{sumiprepare, sumird}.)  
Let $\{ f_{\l }\} _{\l \in U}$ be as in Theorem~\ref{t:id1d2ex}. 
Let $\zeta =(\zeta _{1},\zeta _{2})=(f _{\l _{0},1}, f_{\l _{0},2}).$ 
Let $p=(1/2,1/2).$ Then we can show that 
$T_{\infty ,\zeta, p}$ is continuous on $\CCI $ and the set of varying
points of $T_{\infty ,\zeta, p}$ is equal to 
$J(G_{\l _{0}})=J(\langle \zeta _{1},\zeta _{2}\rangle ).$ 
(For the figure of $J(G_{\l _{0}})$, see Figure~\ref{fig:ConnOSC1}.) 
Moreover, there exists a neighborhood $H$ of $(\zeta _{1},\zeta
_{2})$ in ${\mathcal P}^{2}$  
such that for each $\g =(\g _{1},\g _{2})\in H$, 
$T_{\infty ,\g ,p}$ is continuous on $\CCI $ and locally constant
on $F(\langle \g _{1},\g _{2}\rangle ).$  
It is a complex analogue of the devil's staircase and is called a
``devil's coliseum.'' 
(These results are announced in the first author's papers 
\cite{sumisurvey, sumiprepare}.) 
From this point of view also, it is very natural and important to
investigate the Hausdorff dimension of the  
Julia set of a rational semigroup.
\end{rem}
\begin{figure}[htbp]
\caption{The Julia set of the $2$-generator polynomial semigroup 
$G_{\l _{0}}$ with $(d_{1},d_{2})=(3,2), b=0.1$, in Theorem~\ref{t:id1d2ex}. 
$G_{\l _{0}}$ satisfies the open set condition, $J(G_{\l _{0}})$ is connected and 
$\HD (J(G_{\l _{0}}))=s(\l _{0})<2.$}  
\includegraphics[width=1.8cm,width=1.8cm]{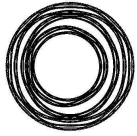}
\label{fig:ConnOSC1}
\end{figure} 

In Theorem~\ref{t:id1d2ex} we deal with $2$-generator polynomial semigroups 
$\langle \g _{1},\g _{2}\rangle $ with $\deg (\g _{1}),$ $ \deg (\g _{2})$ $\geq 2$, 
$(\deg (\g _{1}), \deg (\g _{2}))\neq (2,2)$ for which   
the planar postcritical set is bounded. 
In fact, it is very important to investigate the dynamics of polynomial semigroups with bounded planar 
postcritical set (see \cite{sumid1,sumid2,sumid3,SS}). 
There appear  many new phenomena (for example, the Julia sets of such  semigroups can be disconnected) 
in the dynamics of such semigroups which cannot hold in the usual iteration dynamics of a single 
polynomial. In the proof of Theorem~\ref{t:id1d2ex}, we use some idea from the study of dynamics of such semigroups.
In the family of Theorem~\ref{t:id1d2ex}, for a typical parameter value 
the Hausdorff dimension of the Julia set is strictly less than $2$ and
is equal to the Bowen parameter.  
Thus it is very natural to ask what happens for  polynomial semigroups 
$\langle \g _{1}, \g _{2}\rangle $ with $\deg (\g _{1})=\deg (\g _{2})=2$ 
for which the planar postcritical set is bounded. In this case, by
\cite[Theorem 2.15]{sumid1},  
$J(\langle \g _{1},\g_{2}\rangle )$ is connected and 
$\g_{1}^{-1}(J(\langle \g _{1},\g _{2}\rangle ))\cap \g
_{2}^{-1}(J(\langle \g _{1},\g _{2}\rangle ))\neq \emptyset .$  
Combining Proposition~\ref{p:iatctc} and the lower estimate of the
Bowen parameter from \cite{subowen},  
which was obtained by using thermodynamic formalisms,  potential theory, and 
some results from \cite{zdunik2}, we prove the following. 
\begin{thm}[Corollary~\ref{c:pmz2az2}] 
\label{t:ipmz2az2}
For each $a\in \CC $ with $|a|\neq 0,1$, 
there exists an open neighborhood $Y_{a}$ of $(az^{2}, z^{2})$ in 
${\mathcal P}^{2}$ such that 
$\{ g=(g_{1},g_{2})\} _{g\in Y_{a}} $ is a holomorphic family in $\Exp(2)$ 
%in $\Epb (2)\cap {\mathcal P}_{2}^{2}$ 
satisfying the analytic transversality condition, 
the strong transversality condition and the transversality condition,  
and 
for a.e. $g=(g_{1},g_{2})\in Y_{a}$ with respect to the Lebesgue measure on 
${\mathcal P}^{2}$, we have that 
$\mbox{{\em Leb}}_{2}(J(\langle g_{1},g_{2}\rangle ))>0.$  
\end{thm}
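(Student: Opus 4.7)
The plan is to verify the two hypotheses of Theorem~\ref{t:itcdimj}(2) on a holomorphic family $Y_a\subset\mathcal{P}^2$ of pairs of degree-$2$ polynomials centered at $(az^{2},z^{2})$: namely (i) the transversality condition and (ii) $s(\l)>2$ for Lebesgue-a.e.\ $\l$. I parametrize $\mathcal{P}^2$ near $(az^{2},z^{2})$ by the six complex coefficients of the two degree-$2$ polynomials, giving the natural holomorphic family. Since $0$ and $\infty$ are super-attracting fixed points of both $z^{2}$ and $az^{2}$, the postcritical set of $\langle z^{2},az^{2}\rangle$ equals $\{0,\infty\}$ and is disjoint from the Julia set, so $(az^{2},z^{2})\in\Exp(2)$; by Lemma~\ref{expopenlem} a whole open neighborhood $Y_a$ of $(az^{2},z^{2})$ lies in $\Exp(2)$.

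Next I handle the Bowen parameter. At $\l_{0}=(az^{2},z^{2})$ the disk $\{|z|<\min(1,|a|^{-1})\}$ and the neighborhood $\{|z|>\max(1,|a|^{-1})\}$ of $\infty$ are both forward invariant under each generator and carry compact sets into the basins of $0$ and $\infty$, so the semigroup is normal there and the Fatou set is contained in their union. Hence $J(\langle z^{2},az^{2}\rangle)$ contains the intermediate annulus, so $\HD(J)=2$ and $s(\l_{0})\geq 2$ via the general inequality $\HD(J(G_{\l}))\leq s(\l)$. The strict inequality $s(\l_{0})>2$ is the heart of the argument; it uses the lower estimate of the Bowen parameter from \cite{subowen}, whose proof employs the thermodynamic formalism of the skew product together with potential theory and Zdunik's rigidity \cite{zdunik2} to rule out the degenerate case in which the equality $s=2$ could persist. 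Continuity of $s$ (Theorem~\ref{t:delrp}) then lets me shrink $Y_a$ so that $s(\l)>2$ on all of $Y_a$.

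For the transversality I verify the analytic transversality condition at $\l_{0}$. At $\l_{0}$ each composition $f_{w}$ corresponding to a word $w$ of length $n$ is the monomial $a^{E(w)}z^{2^{n}}$ with $E(w)$ distinct for distinct words $w$, which lets me track the preimages of a reference point through different branches. Computing partial derivatives of the branches with respect to the six coefficient parameters using Lemma~\ref{l:hfdiff}--Corollary~\ref{c:hfdiff3} and invoking the combinatorial observation of Lemma~\ref{l:genprin}, I show that for any two distinct words the associated preimages move in linearly independent directions in $\CC^{6}$; this is the analytic transversality condition at $\l_{0}$ and extends to a neighborhood by continuity. Choosing $Y_{a}$ relatively compact and applying Proposition~\ref{p:iatctc} yields the strong and hence the ordinary transversality condition on $Y_{a}$. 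Theorem~\ref{t:itcdimj}(2) then immediately produces $\text{Leb}_{2}(J(G_{\l}))>0$ for Lebesgue-a.e.\ $\l\in Y_{a}$.

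The main obstacle is the strict inequality $s(\l_{0})>2$. A direct pressure calculation at $(az^{2},z^{2})$ (summing over periodic orbits of the skew product, whose derivative magnitude on the annulus equals $2^{n}$ for every word of length $n$) yields precisely the borderline value $s=2$, because the entropy and derivative contributions from the two generators balance exactly on the annular Julia set. Crossing this threshold requires the sharp lower estimate of \cite{subowen}, in which potential-theoretic tools detect the extra overlap produced by the two distinct Julia circles $\{|z|=1\}$ and $\{|z|=|a|^{-1}\}$ and Zdunik's theorem excludes the only possible obstruction (Lattès-type rigidity) to strict increase.
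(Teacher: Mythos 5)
Your argument breaks down at the Bowen parameter. The claim that $s(\l _{0})>2$ at the base point $(az^{2},z^{2})$ is false: since $J(\langle az^{2},z^{2}\rangle )$ is the closed annulus, $\HD (J)=2$ gives $s(\l _{0})\geq 2$, but $(az^{2},z^{2})$ lies in the family $\{ (\alpha _{1}(z-b)^{2}+b,\ \alpha _{2}(z-b)^{2}+b)\} $ (with $b=0$), which by \cite[Corollary 3.19]{subowen} is \emph{exactly} the locus $\{ \delta \leq 2\} $ inside $\Epb (2)\cap {\mathcal P}_{2}^{2}$; hence $s(\l _{0})=2$. You have the content of \cite{subowen} inverted: it does not push $\delta $ above $2$ at $(az^{2},z^{2})$, it says $\delta >2$ precisely \emph{off} this exceptional subvariety. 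Consequently your plan to ``shrink $Y_{a}$ so that $s(\l )>2$ on all of $Y_{a}$'' cannot work: every neighborhood of the base point meets the (positive-dimensional) subvariety where $s\leq 2$. What saves the theorem, and what the paper does (Theorem~\ref{t:d2pm2} and Corollary~\ref{c:d2pm3}), is a measure-zero argument: the locus $\{ \delta \leq 2\} $ is a proper holomorphic subvariety of the parameter space (again by the classification in \cite[Corollary 3.19]{subowen}), so $s(\l )>2$ for Lebesgue-a.e.\ $\l \in Y_{a}$, and then Theorem~\ref{t:tcdimj}(2) applies on the a.e.\ set $\{ s(\l )>2\} $. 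Without replacing your continuity step by this subvariety argument, the a.e.\ conclusion does not follow.

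On the transversality half, your sketch is repairable but not a proof as stated. The zero set relevant for the analytic transversality condition at $\l _{0}$ is not ``any two distinct words'': by the geometry of the base point it reduces to pairs $(12^{\infty },z)$, $(21^{\infty },z)$ with $z$ on the overlap circle $\{ |z|=|a|^{-1/2}\} $ (equation (\ref{eq:g0set2}) in the paper), and what must be checked is the nonvanishing of a single gradient there, not linear independence of infinitely many directions in $\CC ^{6}$. The paper verifies this by an explicit computation along the one-parameter slice $\l \mapsto (az^{2},z^{2}+\l )$ using Corollary~\ref{c:hfdiff2} (the two derivatives have moduli $\tfrac12 |a|^{1/2}$ and $\leq \tfrac12 |a|^{-1/2}$, distinct because $|a|>1$, with the case $|a|<1$ handled by symmetry), and then transfers the condition to all of ${\mathcal P}^{2}$ by the embedding Lemma~\ref{l:atcemb} before invoking Proposition~\ref{p:atctc}. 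You would need to carry out an analogous concrete computation; the appeal to Lemma~\ref{l:genprin} is fine in spirit but its hypothesis (iv) is precisely the derivative estimate you have not supplied.
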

Note that in the usual iteration dynamics of a single expanding
rational map $g$,  
the Hausdorff dimension of the Julia set is strictly less than two. In
particular, Leb$_{2}(J(g))=0.$ 

For any  $a\in \CC $ with $|a|\neq 0,1$, 
$J(\langle az^{2},z^{2}\rangle )$ is equal to the closed annulus between 
$\{ w\in \CC :|w|=1\} $ and $\{ w\in \CC: |w|=|a|^{-1}\} $, thus 
$\mbox{int}(J(\langle az^{2}, z^{2}\rangle ))\neq \emptyset .$ 
However, regarding Theorem~\ref{t:ipmz2az2},  
%Corollary~\ref{c:pmz2az2},  
it is an open problem to determine,  for any other  
parameter value $(g_{1}, g_{2})\in Y_{a}$ with
$\mbox{Leb}_{2}(J(\langle g_{1},g_{2}\rangle ))>0$,  
whether $\mbox{int}(J(\langle g_{1},g_{2}\rangle ))= \emptyset $ or not. 
%(By \cite[Theorem 2.15]{sumid1}, at least we know that for each 
%$(\g _{1},\g _{2})\in Y_{a}$, $J(\langle \g _{1},\g _{2}\rangle )$ is
%connected.)  
We have some partial answers though. At least we can show that
for each $a\in \CC $ with $|a|\neq 0,1$ and 
for each neighborhood $W$ of $(az^{2},z^{2})$ in $Y_{a}$ 
there exists a non-empty open subset 
$\tilde{W}$ of $W$ such that 
for each $(\g _{1},\g _{2})\in \tilde{W}$, the Fatou set 
$F(\langle \g _{1},\g _{2}\rangle )$ has at least three connected
components, and  thus the Julia set 
$J(\langle \g _{1},\g _{2}\rangle )$ is not a closed annulus. 
If $a\in \RR $ with $a>0,a\neq 1$, then we can show that 
for each neighborhood $W$ of $(az^{2},z^{2})$ in $Y_{a}$ and for each
$n\in \NN $ with $n\geq 3$,  
there exists a non-empty open subset 
$W_{n}$ of $W$ such that for each $(\g _{1},\g_{2})\in W_{n}$, 
$F(\langle \g _{1}, \g _{2}\rangle )$ has at least $n$ connected components and 
$J(\langle \g _{1},\g _{2}\rangle )$ is not a closed annulus (see Remark~\ref{r:Jholes}).  

\ 

We now consider the expanding semigroups generated by affine maps. 
 Let $m\geq 2.$ 
For each $j=1,\ldots ,m$, 
let $g_{j}(z)=a_{j}z+b_{j}$, 
where $a_{j},b_{j}\in \CC , |a_{j}|>1.$ 
Let $G=\langle g_{1},\ldots ,g_{m}\rangle .$ 
Since $|a_{j}|>1$, $\infty \in F(G).$ 
Hence, by (\ref{bsseq}), 
$J(G)$ is a compact subset of $\CC $ which satisfies 
$J(G)=\bigcup _{j=1}^{m}g_{j}^{-1}(J(G)).$ 
Since $g_{j}^{-1}$ is a contracting similitude on $\CC $, 
it follows that 
$J(G)$ is equal to the self-similar set constructed by the 
family $\{ g_{1}^{-1},\ldots ,g_{m}^{-1}\}$ of contracting similitudes. 
For the definition of self-similar sets, see \cite{F,F0,Ki}. 
Note that the Bowen parameter $\delta (g_{1},\ldots ,g_{m})$ of
$(g_{1},\ldots ,g_{m})$  
is equal to the unique solution of 
the equation $\sum _{i=1}^{m}|a_{i}|^{-t}=1,\, t\geq 0$.  
Thus $\delta (g_{1},\ldots ,g_{m})$ is the similarity dimension of 
$\{ g_{1}^{-1},\ldots ,g_{m}^{-1}\} .$ 
Conversely, any self-similar set constructed by 
a finite family $\{ h_{1},\ldots ,h_{m}\} $ of contracting similitudes on $\CC $ is equal to 
the Julia set of the rational semigroup $\langle h_{1}^{-1},\ldots ,h_{m}^{-1}\rangle .$    
By using Proposition~\ref{p:iatctc} and some calculations of the partial derivatives 
of the conjugacy maps with respect to the parameters, 
we prove the following. 
\begin{thm}[Theorem~\ref{t:autc}]
\label{t:iautc}
Let $m\in \NN $ with $m\geq 2.$ 
For each $i=1,\ldots, m,$, 
let $g_{i}(z)=a_{i}z+b_{i}, $ where 
$a_{i}\in \CC,\, |a_{i}|>1$, $b_{i}\in \CC $. 
Let $G:=\langle g_{1},\ldots ,g_{m}\rangle .$ 
We suppose all of the following conditions hold. 
\begin{itemize}
\item[(i)]
For each $(i,j)$ with $i\neq j$ and $g_{i}^{-1}(J(G))\cap
g_{j}^{-1}(J(G))\neq \emptyset $,   
there exists a number $\alpha _{ij}\in \{ 1,\ldots ,m\} $ 
such that 
$g_{i}(g_{i}^{-1}(J(G))\cap g_{j}^{-1}(J(G)))\subset \{
\frac{-b_{\alpha _{ij}}}{a_{\alpha _{ij}}-1}\} .$ 
%J(g_{\alpha _{ij}}).$

\item[(ii)]
If $i,j,k$ are mutually distinct elements in $\{ 1,\ldots ,m\} $, 
then 
$$ g_{k}(g_{i}^{-1}(J(G))\cap g_{j}^{-1}(J(G)))\subset F(G).$$ 

\item[(iii)]
For each $(j,k)$ with $j\neq k$, we have 
$g_{k}(\frac{-b_{j}}{a_{j}-1})\in F(G).$ 
\end{itemize}
Then, there exists an open neighborhood $U$ of 
$(g_{1},\ldots ,g_{m})\in (\mbox{{\em Aut}}(\CC ))^{m}$,  
where {\em Aut}$(\CC ):= \{ az+b: a\in \CC \setminus \{ 0\} ,b\in \CC \}$,  
such that 
$\{ \g =(\g _{1},\ldots ,\g _{m})\} _{ \g \in U} $ 
is a holomorphic family in $\Exp(m)$ satisfying the analytic
transversality condition,  
the strong transversality condition and the transversality condition. 
\end{thm}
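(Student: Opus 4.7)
By Proposition~\ref{p:iatctc} it suffices to produce an open neighborhood $U$ of $g=(g_{1},\ldots,g_{m})\in(\text{Aut}(\CC))^{m}$ on which the family $\{\g\}_{\g\in U}$ satisfies the analytic transversality condition; because $\Exp(m)$ is open (Lemma~\ref{expopenlem}), shrinking $U$ keeps us inside $\Exp(m)$. For $\g=(a_{i}z+b_{i})_{i=1}^{m}$ with $|a_{i}|>1$, the contracting branches $\phi_{i}^{\g}(z)=(z-b_{i})/a_{i}$ yield a jointly holomorphic coding map
$$
\pi_{\g}(\omega)=\lim_{n\to\infty}\phi_{\omega_{1}}^{\g}\circ\cdots\circ\phi_{\omega_{n}}^{\g}(0)=-\sum_{k=1}^{\infty}\frac{b_{\omega_{k}}}{\prod_{\ell\leq k}a_{\omega_{\ell}}},
$$
surjecting onto $J(G_{\g})$, with $\pi_{\g}(i^{\infty})=p_{i}:=-b_{i}/(a_{i}-1)$ the fixed point of $\g_{i}$.

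The main combinatorial step is to enumerate, at the central parameter $g$, all pairs of infinite words $(\omega,\tau)$ with $\omega_{1}\neq\tau_{1}$ and $\pi_{g}(\omega)=\pi_{g}(\tau)$. If $z$ is such an overlap point with $\omega_{1}=i$ and $\tau_{1}=j$, hypothesis~(i) forces $g_{i}(z)=p_{\alpha_{ij}}$ and, by symmetry, $g_{j}(z)=p_{\alpha_{ji}}$. Moreover $p_{\alpha_{ij}}$ itself admits the unique coding $\alpha_{ij}^{\infty}$: a second coding beginning with a letter $k\neq\alpha_{ij}$ would place $p_{\alpha_{ij}}$ in some overlap $g_{k}^{-1}(J(G))\cap g_{l}^{-1}(J(G))$, whereas hypothesis~(iii) asserts $g_{k}(p_{\alpha_{ij}})\in F(G)$, contradicting $g_{k}(p_{\alpha_{ij}})\in J(G)$. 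Hypothesis~(ii) then prevents any third coding of $z$ starting from a letter distinct from $i$ and $j$. Thus the complete list of coinciding pairs at $g$ is the finite, explicit collection $\{(i\,\alpha_{ij}^{\infty},j\,\alpha_{ji}^{\infty}):i\neq j\text{ with nontrivial overlap}\}$.

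Given this finite list, I would verify the analytic transversality condition by differentiating
$$
F_{ij}(\g)\;:=\;\pi_{\g}(i\,\alpha_{ij}^{\infty})-\pi_{\g}(j\,\alpha_{ji}^{\infty})\;=\;\frac{p_{\alpha_{ij}}-b_{i}}{a_{i}}-\frac{p_{\alpha_{ji}}-b_{j}}{a_{j}}
$$
in some complex parameter direction and showing the result is non-zero at $g$. When $i,j,\alpha_{ij},\alpha_{ji}$ are mutually distinct, $\partial F_{ij}/\partial b_{i}=-1/a_{i}\neq 0$; coincidences among these four indices fall into a short list of subcases. The most delicate is $\alpha_{ij}=\alpha_{ji}=\alpha\notin\{i,j\}$ with $a_{i}=a_{j}$, where $\partial F_{ij}/\partial b_\alpha$ vanishes but $\partial F_{ij}/\partial a_{i}=-z/a_{i}$ is non-zero so long as the overlap point $z$ is non-zero; the remaining degenerate subcases are dispatched analogously via $\partial/\partial a_{\alpha}$ or $\partial/\partial b_{\alpha}$. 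In each configuration an explicit first partial is non-vanishing at $g$.

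The main obstacle is combining this finite case analysis with the passage from $g$ to an open neighborhood: beyond continuity of the finitely many $F_{ij}$, one has to rule out that nearby parameters create new coincidence pairs outside the central list which might spoil transversality. This is where I would exploit the uniform geometric decay $\bigl|\prod a_{\omega_{\ell}}\bigr|^{-1}$ of the tails in the series for $\pi_{\g}$, together with the partial-derivative machinery hinted at in Lemma~\ref{l:hfdiff}--Corollary~\ref{c:hfdiff3}, to obtain uniform-in-$\g$ and uniform-in-$\omega$ lower bounds on the relevant derivatives, upgrading the pointwise non-vanishing at $g$ to the full analytic transversality condition throughout a small neighborhood $U$.
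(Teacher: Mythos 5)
Your architecture is the same as the paper's: characterize, via (i)--(iii), all coincidence pairs at the central parameter as $(i\alpha_{ij}^{\infty},j\alpha_{ji}^{\infty})$ over the single overlap point, check non-vanishing of a parameter derivative of the difference function, and pass to a neighborhood and to STC/TC via Proposition~\ref{p:atctc} (the paper packages the last two steps in Lemma~\ref{l:genprin}). The combinatorial step is correct. The gap is in the derivative verification, where your case analysis is mis-organized and misses the configuration that is actually dangerous. If $\alpha_{ij}=j$ and $\alpha_{ji}=i$ (which occurs already in the paper's simplest instance, Example~\ref{ex:m2autatc}: $g_{1}(z)=2z$, $g_{2}(z)=2z-1$, where $\alpha_{12}=2$, $\alpha_{21}=1$, $a_{1}=a_{2}=2$), then $p_{\alpha_{ij}}=p_j$ and $p_{\alpha_{ji}}=p_i$ depend on $b_j,b_i$, and one finds $\partial F_{ij}/\partial b_{i}=-\tfrac{1}{a_{i}}+\tfrac{1}{a_{j}(a_{i}-1)}$, which vanishes exactly when $a_{j}(a_{i}-1)=a_{i}$; in the example both $b$-partials vanish identically, so your ``generic'' partial $-1/a_i$ is simply not available there. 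Conversely, the case you call most delicate, $\alpha_{ij}=\alpha_{ji}=\alpha\notin\{i,j\}$, is harmless: $b_i$ then occurs only in the first term, so $\partial F_{ij}/\partial b_{i}=-1/a_{i}\neq 0$ whether or not $a_i=a_j$. Your fallback $\partial F_{ij}/\partial a_{i}=-z/a_{i}$ is also not secured: nothing in (i)--(iii) prevents the overlap point from being the origin in the given coordinates (the origin has no dynamical meaning before normalization), and in the cases $\alpha_{ji}=i$ or $\alpha_{ij}=j$ that formula acquires extra terms because $p_i$ depends on $a_i$. The paper's device eliminates all of this: conjugate by a translation so that the fixed point of $g_i$ is $0$ (i.e.\ $b_i=0$) and differentiate along the single perturbation $g_i+\l z$; by Corollary~\ref{c:hfdiff2} the derivative of $\overline{h}_{\l }(i\alpha_{ij}^{\infty},z_{0})$ is $-z_{0}/a_{i}$ while that of $\overline{h}_{\l }(j\alpha_{ji}^{\infty},z_{0})$ is $0$ (any occurrence of the letter $i$ in that word sits at the fixed point $0$, where the perturbation vanishes), and $z_{0}\neq 0$ because $z_{0}=p_{i}$ would force $g_{j}(p_{i})\in J(G)$, contradicting (iii). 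One non-vanishing direction suffices for the full gradient, with no case analysis.

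A second, acknowledged but unresolved, gap is the passage from non-vanishing at the base parameter to the analytic transversality condition on a neighborhood: ATC requires the gradient to be non-zero at every coincidence $(\om ,z,\om ',z',\l )$ with $\l$ ranging over all of $U$, not just $\l =\l _{0}$. This is exactly the content of Lemma~\ref{l:genprin} (a continuity--compactness argument combining the inclusion of the coincidence set at $\l_0$ with condition (iv)), which you identify as the main obstacle but do not prove; your proposed route via uniform tail estimates for the coding series is plausible but not carried out. Finally, even completing your coordinate-wise analysis in the bad configuration above would require the extra observation that $g_i\neq g_j$ (so that, e.g., at $a_i=a_j=2$ the surviving partial $\partial F_{ij}/\partial a_i=(b_j-b_i)/4$ is non-zero), which does follow from (i) and (iii) but needs an argument you have not supplied.
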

Note that in the above theorem, 
for each $j=1,\ldots ,m$, $J(g_{j})=\{ \frac{ -b_{j}}{a_{j}-1}\} .$ 

Note also that even if we replace ``$\mbox{Aut}(\CC )$'' by 
$\mbox{Aut}(\CCI ):=\{ \frac{az+b}{cz+d}: a,b,c,d,\in \CC, ad-bc\neq 0\}$, 
similar results hold (see Remark~\ref{r:autc-cci}).
  
By using Theorem~\ref{t:iautc}, we can obtain many examples of 
families of systems of affine maps satisfying the analytic
transversality condition.  
In fact, we have the following. 
%By using Theorem~\ref{t:iautc}, we obtain the following example.
\begin{ex}[Example~\ref{ex:Sierpatc}]
\label{ex:iSierpatc}
Let $p_{1},p_{2},p_{3}\in \CC $ be such that 
$p_{1}p_{2}p_{3}$ makes an equilateral triangle.  
For each $i=1,2,3$, let $g_{i}(z)=2(z-p_{i})+p_{i}.$ 
Let $G=\langle g_{1},g_{2},g_{3}\rangle .$ Then 
$J(G)$ is equal to the Sierpinski gasket.  
It is easy to see that $(g_{1},g_{2},g_{3})$ satisfies the assumptions of Theorem~\ref{t:iautc}. 
Moreover, $\delta (g_{1},g_{2},g_{3})=\HD (J(G))=\frac{\log 3}{\log 2}<2.$ 
By Theorems~\ref{t:iautc}, \ref{t:istcmain} and \ref{t:fundfact1}, 
there exists an open neighborhood $U$ of $(g_{1},g_{2},g_{3})$ 
in $(\mbox{{\em Aut}}(\CC ))^{3}$ and a Borel subset $A$ of $U$ 
with $\HD (U\setminus A)<\HD (U)=12$ 
such that 
{\em (1)} $\{ \g =(\g _{1},\g_{2},\g _{3})\} _{\g \in U}$ is a holomorphic family in $\Exp (3)$ 
satisfying the analytic transversality condition, the strong transversality condition and 
the transversality condition, and {\em (2)} for each $\g =(\g _{1},\g _{2},\g_{3})\in A$, 
$\HD (J(\langle \g _{1},  \g _{2}, \g_{3}\rangle ))=\delta (\g _{1},\g _{2},\g_{3})<2.$  
\end{ex}
For some  other examples including the families related to the Snowflake, Pentakun, 
Hexakun, Heptakun, Octakun and so on,   
see Examples~\ref{ex:m2autatc}, \ref{ex:SFatc}, \ref{ex:Pentaatc}, \ref{ex:nkun} and Remark~\ref{r:autcci}. 
(For the definition of Snowflake, Pentakun, etc., see \cite{Ki}.)  
We remark that, up to our best knowledge, these examples
(Examples~\ref{ex:iSierpatc}, etc.) have not been explicitly dealt with
in any literature of contracting IFSs with overlaps.   

In section~\ref{Preliminaries}, we introduce and collect some
fundamental concepts, notation, and definitions. 
In section~\ref{Results}, we prove the main results of this paper. 
In section~\ref{Applications}, we describe some applications and examples. 
In section~\ref{Remarks}, we make a remark on similar results for
families of conformal contracting iterated function systems in arbitrary dimensions. 
\section{Preliminaries}
\label{Preliminaries}
In this section we introduce notation and basic definitions. 
%We also formulate our main results. 
Throughout the paper, we frequently follow the notation 
from \cite{hiroki1} and \cite{sumi2}. 
\begin{dfn}[\cite{HM,ZR}] 
A ``rational semigroup" $G$ is a semigroup generated by a family of 
non-constant 
rational maps $g:\oc \rightarrow \oc$,\ where $\oc $ denotes the 
Riemann sphere,\ with the semigroup operation being functional 
composition. 
A ``polynomial semigroup'' is a semigroup generated by a 
family of non-constant polynomial maps of $\oc .$ 
For a rational semigroup $G$, we set 
$$
F(G):=\{ z\in \oc : G \mbox{ is normal in some neighborhood of } z\} 
$$
and we call $F(G)$ the {\bf Fatou set} of $G$. Its complement,
$$
J(G):=\oc \setminus F(G)
$$ 
is called the {\bf Julia set} of $G.$ 
If $G$ is generated by a family $\{ f_{i}\} _{i}$ 
(i.e., $G=\{ f_{i_{1}}\circ \cdots \circ f_{i_{n}}: n\in \NN , \forall f_{i_{j}}\in \{ f_{i}\} \}$),\ 
then we write $G=\langle f_{1},f_{2},\ldots \rangle .$ 
For each $g\in \mbox{{\em Rat}}$, we set $F(g):= F(\langle g\rangle )$
and $J(g):=J(\langle g\rangle ).$  
\end{dfn} 
Note that for each $h\in G$, $h(F(G))\subset F(G), h^{-1}(J(G))\subset J(G).$ 
For the fundamental properties of $F(G)$ and $J(G)$, see 
\cite{HM,Stan,hiroki1}. 
For the papers dealing with dynamics of rational semigroups, 
see for example \cite{HM,ZR,Stan,  sumihyp1,sumihyp2,
hiroki1,sumi1, sumi2,sumi06, sumirandom,  suetds1, su1, subowen, 
sumid1, sumid2, sumid3,SS,sumiintcoh, sumiprepare, sumisurvey, sumicp,
sumird}, etc.  
%Investigation of the dynamics of rational semigroup was initiated by 
%Hinkkanen and Martin (\cite{HM}), who were interested 
%in the role of the dynamics of polynomial 
%semigroups while studying various one-complex-dimensional moduli spaces 
%for discrete groups, and by F. Ren's group (\cite{ZR}), 
%who studied such semigroups from the perspective of 
%random dynamical systems.  

We denote by  Rat the set of all non-constant 
rational maps on $\oc $ endowed with 
distance $\overline{d}$ defined by 
$\overline{d}(h_{1},h_{2}):=\sup _{z\in \CCI }\hat{\rho }(h_{1}(z),h_{2}(z))$,
 where $\hat{\rho }$ denotes the spherical distance on $\CCI .$ 
%the topology of uniform convergence on $\oc .$ 
For each $d\in \NN $, 
we set Rat$_{d}:=\{ g\in \mbox{Rat}: \deg (g)=d\} .$ 
Note that each $\mbox{Rat}_{d}$ is a connected component of Rat. 
Hence Rat has countably many connected 
components. In addition, each connected component 
$\mbox{Rat}_{d}$ of Rat is an open subset of Rat and 
$\mbox{Rat}_{d}$ has a structure of a finite dimensional complex manifold.   
Similarly, we denote by ${\mathcal P}$ the set of all polynomial maps
$g:\oc \rightarrow \oc $  
with $\deg (g)\geq 2$ endowed with the relative topology inherited from Rat. 
We set $\mbox{Aut}(\CC ):= \{ az+b: a,b\in \CC , a\neq 0\} $ endowed
with the relative topology inherited from Rat.  
For each $d\in \NN $ with $d\geq 2$, we set 
${\mathcal P}_{d}:=\{ g\in {\mathcal P}: \deg (g)=d\} .$ 
Note that each ${\mathcal P}_{d}$ is a connected component of 
${\mathcal P}.$ Hence 
${\mathcal P}$ has countably many connected 
components. In addition, each connected component 
${\mathcal P}_{d}$ of ${\mathcal P}$ is an open subset of ${\mathcal P}$ and 
${\mathcal P}_{d}$ has a structure of a finite dimensional complex manifold. 
Moreover, Aut$(\CC )$ is a connected, complex-two-dimensional complex manifold.  
We remark that $g_{n}\rightarrow g$ as $n\rightarrow \infty $ in
${\mathcal P}\cup \mbox{Aut}(\CC )$  
if and only if there exists a number $N\in \NN $ such that 
\begin{itemize}
\item[(i)] $\deg (g_{n})=\deg (g)$ for each $n\geq N$, and 
\item[(ii)] the coefficients of $g_{n} (n\geq N)$ converge to the
coefficients of $g$ appropriately as $n\rightarrow \infty .$ 
\end{itemize}
Thus 
$$
{\mathcal P}_{d}\cong (\CC \setminus \{ 0\} )\times \CC ^{d} \  \text{
  and } \
\mbox{Aut}(\CC )\cong (\CC \setminus \{ 0\} )\times \CC .
$$ 
For more information on the topology and complex structure of Rat and ${\mathcal P}\cup
\mbox{Aut}(\CC )$, the reader may consult \cite{Be}. 

For each $z\in \oc $, we denote by $T\oc _{z}$ the 
complex tangent space of $\oc $ at $z.$ 
 Let $\varphi :V\rightarrow \oc $ be a holomorphic map  
defined on an open set $V$ of $\oc $ and let $z\in V. $  
We denote by $D\varphi _{z}: T\oc _{z}\rightarrow T\CCI _{\varphi (z)}$ 
the derivative of $\varphi $ at $z.$ 
Moreover, we denote by 
 $\| \varphi '(z)\|  $ the norm of the derivative $D\varphi _{z}$ at $z$ 
 with respect to the spherical metric on $\CCI .$ 
\begin{dfn}
For each $m\in \Bbb{N}$, 
let $\Sigma _{m}:=\{ 1,\ldots ,m\} ^{\Bbb{N}}$ be the 
space of one-sided sequences of $m$-symbols endowed with the 
product topology. This is a compact metrizable space. 
For each $f=(f_{1},\ldots ,f_{m})\in (\mbox{{\em Rat}})^{m}$, 
we define a map   
$$
\tilde{f}:\Sg_{m}\times \oc \rightarrow \Sg_{m}\times \oc 
$$ 
by the formula 
$$
\tilde{f}(\om,z)=(\sg (\om ),\ f_{\om_{1}}(z)),
$$
where $(\om,z)\in \Sg _{m}\times \oc,\ \om=(\om_{1},\om_{2},\ldots ),$ and 
$\sg :\Sigma _{m}\rightarrow \Sg _{m}$ denotes the shift map. 
The transformation $\tilde{f} :\Sigma _{m}\times \oc \rightarrow 
\Sigma _{m}\times \oc $ is called the {\bf skew product map} associated 
with the multimap $f=(f_{1},\ldots ,f_{m})\in (\mbox{{\em Rat}})^{m} .$  
We denote by $\pi _{1}:\Sigma _{m}\times \oc \rightarrow \Sigma _{m}$ 
the projection onto $\Sg_{m}$ and by $\pi_{2}:\Sg _{m}\times\oc\rightarrow\oc$ 
the projection onto $\oc $. That is, $\pi _{1}(\om ,z)=\om $ and 
$\pi _{2}(\om ,z)=z.$  For each $n\in \Bbb{N} $ and $(\om ,z)\in 
\Sigma _{m}\times \oc $, we put 
$$
\| (\tilde{f}^{n})'(\om ,z)\| := \| (f_{\om_{n}}\circ \cdots \circ f_{\om _{1}})'(z)\| .
$$ 
We define 
$$J_{\om }(\tilde{f}):=\{ z\in \oc : 
 \{ f_{\om _{n}}\circ \cdots \circ f_{\om _{1}}\} _{n\in \Bbb{N}} \mbox{ is 
 not normal in any neighborhood of } z\} 
$$ 
for each $\om \in  \Sigma _{m}$ and we set 
$$
J(\tilde{f}):= \overline{\cup _{w\in \Sigma _{m}}\{ \om \} 
\times J_{\om }(\tilde{f}) },
$$ 
where the closure is taken with respect to the product topology on the space
$\Sigma _{m}\times \oc .$ $J(\tilde{f})$ is called the 
{\bf Julia set} of the skew product map $\tilde{f}.$ In addition, we set 
$F(\tilde{f}):=(\Sigma _{m}\times \oc )\setminus J(\tilde{f})$ and 
$
\deg(\tilde{f}):=\sum _{j=1}^{m}\deg(f_{j}).
$
We also set $\Sigma _{m}^{\ast }:= \cup _{j=1}^{\infty }
\{ 1,\ldots ,m\} ^{j}$ (disjoint union). 
For each 
$\om\in\Sg_m\cup\Sg_m^*$ let $|\om|$ be the length of $\om .$ 
For each $\om\in\Sg_m\cup\Sg_m^*$ we write $\om =(\om _{1},\om _{2},\ldots ).$ 
For each $f=(f_{1},\ldots ,f_{m})\in (\mbox{{\em Rat}})^{m}$ 
and each $\om =(\om _{1},\ldots ,\om _{n})\in \Sigma _{m}^{\ast }$,  
we put 
$$
f_{\om }:= f_{\om _{n}}\circ \cdots \circ f_{\om _{1}}.
$$ 
For every 
$n\le |\om|$ let $\om|_n=(\om_1,\om_2,\ld ,\om_n)$. If $\om\in\Sg_{m}^{*}$, we put
$$
[\om]=\{\tau\in\Sg_m:\tau|_{|\om|}=\om\}.
$$
If $\om,\tau\in\Sg_m\cup\Sg_m^*$, $\om\wedge\tau$ is the longest initial
subword common for both $\om$ and $\tau$. 
Let $\alpha $ be a fixed number with $0<\alpha <1/2.$ 
We endow the shift space $\Sg_m$
with the distance $\rho_\alpha $ defined as
$
\rho_{\alpha }(\om,\tau)=\alpha ^{|\om\wedge\tau|}
$
with the standard convention that $\alpha ^{\infty}=0$. The distance  
$\rho _\alpha $
induces the product topology on $\Sg_m$. Denote the spherical distance
on $\oc$ by $\hat\rho$ and equip the product
space $\Sg_m\times\oc$ with the distance $\rho$ defined as follows.
$$
\rho((\om,x),(\tau,y))=\max\{\rho_{\alpha }(\om,\tau),\hat\rho(x,y)\}.
$$
Of course $\rho$ induces the product topology on $\Sg_m\times\oc$.
If $\om =(\om _{1},\om _{2},\ldots ,\om _{n})\in \Sigma _{m}^{\ast }$ and 
$\tau =(\tau _{1},\tau _{2},\ldots )\in \Sigma _{m}^{\ast }\cup \Sigma _{m}$, 
we set $\om \tau := (\om _{1},\om _{2},\ldots ,\om _{n},\tau _{1},\tau
_{2},\ldots )\in \Sigma _{m}^{\ast }\cup \Sigma _{m}.$  
For a $j\in \{ 1,\ldots ,m\} $, we set 
$j^{\infty }:= (j,j,j,\ldots )\in \Sigma _{m}.$  
\end{dfn}

\begin{rem}
\label{rem1} 
By definition, the set
 $J(\tilde{f})$ 
 is compact. Furthermore,\ if we set 
 $G=\langle f_{1},\ldots ,f_{m}\rangle $, then, 
 by  \cite[Proposition 3.2]{hiroki1},\ the following hold:  
\begin{enumerate}
\item  
 $J(\tilde{f})$ is completely invariant under $\tilde{f}$;\ 
\item 
 $\tilde{f}$ is an open map on $J(\tilde{f})$;\ 
\item 
if 
 $\sharp J(G)\geq 3$ and $E(G):= \{ z\in \oc : 
 \sharp \cup _{g\in G}g^{-1}(\{ z\} )<\infty \} $ is contained in 
 $F(G)$,  then  the dynamical system $(\tilde{f},J(\tilde{f}))$ is topologically exact; 
\item  
 $J(\tilde{f})$ is equal to the closure of 
 the set of repelling periodic points of 
 $\tilde{f}$ if 
 $\sharp J(G)\geq 3$,\ where we say that a periodic point 
 $(\om ,z)$ of $\tilde{f}$ with 
 period $n$ is repelling if $\| (\tilde{f}^{n})'(\om ,z)\| >1$.    
\item $\pi _{2}(J(\tilde{f}))=J(G).$ 
\end{enumerate}
\end{rem} 
\begin{dfn}[\cite{sumi2}]
A finitely generated 
rational semigroup $G=\langle f_{1},\ldots ,f_{m}\rangle $ 
is said to be expanding provided that 
$J(G)\ne \es $ and the skew product map 
$\tilde{f}:\Sg_{m} \times \oc \rightarrow \Sg _{m}\times \oc $ 
associated with 
$f=(f_{1},\ldots ,f_{m}) $ is expanding along 
fibers of the Julia set $J(\tilde{f})$, 
meaning that there exist $\eta >1$ and 
$C\in(0,1]$ such that for all $n\ge 1$,
\begin{equation}
\label{1112505}
\inf \{ \| (\tilde{f}^{n})'(z)\|: z\in J(\tilde{f})\} \ge C\eta ^{n}.  
\end{equation}  
%where $\|\cdot \| $ denotes 
%the absolute value of the spherical derivative. 
%norm in the spherical metric on $\oc $. 
\end{dfn} 
\begin{dfn}
Let $G$ be a rational semigroup. 
We put
$$
P(G):=\overline{\cup _{g\in G}\{ \mbox{all critical values of }
g:\oc \rightarrow \oc \}} \ (\subset \oc )
$$ 
and we call $P(G)$ the {\bf postcritical set} of $G$. 
A rational semigroup $G$ is said to be {\bf hyperbolic} if 
$P(G)\subset F(G).$ 
\end{dfn}
We remark that if $\Gamma \subset \mbox{Rat}$ and $G$ is generated by $\Gamma $, 
then 
\begin{equation}
\label{eq:PG}
P(G)=\overline{\bigcup _{g\in G\cup \{ Id\} }g(\bigcup _{h\in \Gamma }\{ \mbox{all critical values of } h:\CCI \rightarrow \CCI \} )}. 
\end{equation}
Therefore for each $g\in G$, $g(P(G))\subset P(G).$ 
\begin{dfn}
Let $G$ be a polynomial semigroup. 
We set $P^{\ast }(G):= P(G)\setminus \{ \infty \}.$ 
This set is called the {\bf planar postcritical set} of $G.$ 
We say that $G$ is postcritically bounded if $P^{\ast }(G)$ is bounded in $\C .$ 
\end{dfn}
\begin{rem}
\label{exphyplem}
Let $G=\langle f_{1},\ldots ,f_{m}\rangle $ be a rational semigroup 
such that 
%for each $j=1,\ldots ,s$, $\deg (f_{j})\geq 2.$ 
there exists an element $g\in G$ with $\deg (g)\geq 2$ and 
such that each M\"{o}bius transformation in $G$ is loxodromic. 
Then,  
it was proved in \cite{sumihyp2} that 
$G$ is expanding if and only if $G$ is hyperbolic. 
\end{rem}
\begin{dfn}
For each $m\in \NN $, 
we define
$$\Exp (m):=\{ (f_{1},\ldots ,f_{m})\in (\mbox{{\em Rat}})^{m}
: \langle f_{1},\ldots ,f_{m}\rangle \mbox{ is expanding} \}.
$$  
%We also set $\Sigma _{m}^{\ast }:= \cup _{j=1}^{\infty }
%\{ 1,\ldots ,m\} ^{j}$ (disjoint union). 
%For every 
%$\om\in\Sg_m\cup\Sg_m^*$ let $|\om|$ be the length of $\om .$
%For each $f=(f_{1},\ldots ,f_{m})\in (\mbox{{\em Rat}})^{m}$ 
%and each $\om =(\om _{1},\ldots ,\om _{n})\in \Sigma _{m}^{\ast }$,  
%we put $f_{\om }:= f_{\om _{n}}\circ \cdots \circ f_{\om _{1}}.$ 
\end{dfn}
Then we have the following.
\begin{lem}[\cite{sumihyp1, suetds1}]
\label{expopenlem}
$\Exp(m)$ is an open subset of {\em (Rat}$)^{m}.$ 
\end{lem}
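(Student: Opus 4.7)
The plan is to show that if $(f_1,\ldots,f_m)\in\Exp(m)$, then the skew product expansion condition persists under small $C^0$ perturbations of each $f_j$ in Rat. The argument has three main parts: upgrading the expansion bound to an open neighborhood of $J(\tilde f)$, controlling the motion of the Julia set under perturbation, and combining the two.

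First I would promote the condition $\inf_{z\in J(\tilde f)}\|(\tilde f^n)'(z)\|\geq C\eta^n$ to a uniform bound on a neighborhood. Pick $N$ large enough that $C\eta^N\geq 2$. Since $J(\tilde f)$ is compact in $\Sigma_m\times\CCI$ and since the map $(\om,z)\mapsto\|(\tilde f^N)'(\om,z)\|$ is continuous (composed of the finitely many $f_{\om|_N}$ which act smoothly on $\CCI$), there exists an open neighborhood $V$ of $J(\tilde f)$ and a constant $\eta_0>1$ with $\|(\tilde f^N)'(\om,z)\|\geq\eta_0$ for all $(\om,z)\in V$. Moreover, for $g=(g_1,\ldots,g_m)$ sufficiently $\overline d$-close to $f$, the composed derivative $(\om,z)\mapsto\|(\tilde g^N)'(\om,z)\|$ is uniformly close to $\|(\tilde f^N)'(\om,z)\|$ on any fixed compact set, so we still have $\|(\tilde g^N)'(\om,z)\|\geq\eta_0'>1$ on $V$ for some $\eta_0'$.

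The main obstacle is the second step: showing upper semicontinuity of the fibered Julia set, i.e.\ that $J(\tilde g)\subset V$ for all $g$ in a sufficiently small neighborhood of $f$ in $(\mathrm{Rat})^m$. Since $G=\langle f_1,\ldots,f_m\rangle$ is expanding, it is hyperbolic by Remark \ref{exphyplem}, so $P(G)\subset F(G)$ at a positive spherical distance from $J(G)$. Using the expansion of $\tilde f^N$ on $V$ and the compactness of $P(G)$, one constructs a forward-invariant trapping neighborhood $W\supset J(G)$ in $\CCI$ whose complement contains a neighborhood of $P(G)$; equivalently, the inverse branches of $f_{\om|_n}$ are well-defined uniform contractions on $W$. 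This structural property (existence of a backward-invariant, postcritically disjoint neighborhood with uniform contraction of inverse branches) is open: a small perturbation $g$ still has $P(\langle g_1,\ldots,g_m\rangle)$ inside the Fatou-set-like complement of $W$ (use that the finitely many critical values of each $g_j$ move continuously, that $W$ is forward invariant up to a small error, and Montel's theorem to see points outside $W$ lie in $F(\langle g_1,\ldots,g_m\rangle)$). Hence $J(\tilde g)\subset\pi_1^{-1}(\Sigma_m)\times\overline W\subset V$ for $g$ close to $f$.

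Combining the two, for such $g$ one has $\inf_{z\in J(\tilde g)}\|(\tilde g^N)'(z)\|\geq\eta_0'>1$, and iterating this estimate along any orbit yields constants $C'\in(0,1]$, $\eta'>1$ with $\inf_{z\in J(\tilde g)}\|(\tilde g^n)'(z)\|\geq C'(\eta')^n$ for every $n\geq 1$. Also $J(\langle g_1,\ldots,g_m\rangle)=\pi_2(J(\tilde g))\neq\emptyset$ because $J(G)\neq\emptyset$ persists under perturbation (e.g.\ repelling periodic points of $\tilde f$ in $J(\tilde f)$ move continuously to repelling periodic points of $\tilde g$ by the implicit function theorem). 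Therefore $g\in\Exp(m)$, proving openness. The hardest ingredient is the postcritical-set control in step two; this is exactly the point where the proofs in \cite{sumihyp1, suetds1} place their effort, and I would simply invoke those constructions rather than reprove them.
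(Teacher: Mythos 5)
The paper does not actually prove this lemma: it is quoted verbatim from \cite{sumihyp1, suetds1}, so there is no in-paper argument to compare against, and your sketch should be measured against those references. Your outline has the right shape and is essentially the standard one (upgrade the fiberwise expansion to a uniform bound for $\tilde{f}^{N}$ on a neighborhood $V$ of the compact set $J(\tilde{f})$, using that $\overline{d}$-convergence in Rat forces eventual equality of degrees and convergence of coefficients, hence uniform convergence of derivatives; then show $J(\tilde{g})\subset V$ for $g$ close to $f$; then iterate the $N$-step estimate). Two caveats. First, your appeal to Remark~\ref{exphyplem} to get hyperbolicity is not justified for an arbitrary element of $\Exp(m)$: that remark assumes $G$ contains an element of degree at least $2$ and that every M\"obius element is loxodromic, whereas $\Exp(m)$ contains, e.g., the expanding semigroups generated entirely by degree-one affine maps used in Theorem~\ref{t:autc}. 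This is repairable rather than fatal: if every generator is M\"obius then $P(G)=\emptyset$ and the separation you want is vacuous, and in the presence of critical points one can derive $P(G)\cap J(G)=\emptyset$ directly from expandingness, since a critical value landing in a fiber Julia set would, by the complete invariance of $J(\tilde{f})$ and Lemma~\ref{l:jtfwo}, place a point with $\|\tilde{f}'\|=0$ in $J(\tilde{f})$, contradicting the defining expansion inequality at $n=1$; but as written your step is a gap in rigor. Second, the genuinely hard ingredient is exactly the upper semicontinuity of the fibered Julia set (your trapping-neighborhood construction and the claim that it persists under perturbation), and you explicitly defer this to \cite{sumihyp1, suetds1}. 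Since the paper itself disposes of the entire lemma by citation, that deferral is consistent with the paper's treatment, but it means your proposal is an outline of the cited proof rather than an independent one; if you intend it to stand alone, the stability of the backward-invariant neighborhood and the Montel argument showing points outside it lie in the perturbed Fatou set need to be carried out in detail, together with the (easy but necessary) persistence of $J\neq\emptyset$, which you correctly note follows from the persistence of a repelling periodic point of $\tilde{f}$.
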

\begin{lem}[Theorem 2.14 in \cite{sumi1}]
\label{l:jtfwo} 
For each $f=(f_{1},\ldots ,f_{m})\in \Exp(m)$, 
$J(\tilde{f})=\bigcup _{\om \in \Sigma _{m}}(\{\omega\}\times J_{\om }(\tilde{f}))$ and 
$J(\langle f_{1},\ldots ,f_{m}\rangle )=\bigcup _{\om \in \Sigma _{m}}J_{\om }(\tilde{f}).$ 
\end{lem}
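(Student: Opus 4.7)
The plan is to show that the union $\bigcup_{\om\in\Sigma_m}\{\om\}\times J_\om(\tilde f)$ is already closed, from which the first equality follows at once and the second reduces to an application of Remark~\ref{rem1}(5). The inclusion $\bigcup_{\om\in\Sigma_m}\{\om\}\times J_\om(\tilde f)\subset J(\tilde f)$ is immediate from the definition of $J(\tilde f)$ as a closure, so the whole content of the first identity is the reverse inclusion, i.e.\ closedness of the union.

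To prove closedness, I would take an arbitrary $(\om,z)\in J(\tilde f)$ and argue by contradiction. The fiber-expansion hypothesis \eqref{1112505} applied at $(\om,z)$ gives, for every $n\geq 1$,
\[
\|(f_{\om|_n})'(z)\|=\|(\tilde f^n)'(\om,z)\|\geq C\eta^n,
\]
the norm being measured in the spherical metric on $\oc$. If $z\notin J_\om(\tilde f)$, then by definition the sequence $\{f_{\om|_n}\}_{n\geq 1}$ is normal on some open neighborhood of $z$, so every subsequence admits a further subsequence converging locally uniformly in the spherical metric to some meromorphic limit $g$. A classical Weierstrass-type theorem, applied locally in charts of $\oc$, then forces the spherical derivatives along this convergent sub-subsequence to converge to the finite value $\|g'(z)\|$, which flatly contradicts the exponential lower bound $C\eta^n$. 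Hence $z\in J_\om(\tilde f)$, and the reverse inclusion is established.

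For the second equality I would simply apply the projection $\pi_2$ to the first identity, obtaining $\pi_2(J(\tilde f))=\bigcup_{\om\in\Sigma_m}J_\om(\tilde f)$, and then invoke Remark~\ref{rem1}(5) which identifies $\pi_2(J(\tilde f))$ with $J(\langle f_1,\ldots,f_m\rangle)$. The only technical wrinkle I anticipate is handling the spherical-derivative convergence when a normal-family limit takes the value $\infty$ at or near $z$; this is routinely dealt with by switching to the chart $w\mapsto 1/w$ and reapplying the Euclidean Weierstrass theorem, so I do not expect a genuine obstacle. The conceptual core of the argument is the tension between uniform fiberwise expansion along $J(\tilde f)$ and the bounded-derivative behavior inherent in normality.
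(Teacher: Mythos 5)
Your argument is correct. Note that the paper itself offers no proof of this lemma: it is imported verbatim as Theorem 2.14 of \cite{sumi1}, where it is established in the more general semi-hyperbolic setting by a more involved analysis of fiber Julia sets, so there is no internal proof to compare against; what you have produced is a legitimate self-contained shortcut that uses the expanding hypothesis in an essential way. The trivial inclusion holds since $J(\tilde{f})$ is defined as the closure of $\bigcup_{\om}\{\om\}\times J_{\om}(\tilde{f})$, and your reverse inclusion is sound: for $(\om,z)\in J(\tilde{f})$ the bound \eqref{1112505} gives $\|(f_{\om|_{n}})'(z)\|\geq C\eta^{n}\to\infty$, while normality of $\{f_{\om|_{n}}\}$ near $z$ would force these spherical derivatives to stay bounded along a locally uniformly convergent subsequence, a contradiction; your chart-switch remark disposes of limits through $\infty$ (including the identically-$\infty$ limit, where the derivatives in fact tend to $0$). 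You could compress this analytic step by quoting Marty's theorem directly --- normality is equivalent to local uniform boundedness of spherical derivatives --- rather than re-deriving the Weierstrass-type convergence, but the content is the same. The deduction of the second identity by applying $\pi_{2}$ and invoking Remark~\ref{rem1}(5) is exactly right, since that item is stated without the auxiliary hypotheses appearing in items (3) and (4). What the expansion buys you, compared with \cite{sumi1}, is precisely that the union over $\Sigma_{m}$ is closed for this soft compactness reason, with no need for the finer fiberwise continuity arguments used there.
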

\begin{dfn}
We set 
$$\Epb (m):= \{ f=(f_{1},\ldots ,f_{m})\in \Exp (m)\cap {\mathcal P}^{m}
: \langle f_{1},\ldots ,f_{m}\rangle \mbox{ is postcritically bounded} \} .$$ 
\end{dfn}
\begin{lem}[\cite{sumid3, sumiprepare}]
\label{l:epbopen}
$\Epb (m)$ is open in ${\mathcal P}^{m}.$ 
\end{lem}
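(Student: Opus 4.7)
The plan is to split the openness of $\Epb(m)$ in $\cP^{m}$ into two statements: (i) $\Exp(m)\cap \cP^{m}$ is open in $\cP^{m}$, and (ii) within $\Exp(m)\cap \cP^{m}$, the subset of $f$ for which $\langle f_{1},\ldots,f_{m}\rangle$ is postcritically bounded is open. Statement (i) is immediate from Lemma \ref{expopenlem} because $\cP$ carries the relative topology from \emph{Rat}; only (ii) requires work.

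Fix $f\in \Epb(m)$ and put $G=\langle f_{1},\ldots,f_{m}\rangle$. Every generator is a polynomial of degree $\geq 2$, so Remark \ref{exphyplem} applies and gives that $G$ is hyperbolic, i.e.\ $P(G)\subset F(G)$. Combined with postcritical boundedness, $P^{\ast}(G)$ is a compact subset of $F(G)\cap \CC$, and $f_{j}(P^{\ast}(G))\subset P^{\ast}(G)$ for every $j$.

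The core of the argument is to produce a compact \emph{trapping region} $L\subset \CC$ with $P^{\ast}(G)\subset \Int(L)$ and $f_{j}(L)\subset \Int(L)$ for every $j$. First, $F_{\infty}(G)$, the Fatou component of $G$ containing $\infty$, is the unique unbounded Fatou component of $G$: any other unbounded Fatou component $U$ would satisfy $\infty\in\overline{U}$, contradicting the fact that $F_{\infty}(G)$ is an open neighborhood of $\infty$ disjoint from $U$. Hence $F(G)\sms F_{\infty}(G)$ is a bounded open subset of $\CC$, and by compactness of $P^{\ast}(G)$ only finitely many of its connected components $U_{1},\ldots,U_{p}$ meet $P^{\ast}(G)$. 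Each $f_{j}(U_{i})$ is connected, lies in $F(G)$, and meets $f_{j}(P^{\ast}(G))\subset P^{\ast}(G)$, so $f_{j}(U_{i})\subset U_{k}$ for some $k\in\{1,\ldots,p\}$. Thus $D:=U_{1}\cup\cdots\cup U_{p}$ is bounded, open, contains $P^{\ast}(G)$, and satisfies $f_{j}(D)\subset D$ for all $j$. A careful thickening of $P^{\ast}(G)$ inside $D$, exploiting hyperbolicity of $G$ to guarantee that forward orbits under $\langle f_{1},\ldots,f_{m}\rangle$ of a sufficiently small compact neighborhood of $P^{\ast}(G)$ remain in a compact subset of $D$, yields the desired $L$; this is precisely the construction carried out in \cite{sumid3, sumiprepare}.

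Given $L$, the conclusion is routine. For $g=(g_{1},\ldots,g_{m})\in \cP^{m}$ sufficiently close to $f$, the degrees are unchanged, so each critical value of $g_{j}$ lies close to a critical value of $f_{j}$ (roots of $g_{j}'$ depend continuously on the coefficients of $g_{j}$), and hence in $\Int(L)$. Uniform continuity on the compact set $L$ gives $g_{j}(L)\subset \Int(L)$ for $g$ close enough to $f$. A straightforward induction on word length then shows that every iterate under $\langle g_{1},\ldots,g_{m}\rangle$ of every critical value of $g_{j}$ remains in $L$, whence $P^{\ast}(\langle g_{1},\ldots,g_{m}\rangle)\subset L$ is bounded, i.e.\ $g\in \Epb(m)$. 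The main obstacle is the trapping-region construction, which requires careful use of the hyperbolic-semigroup structure; the remainder is standard perturbation theory.
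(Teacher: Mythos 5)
First, note that the paper does not prove this lemma at all: it is quoted from \cite{sumid3, sumiprepare}, so there is no internal argument to compare yours with. Judged on its own, your overall strategy is the standard (and correct) one: reduce to openness of postcritical boundedness inside $\Exp(m)\cap\cP^{m}$, build a compact trapping region $L\subset\CC$ with $P^{\ast}(G)\subset\Int(L)$ and $f_{j}(L)\subset\Int(L)$, and then run the perturbation/induction argument, which you carry out correctly (degrees are locally constant in $\cP$, finite critical values vary continuously, uniform convergence on $L$ gives $g_{j}(L)\subset\Int(L)$, and induction on word length bounds $P^{\ast}(\langle g_{1},\ldots,g_{m}\rangle)$).

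The genuine gap is that the only non-routine step --- the existence of $L$ --- is not proved: you assert that ``a careful thickening of $P^{\ast}(G)$ inside $D$, exploiting hyperbolicity'' works and defer to the very references the paper cites. The intermediate claim you offer in its place (that the semigroup orbit of a sufficiently small compact neighborhood of $P^{\ast}(G)$ stays in a compact subset of $D$) is itself nontrivial and does not follow from hyperbolicity without an argument: expansion lives on $J(\tilde f)$, not on $D$, and normality alone does not rule out limit functions with values on $\partial D\subset J(G)$; moreover, even granting that claim, the set $\overline{\bigcup_{g\in G\cup\{Id\}}g(K)}$ only satisfies $f_{j}(L)\subset L$, and you still need an extra device to obtain the strict inclusion $f_{j}(L)\subset\Int(L)$ that the perturbation step requires. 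A workable route is via the hyperbolic metric on the finitely many components $U_{1},\ldots,U_{p}$: $P^{\ast}(G)\cap U_{i}$ is compact, closed hyperbolic balls $E_{i}(c)=\{z\in U_{i}: d_{U_{i}}(z,P^{\ast}(G)\cap U_{i})\le c\}$ are compact in $U_{i}$, and Schwarz--Pick applied to $f_{j}:U_{i}\to U_{k}$ gives $f_{j}(E_{i}(c))\subset E_{k}(c)$; but then one must still supply the strictness (or a two-radius/slack argument) to conclude, and this is exactly the work you have black-boxed. Two smaller points: you use $P^{\ast}(G)\subset F(G)\sms F_{\infty}(G)$ (so that the $U_{i}$ cover $P^{\ast}(G)$) without justification --- it is true, since $F_{\infty}(G)$ is contained in the basin of $\infty$ of $f_{1}$ and $P^{\ast}(G)$ is forward invariant and bounded, but it should be said; and $\infty\in F(G)$ (needed to speak of $F_{\infty}(G)$) deserves a sentence as well. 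As it stands, the proposal is a correct reduction plus a citation of the key construction, not a complete proof.
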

\begin{dfn}
\label{d:Bpara}
Let $f=(f_{1},\ldots ,f_{m})\in \Exp(m)$ and 
let $\tilde{f}:\Sigma _{m}\times \oc \rightarrow 
\Sigma _{m}\times \oc $ be the skew product map 
associated with $f=(f_{1},\ldots ,f_{m}) .$ 
For each $t\in \R $, 
let $P(t,f)$ be the topological pressure of 
the potential $\varphi(z):= -t\log \| \tilde{f}'(z)\|$ 
with respect to the map 
$\tilde{f}:J(\tilde{f})\rightarrow J(\tilde{f}).$  
(For the definition of the topological pressure, 
see \cite{pubook}.) 
We denote by $\delta (f)$ the unique zero 
of the function $\R\ni t\mapsto P(t,f)\in\R.$ Note that the existence and 
uniqueness of the zero of the function $P(t,f)$ was shown in 
\cite{sumi2}. The number $\delta (f)$ is called the 
{\bf Bowen parameter} of the multimap $f=(f_{1},\ldots ,f_{m})\in 
\Exp(m).$  

Let $u\geq 0$. 
A Borel probability measure $\mu $ on $J(\tilde{f})$ is said to be $u$-conformal for 
$\tilde{f}$ if the following holds.  
For any Borel subset $A$ of $J(\tilde{f})$ such that 
$\tilde{f}|_{A}:A\rightarrow J(\tilde{f})$ is injective, 
we have that 
$$
\mu (\tilde{f}(A))=\int _{A}\| \tilde{f}'(z)\| ^{u} d\mu (z).
$$ 
\end{dfn}

We remark that with the notation of Definition~\ref{d:Bpara}, 
there exists a unique $\delta (f)$-conformal measure for $\tilde{f}$ (see \cite{sumi2}). 
\begin{dfn}
For a subset $A$ of $\oc $, we denote by $\HD (A)$ the Hausdorff dimension of 
$A$ with respect to the spherical distance. 
For each $d\in \NN $, if $B$ is a subset of 
$\RR^ {d}$, 
we denote by $\HD (B)$ the Hausdorff dimension of $B$ with respect to 
the Euclidean distance on $\RR ^{d}.$ 
For a Riemann surface $S$, we denote by $\mbox{{\em Aut}}(S)$ the set of all 
holomorphic isomorphisms of $S.$ 
For a compact metric space $X$, we denote by $C(X)$ the Banach space of all continuous 
complex-valued functions on $X$, endowed with the supremum norm. 
\end{dfn}

A fundamental fact about the Bowen parameter is the following. 
\begin{thm}[\cite{sumi2,sumihyp2}]
\label{t:fundfact1}
For each $f=(f_{1},\ldots ,f_{m})\in \Exp (m)$, 
$\HD (J(\langle f_{1},\ldots ,f_{m}\rangle ))\leq \delta (f).$ 
\end{thm}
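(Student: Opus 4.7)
The plan is to show that for every $t>\delta(f)$ the $t$-dimensional Hausdorff measure $\mathcal H^{t}(J(G))$ vanishes, so that $\HD(J(G))\le t$; letting $t\downarrow\delta(f)$ then yields the claim. By Lemma~\ref{l:jtfwo}, $J(G)=\bigcup_{\omega\in\Sigma _{m}}J_{\omega}(\tilde f)$. The expanding condition (\ref{1112505}) applied with $n=1$ forces $\|f_{j}'(z)\|>0$ for every $z\in J(G)$ and every generator index $j$, so no critical point of any generator lies on $J(G)$; by compactness of $J(G)$, a neighbourhood of $J(G)$ is uniformly separated from the union of the critical sets of $f_{1},\dots,f_{m}$.

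First I would use this uniform separation together with the fibrewise expansion (\ref{1112505}) to run a Koebe-type distortion argument (in the spirit of \cite{sumi2}), producing a radius $r_{0}>0$ and a constant $K\ge 1$ such that for every $\tau\in\Sigma _{m}^{\ast}$ and every $(\tau\omega',z)\in J(\tilde f)$, the univalent inverse branch of $f_{\tau}$ that sends $f_{\tau}(z)$ back to $z$ is defined on the spherical ball $B(f_{\tau}(z),r_{0})$ and
$$\diam\bigl(f_{\tau}^{-1}(B(f_{\tau}(z),r_{0}))\bigr)\le K\|(f_{\tau})'(z)\|^{-1}.$$
Choose a finite cover of $J(G)$ by balls $B(y_{1},r_{0}),\dots ,B(y_{N},r_{0})$ with $y_{i}\in J(G)$. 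For each $n\in\NN$, the family
$$\mathcal C_{n}:=\bigl\{f_{\tau}^{-1}(B(y_{i},r_{0})):\tau\in\Sigma _{m}^{n},\ 1\le i\le N,\ \text{the relevant branch exists}\bigr\}$$
covers $J(G)$ — indeed, any $z\in J(G)$ belongs to some $J_{\omega}(\tilde f)$, and the forward image $f_{\omega|_{n}}(z)\in J(G)$ lies in some $B(y_{i},r_{0})$, whence $z$ sits in the corresponding pulled-back piece. By (\ref{1112505}) each member of $\mathcal C_{n}$ has diameter at most $KC^{-1}r_{0}\eta^{-n}$, which tends to $0$ uniformly in $\tau$.

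Then I would invoke the pressure. Summing the diameter bound,
$$\sum_{E\in\mathcal C_{n}}(\diam E)^{t}\le (Kr_{0})^{t}N\sum_{\tau\in\Sigma _{m}^{n}}\sup_{z\in J_{\tau}(\tilde f)}\|(f_{\tau})'(z)\|^{-t}.$$
The partition sum on the right is, up to a subexponential factor, comparable to $e^{nP(t,f)}$: this is the standard Ruelle--Bowen estimate, legal here because the potential $-t\log\|\tilde f'\|$ is H\"older continuous on $J(\tilde f)$, the derivative being uniformly bounded away from $0$ and $\infty$ on $J(\tilde f)$. By the existence and uniqueness of $\delta(f)$ as the zero of the strictly decreasing function $t\mapsto P(t,f)$ (recorded in \cite{sumi2}), $t>\delta(f)$ yields $P(t,f)<0$, whence $\sum_{E\in\mathcal C_{n}}(\diam E)^{t}\to 0$ as $n\to\infty$. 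Therefore $\mathcal H^{t}(J(G))=0$ and $\HD(J(G))\le t$, which completes the proof after sending $t\downarrow\delta(f)$.

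The hard part will be the distortion estimate underlying the second step: one must verify that compositions $f_{\tau}$ of generators (of possibly different degrees) admit single-valued inverse branches with uniform Koebe-type control on a \emph{fixed} radius $r_{0}$ around every forward image of a point of $J(\tilde f)$. The expanding hypothesis and compactness of $J(G)$ do supply the uniform separation of $J(G)$ from the generator critical sets that allows the inductive pullback to proceed; once that distortion bound is secured, the remainder is the familiar conversion of strictly negative pressure into vanishing Hausdorff measure.
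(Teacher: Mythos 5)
First, a caveat: the paper does not prove Theorem~\ref{t:fundfact1} at all — it imports it from \cite{sumi2,sumihyp2} — so your proposal can only be measured against the standard argument of those references, and your overall plan is indeed that argument: for $t>\delta (f)$ one has $P(t,f)<0$, and covers of $J(G)$ by inverse-branch pullbacks of finitely many balls of a fixed radius then have $t$-dimensional cost tending to $0$, so $\mathcal{H}^{t}(J(G))=0$ and $\HD (J(G))\le t$. However, two of your key steps have genuine gaps. The geometric input is misidentified: from (\ref{1112505}) with $n=1$ you only get $\| f_{\om _{1}}'(z)\| \ge C\eta $ at points $(\om ,z)\in J(\tilde{f})$, i.e.\ for $z\in J_{\om }(\tilde{f})$ and the \emph{first letter} $\om _{1}$; it does not follow that no critical point of a generator lies on $J(G)$ (a critical point of $f_{j}$ can lie on some $J_{\om }(\tilde{f})$ with $\om _{1}\neq j$ as long as its $f_{j}$-image avoids $J(G)$), so the separation of a neighbourhood of $J(G)$ from the generators' critical sets is unjustified. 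More importantly, even if that separation held, it is not what makes the inverse branches of a long composition $f_{\tau }$ single-valued with bounded distortion on a fixed ball $B(f_{\tau }(z),r_{0})$: the ball must avoid the critical \emph{values} of $f_{\tau }$, i.e.\ forward images, under tails of the word, of critical points met at intermediate steps, and these can approach $J(G)$ even when the critical points themselves stay far from it. The correct input is hyperbolicity: by (\ref{eq:PG}) the critical values of every $f_{\tau }$ lie in $P(G)$, and for expanding semigroups $P(G)\subset F(G)$ (Remark~\ref{exphyplem}, \cite{sumihyp2}), whence $\dist (J(G),P(G))>0$; this is the fact your "hard part" actually needs, and it must be invoked rather than derived from the separation you state.

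The second gap is in the counting. Since the generators are not injective, for a fixed word $\tau $ of length $n$ and a fixed ball $B(y_{i},r_{0})$ there are up to $\deg (f_{\tau })$ distinct inverse branches whose images meet $J(G)$, and all of them must be retained in $\mathcal{C}_{n}$ (your "the relevant branch exists", in the singular, hides this). Consequently the displayed bound $\sum _{E\in \mathcal{C}_{n}}(\diam E)^{t}\le (Kr_{0})^{t}N\sum _{\tau }\sup _{z}\| (f_{\tau })'(z)\| ^{-t}$, with one term per word, is not valid as written; and the sup-indexed partition sum is not the quantity comparable to $e^{nP(t,f)}$ — the pressure is computed from the full preimage sums $\sum _{x\in \tilde{f}^{-n}(y)}\| (\tilde{f}^{n})'(x)\| ^{-t}$, which carry the branch multiplicity. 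The repair is standard (index the cover by inverse branches and evaluate $\| (f_{\tau })'\| $ at each branch's base point, so that the cover cost is dominated by the iterated transfer-operator sum, hence by $e^{n(P(t,f)+\epsilon )}$), but as written the two halves of your estimate do not fit together. Both gaps occur exactly at the points the paper flags as the "qualitatively different extra effort" needed for non-injective maps as compared with contracting IFSs.
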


Another crucial property of the Bowen parameter is the following fact
proved as one of the main results of \cite{suetds1}. 
%\begin{lem}[\cite{sumihyp2}]
%Let $f=(f_{1},\ldots ,f_{s})\in \Exp(s).$ 
%Then $\dim _{H}(J(\langle f_{1},\ldots ,f_{s}\rangle ))\leq \delta (f).$ 
%\end{lem}

\begin{thm}[\cite{suetds1}]
\label{t:delrp}
The function $\Exp(m)\ni f\mapsto \delta (f)\in\R$ is real-analytic and plurisubharmonic.  
\end{thm}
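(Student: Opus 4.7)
The plan is to establish both assertions via the thermodynamic formalism, reading off the regularity of $\delta(f)$ from that of $P(t,f)$, which in turn will come from operator-theoretic perturbation theory on a single Ruelle transfer operator acting on a fixed Banach space. Fix $f_{0}\in\Exp(m)$. First I would install a common phase space for all nearby systems: structural stability of expanding skew products (for which the groundwork is in \cite{sumihyp1,sumi1,sumi2}) yields a neighborhood $V\subset\Exp(m)$ of $f_{0}$ and a fiberwise H\"older conjugacy $\Phi_{f}\colon J(\tilde{f_{0}})\to J(\tilde{f})$ with $\Phi_{f}\circ\tilde{f_{0}}=\tilde{f}\circ\Phi_{f}$, depending holomorphically on $f$ in the complex-manifold structure of $(\mbox{Rat})^{m}$. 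Pulling the potential $-t\log\|\tilde{f}'\|$ back through $\Phi_{f}$ transfers the problem to the transfer operator
\[
\mathcal{L}_{t,f}\phi(x)\;=\;\sum_{y\in\tilde{f_{0}}^{-1}(x)}\|\tilde{f}'(\Phi_{f}(y))\|^{-t}\,\phi(y)
\]
on a single Banach space of H\"older continuous functions on $J(\tilde{f_{0}})$. In local charts on $\oc$ the derivative $\tilde{f}'(\Phi_{f}(y))$ is holomorphic in $f$ (composition of holomorphic data; $\Phi_{f}(y)$ avoids the critical set by expansion), while the spherical-metric normalization contributes only an $\tilde{f_{0}}$-coboundary to the potential, so $\mathcal{L}_{t,f}$ depends real-analytically on $t\in\R$ and holomorphically on $f\in V$.

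By standard thermodynamic formalism (\cite{pubook,sumi2}), $\mathcal{L}_{t,f}$ is quasi-compact with a simple isolated leading eigenvalue $\lambda(t,f)=e^{P(t,f)}$, and Kato's analytic perturbation theory for isolated simple eigenvalues then extends $\lambda(t,f)$ to a holomorphic function of $(t,f)$ in complex parameters. Hence $P(t,f)$ is jointly real-analytic in real parameters, and for each real $t$ the function $f\mapsto P(t,f)=\log|\lambda(t,f)|$ is pluriharmonic in $f$. Differentiating at $t=\delta(f)$ gives
\[
\pa_{t}P(\delta(f),f)\;=\;-\!\int\log\|\tilde{f}'\|\,d\mu_{\delta(f)}\;<\;0
\]
by the expansion condition (\ref{1112505}), so the implicit function theorem applied to the defining relation $P(\delta(f),f)=0$ yields real-analyticity of $f\mapsto\delta(f)$ on $\Exp(m)$.

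For plurisubharmonicity, I would combine the variational principle $P(t,f)=\sup_{\mu}\{h_{\mu}(\tilde{f})-t\chi_{\mu}(\tilde{f})\}$ with $P(\delta(f),f)=0$ and the pushforward $(\Phi_{f})_{*}$ (which preserves entropy and conjugates invariant measures) to get
\[
\delta(f)\;=\;\sup_{\nu}\,\frac{h_{\nu}(\tilde{f_{0}})}{\chi(\nu,f)},\qquad\chi(\nu,f)\;=\;\int\log\|\tilde{f}'\circ\Phi_{f}\|\,d\nu,
\]
with $\nu$ ranging over $\tilde{f_{0}}$-invariant probability measures on $J(\tilde{f_{0}})$. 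For each such $\nu$, the integrand equals $\log|A(f,x)|$ plus an $\tilde{f_{0}}$-coboundary that integrates to zero against $\nu$, where $A$ is holomorphic in $f$; hence $f\mapsto\chi(\nu,f)$ is pluriharmonic and strictly positive. Consequently $1/\chi(\nu,f)=e^{-\log\chi(\nu,f)}$ is plurisubharmonic in $f$ (exponential of a plurisubharmonic function), and multiplying by the non-negative scalar $h_{\nu}$ preserves the class. Since the upper-semicontinuous regularization of the pointwise supremum of a family of plurisubharmonic functions is plurisubharmonic, and since $\delta$ is already continuous by the previous paragraph, $\delta$ coincides with this regularization and is therefore plurisubharmonic on $\Exp(m)$.

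The main obstacle will be constructing the fiberwise conjugacy $\Phi_{f}$ with genuine holomorphic dependence on the complex parameter $f\in(\mbox{Rat})^{m}$ in the semigroup setting: the base $\Sigma_{m}$ is only a Cantor set rather than a complex manifold, and individual fiber maps $f_{j}$ may carry critical points in $\oc\setminus J(\tilde{f})$, so the Ma\~n\'e-Sad-Sullivan-style holomorphic-motion machinery must be adapted carefully, leaning on Sumi's earlier expanding-semigroup structural stability. Once this is installed, the spectral perturbation theory and the potential-theoretic sup-of-PSH argument proceed along classical lines.
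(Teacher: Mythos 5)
Note first that the paper you are working from does not prove Theorem~\ref{t:delrp} at all: it is imported verbatim from \cite{suetds1} (with the conjugacy machinery recorded in Remark~\ref{r:setting}), so your sketch has to be measured against that source, whose broad strategy (holomorphically moving conjugacies $\overline{h}_{f}$ of $J(\tilde{f})$, transfer operators on a fixed H\"older space, implicit function theorem via $\partial_{t}P\le -\log\eta<0$) you do follow. The genuine gap is the assertion that $\mathcal{L}_{t,f}$ ``depends holomorphically on $f$'' and hence, by Kato, that $\lambda(t,f)$ is holomorphic and $f\mapsto P(t,f)=\log|\lambda(t,f)|$ is pluriharmonic. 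The weight $\|\tilde{f}'(\Phi_{f}(y))\|^{-t}$ is a \emph{modulus} (times the spherical conformal factor) of holomorphic data raised to a real power; even after discarding the coboundary it has the form $|A(f,y)|^{-t}$, which is only real-analytic, never holomorphic, in $f$. So holomorphic perturbation theory does not apply as stated, and the pluriharmonicity claim is in fact false: already for $m=1$ and $z\mapsto z^{2}+c$, Ruelle's asymptotics $\HD(J(z^{2}+c))=1+|c|^{2}/(4\log 2)+o(|c|^{2})$ are incompatible with $c\mapsto P(t,c)$ being harmonic for every $t$ (a harmonic perturbation of the pressure cannot produce the $|c|^{2}$ term in the zero). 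The standard repair --- and what \cite{suetds1} actually does --- is to complexify the weight: after the M\"obius normalization of Remark~\ref{r:conjG}, split $\log\|\cdot\|$ chartwise into $\log|f_{\om_{1}}'(\overline{h}_{f}(\om,z))|$ plus an $\tilde{f}_{\lambda_{0}}$-coboundary, write $|A|^{-t}$ as a product of a holomorphic and an anti-holomorphic factor in a doubled set of parameters (and a complexified exponent), apply analytic perturbation theory there, and restrict to the real diagonal to recover real-analyticity of $P$, then of $\delta$ by the implicit function theorem. Without this step your real-analyticity argument has a hole, though a repairable one by this standard technology.

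Your plurisubharmonicity half is essentially sound and close in spirit to the cited source: locally in $f$ one has $\delta(f)=\sup_{\nu}h_{\nu}(\tilde{f}_{\lambda_{0}})/\chi(\nu,f)$ (Bowen-type formula, with the sup attained by the equilibrium state, cf.\ Remark~\ref{r:eqst}), the spherical factor contributes only a coboundary so that $\chi(\nu,\cdot)$ is pluriharmonic and bounded below by $\log\eta>0$, a positive pluriharmonic function has plurisubharmonic reciprocal, the family $h_{\nu}/\chi(\nu,\cdot)$ is locally uniformly bounded above (entropy at most $\log\deg(\tilde{f})$), and continuity of $\delta$ (from the first part) lets you identify the supremum with its upper semicontinuous regularization. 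Two points should be made explicit: pluriharmonicity of the main term needs $f_{\om_{1}}'(\overline{h}_{f}(\om,z))\ne 0$, which follows from the expansion property (\ref{1112505}), and the chartwise decomposition must be applied with $J(G_{\lambda})$ kept in a fixed compact subset of $\CC$ and off the poles of the generators along skew-product orbits --- arranged locally in $f$ by a single M\"obius conjugation as in Remark~\ref{r:conjG}, which changes the cocycle by another coboundary and leaves $\delta$ unchanged; since plurisubharmonicity is local, working on the neighborhoods furnished by Remark~\ref{r:setting} suffices. Finally, the difficulty you flag about adapting Ma\~n\'e--Sad--Sullivan to the Cantor base is not an open issue: it is exactly what \cite[Theorem 4.9, Lemma 6.2]{suetds1} provides, as this paper records in Remark~\ref{r:setting}.
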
 
\begin{rem}[\cite{sumi2,subowen}]
\label{r:eqst}
Let $f=(f_{1},\ldots ,f_{m})\in \Exp(m).$ 
Then there exists a unique equilibrium state $\nu _{f}$ with respect to 
$\tilde{f}:J(\tilde{f})\rightarrow J(\tilde{f})$ for the potential function 
$-\delta (f)\log \| \tilde{f}'(z)\| .$ 
The $\tilde{f}$-invariant probability measure $\nu _{f}$ is equivalent to 
the $\delta (f)$-conformal measure for $\tilde{f}.$ 
We have that $\delta (f)=\frac{h_{\nu _{f}}(\tilde{f})}{\int \log \| \tilde{f}'\| d\nu _{f}}$, 
where $h_{\nu _{f}}(\tilde{f})$ denotes the metric entropy of  
$(\tilde{f},\nu _{f}).$ Moreover, $\delta (f)$ is equal to the ``critical exponent 
 of the Poincar\'{e} series'' of the multimap $f$. For the details, 
 see \cite{sumi2,subowen}. 
\end{rem}

\section{Proofs and Results}
\label{Results}
In this section we state and prove the main results of our paper. 
\begin{dfn}
\label{d:osc}
Let $f=(f_{1},\ldots ,f_{m})\in (\mbox{{\em Rat}})^{m}$
and let $G=\langle f_{1},\ldots ,f_{m}\rangle $.  
Let also $U$ be a non-empty open set in $\oc .$ 
We say that $f$ (or $G$) satisfies the open set condition (with $U$) 
if 
$$
\cup _{j=1}^{m}f_{j}^{-1}(U)\subset U \  \text{  and } \
f_{i}^{-1}(U)\cap f_{j}^{-1}(U)=\emptyset 
$$ 
for each $(i,j)$ with $i\neq j.$ 
There is also a stronger condition. Namely, we say that 
$f$ (or $G$) satisfies the separating open set condition 
(with $U$) if 
$$
\cup _{j=1}^{m}f_{j}^{-1}(U)\subset U \  \text{  and } \
f_{i}^{-1}(\overline{U})\cap f_{j}^{-1}(\overline{U})=\emptyset 
$$ 
for each $(i,j)$ with $i\neq j.$ 
%Furthermore, we say that $f$ (or $G$) satisfies the 
%strongly separating open set condition (with $U$) 
%if 
%$$
%\cup _{j=1}^{s}f_{j}^{-1}(\overline{U})\subset U \  \text{  and } \
%f_{i}^{-1}(\overline{U})\cap f_{j}^{-1}(\overline{U})=\emptyset 
%$$ 
%for each $(i,j)$ with $i\neq j.$ 
\end{dfn}
We remark that the above concept of ``open set condition'' 
(for ``backward IFSs'') is an analogue 
of the usual open set condition in the theory of IFSs. 

The following theorem is important for our investigations.
\begin{thm}[\cite{sumi2}]
\label{t:deltaHD}
Let $f=(f_{1},\ldots ,f_{m})\in \Exp(m)$. If $f$ satisfies the open set condition,
then 
$\HD (J(\langle f_{1},\ldots ,f_{m}\rangle ))=\delta (f).$
\end{thm}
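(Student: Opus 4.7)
The upper bound $\HD(J(G))\le \delta(f)$ is already provided by Theorem~\ref{t:fundfact1} without any separation hypothesis, so the task is to establish the reverse inequality $\HD(J(G))\ge \delta(f)$ under the open set condition. The natural route is through the mass distribution principle: I would construct a Borel probability measure $\mu$ on $J(G)$ and a constant $C>0$ such that
\[
\mu(B(z,r)) \le C\, r^{\delta(f)} \quad \text{for every } z\in J(G) \text{ and every sufficiently small } r>0.
\]

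The measure I would use is $\mu := (\pi_{2})_{\ast}\tilde{\mu}$, where $\tilde{\mu}$ is the $\delta(f)$-conformal measure for the skew product $\tilde{f}$ on $J(\tilde f)$ (its existence is recalled after Definition~\ref{d:Bpara} and in Remark~\ref{r:eqst}). The expanding condition \eqref{1112505} implies that for every $(\omega,z)\in J(\tilde f)$ and small $r>0$ one may choose a unique ``hyperbolic time'' $n=n(\omega,z,r)$ so that a well-defined local inverse branch $f_{\omega|_{n}}^{-1}$ carries a ball of fixed definite radius around $f_{\omega|_{n}}(z)$ to a neighbourhood of $z$ of diameter comparable to $r$. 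Because $P(G)$ is bounded away from $J(G)$ (expanding implies hyperbolic in this setting, cf.\ Remark~\ref{exphyplem} applied via the skew-product), Koebe-type distortion gives
\[
\|(f_{\omega|_{n}}^{-1})'(w)\| \asymp r, \qquad \tilde\mu([\omega|_{n}]\times B) \asymp \|(f_{\omega|_{n}}^{-1})'\|^{\delta(f)} \asymp r^{\delta(f)},
\]
by the defining conformality relation for $\tilde\mu$.

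The crucial role of the open set condition is now to control the overlap: with the open set $U$ from Definition~\ref{d:osc}, the preimage sets $\{f_{\omega|_{n}}^{-1}(U)\}$ are pairwise disjoint across distinct words of the same ``generation.'' Combined with the Koebe-distortion statement above, a standard Moran-type packing argument in the plane shows that the number of words $\omega|_{n}$ of this generation for which $f_{\omega|_{n}}^{-1}(U)$ meets $B(z,r)$ is bounded by a universal constant $N$ independent of $z$ and $r$. Summing the fibre-wise mass estimates over these at most $N$ words yields $\mu(B(z,r)) \le C\, r^{\delta(f)}$, and the mass distribution principle then delivers $\HD(J(G))\ge \delta(f)$.

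The main obstacle is the last step: in the rational semigroup setting each $f_{j}$ has critical points, so one cannot globally invert the maps. One must verify that the selected inverse branches are well defined on a fixed-radius ball (this is where expansion of $\tilde f$ together with $P(G)\subset F(G)$ enters in an essential way), and then justify the Moran packing estimate using the open set $U$. The latter requires picking $U$ so that $\overline U$ is disjoint from $P(G)$ (or at least so that Koebe distortion applies uniformly on pullbacks of $U$), after which the disjointness of the $f_{\omega|_{n}}^{-1}(U)$ translates, via comparability of diameters of $f_{\omega|_{n}}^{-1}(U)$ with $r$, into the desired finite overlap bound. Once this geometric/combinatorial bookkeeping is in place, the conformal-measure estimate plus the mass distribution principle finishes the proof.
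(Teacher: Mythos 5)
This paper states Theorem~\ref{t:deltaHD} without proof, quoting it from \cite{sumi2}, so there is no in-paper argument to compare against. Your proposal --- the upper bound from Theorem~\ref{t:fundfact1}, and for the lower bound the projection of the $\delta(f)$-conformal measure for $\tilde{f}$, univalent inverse branches with uniform Koebe distortion (available since expanding implies $P(G)\subset F(G)$), a Moran-type bounded-overlap count supplied by the open set condition, and the mass distribution principle --- is exactly the standard route taken in the cited source, so it is essentially the same approach as that proof.
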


It is interesting to ask for an  estimate of the Hausdorff dimension
of the Julia set  
of $G$ in the case when it is not known whether $G$ satisfies the open
set condition or not. The goal of our paper is to provide answers to
this question. We start with introducing  the following setting. 

\ 

\noindent {\bf Setting $(\ast )$:} 
Let $d,m\in \N $. Let $U$ be a non-empty bounded open subset of $\R ^{d}.$ 
For each $\lambda \in U$, let 
$f_{\lambda }=(f_{\lambda ,1},\ldots ,f_{\lambda ,m})\in \Exp (m)$ 
and let $G_{\l }:= \langle f_{\l ,1},\ldots ,f_{\l ,m}\rangle .$ 
We suppose that 
$\{ f_{\l }\} _{\l\in U}$ is a continuous family of $\Exp (m)$, 
i.e., the map 
$U\ni\lambda \mapsto f_{\l }\in \Exp (m)$ is continuous. Fix a parameter
$\l _{0}\in U$. Suppose that for each $\l \in U$, 
there exists a homeomorphism $h_{\l }:J(\tilde{f}_{\l _{0}})\rightarrow 
J(\tilde{f}_{\l })$ of the form $h_{\l }(\om,z)=(\om,\overline{h}_{\l }(\om,z))$ such that  
%\begin{equation}
%\begin{CD}
%J(\tilde{f}_{\l _{0}} @>{\tilde{f_{\l _{0}}}}>>J(\tilde{f}_{\l _{0}}\\ 
%@V{h_{\l }}VV
%@VV{h_{\l }}V\\ 
%J(\tilde{f}_{\l })@>>{\tilde{f}_{\l }}>J(\tilde{f}_{\l })
%\end{CD}
%\end{equation} 
$h_{\l _{0}}=Id|_{J(\tilde{f}_{\l _{0}})}$, 
$h_{\l }\circ \tilde{f}_{\l _{0}}=\tilde{f}_{\l }\circ h_{\l }$ on $J(\tilde{f}_{\l _{0}})$,  
 and such that 
%for each $(\om,z)\in J(\tilde{f}_{\l _{0}})$ 
the map $(\om,z,\l) \mapsto \overline{h}_{\l }(\om,z)\in \CCI , 
(\om,z,\l) \in J(\tilde{f} _{\l _{0}})\times U$, is continuous. 
The point $\l _{0}$ is called the base point of $\{ f_{\l }\} _{\l \in U}.$ 
Let $C>0, \eta >1$ be such that for each $n\in \NN $, 
$\inf _{(\om,z)\in J(\tilde{f}_{\l _{0}})}\| (\tilde{f}^{n}_{\l _{0}})'(\om,z)\| \geq C\eta ^{n}.$ 
For each $\l \in U$, we set $s(\l ):=\delta (f_{\l })$, where $\delta (f_{\l })$ is the Bowen parameter of 
the multimap $f_{\l }.$   

\ 

 We now will explain (in Definition~\ref{d:holofam} and Remark~\ref{r:setting}) 
that Setting $(\ast )$ is natural.
\begin{dfn}
\label{d:holofam}
Let $M$ be a finite dimensional complex manifold. 
Let $m\in \NN .$ 
For each $\l \in M$, let 
$f_{\l }=(f_{\l ,1},\ldots ,f_{\l, m})$ be an element of $\Exp(m).$ 
We say that $\{ f_{\l }\} _{\l \in M}$ is a holomorphic family in  
$\Exp (m) $ over $M$ if 
the map $\l \mapsto f_{\l }\in \Exp (m), \l \in M$, is holomorphic. 
If a holomorphic family $\{ f_{\l }\} _{\l \in M}$ in $\Exp(m)$ 
satisfies that $f_{\l }\in \Epb(m)$ for each $\l \in M$, 
then we say that $\{ f_{\l }\} _{\l \in M}$ is a holomorphic family in $\Epb(m).$ 

\end{dfn}
\begin{rem}
\label{r:setting}
Let $\{ f_{\l }\} _{\l \in M}$ be a holomorphic family in $\Exp(m)$ over a complex manifold $M$ and 
let $\l _{0}\in M.$ 
Then there exists a neighborhood $U$ of $\l _{0}$ 
such that for the holomorphic family $\{ f_{\l }\} _{\l \in U}$ over $U$, 
there exists a unique family $\{ h_{\l }\} _{\l \in U}$ of conjugacy
maps  as in Setting $(\ast ).$  
Moreover, $\l \mapsto \overline{h}_{\l }(\om ,z)$ is holomorphic. 
For the proof of this result, 
see \cite[Theorem 4.9, Lemma 6.2]{suetds1} and its proof (in fact, the
assumption `` $f$ is simple'' in  
\cite[Theorem 4.9]{suetds1} is not needed).   
\end{rem}

\begin{rem}
\label{r:conjG}
Let $\{ f_{\l }\} _{\l \in M}$ be a holomorphic family in 
$\Exp (m)$ over $M$ and let $\l _{0}\in M$. 
Since the map $\l \mapsto J(G_{\l })$ is continuous with respect to 
the Hausdorff metric (\cite[Theorem 2.3.4]{sumihyp1}, \cite[Lemma 4.1]{suetds1}), 
there exist a M\"{o}bius transformation $\alpha $, 
an open neighborhood $U$ of $\l _{0}$, 
and a compact subset $K$ of $\CC $  
such that setting $\tilde{G}_{\l }:=\{ \alpha \circ  g\circ \alpha
^{-1}: g\in G_{\l }\} $  
for each $\l \in U$, 
we have $J(\tilde{G}_{\l })\subset K $ for each $\l \in U.$ 
\end{rem}

From Lemma~\ref{l:vd} through Theorem~\ref{t:tcdimj}, 
we assume Setting $(\ast )$. 

\vspace{3mm}\noindent {\bf Notation:} 
For a $x\in \RR ^{d}$ and $r>0$, we denote by 
$B_{r}(x)$ the open $r$-ball with center $x$ with respect to the
Euclidean distance.  
For a $y\in \CC $ and $r>0$ we set $D_{r}(y):= \{ z\in \CC : |z-y|<r\} .$  
We denote by Leb$_{d}$ the $d$-dimensional Lebesgue measure on a
$d$-dimensional manifold.  
%$\RR ^{d}.$ 
%Leb$_{2}$ is also used to denote the $2$-dimensional Lebesgue measure
%on $\CCI .$  

Under Setting $(\ast )$, the following lemma is immediate. 
\begin{lem}
\label{l:vd}
Let $s,\epsilon >0$ be given with $s>\epsilon .$ 
Then there exist constants $v>0$ and $\delta >0$ such that 
for any $(\om ,z,\om ',z',\l )\in J(\tilde{f}_{\l _{0}})^{2}\times U$, 
if $\rho ((\om,z),(\om',z'))<v $ and $\l \in B_{\delta }(\l _{0})$,
then 
\begin{itemize}
\item $(\eta ^{\frac{3\epsilon }{4(s-\epsilon )}})^{-1}\leq 
\frac{\| \tilde{f}_{\l }'(\om ', z')\| }{\| \tilde{f}_{\l _{0}}'(\om ,z)\| }\leq 
 \min \{ \eta ^{\frac{3\epsilon }{4(s-\epsilon )}}, \eta ^{\frac{\epsilon }{4}}\} $ and 
\item 
$\hat{\rho }(z,\overline{h}_{\l }(\om ,z))<\frac{1}{2}v.$  

\end{itemize} 

\end{lem}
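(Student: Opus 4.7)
The plan is to deduce both conclusions from joint uniform continuity, exploiting the compactness of the Julia set $J(\tilde{f}_{\l_0})$ together with the continuity assumptions built into Setting $(\ast)$.

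First I would consider the function $\Phi \colon U \times J(\tilde{f}_{\l_0}) \to \RR$ defined by $\Phi(\l, \om, z) = \log \|\tilde{f}_\l'(\om, z)\| = \log \|f_{\l, \om_1}'(z)\|$. Because $\l \mapsto f_\l$ is continuous into $(\mathrm{Rat})^m$, the first coordinate $\om_1$ is locally constant on the shift space, and the expanding condition gives $\|\tilde{f}_{\l_0}'(\om, z)\| \geq C\eta > 0$ uniformly on $J(\tilde{f}_{\l_0})$, $\Phi$ is well-defined and continuous on a neighborhood of $\{\l_0\} \times J(\tilde{f}_{\l_0})$. By compactness of $J(\tilde{f}_{\l_0})$, $\Phi$ is therefore uniformly continuous in a neighborhood of that slice. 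Setting $\epsilon_0 := \min\bigl\{\frac{3\epsilon}{4(s-\epsilon)}, \frac{\epsilon}{4}\bigr\} \log \eta$, uniform continuity yields $v_1 > 0$ and $\delta_1 > 0$ such that $\rho((\om, z), (\om', z')) < v_1$ and $|\l - \l_0| < \delta_1$ imply $|\Phi(\l, \om', z') - \Phi(\l_0, \om, z)| < \epsilon_0$, which exponentiates to the first displayed estimate.

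Next I would apply the same template to the continuous map $\Psi \colon U \times J(\tilde{f}_{\l_0}) \to \CCI$, $\Psi(\l, \om, z) = \bar{h}_\l(\om, z)$. Here $\Psi(\l_0, \om, z) = z$ since $h_{\l_0} = \mathrm{Id}$, so joint continuity together with compactness of $J(\tilde{f}_{\l_0})$ yields, for any prescribed tolerance, a $\delta_2 > 0$ with $\hat{\rho}(z, \bar{h}_\l(\om, z)) < \frac{1}{2} v_1$ whenever $|\l - \l_0| < \delta_2$.

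Finally I would set $v := v_1$ and $\delta := \min\{\delta_1, \delta_2\}$; this produces both bounds simultaneously. There is no serious obstacle: the only point requiring a moment of care is verifying that $\Phi$ extends continuously to a neighborhood of the compact set $\{\l_0\} \times J(\tilde{f}_{\l_0})$, which is guaranteed by Lemma~\ref{expopenlem} (openness of $\Exp(m)$) and the fact that expansion on $J(\tilde{f}_{\l_0})$ bounds $\|\tilde{f}_{\l_0}'\|$ away from zero, so that $\log$ is harmless.
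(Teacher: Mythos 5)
Your argument is correct and is exactly the continuity-plus-compactness reasoning the paper relies on: the paper offers no written proof, declaring the lemma ``immediate'' under Setting $(\ast)$, and your uniform-continuity estimate for $\log\|\tilde{f}_{\l}'\|$ near the compact set $\{\l_0\}\times J(\tilde{f}_{\l_0})$ together with the vanishing of $\hat{\rho}(z,\overline{h}_{\l}(\om,z))$ at $\l=\l_0$ supplies precisely the missing details, with the order of choices ($v_1$ first, then $\delta_2$) handled correctly.
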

 
We now give the definition of the transversality condition, the
concept of our primary interests in this paper.
    
\begin{dfn}
\label{d:tc}
Let $\{ f_{\l }\} _{\l \in U}$ be as in Setting $(\ast ).$ 
We say that $\{ f_{\l }\} _{\l \in U}$ 
satisfies the transversality condition {\em (TC)} 
if there exists a constant $C_{1}>0$ such that for each $r\in
(0,\mbox{{\em diam}}(\CCI ))$ and  
for each $(\om ,z),(\om ',z')\in J(\tilde{f}_{\l _{0}})$ with 
$\om _{1}\neq \om '_{1}$, 
\begin{equation}
\label{eq:tc} 
\mbox{{\em Leb}}_{d}(\{ \l \in U: \hat{\rho }(\overline{h}_{\l }(\om
,z),\overline{h}_{\l }(\om ',z')) 
\leq r\} )\leq  C_{1}r^{2}.
\end{equation} 
\end{dfn}
\begin{rem}
\label{r:tccon}
If $\{ f_{\l }\} _{\l \in U}$ with base $\l _{0}\in U$ satisfies the
transversality condition,  
then for any $\l _{1}\in U$, 
the family $\{ f_{\l }\} _{\l\in U}$ with base $\l _{1}$ satisfies the
transversality condition  
with the same constant $C_{1}$ (we just consider the family $\{ h_{\l
}h_{\l _{1}}^{-1}\} _{\l \in U}$  
of conjugacy maps). 
\end{rem}
\begin{lem}
\label{l:tcintc}
Suppose that $\{ f_{\l }\} _{\l \in U}$ satisfies the transversality condition. 
Let $\alpha \in (0,2).$ 
Then there exists a constant $C_{2}>0$ such that 
for each $(\om ,z),(\om ',z')\in J(\tilde{f}_{\l _{0}})$ with 
$\om _{1}\neq \om '_{1}$, 
$$
\int _{U}\frac{d\l }{\hat{\rho }(\overline{h}_{\l }(\om
  ,z),\overline{h}_{\l }(\om ',z'))^{\alpha }} 
\leq C_{2}.
$$ 
\end{lem}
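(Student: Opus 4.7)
The plan is to use the standard layer-cake (Cavalieri) representation of the integral combined with the transversality bound. Write $F(\lambda) := \hat{\rho}(\overline{h}_{\lambda}(\omega,z), \overline{h}_{\lambda}(\omega',z'))$ and set $D := \mbox{diam}(\CCI)$. Since $F(\lambda) \leq D$ for every $\lambda \in U$, I expect to split the distribution-function integral at the threshold $r=D$: below $D$ apply (TC), and above $D$ use only the trivial bound $\mbox{Leb}_{d}(U)$.

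First I would apply Cavalieri's formula to get
\[
\int_{U} F(\lambda)^{-\alpha}\, d\lambda
\;=\; \int_{0}^{\infty} \mbox{Leb}_{d}\bigl(\{\lambda \in U : F(\lambda)^{-\alpha}>t\}\bigr)\, dt
\;=\; \alpha\int_{0}^{\infty} \mbox{Leb}_{d}\bigl(\{\lambda \in U : F(\lambda)<r\}\bigr)\, r^{-\alpha-1}\, dr,
\]
where the second equality follows from the substitution $r = t^{-1/\alpha}$. I then break the $r$-integral at $r = D$. For $r \geq D$, the set $\{\lambda : F(\lambda) < r\}$ is contained in $U$, so that contribution is bounded by $\mbox{Leb}_{d}(U)\cdot \alpha \int_{D}^{\infty} r^{-\alpha-1}\, dr = \mbox{Leb}_{d}(U)\, D^{-\alpha}$.

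For $0 < r < D$, the transversality condition (TC) from Definition~\ref{d:tc} gives $\mbox{Leb}_{d}(\{\lambda : F(\lambda) \leq r\}) \leq C_{1} r^{2}$, so that portion is bounded by
\[
\alpha C_{1}\int_{0}^{D} r^{2} \cdot r^{-\alpha-1}\, dr \;=\; \alpha C_{1}\int_{0}^{D} r^{1-\alpha}\, dr \;=\; \frac{\alpha C_{1} D^{2-\alpha}}{2-\alpha}.
\]
The finiteness of this integral is precisely where the hypothesis $\alpha < 2$ enters, matching the quadratic exponent in (TC). Combining the two pieces gives the uniform bound
\[
C_{2} := \frac{\alpha C_{1} D^{2-\alpha}}{2-\alpha} + \mbox{Leb}_{d}(U)\, D^{-\alpha},
\]
which depends only on $\alpha$, $C_{1}$, $U$, and $D$, and not on the chosen $(\omega,z),(\omega',z')$.

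There is no real obstacle here; the argument is a routine integrability consequence of a power-law distribution bound, and the condition $\alpha < 2$ is sharp given the exponent $2$ in (TC). The only point that requires a moment of care is making sure the integrand is measurable and that one may integrate up to $r = D$ rather than only up to $\mbox{diam}(\CCI)$ of the ambient set; both are immediate from continuity of $\overline{h}_{\lambda}$ in $\lambda$ and compactness of $\CCI$.
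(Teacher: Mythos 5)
Your proof is correct and follows essentially the same route as the paper: the layer-cake formula for $\int_U F(\l)^{-\alpha}\,d\l$, the change of variables to the distribution of $F$, the split of the $r$-integral at $\mbox{diam}(\CCI)$, the transversality bound $C_1 r^2$ on the lower range and the trivial bound $\mbox{Leb}_d(U)$ on the upper range. Even your constant $C_2=\frac{\alpha C_1 D^{2-\alpha}}{2-\alpha}+\mbox{Leb}_d(U)D^{-\alpha}$ coincides with the one obtained in the paper's computation.
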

\begin{proof}
Let $(\om ,z),(\om ',z')\in J(\tilde{f}_{\l _{0}})$ with $\om _{1}\neq
\om '_{1}.$ Then  
\begin{align*}
\ & \int _{U}\frac{d\l }{\hat{\rho }(\overline{h}_{\l }(\om
  ,z),\overline{h}_{\l }(\om ',z'))^{\alpha }}=\\  
= & \int _{0}^{\infty }\mbox{Leb}_{d}\left(\left\{ \l \in U: 
\frac{1}{\hat{\rho }(\overline{h}_{\l }(\om ,z),\overline{h}_{\l }(\om
  ', z'))^{\alpha }}\geq x\right\} \right) dx\\  
= & \alpha \int _{0}^{\infty }\mbox{Leb}_{d}(\{ \l \in U: 
\hat{\rho }(\overline{h}_{\l }(\om ,z), \overline{h}_{\l }(\om ',
z'))\leq r\} )r^{-\alpha -1} dr\\  
= & \alpha \int _{0}^{\mbox{diam}(\CCI ) }\mbox{Leb}_{d}(\{ \l \in U: 
\hat{\rho }(\overline{h}_{\l }(\om ,z), \overline{h}_{\l }(\om ',
z'))\leq r\} )r^{-\alpha -1} dr \\  
\ & \ \ + \alpha \int _{\mbox{diam}(\CCI )}^{\infty }\mbox{Leb}_{d}(\{ \l \in U: 
\hat{\rho }(\overline{h}_{\l }(\om ,z), \overline{h}_{\l }(\om ',
z'))\leq r\} )r^{-\alpha -1} dr\\  
\leq & \alpha \left( \int _{0}^{\mbox{diam}(\CCI )}C_{1}r^{2}\cdot r^{-\alpha -1}dr 
 + \mbox{Leb}_{d}(U)[\frac{1}{-\alpha }r^{-\alpha }]_{\mbox{diam}(\CCI )}^{\infty }\right) \\ 
= & \alpha \left( \frac{C_{1}}{2-\alpha }(\mbox{diam}(\CCI ))^{2-\alpha }
 +\mbox{Leb}_{d}(U)(\frac{1}{\alpha }(\mbox{diam}(\CCI ))^{-\alpha })\right). 
\end{align*}
Thus we have proved our lemma. 
\end{proof}
\begin{lem}
\label{l:djms2e}
Suppose that  $\{ f_{\l }\} _{\l \in U}$ satisfies the transversality condition. 
Then for each $\l _{1}\in U$ and for each $\epsilon >0$, there exists $\delta >0$ 
such that for {\em Leb}$_{d}$-a.e. $\l \in B_{\delta }(\l _{1})$, 
$\HD (J(G_{\l }))\geq \min \{ s(\l _{1}),2\} -\epsilon .$ 
\end{lem}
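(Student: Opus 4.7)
The plan is a standard potential--theoretic energy argument. Set $s := s(\lambda_1)$ and $\alpha := \min\{s, 2\} - \epsilon$; by Remark~\ref{r:tccon} we may take $\lambda_1$ as the base point of the family. Let $\nu$ be the $s$-conformal measure for $\tilde f_{\lambda_1}$ on $J(\tilde f_{\lambda_1})$ (Definition~\ref{d:Bpara}, Remark~\ref{r:eqst}), and for $\lambda$ in a ball $V := B_\delta(\lambda_1)\subset U$, with $\delta>0$ to be chosen, define the Borel probability measure
\[
\mu_\lambda := (\pi_2 \circ h_\lambda)_* \nu,
\]
supported on $\pi_2(J(\tilde f_\lambda)) = J(G_\lambda)$. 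By the Frostman energy--dimension theorem, it suffices to prove that the $\alpha$-Riesz energy $I_\alpha(\mu_\lambda)$ is finite for Leb$_d$-a.e.\ $\lambda \in V$, since then $\HD(J(G_\lambda)) \geq \alpha$. By Fubini this reduces to showing
\[
\int\int \left(\int_V \frac{d\lambda}{\hat\rho(\overline h_\lambda(\omega,z),\, \overline h_\lambda(\omega',z'))^\alpha}\right) d\nu(\omega,z)\,d\nu(\omega',z') < \infty.
\]

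I would decompose the $(\nu\times\nu)$-integration over the disjoint sets $A_n := \{((\omega,z),(\omega',z')) : |\omega \wedge \omega'| = n\}$, $n=0,1,2,\ldots$. The case $n=0$ is exactly Lemma~\ref{l:tcintc}, bounding the inner $\lambda$-integral by $C_2$. For $n \geq 1$ I would change variables on each cylinder $[\tau]$ with $|\tau|=n$ via the inverse branches of $f_{\lambda_1,\tau}$, using the $s$-conformality of $\nu$. The conjugacy $h_\lambda\circ \tilde f_{\lambda_1}^n = \tilde f_\lambda^n\circ h_\lambda$, read on the second coordinate, gives the key identity $\overline h_\lambda(\tau\xi, f_{\lambda_1,\tau}^{-1}(w)) = f_{\lambda,\tau}^{-1}(\overline h_\lambda(\xi, w))$ on matching inverse branches. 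Combined with Koebe-type bounded distortion for the expanding semigroup $G_{\lambda_1}$ and with Lemma~\ref{l:vd} (which, after shrinking $\delta$, replaces $\|(f_{\lambda,\tau}^{-1})'\|$ by $\|(f_{\lambda_1,\tau}^{-1})'\|$ up to a harmless factor of order $\eta^{\epsilon n/4}$), this yields the uniform comparison
\[
\hat\rho\bigl(\overline h_\lambda(\tau\xi, f_{\lambda_1,\tau}^{-1}(w)),\, \overline h_\lambda(\tau\xi', f_{\lambda_1,\tau}^{-1}(w'))\bigr) \asymp \|(f_{\lambda_1,\tau}^{-1})'\|\cdot \hat\rho(\overline h_\lambda(\xi,w), \overline h_\lambda(\xi',w')).
\]
Extracting the two conformality factors $\|(f_{\lambda_1,\tau}^{-1})'\|^{s}$ produced by the double substitution in $d\nu\,d\nu$, and applying Lemma~\ref{l:tcintc} to the resulting pairs with $\xi_1\neq\xi'_1$, one arrives at
\[
\int\int_{A_n}\int_V \hat\rho^{-\alpha}\,d\lambda\,d\nu\,d\nu \leq C_3 C_2 \sum_{|\tau|=n} \|(f_{\lambda_1,\tau}^{-1})'\|^{2s-\alpha}.
\]
Since $\alpha < s$ in both regimes $s\le 2$ and $s>2$, one has $2s - \alpha > s$, so by monotonicity of the pressure and Definition~\ref{d:Bpara}, $P(2s-\alpha, f_{\lambda_1}) < P(s, f_{\lambda_1}) = 0$. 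The standard transfer--operator estimate (equivalently, the Bowen characterization of the Bowen parameter) then yields exponential decay of $\sum_{|\tau|=n}\|(f_{\lambda_1,\tau}^{-1})'\|^{2s-\alpha}$ in $n$, so summing over $n$ we obtain $\int_V I_\alpha(\mu_\lambda)\,d\lambda < \infty$, and the lemma follows.

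I expect the main obstacle to be the combinatorial bookkeeping of the change of variables on $A_n$: the maps $f_{\lambda_1,\tau}$ are generically non-injective, so each cylinder $[\tau]$ carries several inverse branches that must be enumerated and summed over, and one must verify that neither the transversality constant $C_1$ nor the distortion constant is multiplied by a factor growing with $n$. This is exactly the point where uniform expansion, Koebe bounded distortion for expanding conformal semigroups, and the $\lambda$-uniform estimates of Lemma~\ref{l:vd} on a sufficiently small ball $V$ are indispensable; without them the geometric summation in $n$ would break down.
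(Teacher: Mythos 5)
Your overall architecture (Frostman energy estimate, Fubini in $\l$, the transversality integral of Lemma~\ref{l:tcintc}, conformality of the measure, and a geometric summation over a combinatorial decomposition) is the same as the paper's. But the decomposition you chose creates a genuine gap. You split the pair space by $|\om\wedge\om'|=n$ and then claim the uniform comparison $\hat\rho\bigl(\overline{h}_{\l }(\tau\xi,x),\overline{h}_{\l }(\tau\xi',x')\bigr)\asymp \|(f_{\l _{1},\tau}^{-1})'\|\,\hat\rho\bigl(\overline{h}_{\l }(\xi,w),\overline{h}_{\l }(\xi',w')\bigr)$ over the whole cylinder pair. Sharing the first $n$ symbols does not imply that the two orbits stay within the Koebe/injectivity radius of the expanding system along the whole block $\tau$: the points $x,x'$ may separate to a definite scale at some time $k<n$ and thereafter evolve independently (and, since the $f_{j}$ are non-injective, they need not lie in a common univalent domain of $f_{\l ,\tau}$ at all). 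For such pairs the full-block Koebe linearization with the single factor $\|(f_{\l _{1},\tau}^{-1})'\|$ is simply not available, so the inequality needed to transfer $\int_{V}\hat\rho^{-\alpha}d\l $ down to the time-$n$ pair (where $\xi_{1}\neq\xi'_{1}$ and Lemma~\ref{l:tcintc} applies) is unjustified; this is not ``combinatorial bookkeeping of branches'' but the place where the argument breaks. The paper avoids it by stopping earlier: it defines $n(\om,z,\om',z')$ as the \emph{first} time at which either the second coordinates are $v$-separated or the symbols disagree, and decomposes into the sets $E_{n}$. Up to that stopping time the two orbits are $v$-close and follow the same symbols, so Koebe distortion along the common inverse branch is legitimate (together with Lemma~\ref{l:vd} to replace $\l $-derivatives by $\l _{0}$-derivatives at the cost of $\eta^{3\epsilon n/4(s-\epsilon)}$); at the stopping time one either invokes transversality (symbol split) or nothing at all (the denominators are bounded below by a constant because the conjugacies move points by less than $v/2$), and the $\l$-integral is bounded in both cases.

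A secondary point: your summability step, producing $\sum_{|\tau|=n}\|(f_{\l _{1},\tau}^{-1})'\|^{2s-\alpha}$ and invoking $P(2s-\alpha,f_{\l _{1}})<0$, tacitly treats the skew-product conformal measure of a cylinder as a single factor $\|(f_{\tau}^{-1})'\|^{s}$ per coordinate; in this setting cylinder masses are sums over the (up to $\deg(f_{\tau})$ many) inverse branches, so the clean exponent $2s-\alpha$ per word is not available and the pressure comparison does not come out as stated. The paper sidesteps this as well: it fixes $(\om,z)$, uses conformality once in the form $\|(\tilde f_{\l _{0}}^{n})'(\om,z)\|^{s(\l _{0})}\mu(E_{n,\om,z})\asymp\mu(\tilde f_{\l _{0}}^{n}(E_{n,\om,z}))\le 1$, and gains the geometric factor $(C\eta^{n})^{-\epsilon/4}$ from estimating with exponent $s-\epsilon/4$ instead of $s$, which is what makes the series over $n$ converge. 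Repairing your proof essentially forces you into this stopping-time formulation, i.e.\ into the paper's argument.
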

\begin{proof}
We may assume that $\l _{1}=\l _{0}.$ 
Since $\l \mapsto J(G_{\l })$ is continuous 
with respect to the Hausdorff metric in the space of all non-empty compact subsets of 
$\CCI $ (\cite[Theorem 2.3.4]{sumihyp1}, \cite[Lemma 4.1]{suetds1}), 
by conjugating $G_{\l _{0}}$ with a M\"{o}bius transformation, 
we may assume without loss of generality that there exists a compact
subset $K$ of $\CC $ such that  
for each $\l $ in a small neighborhood of $\l _{0}$, $J(G_{\l })\subset K.$ 
Let $s:=\min \{ s(\l _{0}),2\} .$ 
Let $\epsilon >0$ with $\epsilon <s.$ 
For this pair $(\epsilon ,s)$, let $v,\delta >0$ be as in
Lemma~\ref{l:vd}.  
We may assume that $v$ is small enough. 
Let $\mu $ be the $s(\l _{0})$-conformal measure for $\tilde{f}_{\l _{0}}.$ 
Let $\mu _{2}:=\mu \otimes \mu $. This is a Borel probability measure on 
$J(\tilde{f}_{\l _{0}})^{2}.$ 
For each $\l \in U$, let 
$$
R(\l ):=\int _{J(\tilde{f}_{\l _{0}})^{2}}\frac{d\mu _{2}(\om ,z,\om ', z')}
{|\overline{h}_{\l }(\om ,z)-\overline{h}_{\l }(\om ',
  z')|^{s-\epsilon }}.
$$ 
By \cite[Theorem 4.13]{F}, it suffices to show that 
\begin{equation}
\label{eq:rlfin}
R(\l )<\infty \mbox{ for Leb}_{d}\mbox{-a.e. } \l \in B_{\delta }(\l _{0}). 
\end{equation}
In order to prove (\ref{eq:rlfin}), assuming $v$ is small enough, 
for each $(\om ,z,\om ',z')\in J(\tilde{f}_{\l _{0}})^{2}$ with 
$(\om ,z)\neq (\om ',z')$, 
let $n=n(\om ,z,\om ', z')\in \NN \cup \{ 0\} $ be the minimum number 
such that 
$$
\mbox{either }\ |\pi _{2}(\tilde{f}_{\l _{0}}^{n}(\om ,z)) -\pi _{2}(\tilde{f}_{\l
  _{0}}^{n}(\om ',z') ) |\geq v \ \ 
\text{{\rm or }}  \ 
\om _{n+1}\neq \om '_{n+1}.
$$ 
For each $n\in \NN \cup \{ 0\} $, 
let $E_{n}:=\{ (\om ,z,\om ', z')\in J(\tilde{f}_{\l _{0}})^{2}:  
n(\om ,z,\om ',z')=n\} .$ 
Let $H:= \{ (\om ,z,\om ',z')\in J(\tilde{f}_{\l _{0}})^{2}:  
(\om ,z)=(\om ',z')\} .$ 
%We may assume that $v$ is small enough. 
Then 
we have 
$J(\tilde{f}_{\l _{0}})^{2}=H\amalg \amalg _{n\geq 0}E_{n}$ (disjoint union).  
%Let $\overline{\pi }_{1}:J(\tilde{f}_{\l _{0}})^{2}\rightarrow J(\tilde{f}_{\l _{0}})$ be the canonical projection defined by 
%$\overline{\pi }_{1}(\om ,z,\om ',z')=(\om ,z).$ Similarly, 
%let $\overline{\pi }_{2}:J(\tilde{f}_{\l _{0}})^{2}\rightarrow J(\tilde{f}_{\l _{0}})$ be the canonical projection 
%defined by $\overline{\pi }_{2}(\om ,z,\om ',z')=(\om ',z').$ 
We obtain that  
\begin{align*}
\mu _{2}(H) & =  \int _{J(\tilde{f}_{\l _{0}})}\mu (\{ (\om ',z')\in J(\tilde{f}_{\l _{0}}):  
(\om ,z, \om ', z')\in H\} )d\mu (\om ,z)\\ 
& = \int_{J(\tilde{f}_{\l _{0}})}\mu (\{ (w,z)\} ) d\mu (\om ,z)=0. 
\end{align*}
Hence, by Lemma~\ref{l:vd} and Koebe's distortion theorem, we obtain that 
\begin{align*}
\int _{B_{\delta }(\l_{0})}R(\l )d\l & = 
\int _{B_{\delta }(\l_{0})}d\l \int _{J(\tilde{f}_{\l _{0}})^{2}}
\frac{d\mu _{2}(\om ,z,\om ',z')}{|\overline{h}_{\l }(\om ,z)-\overline{h}_{\l }(\om ',z')|^{s-\epsilon }}\\ 
& = \sum _{n=0}^{\infty }\int _{E_{n}}d\mu _{2}(\om ,z,\om ',z')
\int _{B_{\delta }(\l _{0})}\frac{d\l }{|\overline{h}_{\l }(\om ,z)-\overline{h}_{\l }(\om ',z')|^{s-\epsilon }}\\ 
& \leq \sum _{n=0}^{\infty }\int _{E_{n}}d\mu _{2}(\om ,z,\om ',z')
\int _{B_{\delta }(\l _{0})}\frac{\mbox{Const.}\| (f_{\l ,\om |_{n}})'(\overline{h}_{\l }(\om ,z))\| ^{s-\epsilon}d\l }
{|\overline{h}_{\l }(\tilde{f}_{\l _{0}}^{n}(\om ,z))-\overline{h}_{\l }(\tilde{f}_{\l _{0}}^{n}(\om ',z'))|^{s-\epsilon }}\\ 
& \leq \sum _{n=0}^{\infty }\int _{E_{n}}d\mu _{2}(\om ,z,\om ',z')
\int _{B_{\delta }(\l_{0})}\frac{\mbox{Const.}\| (f_{\l _{0},\om |_{n}})'(z)\| ^{s-\epsilon }
(\eta ^{\frac{3\epsilon }{4(s-\epsilon )}})^{(s-\epsilon )n} d\l }
{|\overline{h}_{\l }(\tilde{f}_{\l _{0}}^{n}(\om ,z))-\overline{h}_{\l }(\tilde{f}_{\l _{0}}^{n}(\om ',z'))|^{s-\epsilon }}\\ 
& = \sum _{n=0}^{\infty }\int _{E_{n}}d\mu _{2}(\om ,z,\om ',z')
\int _{B_{\delta }(\l_{0})}\frac{\mbox{Const.}\| (\tilde{f}_{\l _{0}}^{n})'(\om ,z)\| ^{s-\frac{\epsilon }{4}}
\| (\tilde{f}_{\l _{0}}^{n})'(\om ,z)\| ^{-\frac{3\epsilon }{4}}(\eta ^{\frac{3\epsilon }{4}})^{n}d\l }
{|\overline{h}_{\l }(\tilde{f}_{\l _{0}}^{n}(\om ,z))-\overline{h}_{\l }(\tilde{f}_{\l _{0}}^{n}(\om ',z'))|^{s-\epsilon }}\\ 
& \leq \sum _{n=0}^{\infty }\int _{E_{n}}d\mu _{2}(\om ,z,\om ',z')
\int _{B_{\delta }(\l _{0})} \frac{\mbox{Const.}\| (\tilde{f}_{\l _{0}}^{n})'(\om ,z)\| ^{s-\frac{\epsilon }{4}}d\l }
{|\overline{h}_{\l }(\tilde{f}_{\l _{0}}^{n}(\om ,z))-\overline{h}_{\l }(\tilde{f}_{\l _{0}}^{n}(\om ',z'))|^{s-\epsilon }},
\end{align*}
where Const. denotes a constant although all Const. above may be
mutually different, and $f_{\l _{0},\om |_{0}}=\mbox{Id}.$   
By Lemma~\ref{l:tcintc}, 
it follows that 
\begin{align*}
\int _{B_{\delta }(\l_{0})}R(\l )d\l & \leq 
\mbox{Const.} \sum _{n=0}^{\infty }\int _{E_{n}}\| (\tilde{f}_{\l
  _{0}}^{n})'(\om ,z)\| ^{s-\frac{\epsilon }{4}} 
d\mu _{2}(\om ,z, \om ',z')\\ 
& \leq \mbox{Const.} \sum _{n=0}^{\infty }(C\eta ^{n})^{-\frac{\epsilon }{4}}
\int _{E_{n}}\| (\tilde{f}_{\l _{0}}^{n})'(\om ,z)\| ^{s(\l _{0})}d\mu _{2}(\om ,z, \om ',z')\\ 
& = \mbox{Const.} \sum _{n=0}^{\infty }(C\eta ^{-\frac{\epsilon }{4}n})
\int _{J(\tilde{f}_{\l _{0}})}d\mu (\om ,z)\int _{E_{n,\om ,z}}\| (\tilde{f}_{\l _{0}}^{n})'(\om ,z)\| ^{s(\l _{0})}d\mu (\om ',z')\\ 
& = \mbox{Const.} \sum _{n=0}^{\infty }C\eta ^{-\frac{\epsilon }{4}n}
\int _{J(\tilde{f}_{\l _{0}})}(\| (\tilde{f}_{\l _{0}}^{n})'(\om ,z)\| ^{s(\l _{0})}\mu (E_{n,\om ,z}))d\mu (\om ,z), 
\end{align*} 
where $E_{n,\om ,z}:= \{ (\om ',z')\in J(\tilde{f}_{\l _{0}}): (\om
,z,\om ',z')\in E_{n}\} .$  
As, by Koebe's distortion theorem,
$\| (\tilde{f}_{\l _{0}}^{n})'(\om ,z)\| ^{s(\l _{0})}\mu (E_{n,\om
  ,z})$ is comparable with  
$\mu (\tilde{f}_{\l _{0}}^{n}(E_{n,\om ,z}))$, we 
therefore, obtain that 
\begin{align*}
\int _{B_{\delta }(\l _{0})}R(\l )d\l 
& \leq \mbox{Const.}\sum _{n=0}^{\infty }C\eta ^{-\frac{\epsilon }{4}n}<\infty .
\end{align*}
Hence, (\ref{eq:rlfin}) holds. Thus, we have proved Lemma~\ref{l:djms2e}.
\end{proof} 

\begin{lem}
\label{l:posmeas}
Suppose that $\{ f_{\l }\} _{\l \in U}$ satisfies the transversality condition. 
Suppose $s(\l _{0})>2.$ Let $\mu $ be the $s(\l _{0})$-conformal
measure on $J(\tilde{f}_{\l _{0}})$  
for $\tilde{f}_{\l _{0}}.$ 
Then there exists $\delta >0$ such that 
for $\mbox{{\em Leb}}_{d}$-a.e. $\l \in B_{\delta }(\l _{0})$, 
the Borel probability measure $(\overline{h}_{\l })_{\ast }(\mu )$ on $J(G_{\l })$ 
is absolutely continuous 
with  respect to {\em Leb}$_{2}$ with $L^{2}$ density and 
$\mbox{{\em Leb}}_{2}(J(G_{\l }))>0.$   
\end{lem}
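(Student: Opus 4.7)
The plan is to adapt the proof of Lemma~\ref{l:djms2e} by replacing the $s$-energy integral with a second-order density criterion for $L^2$-absolute continuity. By Remark~\ref{r:conjG} we may, after a M\"obius conjugacy, assume $J(G_{\l })\subset K\subset \CC $ for $\l $ in a small neighborhood of $\l _{0}$, so Leb$_2$ is the usual planar Lebesgue measure. Set $\nu _{\l }:=(\overline{h}_{\l })_{*}\mu $, a Borel probability measure supported on $J(G_{\l })$. I will use the following standard criterion: if
$$
I(\l ):=\liminf _{r\to 0^{+}}\frac{1}{r^{2}}\int \nu _{\l }(D_{r}(y))\,d\nu _{\l }(y)<\infty ,
$$
then $\nu _{\l }\ll \mbox{Leb}_{2}$ with an $L^{2}$ density. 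This is the convolution argument: with $\psi _{r}:=(\pi r^{2})^{-1}\1 _{D_{r}(0)}$, one has $\|\nu _{\l }*\psi _{r}\|_{L^{2}}^{2}\le \mbox{Const}\cdot r^{-2}\int \nu _{\l }(D_{2r}(y))\,d\nu _{\l }(y)$; if this is bounded in $r$, then $\nu _{\l }*\psi _{r}$ has a weakly convergent subsequence in $L^{2}$, so $\nu _{\l }$ has an $L^{2}$ density. In particular, $\mbox{Leb}_{2}(J(G_{\l }))>0$ since $\nu _{\l }(J(G_{\l }))=1$.

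It therefore suffices to prove $\int _{B_{\delta }(\l _{0})}I(\l )\,d\l <\infty $ for some $\delta >0$. By Fatou's lemma and Fubini this is bounded by
$$
\liminf _{r\to 0^{+}}\frac{1}{r^{2}}\int\!\!\int d\mu _{2}(\om ,z,\om ',z')\int _{B_{\delta }(\l _{0})}\1 \{|\overline{h}_{\l }(\om ,z)-\overline{h}_{\l }(\om ',z')|<r\}\,d\l .
$$
Partition $J(\tilde{f}_{\l _{0}})^{2}=H\amalg \amalg _{n\ge 0}E_{n}$ exactly as in the proof of Lemma~\ref{l:djms2e}, where $n(\om ,z,\om ',z')$ is the first time either the fibers separate by $\geq v$ or the symbols differ; $\mu _{2}(H)=0$. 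On each $E_{n}$, if fibers have separated, then by Lemma~\ref{l:vd} (taking $v$ small enough) $|\overline{h}_{\l }(\tilde{f}_{\l _{0}}^{n}(\om ,z))-\overline{h}_{\l }(\tilde{f}_{\l _{0}}^{n}(\om ',z'))|$ is bounded below by a positive constant uniformly in $\l \in B_{\delta }(\l _{0})$, so the indicator vanishes for all sufficiently small $r$. If on the other hand $\om _{n+1}\ne \om '_{n+1}$, then using the conjugacy relation $\overline{h}_{\l }\circ \tilde{f}_{\l _{0}}^{n}=f_{\l ,\om |_{n}}\circ \overline{h}_{\l }$ (recall $\om |_{n}=\om '|_{n}$ on $E_{n}$), Koebe distortion, and Lemma~\ref{l:vd} (with a small $\epsilon '>0$), the condition $|\overline{h}_{\l }(\om ,z)-\overline{h}_{\l }(\om ',z')|<r$ translates, up to a multiplicative constant, into $|\overline{h}_{\l }(\tilde{f}_{\l _{0}}^{n}(\om ,z))-\overline{h}_{\l }(\tilde{f}_{\l _{0}}^{n}(\om ',z'))|<r'$ with $r'\le \mbox{Const}\cdot r\,\|(\tilde{f}_{\l _{0}}^{n})'(\om ,z)\|\,\eta ^{\epsilon 'n}$; then the transversality condition (applicable since the first symbols of $\sigma ^{n}\om $ and $\sigma ^{n}\om '$ differ) bounds the inner integral by $C_{1}r'^{2}\le \mbox{Const}\cdot r^{2}\|(\tilde{f}_{\l _{0}}^{n})'(\om ,z)\|^{2+\epsilon ''}$ for a small $\epsilon ''>0$.

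It remains to estimate $\sum _{n}\int _{E_{n}}\|(\tilde{f}_{\l _{0}}^{n})'(\om ,z)\|^{2+\epsilon ''}\,d\mu _{2}$. Choose $\epsilon ''>0$ so small that $s(\l _{0})-2-\epsilon ''>0$, which is possible by the assumption $s(\l _{0})>2$. Factoring
$$
\|(\tilde{f}_{\l _{0}}^{n})'(\om ,z)\|^{2+\epsilon ''}=\|(\tilde{f}_{\l _{0}}^{n})'(\om ,z)\|^{s(\l _{0})}\cdot \|(\tilde{f}_{\l _{0}}^{n})'(\om ,z)\|^{-(s(\l _{0})-2-\epsilon '')}
$$
and using $\|(\tilde{f}_{\l _{0}}^{n})'(\om ,z)\|\ge C\eta ^{n}$ on the second factor, then applying Koebe distortion together with the $s(\l _{0})$-conformality of $\mu $ exactly as in Lemma~\ref{l:djms2e} to convert $\int _{E_{n,\om ,z}}\|(\tilde{f}_{\l _{0}}^{n})'\|^{s(\l _{0})}\,d\mu $ into a quantity comparable to $\mu (\tilde{f}_{\l _{0}}^{n}(E_{n,\om ,z}))\le 1$, I obtain a bound of order $(C\eta ^{n})^{-(s(\l _{0})-2-\epsilon '')}$ per term, which sums to a finite constant. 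Thus $\int _{B_{\delta }(\l _{0})}I(\l )\,d\l <\infty $, so $I(\l )<\infty $ for Leb$_{d}$-a.e.\ $\l \in B_{\delta }(\l _{0})$, completing the proof. The main delicate point is the Koebe/transversality step: one must verify that the multiplicative loss from comparing $\|(f_{\l ,\om |_{n}})'\|$ with $\|(f_{\l _{0},\om |_{n}})'\|$ can be absorbed into the $\epsilon ''$-surplus without destroying the gap $s(\l _{0})-2>0$, and that the transversality constant $C_{1}$ is independent of the base point as in Remark~\ref{r:tccon}.
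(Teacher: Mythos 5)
Your proposal is correct and takes essentially the same route as the paper: reduce the statement to the finiteness of a parameter-integrated second-order density quantity, apply Fatou and Fubini, use the decomposition $J(\tilde{f}_{\l _{0}})^{2}=H\amalg\amalg _{n}E_{n}$ from Lemma~\ref{l:djms2e}, transfer to time $n$ via the conjugacy, Koebe distortion and Lemma~\ref{l:vd}, invoke the transversality condition to get the bound $\mbox{Const}\cdot r^{2}\|(\tilde{f}_{\l _{0}}^{n})'(\om ,z)\|^{2+\epsilon ''}$, and then exploit the surplus $s(\l _{0})-2>0$ together with conformality of $\mu $ and bounded distortion to sum a geometric series. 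The only cosmetic differences are that the paper quotes Mattila's criterion (finiteness of $\int \underline{D}(\nu _{\l },x)\,d\nu _{\l }(x)$ gives an $L^{2}$ density) where you re-derive it by the convolution/weak-compactness argument, and that you treat explicitly the fiber-separation case inside $E_{n}$ which the paper handles implicitly.
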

\begin{proof}
As in the proof of Lemma~\ref{l:djms2e}, 
we may assume that there exists a compact subset $K_{0}$ of $\CC $ such that 
for each $\l \in U$, $J(G_{\l })\subset K_{0}.$ 
Take an $\epsilon >0$ with $s(\l _{0})-\epsilon >2.$ 
For this $\epsilon $ and $s=s(\l _{0})$, take a couple $(v, \delta )$
coming from Lemma~\ref{l:vd}.  
We use the notation and the arguments from the proof of Lemma~\ref{l:djms2e}. 
%For each $n\in \NN \cup \{ 0\} $, 
%let $E_{n}$ be the set in the proof of Lemma~\ref{l:djms2e}. 
For each $\l \in B_{\delta }(\l _{0})$, 
let $\nu _{\l }:= (\overline{h}_{\l })_{\ast }(\mu ).$ 
Then supp $\nu _{\l }\subset J(G_{\l }).$ 
It is enough to show that $\nu _{\l }$ is absolutely continuous with respect to 
Leb$_{2}$ with $L^{2}$ density for Leb$_{d}$-a.e. $\l \in B_{\delta }(\l _{0}).$ 
In order to do that, we set 
$$
{\mathcal I}:= \int _{B_{\delta }(t_{0})}d\l \int _{\CC
}\underline{D}(\nu _{\l },x)d\nu _{\l }(x),
$$ 
where 
$$
\underline{D}(\nu _{\l },x):=\liminf _{r\rightarrow 0}\frac{\nu _{\l
  }(B(x,r))}{r^{2}}.
$$ 
We remark that if ${\mathcal I}<\infty $, 
%then 
%for Leb$_{d}$-a.e. $\l \in B_{\delta }(\l _{0})$, we have 
%$\int \underline{D}(\nu _{\l },x)<\infty $ for Leb$_{2}$-a.e. $x\in \CC .$ 
%Hence if ${\mathcal I}<\infty $, then 
% for Leb$_{d}$-a.e. $\l \in B_{\delta }(\l _{0})$, 
% $\nu _{\l }$ is absolutely continuous with respect to Leb$_{2}.$ 
%Hence, by \cite[Proposition 2.2(c)]{F0}, 
%we obtain that if ${\mathcal I}<\infty $, 
then by \cite[p.36, p.43]{Ma}, 
for Leb$_{d}$-a.e. $\l \in B_{\delta }(\l _{0})$, 
$\nu _{\l }$ is absolutely continuous with respect to 
Leb$_{2}$ with $L^{2}$ density.  
Therefore, it is enough to show that ${\mathcal I}<\infty .$ 
In order to do that, by Fatou's lemma, we have 
\begin{equation}
\label{eq:IFatou}
{\mathcal I}\leq \liminf _{r\rightarrow 0}\int _{B_{\delta }(\l _{0})}\int _{\CC }
\frac{\nu _{\l }(B(x,r))}{r^{2}}d\nu _{\l }(x) d\l .
\end{equation}  
Moreover, we have 
%\begin{align*}
$$ \int _{\CC }\nu _{\l }(B(x,r))d\nu _{\l }(x)=
\int _{J(\tilde{f}_{\l _{0}})^{2}}
1_{ \{ (\om, z,\om ', z')\in J(\tilde{f}_{\l _{0}})^{2}: 
|\overline{h}_{\l }(\om ,z)-\overline{h}_{\l }(\om ',z')|< r\} } d\mu _{2}(\om ,z,\om ',z'),$$ 
%\end{align*}
where $1_{A}$ denotes the characteristic function with respect to the set $A$, and $\mu _{2}:= \mu \otimes \mu .$ 
Hence, by using (\ref{eq:IFatou}),  
we obtain that 
\begin{align*}
{\mathcal I} & \leq \liminf _{r\rightarrow 0}\frac{1}{r^{2}}
\int _{J(\tilde{f}_{\l _{0}})^{2}}\mbox{Leb}_{d}(\{ \l \in B_{\delta }(\l _{0}): 
|\overline{h}_{\l }(\om ,z)-\overline{h}_{\l }(\om ',z')|<r\} )d\mu _{2}(\om ,z,\om ',z')\\ 
& = \liminf _{r\rightarrow 0} \frac{1}{r^{2}}\sum _{n=0}^{\infty }
\int _{E_{n}}\mbox{Leb}_{d}( \{ \l \in B_{\delta }(\l _{0}): 
|\overline{h}_{\l }(\om ,z)-\overline{h}_{\l }(\om ',z')|<r\} )d\mu _{2}(\om ,z,\om ',z').
\end{align*}
By Koebe's distortion theorem 
%((***) we should be consisten and
%speal always about Koebe's distortion theorem or lemma - theorem
%sounds better to me) 
(we take $v$ and $\delta $ sufficiently
small), there exists a constant $K>0$ such that  
for each $n\in \NN \cup \{ 0\} $, for each $(\om ,z,\om ',z')\in E_{n}$ and for each $\l \in B_{\delta }(\l _{0})$, 
$$ |\overline{h}_{\l }(\om ,z)-\overline{h}_{\l }(\om ',z')|
\geq K \|(f_{\l ,\om |_{n}})'(z)\|^{-1}|\overline{h}_{\l }(\tilde{f}_{\l _{0}}^{n}(\om ,z))
-\overline{h}_{\l }(\tilde{f}_{\l _{0}}^{n}(\om ',z'))|.$$ 
Therefore, by Lemma~\ref{l:vd}, 
for each $n\in \NN \cup \{ 0\} $, for each $(\om ,z,\om ',z')\in E_{n}$ and for each $\l \in B_{\delta }(\l _{0})$, 
\begin{align*}
|\overline{h}_{\l }(\om ,z)-\overline{h}_{\l }(\om ',z')| 
& \geq K\| (\tilde{f}_{\l _{0}}^{n})'(\om ,z)\| ^{-1}(\eta ^{\frac{\epsilon }{4}})^{-n}
|\overline{h}_{\l }(\tilde{f}_{\l _{0}}^{n}(\om ,z))-\overline{h}_{\l }(\tilde{f}_{\l _{0}}(\om ',z'))|\\ 
& \geq K \| (\tilde{f}_{\l _{0}}^{n})'(\om ,z)\| ^{-1-\frac{\epsilon }{4}}(C\eta ^{n})^{\frac{\epsilon }{4}}
\eta ^{-\frac{\epsilon}{4}n}|\overline{h}_{\l }(\tilde{f}_{\l _{0}}^{n}(\om ,z))-\overline{h}_{\l }(\tilde{f}_{\l _{0}}(\om ',z'))|\\ 
& \geq KC^{\frac{\epsilon }{4}}\| (\tilde{f}_{\l _{0}}^{n})'(\om ,z)\| ^{-1-\frac{\epsilon }{4}}
|\overline{h}_{\l }(\tilde{f}_{\l _{0}}^{n}(\om ,z))-\overline{h}_{\l }(\tilde{f}_{\l _{0}}(\om ',z'))|.
\end{align*}
Hence, by transversality condition, for each $n$ and for each $(\om ,z,\om ',z')\in E_{n}$, 
\begin{align*}
\ & \ \ \ \  \mbox{Leb}_{d}(\{ \l \in B_{\delta }(\l _{0}): |\overline{h}_{\l }(\om ,z)-\overline{h}_{\l }(\om ',z')|<r\}) \\ 
& \leq 
\mbox{Leb}_{d}(\{ \l \in B_{\delta }(\l _{0}):
|\overline{h}_{\l }(\tilde{f}_{\l _{0}}^{n}(\om ,z))-\overline{h}_{\l }(\tilde{f}_{\l _{0}}^{n}(\om ',z'))|
\leq (KC^{\frac{\epsilon }{4}})^{-1}r\| (\tilde{f}_{\l _{0}}^{n})'(\om ,z)\| ^{1+\frac{\epsilon }{4}}\} )\\ 
& \leq \mbox{Const.} r^{2}\|(\tilde{f}_{\l _{0}}^{n})'(\om ,z) \|^{2+\frac{\epsilon }{2}}.
\end{align*}
Therefore, 
\begin{align*}
{\mathcal I} & \leq \mbox{Const.} \sum _{n=0}^{\infty }
\int _{E_{n}}\| (\tilde{f}_{\l _{0}}^{n})'(\om ,z)\|
^{2+\frac{\epsilon }{2}}d\mu _{2}(\om ,z,\om ',z')\\  
& =\mbox{Const.} \sum _{n=0}^{\infty }\int _{J(\tilde{f}_{\l _{0}})}d\mu (\om ,z)
\int _{E_{n,\om ,z}}\| (\tilde{f}_{\l _{0}}^{n})'(\om ,z)\|
^{2+\frac{\epsilon }{2}}d\mu (\om ',z'),  
\end{align*}
where $E_{n,\om ,z}=\{ (\om ',z')\in J(\tilde{f}_{\l _{0}}): (\om ,z,\om ',z')\in E_{n}\} .$ 
Thus, 
\begin{align*}
{\mathcal I} & \leq 
\mbox{Const.} \sum _{n=0}^{\infty }\int _{J(\tilde{f}_{\l _{0}})}
(\| (\tilde{f}_{\l _{0}}^{n})'(\om ,z)\| ^{s(\l _{0})} \cdot \mu (E_{n,\om ,z}))\cdot 
\| \tilde{f}_{\l _{0}}^{n}(\om ,z)\| ^{-\frac{\epsilon }{2}} d\mu (\om ,z)\\ 
& \leq  \mbox{Const.} \sum _{n=0}^{\infty }(C\eta ^{n})^{\frac{-\epsilon }{2}}<\infty .
\end{align*}
Hence we have proved Lemma~\ref{l:posmeas}. 
\end{proof}
\begin{thm}
\label{t:tcdimj}
Let $\{ f_{\l }\} _{\l \in U}$ be a family in $\Exp (m)$ satisfying Setting $(\ast ).$  
Suppose that $\{ f_{\l }\} _{\l \in U}$ satisfies the transversality condition. 
Let $\mu $ be the $s(\l _{0})$-conformal measure on $J(\tilde{f}_{\l _{0}})$ 
for $\tilde{f}_{\l _{0}}.$ 
Then we have the following. 
\begin{itemize}
\item[(1)]
$\HD (J(G_{\l }))=\min \{ s(\l ),2\} $ for 
{\em Leb}$_{d}$-a.e. $\l \in U.$ 
\item[(2)] 
For {\em Leb}$_{d}$-a.e. $\l \in \{ \l \in U: s(\l )>2\} $,  
the Borel probability measure $(\overline{h}_{\l })_{\ast }(\mu )$ on $J(G_{\l })$ 
is absolutely continuous 
with  respect to Lebesgue measure {\em Leb}$_{2}$ with $L^{2}$ density and 
$\mbox{{\em Leb}}_{2}(J(G_{\l }))>0.$
\end{itemize}
\end{thm}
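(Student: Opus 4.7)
The upper bound in (1) is immediate from Theorem~\ref{t:fundfact1}, which gives $\HD(J(G_{\l}))\le \delta(f_{\l})=s(\l)$, combined with the trivial bound $\HD(J(G_{\l}))\le 2$. Hence $\HD(J(G_{\l}))\le \min\{s(\l),2\}$ for every $\l\in U$. The content of the theorem lies in the matching lower bound in (1) and in (2); both will be obtained by upgrading the pointwise-base results Lemma~\ref{l:djms2e} and Lemma~\ref{l:posmeas} to global statements via a countable covering argument.

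The crucial input is the base-invariance of the transversality condition recorded in Remark~\ref{r:tccon}: for every $\l_1\in U$, the family $\{f_{\l}\}_{\l\in U}$ still satisfies (TC) with the same constant $C_1$ when we adopt $\l_1$ as the base and use $\{h_{\l}\circ h_{\l_1}^{-1}\}$ as the new family of conjugacies. Consequently, Lemma~\ref{l:djms2e} is available at every $\l_1\in U$: for each $n\in\N$ there exists $\delta(\l_1,n)>0$ such that for Leb$_d$-a.e.\ $\l\in B_{\delta(\l_1,n)}(\l_1)$ one has $\HD(J(G_{\l}))\ge \min\{s(\l_1),2\}-1/n$. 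Using continuity of $\l\mapsto s(\l)$ from Theorem~\ref{t:delrp}, shrink $\delta(\l_1,n)$ further so that $|s(\l)-s(\l_1)|<1/n$ on the ball as well. Separability of $U\subset\R^d$ supplies a countable subcover of $U$ by such balls for every fixed $n$, and the union $E_n$ over this subcover of the corresponding Lebesgue-null exceptional sets is itself null. For $\l\in U\setminus E_n$ lying in any ball of the $n$-th cover,
\[
\HD(J(G_{\l}))\ge \min\{s(\l_1),2\}-1/n\ge \min\{s(\l),2\}-2/n.
\]
Setting $N=\bigcup_{n}E_n$ and letting $n\to\infty$ yields $\HD(J(G_{\l}))\ge \min\{s(\l),2\}$ for every $\l\in U\setminus N$, which combined with the upper bound proves (1).

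Statement (2) is proved by the same scheme, with Lemma~\ref{l:posmeas} in place of Lemma~\ref{l:djms2e}. By continuity of $s$, the set $V:=\{\l\in U:s(\l)>2\}$ is open. For each base $\l_1\in V$, Remark~\ref{r:tccon} again allows us to switch the base to $\l_1$, and Lemma~\ref{l:posmeas} then delivers a ball $B_{\delta(\l_1)}(\l_1)\subset V$ on which, for Leb$_d$-a.e.\ $\l$, the pushforward of the $s(\l_1)$-conformal measure for $\tilde f_{\l_1}$ under the fiber conjugacy to $J(\tilde f_{\l})$ is absolutely continuous with respect to Leb$_2$ with $L^2$-density, so in particular Leb$_2(J(G_{\l}))>0$. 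A countable subcover of $V$ together with a countable union of null sets gives Leb$_2(J(G_{\l}))>0$ for Leb$_d$-a.e.\ $\l\in V$. When the base $\l_0$ fixed in the hypothesis itself lies in $V$, the choice $\l_1=\l_0$ recovers the absolute continuity assertion for $(\overline{h}_{\l})_{\ast}\mu$ verbatim.

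The main obstacle is the coordinated two-scale covering: the radius from Lemma~\ref{l:djms2e} (or Lemma~\ref{l:posmeas}) depends on the base $\l_1$ and on the chosen $\epsilon$, while the continuity-of-$s$ radius depends separately on $\l_1$ and on the desired oscillation, and the two must be combined so that a conclusion phrased in terms of $s(\l_1)$ translates cleanly into one phrased in terms of $s(\l)$. The base-invariance of (TC) encoded in Remark~\ref{r:tccon} is the one non-routine ingredient that makes both radii simultaneously available at arbitrary base points and permits the passage from the localized lemmas to the global assertions in the theorem.
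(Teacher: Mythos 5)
Your proposal is correct and follows essentially the same route as the paper: both parts are reduced to Lemma~\ref{l:djms2e} and Lemma~\ref{l:posmeas}, combined with continuity of $\l \mapsto s(\l )$ (Theorem~\ref{t:delrp}) and the base-invariance of the transversality condition (Remark~\ref{r:tccon}). The only differences are in the routine globalization step: the paper proves (1) by a Lebesgue-density-point contradiction where you use a direct Lindel\"of covering with a countable union of null sets, and for (2) the paper simply cites Lemma~\ref{l:posmeas}, while your covering argument (with the correct observation that after a base change the relevant measure is the $s(\l _{1})$-conformal one, the assertion about $(\overline{h}_{\l })_{\ast }(\mu )$ itself being recovered at the base $\l _{0}$) is, if anything, more explicit than the paper's treatment.
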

\begin{proof}
We first prove (1). 
By \cite{sumi2}, 
we have that $\HD (J(G_{\l }))\leq \min \{ s(\l ),2\} $ for each 
$\l \in U.$ 
Hence it suffices to show that 
$\HD (J(G_{\l }))\geq \min \{ s(\l ),2\} $ for 
Leb$_{d}$-a.e. $\l \in U.$ 
Suppose that this is not true. 
Then, there exists an $\epsilon >0$ and a point 
$\l_{1} \in U$ such that 
$\l _{1}$ is a Lebesgue density point of the set 
$\{ \l \in U: \HD (J(G_{\l }))<\min \{ s(\l ),2\} -\epsilon \} .$ 
Then there exists $\delta _{0}>0$ such that 
for each $\delta \in (0,\delta _{0})$, 
\begin{equation}
\label{eq:Lebdmp}
\mbox{Leb}_{d}(\{ \l \in B_{\delta }(\l _{1}): \HD (J(G_{\l }))<\min \{ s(\l ),2\} -\epsilon \}) >0. 
\end{equation}
However, by the continuity of the function $\l \mapsto s(\l )$ 
(see Theorem~\ref{t:delrp}, \cite{suetds1}), 
if $\delta $ is small enough, 
then $s(\l )<s(\l _{1})+\frac{\epsilon }{2}$ for each 
$\l \in B_{\delta }(\l _{1}).$ 
Thus, 
for all $\delta $ sufficiently small, we obtain from (\ref{eq:Lebdmp}) 
that 
$$ 
\mbox{Leb}_{d}(\{ \l \in B_{\delta }(\l _{1}): \HD (J(G_{\l
}))<\min \{ s(\l _{1}),2\} -\frac{\epsilon }{2}\} )>0.
$$ 
This however contradicts Lemma~\ref{l:djms2e}. 
Thus, we have proved assertion (1). 
Statement (2) follows from Lemma~\ref{l:posmeas}.
Hence, we have proved our theorem.  
\end{proof} 
\begin{rem}
\label{r:acim}
Let $\{ f_{\l }\} _{\l \in U}$ be as in Theorem~\ref{t:tcdimj}. 
Let $\nu $ be the equilibrium state with respect to $\tilde{f}_{\l _{0}}:J(\tilde{f}_{\l _{0}})\rightarrow J(\tilde{f}_{\l _{0}})$ 
for the potential $-\delta (f_{\l _{0}})\log \| \tilde{f}_{\l _{0}}'\| $ (see Remark~\ref{r:eqst}). 
Then for each $\l \in U$, the Borel probability measure $(\overline{h}_{\l })_{\ast }(\mu ) $ in Theorem~\ref{t:tcdimj} is equivalent to 
$(\pi _{2})_{\ast }((h_{\l })_{\ast }(\nu ))$ and $(h_{\l })_{\ast }(\nu )$ is $\tilde{f}_{\l }$-invariant. 
Thus $(\overline{h}_{\l })_{\ast }(\mu ) $ is equivalent to the projection of an $\tilde{f}_{\l }$-invariant Borel probability measure on 
$J(\tilde{f}_{\l }).$ 
\end{rem}

We now define the strong transversality condition. 
\begin{dfn}
\label{d:nrf}
For each $r>0$ and each subset $F$ of $\RR^{d}$, 
we denote by $N_{r}(F)$ the minimal number of balls of radius $r$
needed to cover the set $F.$  

Let $\nu $ be a Borel probability measure in $\RR ^{d}.$ Let $u\geq 0.$ 
Let $E$ be a Borel subset of $\RR ^{d}.$  
We say that $\nu $ is a Frostman measure on $E$ with exponent $u$ 
if $\nu (E)=1$ and if there exists a constant $C>0$ such that for each
$x\in \RR ^{d}$ and for each $r>0$,  
$\nu (B_{r}(x))\leq Cr^{u}.$   
\end{dfn}
\begin{dfn}
\label{d:stc}
Let $d\in \NN .$ Let $U$ be a non-empty bounded open subset of $\RR ^{d}.$ 
Let $\{ f_{\l }\} _{\l \in U}$ be a family as in Setting $(\ast ).$ 
We say that $\{ f_{\l }\} _{\l \in U}$ satisfies the strong transversality condition (STC) 
if there exists a constant $C_{1}'>0$ such that for each 
$r\in (0,\diam (\CCI ))$ and for each 
$(\om ,z),(\om ',z')\in J(\tilde{f}_{\l _{0}})$ with 
$\om _{1}\neq \om '_{1},$ 
\begin{equation}
\label{eq:stc}
N_{r}(\{ \l \in U: \hat{\rho }(\overline{h}_{\l }(\om ,z),\overline{h}_{\l }(\om ',z'))\leq r\} )
\leq C_{1}'r^{2-d}.
\end{equation}
\end{dfn} 
\begin{rem}
\label{r:stctc}
The strong transversality condition implies the transversality condition. 
It is however not known whether or not there exists a 
family of multimaps of rational maps (or contracting conformal IFSs)  
which satisfies the transversality condition 
but fails to satisfy the strong transversality condition.  
\end{rem} 
In the same way as Lemma~\ref{l:tcintc} we can prove the following.
\begin{lem}
\label{l:stcintc}
Let $d\in \NN .$ Let $U$ be a non-empty bounded open subset of $\RR ^{d}.$ 
Let $\{ f_{\l }\} _{\l \in U}$ be a family as in Setting $(\ast ).$ 
Suppose that $\{ f_{\l }\} _{\l \in U}$ satisfies the 
strong transversality condition. 
Let $\nu $ be a Frostman measure in $\RR ^{d}$ with exponent $u.$ 
Suppose $u-d+2>0.$  
Then for each $\alpha \in (0,u-d+2)$ 
there exists a constant $C_{2}'>0$ such that 
for each $(\om ,z,\om ',z')\in J(\tilde{f}_{\l _{0}})$ with $\om _{1}\neq \om '_{1}$,  
$$\int _{U}\frac{d\nu (\l)}{\hat{\rho }(\overline{h}_{\l }(\om ,z),\overline{h}_{\l }(\om ',z'))^{\alpha }}
\leq C_{2}'.$$ 
\end{lem}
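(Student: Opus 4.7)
The plan is to mimic the proof of Lemma~\ref{l:tcintc}, replacing Lebesgue measure by the Frostman measure $\nu $ and the transversality condition by the strong transversality condition. The only genuinely new ingredient is the passage from a covering-number bound (which is what STC supplies) to a $\nu $-measure bound on sublevel sets of the map $\l \mapsto \hat{\rho }(\overline{h}_{\l }(\om ,z), \overline{h}_{\l }(\om ',z'))$.

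First, for fixed $(\om ,z),(\om ',z')\in J(\tilde{f}_{\l _{0}})$ with $\om _{1}\neq \om '_{1}$ and $r>0$, set
\[
A_{r}:=\{ \l \in U: \hat{\rho }(\overline{h}_{\l }(\om ,z),\overline{h}_{\l }(\om ',z'))\leq r\} .
\]
By STC, $N_{r}(A_{r})\leq C_{1}'r^{2-d}$. Covering $A_{r}$ by $N_{r}(A_{r})$ Euclidean balls of radius $r$ and using the Frostman hypothesis $\nu (B_{r}(x))\leq Cr^{u}$, one immediately obtains
\[
\nu (A_{r})\leq CC_{1}'\, r^{u-d+2}.
\]
This is the single new estimate; note that $u-d+2>0$ by assumption, so the exponent is positive.

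Next, proceed exactly as in Lemma~\ref{l:tcintc} via the layer-cake formula. Writing $F(\l ):= \hat{\rho }(\overline{h}_{\l }(\om ,z),\overline{h}_{\l }(\om ',z'))$,
\[
\int _{U}\frac{d\nu (\l )}{F(\l )^{\alpha }}=\alpha \int _{0}^{\infty }\nu (A_{r})\, r^{-\alpha -1}\, dr,
\]
and split the outer integral at $r=\diam (\CCI )$. On the interval $(0,\diam (\CCI )]$, apply the bound from the previous step to control the integrand by $\text{Const.}\cdot r^{u-d+1-\alpha }$, which is integrable at $0$ precisely when $\alpha <u-d+2$ (the hypothesis). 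On $(\diam (\CCI ),\infty )$, use the trivial bound $\nu (A_{r})\leq 1$ so the integrand is $r^{-\alpha -1}$, integrable at infinity for any $\alpha >0$. Summing the two contributions produces a constant $C_{2}'$ depending only on $\alpha $, $C_{1}'$, $C$, $d$, $u$, and $\diam (\CCI )$, but independent of the chosen pair $(\om ,z),(\om ',z')$.

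The only real obstacle is locating the correct exponent range and recognizing the Frostman-times-covering-number identity; both are conceptually transparent, so the lemma should follow without further technical input. The argument is otherwise structurally identical to the proof of Lemma~\ref{l:tcintc}, with the Lebesgue-measure bound $C_{1}r^{2}$ simply replaced by $CC_{1}'r^{u-d+2}$.
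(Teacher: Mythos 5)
Your proof is correct and follows exactly the route the paper intends: the paper only remarks that the lemma is proved ``in the same way as Lemma~\ref{l:tcintc}'', and your single new estimate $\nu (A_{r})\leq CC_{1}'r^{u-d+2}$ (covering $A_{r}$ by $N_{r}(A_{r})\leq C_{1}'r^{2-d}$ balls and applying the Frostman bound to each) is precisely the substitution for the Lebesgue bound $C_{1}r^{2}$ that makes that remark precise, after which the layer-cake computation and the split at $r=\diam (\CCI )$ go through verbatim.
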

\begin{lem}
\label{l:sdjms}
Let $d\in \NN .$ Let $U$ be a non-empty bounded open subset of $\RR ^{d}.$ 
Let $\{ f_{\l }\} _{\l \in U}$ be a family as in Setting $(\ast ).$ 
Suppose that $\{ f_{\l }\} _{\l \in U}$ satisfies the 
strong transversality condition. 
Then for each $\l _{1}\in U$,  for each $\epsilon >0$, and for each $u\geq 0$,  
there exists $\delta >0$ such that 
if $\nu $ is a Frostman measure on $B_{\delta }(\l _{1})$ with exponent $u$, 
then 
$$\HD (J(G_{\l }))\geq \min \{ s(\l _{1}),u-d+2\} -\epsilon $$ 
for $\nu $-a.e. $\l \in B_{\delta }(\l _{1}).$  
\end{lem}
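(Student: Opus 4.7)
The plan is to mimic closely the proof of Lemma~\ref{l:djms2e}, replacing the transversality condition by the strong transversality condition and integrating against the Frostman measure $\nu$ instead of Lebesgue measure. First I would reduce to the case $\l_1 = \l_0$ and (after a M\"obius conjugacy using the continuity of $\l \mapsto J(G_\l)$) to the situation in which $J(G_\l) \subset K$ for a fixed compact set $K \subset \CC$, so that the spherical and Euclidean metrics are comparable. If $u - d + 2 \leq 0$ the conclusion is vacuous, so I may assume $u - d + 2 > 0$. Set $s := \min\{s(\l_0), u-d+2\}$, pick $\epsilon \in (0,s)$, and let $v, \delta > 0$ be supplied by Lemma~\ref{l:vd} for this pair $(s,\epsilon)$, shrinking $\delta$ if needed.

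Let $\mu$ be the $s(\l_0)$-conformal measure on $J(\tilde{f}_{\l_0})$ for $\tilde{f}_{\l_0}$, and put $\mu_2 := \mu \otimes \mu$. For $\l \in B_\delta(\l_0)$ define the energy
\[
R(\l) := \int_{J(\tilde{f}_{\l_0})^2} \frac{d\mu_2(\om,z,\om',z')}{|\overline{h}_\l(\om,z) - \overline{h}_\l(\om',z')|^{s-\epsilon}},
\]
which is the $(s-\epsilon)$-energy of the pushforward $(\overline{h}_\l)_*\mu$ supported in $J(G_\l)$. By the standard potential theoretic characterization of Hausdorff dimension (\cite[Theorem 4.13]{F}), it suffices to show that $R(\l) < \infty$ for $\nu$-a.e.\ $\l \in B_\delta(\l_0)$, which in turn follows from $\int_{B_\delta(\l_0)} R(\l)\, d\nu(\l) < \infty$.

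To establish this finiteness, I would decompose $J(\tilde{f}_{\l_0})^2 = H \amalg \coprod_{n \geq 0} E_n$ as in the proof of Lemma~\ref{l:djms2e}, noting $\mu_2(H) = 0$. On each $E_n$ I would use Lemma~\ref{l:vd} together with Koebe's distortion theorem to compare $|\overline{h}_\l(\om,z) - \overline{h}_\l(\om',z')|$ with $\|(\tilde{f}^n_{\l_0})'(\om,z)\|^{-1}|\overline{h}_\l(\tilde{f}^n_{\l_0}(\om,z)) - \overline{h}_\l(\tilde{f}^n_{\l_0}(\om',z'))|$, up to the factors $\eta^{O(\epsilon n)}$ coming from the fiber-expansion comparison between parameters. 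Because $s - \epsilon < u - d + 2$, Lemma~\ref{l:stcintc} applies with $\alpha = s - \epsilon$ and gives a uniform bound on the inner integral over $B_\delta(\l_0)$ against $\nu$. The remaining $\mu_2$-integral over $E_n$ is then controlled by the $s(\l_0)$-conformality of $\mu$ exactly as in the proof of Lemma~\ref{l:djms2e}: after using $\|(\tilde{f}^n_{\l_0})'(\om,z)\|^{s-\epsilon} = \|(\tilde{f}^n_{\l_0})'(\om,z)\|^{s(\l_0)} \cdot \|(\tilde{f}^n_{\l_0})'(\om,z)\|^{-(s(\l_0)-s+\epsilon)}$ and applying the Koebe distortion theorem on the shadows $\tilde{f}^n_{\l_0}(E_{n,\om,z})$, each term is dominated by $\eta^{-cn}$ for some $c > 0$, so the series converges geometrically.

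The main technical obstacle is the bookkeeping of three competing exponential rates: the expansion $\eta^n$ along fibers, the small perturbation factor $\eta^{O(\epsilon) n}$ from Lemma~\ref{l:vd} that arises because $\|\tilde{f}'_\l\|$ and $\|\tilde{f}'_{\l_0}\|$ are only comparable up to such a factor, and the loss of a slack $\epsilon/4$ needed so that Lemma~\ref{l:stcintc} still applies (requiring $s - \epsilon < u - d + 2$). The choice $s = \min\{s(\l_0), u-d+2\}$ and the careful choice of $\delta$ in Lemma~\ref{l:vd} are precisely what force all three rates to combine into an overall geometric decay, yielding $\int_{B_\delta(\l_0)} R(\l)\, d\nu(\l) < \infty$ and hence the lemma.
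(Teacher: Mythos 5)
Your proposal is correct and follows essentially the same route as the paper: the authors also reduce to $\l_{1}=\l_{0}$ and $u-d+2>0$, set $s=\min\{s(\l_{0}),u-d+2\}$, and then repeat the energy/decomposition argument of Lemma~\ref{l:djms2e} verbatim, with the only change being that $\int_{B_{\delta}(\l_{0})}R(\l)\,d\nu(\l)<\infty$ is established via Lemma~\ref{l:stcintc} (applied with $\alpha=s-\epsilon<u-d+2$) in place of Lemma~\ref{l:tcintc}.
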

\begin{proof}
We may assume that $\l_{1}=\l _{0}$ and $u-d+2>0.$  
Let $s:=\min \{ s(\l _{0}),u-d+2\} .$ 
We repeat the proof of Lemma~\ref{l:djms2e}. 
The only change is that now we prove 
$\int _{B_{\delta }(\l _{0})}R(\l )d\nu (\l )<\infty $ by using 
Lemma~\ref{l:stcintc}.  
\end{proof}

We now give an upper estimate of the Hausdorff dimension of the set of exceptional parameters.
 Note that if $\{ f_{\l }=(f_{\l ,1},\ldots ,f_{\l, m})\} _{\l \in U}$ is a family in $\Exp(m)$, 
then by Theorem~\ref{t:fundfact1}, for each $\l \in U$, $\HD (J(G_{\l } ))\leq  s(\l ) ,$ where 
$G_{\l }:=\langle f_{\l ,1},\ldots ,f_{\l ,m}\rangle $ and $s(\l ):=\delta (f_{\l }).$    

\begin{thm}
\label{t:stcmain}
Let $d\in \NN .$ Let $U$ be a non-empty bounded open subset of $\RR ^{d}.$ 
Let $\{ f_{\l }\} _{\l \in U}$ be a family as in Setting $(\ast ).$ 
Suppose that $\{ f_{\l }\} _{\l \in U}$ satisfies the 
strong transversality condition. 
Let $G$ be a subset of $U$. 
Let $\xi \geq 0.$ 
Suppose $\min \{ \xi ,\sup _{\lambda \in G}s(\lambda )\}+d-2\geq 0.$ 
Then we have  
%then for each $\xi >0$, we have 
\begin{equation}
\label{eq:stcmain1}
\HD (\{ \l \in G: \HD (J(G_{\l }))<\min \{ \xi ,s(\l )\} \})
\leq \min \{ \xi ,\sup _{\l \in G}s(\l )\} +d-2.
\end{equation}
\end{thm}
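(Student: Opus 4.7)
The plan is to argue by contradiction, using Frostman's lemma to place a measure on the exceptional set and then invoking Lemma~\ref{l:sdjms} to obtain a lower bound on the Hausdorff dimension of the Julia set that contradicts the very defining inequality of the exceptional set. Concretely, write $\beta:=\min\{\xi,\sup_{\lambda\in G}s(\lambda)\}+d-2$ and $E:=\{\lambda\in G:\HD(J(G_\lambda))<\min\{\xi,s(\lambda)\}\}$, and suppose for contradiction that $\HD(E)>\beta$.

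First I would split $E$ quantitatively as $E=\bigcup_{k\geq 1}E_k$, where $E_k:=\{\lambda\in G:\HD(J(G_\lambda))<\min\{\xi,s(\lambda)\}-\tfrac{1}{k}\}$. By the countable stability of Hausdorff dimension, some $E_k$ satisfies $\HD(E_k)>\beta$. Pick $u\in(\beta,\HD(E_k))$; note that $u-d+2>\min\{\xi,\sup_G s\}\geq 0$. Frostman's lemma furnishes a Borel probability measure $\nu$ supported in $E_k$ with $\nu(B_r(x))\leq Cr^u$ for all $x\in\RR^d$ and all $r>0$. Fix a parameter $\epsilon\in(0,\tfrac{1}{2k})$.

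Next I would localize. For each $\lambda_1\in E_k$, Lemma~\ref{l:sdjms} provides a radius $\delta(\lambda_1)>0$ such that the Julia-set dimension bound holds in $B_{\delta(\lambda_1)}(\lambda_1)$; shrinking $\delta(\lambda_1)$ if necessary and invoking the continuity of $s$ from Theorem~\ref{t:delrp}, I may also assume $|s(\lambda)-s(\lambda_1)|<\epsilon$ throughout this ball. By Lindel\"{o}f the cover $\{B_{\delta(\lambda_1)}(\lambda_1):\lambda_1\in E_k\}$ admits a countable subcover, and at least one of its balls, say $B_{\delta_n}(\lambda_n)$, satisfies $\nu(B_{\delta_n}(\lambda_n))>0$. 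The normalized restriction $\tilde\nu$ is then a Frostman measure on $B_{\delta_n}(\lambda_n)$ of exponent $u$, and Lemma~\ref{l:sdjms} yields $\HD(J(G_\lambda))\geq\min\{s(\lambda_n),u-d+2\}-\epsilon$ for $\tilde\nu$-a.e.\ $\lambda$.

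The contradiction now splits into two cases. If $s(\lambda_n)\leq u-d+2$, then $\HD(J(G_\lambda))\geq s(\lambda_n)-\epsilon>s(\lambda)-2\epsilon$ by the continuity estimate, whereas $\lambda\in E_k$ gives $\HD(J(G_\lambda))<\min\{\xi,s(\lambda)\}-\tfrac{1}{k}\leq s(\lambda)-\tfrac{1}{k}$; hence $2\epsilon>\tfrac{1}{k}$, contradicting $\epsilon<\tfrac{1}{2k}$. If instead $s(\lambda_n)>u-d+2$, then combining with $s(\lambda_n)\leq\sup_G s$ gives $\sup_G s>u-d+2>\min\{\xi,\sup_G s\}$, which is only possible when $\min\{\xi,\sup_G s\}=\xi$, and thus $u-d+2>\xi$. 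Consequently $\HD(J(G_\lambda))\geq u-d+2-\epsilon>\xi-\epsilon$, while $\HD(J(G_\lambda))<\xi-\tfrac{1}{k}$, again contradicting $\epsilon<\tfrac{1}{2k}$. The only delicate point is to calibrate the decomposition $E_k$ and the choice of $\epsilon$ so that the continuity slack in $s$ and the two cases of the $\min\{s(\lambda_n),u-d+2\}$ both produce strict contradictions; once that is done, Lemma~\ref{l:sdjms} supplies the essential mechanism.
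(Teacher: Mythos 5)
Your proof is correct and follows essentially the same route as the paper's: decompose the exceptional set by the quantitative gap $1/k$, put a Frostman measure on a piece whose dimension exceeds the claimed bound, and contradict Lemma~\ref{l:sdjms} combined with the continuity of $\l \mapsto s(\l )$ (Theorem~\ref{t:delrp}). The only cosmetic difference is that you apply Frostman's lemma globally on $E_k$ (with exponent $u$ slightly above the bound) and then localize the measure by restricting to a ball of a Lindel\"of subcover, whereas the paper first localizes the set to a ball where $s$ is nearly constant and then applies Frostman's lemma with exponent exactly $\kappa$, which lets a single inequality replace your two-case analysis.
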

\begin{proof}
We set $\kappa := \min \{ \xi ,\sup _{\l\in G}s(\l )\} +d-2.$ 
By the countable stability of Hausdorff dimension, it is enough
to prove that  for each $n\in \NN $, 
\begin{equation}
\label{eq:stcmain2}
\HD (\{ \l \in G: \HD (J(G_{\l }))<\min \{ \xi ,s(\l )\}
-\frac{1}{n}\} )\leq \kappa . 
\end{equation}
Fix $n\in \NN .$ In order to prove (\ref{eq:stcmain2})
it suffices to show that for each $\l _{1}\in G$ there exists a
$\delta =\delta _{\l _{1}}>0$  such that 
\begin{equation}
\label{eq:stcmain3}
\HD (\{ \l \in B_{\delta }(\l _{1}): \HD (J(G_{\l }))<\min \{ \xi ,
s(\l )\} -\frac{1}{n}\} )\leq \kappa. 
\end{equation}
To prove (\ref{eq:stcmain3}), suppose that it is false. 
Then there exists $\l _{1}\in G$ such that for each $\delta >0$, 
\begin{equation}
\label{eq:stcmain4}
\HD (\{ \l \in B_{\delta }(\l _{1}): \HD (J(G_{\l }))<\min \{ \xi
,s(\l )\} -\frac{1}{n}\} )>\kappa . 
\end{equation}
Choose $\delta >0$ so small that the statement of Lemma~\ref{l:sdjms} holds with 
$\epsilon =\frac{1}{2n}$ 
and $|s(\l )-s(\l _{1})|<\frac{1}{2n}$ for each $\l \in B_{\delta }(\l _{1})$ 
(by the continuity of $s(\l )$, see Theorem~\ref{t:delrp}). 
Then, 
\begin{align*}
\{ \l \in B_{\delta }(\l _{1}): \HD (J(G_{\l }))&<\min \{ \xi
,s(\l )\} -\frac{1}{n}\} \\ 
& \subset \{ \l \in B_{\delta }(\l _{1}): \HD (J(G_{\l }))<\min \{ \xi
,s(\l _{1})\} -\frac{1}{2n}\} :=E.
\end{align*}
Hence $\HD (E)>\kappa .$ 
By Frostman's Lemma (see \cite[Corollary 4.12]{F}), 
there exists a Frostman measure $\nu $ on the set $E$ with exponent $u=\kappa .$ 
By Lemma~\ref{l:sdjms}, for $\nu $-a.e. $\l $ we have 
$$\HD (J(G_{\l }))\geq \min \{ s(\l _{1}),\kappa -d+2\} -\frac{1}{2n}
=\min \{ s(\l _{1}),\min \{ \xi , \sup _{\l \in G}s(\l )\} \} -\frac{1}{2n}.$$ 
This is a contradiction since for each $\l\in E$ 
we have 
$\HD (J(G_{\l }))<\min \{ \xi ,s(\l _{1})\}-\frac{1}{2n}$ 
and 
$$\min \{ \xi ,s(\l _{1})\} \leq \min \{ s(\l _{1}),\min \{ \xi ,\sup _{\l \in G}s(\l )\} \} .$$ 
Thus we have proved Theorem~\ref{t:stcmain}. 
\end{proof}

\noindent By continuity of $s(\l )$ (see Theorem~\ref{t:delrp}, \cite{suetds1}),  
as an immediate consequence of Theorem~\ref{t:stcmain},
 we get the following estimate for the local dimension of the exceptional set.  
\begin{cor}
Let $d\in \NN .$ Let $U$ be a non-empty bounded open subset of $\RR ^{d}.$ 
Let $\{ f_{\l }\} _{\l \in U}$ be a family as in Setting $(\ast ).$ 
Suppose that $\{ f_{\l }\} _{\l \in U}$ satisfies the 
strong transversality condition. 
Let $\xi \geq 0.$ 
Suppose $\min \{ \xi ,s(\lambda _{1})\}+d-2\geq 0.$ 
%Let $\xi >0.$ 
Then, we have all of the following. 
\begin{itemize}
\item[(1)] 
For each $\l _{1}\in U$, we have 
$$ \lim _{r\rightarrow 0}\HD (\{ \l \in B_{r}(\l _{1}): 
\HD (J(G_{\l }))<\min \{ \xi ,s(\l )\} \} )\leq \min \{ \xi ,s(\l _{1})\} +d-2.$$ 
\item[(2)]
If, in addition to the assumptions of our corollary, $s(\l _{1})<2$, 
then 
$$\lim _{r\rightarrow 0} \HD (\{ \l \in B_{r}(\l _{1}): \HD (J(G_{\l }))\neq s(\l )\} )\leq 
d-(2-s(\l _{1}))<d=\HD (U).$$ 
\end{itemize}
\end{cor}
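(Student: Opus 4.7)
The plan is to derive both parts directly from Theorem~\ref{t:stcmain} by specializing the subset $G \subset U$ to small balls $B_r(\l_1)$ and letting $r \to 0$, using the continuity of $s(\l)$ stated in Theorem~\ref{t:delrp}.

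For part (1), I would fix $\l_1 \in U$ and, for each sufficiently small $r>0$, apply Theorem~\ref{t:stcmain} with $G = B_r(\l_1) \cap U$. First I need to verify the hypothesis of that theorem, namely $\min\{\xi, \sup_{\l \in B_r(\l_1)} s(\l)\} + d - 2 \geq 0$; this is automatic because $\l_1 \in B_r(\l_1)$ gives $\sup_{\l \in B_r(\l_1)} s(\l) \geq s(\l_1)$, so the standing assumption $\min\{\xi, s(\l_1)\} + d - 2 \geq 0$ is inherited. Theorem~\ref{t:stcmain} then yields
\[
\HD(\{\l \in B_r(\l_1): \HD(J(G_\l)) < \min\{\xi, s(\l)\}\}) \leq \min\{\xi, \sup_{\l \in B_r(\l_1)} s(\l)\} + d - 2.
\]
Letting $r \to 0$, continuity of $s$ gives $\sup_{\l \in B_r(\l_1)} s(\l) \to s(\l_1)$, and hence the right-hand side tends to $\min\{\xi, s(\l_1)\} + d - 2$. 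This establishes (1).

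For part (2), observe that Theorem~\ref{t:fundfact1} forces $\HD(J(G_\l)) \leq s(\l)$ for every $\l$, so the inequality $\HD(J(G_\l)) \neq s(\l)$ is equivalent to $\HD(J(G_\l)) < s(\l)$. I would choose $\xi > 2 > s(\l_1)$; by continuity of $s(\l)$, for all sufficiently small $r$ we have $s(\l) < \xi$ throughout $B_r(\l_1)$, so $\min\{\xi, s(\l)\} = s(\l)$ on $B_r(\l_1)$. Consequently, for such $r$,
\[
\{\l \in B_r(\l_1): \HD(J(G_\l)) \neq s(\l)\} = \{\l \in B_r(\l_1): \HD(J(G_\l)) < \min\{\xi, s(\l)\}\}.
\]
Applying part (1) with this choice of $\xi$ and noting that $\min\{\xi, s(\l_1)\} = s(\l_1)$, the conclusion bounds the limit by $s(\l_1) + d - 2 = d - (2 - s(\l_1))$, which is strictly less than $d = \HD(U)$ by the hypothesis $s(\l_1) < 2$.

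There is no real obstacle here; the argument is essentially bookkeeping. The only subtle point to be careful about is checking that $\min\{\xi, \sup_{\l \in B_r(\l_1)} s(\l)\} + d - 2 \geq 0$ holds throughout, so that Theorem~\ref{t:stcmain} applies for every small $r$ and the limit is well-defined as a monotone limit in $r$.
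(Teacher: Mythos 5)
Your proposal is correct and matches the paper's intent: the paper states this corollary as an immediate consequence of Theorem~\ref{t:stcmain} applied to $G=B_{r}(\l _{1})$ together with the continuity of $\l \mapsto s(\l )$ (Theorem~\ref{t:delrp}), which is exactly your argument, including the use of $\HD (J(G_{\l }))\leq s(\l )$ (Theorem~\ref{t:fundfact1}) and a choice of $\xi >2$ to reduce part (2) to part (1). The hypothesis checks you flag (that $\min \{ \xi ,\sup _{\l \in B_{r}(\l _{1})}s(\l )\} +d-2\geq 0$ is inherited, and that $s(\l _{1})+d-2\geq 0$ holds for the enlarged $\xi $) are the only bookkeeping needed, and you handle them correctly.
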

We now give a sufficient condition for a holomorphic family $\{ f_{\l }\} _{\l \in U}$ to satisfy 
the strong transversality condition. 
\begin{dfn}
\label{d:analtc}
Let $U$ be an open subset of $\CC ^{d}.$ 
Let $\{ f_{\l }\} _{\l \in U}=\{ (f_{\l ,1},\ldots ,f_{\l ,m})\} _{\l \in U}$ be a holomorphic family in $\Exp (m)$ 
over $U.$ We set 
$G_{\l }:= \langle f_{\l ,1},\ldots ,f_{\l ,m}\rangle $ for each $\l \in U.$ 
Let $\l _{0}\in U$ be a point. 
Suppose that for each $\l \in U$, 
there exists a homeomorphism $h_{\l }:J(\tilde{f}_{\l _{0}})\rightarrow 
J(\tilde{f}_{\l })$ of the form $h_{\l }(\om,z)=(\om,\overline{h}_{\l }(\om,z))$ such that  
%\begin{equation}
%\begin{CD}
%J(\tilde{f}_{\l _{0}} @>{\tilde{f_{\l _{0}}}}>>J(\tilde{f}_{\l _{0}}\\ 
%@V{h_{\l }}VV
%@VV{h_{\l }}V\\ 
%J(\tilde{f}_{\l })@>>{\tilde{f}_{\l }}>J(\tilde{f}_{\l })
%\end{CD}
%\end{equation} 
$h_{\l _{0}}=Id|_{J(\tilde{f}_{\l _{0}})}$, 
$h_{\l }\circ \tilde{f}_{\l _{0}}=\tilde{f}_{\l }\circ h_{\l }$ on $J(\tilde{f}_{\l _{0}})$,  
 and such that for each $(\om,z)\in J(\tilde{f}_{\l _{0}})$ 
the map $(\om,z,\l) \mapsto \overline{h}_{\l }(\om,z)\in \CCI , 
(\om,z,\l) \in J(\tilde{f} _{\l _{0}})\times U$, is continuous and 
the map $\l \mapsto \overline{h}_{\l }(\om ,z)$ is holomorphic. 
%(Note: 
%such $\{ h_{\l} \} $ exists if $U$ is a small neighborhood of $\l
%_{0}$. See Remark~\ref{r:setting}.)   
We say that the family $\{ f_{\l }\} _{\l \in U}$ satisfies the
analytic transversality condition (ATC) if the following hold.
\begin{itemize}
\item[{\em (a)}] 
$J(G_{\l })\subset \CC $ for each $\l \in U$. 
\item[{\em (b)}] 
For each $(\om ,z,\om ',z',\l )\in J(\tilde{f}_{\l _{0}})^{2}\times U$, 
let $g_{\om ,z,\om ',z'}(\l ):= \overline{h}_{\l }(\om ,z)-\overline{h}_{\l }(\om ',z').$ 
Then for each $(\om ,z,\om ',z',\l )\in J(\tilde{f}_{\l _{0}})^{2}\times U$ with 
$g_{\om ,z,\om ',z'}(\l )=0$ and $\om _{1}\neq \om '_{1}$, 
we have 
$\bigtriangledown _{\l }g_{\om ,z,\om ',z'}(\l )\neq 0$, 
where 
 $\bigtriangledown _{\l }g_{\om ,z,\om ',z'}(\l ):= 
(\frac{\partial g_{\om ,z,\om ',z'}}{\partial \l _{1}}(\l ),\ldots , 
\frac{\partial g_{\om ,z,\om ',z'}}{\partial \l _{d}}(\l )).$  
\end{itemize}
\end{dfn}
\begin{prop}
\label{p:atctc}
Let $U$ be a bounded open subset of $\CC ^{d}.$ 
%Let $\l _{0}\in U.$ 
Let $\{ f_{\l }\} _{\l \in U}$ be a holomorphic family in $\Exp (m)$ 
over $U.$ Suppose that $\{ f_{\l }\} _{\l \in U}$ satisfies the
analytic transversality condition.  
Then for each non-empty, relative compact, open subset $U'$ of $U$, 
the family $\{ f_{\l }\} _{\l \in U'}$ satisfies the strong
transversality condition, and consequently, the transversality condition.  
\end{prop}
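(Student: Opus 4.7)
The parameter space $U \subset \CC^d$ has real dimension $2d$, so the strong transversality condition to be established reads $N_r(\{\l \in U' : \hat\rho(\overline{h}_\l(\om,z),\overline{h}_\l(\om',z')) \leq r\}) \leq C_1' r^{2-2d}$. Since condition~(a) of ATC gives $J(G_\l) \subset \CC$ for all $\l$, and by Remark~\ref{r:conjG} one may assume the Julia sets stay in a fixed compact subset of $\CC$, the spherical and Euclidean distances are comparable there, so it suffices to control $|g_\kappa(\l)|$ where $\kappa := (\om,z,\om',z')$ ranges over the compact set $K := \{(\om,z,\om',z') \in J(\tilde f_{\l_0})^2 : \om_1 \neq \om'_1\}$; compactness of $K$ is immediate since $\{\om_1 \neq \om'_1\}$ is clopen in $\Sigma_m^2$. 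By Remark~\ref{r:setting} each $\overline{h}_\l(\om,z)$ is holomorphic in $\l$ and jointly continuous in $(\l,\om,z)$; combining this with the Cauchy integral formula gives joint continuity of both $(\l,\kappa) \mapsto g_\kappa(\l)$ and $(\l,\kappa) \mapsto \nabla_\l g_\kappa(\l)$.

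The heart of the argument is a uniform gradient lower bound: there exist $c, r_0 > 0$ such that $|\nabla_\l g_\kappa(\l)| \geq c$ whenever $\l \in \overline{U'}$, $\kappa \in K$, and $|g_\kappa(\l)| \leq r_0$. To prove this, fix $(\l_1,\kappa_0) \in \overline{U'} \times K$ and consider two cases. If $g_{\kappa_0}(\l_1) \neq 0$, joint continuity furnishes a neighborhood $V \times W$ and a $\delta > 0$ with $|g_\kappa(\l)| > \delta$ throughout. If $g_{\kappa_0}(\l_1) = 0$, ATC condition~(b) gives $\nabla_\l g_{\kappa_0}(\l_1) \neq 0$, and continuity of $\nabla_\l g$ yields a neighborhood on which $|\nabla_\l g_\kappa(\l)| > c > 0$. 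A finite subcover of the compact product $\overline{U'} \times K$ extracts the uniform constants $c, r_0$.

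With the uniform lower bound in hand, on the sublevel set $\{|g_\kappa| \leq r_0\}$ each map $g_\kappa$ is a holomorphic submersion. Locally straightening $g_\kappa$ to a coordinate projection $\CC^d \to \CC$ via the holomorphic implicit function theorem (with derivative bounded below by $c$), or equivalently applying the co-area formula, yields the uniform volume bound $\mbox{Leb}_{2d}(\{\l \in \overline{U'} : |g_\kappa(\l)| \leq r\}) \leq C r^2$ for all $r \leq r_0$ and $\kappa \in K$. The elementary inequality $N_r(E) \leq C'' \cdot \mbox{Leb}_{2d}(E_r)/r^{2d}$ (with $E_r$ the $r$-neighborhood of $E$) then produces $N_r \leq C''' r^{2-2d}$ for $r \leq r_0$. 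For $r > r_0$, since $U'$ is bounded, $N_r$ is bounded by a constant depending only on $U'$, which can be absorbed into $C_1'$. The implication STC $\Rightarrow$ TC is immediate: any set covered by $N_r$ balls of radius $r$ has Lebesgue measure at most $C \cdot N_r \cdot r^{2d} \leq C C_1' r^2$.

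The main obstacle is attaining the gradient lower bound uniformly over all $\kappa \in K$ simultaneously; this hinges on joint continuity of $(\l,\kappa) \mapsto \nabla_\l g_\kappa(\l)$ combined with compactness of $\overline{U'} \times K$. Once uniformity is secured, the remainder is a standard tube estimate around a complex submanifold of real codimension $2$, which is precisely what produces the critical exponent $r^2$ in the volume bound and hence $r^{2-2d}$ in STC.
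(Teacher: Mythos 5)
Your proof is correct and follows essentially the same route as the paper: the analytic transversality condition gives a nondegenerate gradient at the zeros of $g_{\om ,z,\om ',z'}$, compactness of $J(\tilde{f}_{\l _{0}})^{2}\times \overline{U'}$ makes the local straightening uniform, and the sublevel sets become real-codimension-two tubes yielding the exponent $r^{2-2d}$; the paper just packages the straightening as an explicit holomorphic local inverse $\alpha$ (the Ahlfors one-variable argument with parameters) and counts covering balls of the parametrized sublevel set directly instead of passing through Lebesgue measure. The one point to make explicit in your version is that the covering inequality uses $\mbox{Leb}_{2d}(E_{r})$ for the $r$-neighborhood $E_{r}$ rather than of $E$ itself, so you should bound the measure of the slightly larger sublevel set $\{ |g_{\kappa }|\leq (1+M)r\} $ using a uniform upper bound $M$ for $|\nabla _{\l }g_{\kappa }|$ on a compact neighborhood of $\overline{U'}$ --- a routine addition that does not affect the argument.
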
 
\begin{proof}
Let $\l _{0}\in U$ and let  
$h_{\l }$ and $g_{\om ,z,\om ',z'}(\l )$ be as in Definition~\ref{d:analtc}.
We set 
$$
W:= \{ (\om ,z,\om ',z',\zeta )\in J(\tilde{f}_{\l _{0}})^{2}\times
U: g_{\om ,z,\om ',z'}(\zeta )=0 \text{ and } \om _{1}\neq \om '_{1}\} .
$$  
For each $\l \in U$  write $\l =(\l _{1},\ldots ,\l _{d}).$ 
Let $(\om ,z,\om ',z',\zeta )\in W.$ 
Then $\bigtriangledown _{\l }g_{\om ,z,\om ',z'}(\zeta )\neq 0.$ 
Without loss of generality, we may assume that 
$\frac{\partial g_{\om ,z,\om ',z'}}{\partial \l _{1}}(\zeta )\neq 0.$ 
 Then by the arguments in \cite[page 154]{A}, 
there exists a neighborhood $A_{0}$ of 
$(\om ,z,\om ',z')$, a constant $\delta >0$, and a constant $r_{0}>0$, 
%and a constant $C_{0}>0$  
such that 
for each $(x,y,x',y')\in A_{0}$ and  for each 
$(\l _{2},\ldots ,\l _{d})\in D_{2\delta }(\zeta _{2})\times \cdots \times D_{2\delta }(\zeta _{d})$, 
setting 
$g_{x,y,x',y',\l _{2},\ldots ,\l _{d}}(\l _{1}):= 
g_{x,y,x',y'}(\l _{1},\ldots ,\l _{d})$ for each 
$\l _{1}\in D_{2\delta }(\zeta _{1})$, 
we have that 

\vspace{2mm}\noindent(i) $g_{x,y,x',y',\l _{2},\ldots ,\l _{d}}$ is injective
on $D_{2\delta }(\zeta _{1})$, and \\  
(ii) there exists a holomorphic function 
$\alpha _{x,y,x',y',\l _{2},\ldots ,\l _{d}}:D_{2r_{0}}(0)\rightarrow D_{2\delta }(\zeta _{1})$ 
such that 
$$
g_{x,y,x',y',\l _{2},\ldots ,\l _{d}}\circ \alpha _{x,y,x',y',\l
  _{2},\ldots \l _{d}}=\mbox{Id} \mbox{ on } D_{2r_{0}}(0).
$$
%and such that 
We may assume that there exists a constant $C_{0}>0$ such that 
for each $(x,y,x',y')\in A_{0}$, 
for each $(\l _{2},\ldots ,\l _{d},z)\in \prod _{j=2}^{d}D_{2\delta }(\zeta _{j})\times D_{2r_{0}}(0)$, 
and for each $j=2,\ldots ,d$, we have
%Since $g_{\tilde{\om },\tilde{z},\tilde{\om }',\tilde{z}'}\rightarrow 
%g_{\om ,z,\om ',z'}$ as $(\tilde{\om },\tilde{z},\tilde{\om }',\tilde{z}')
%\rightarrow (\om ,z,\om ',z') $ locally uniformly,   
%we may assume that if $(\tilde{x},\tilde{y},\tilde{x}',\tilde{y}',\tilde{\l }_{2},\ldots ,\tilde{\l }_{d})
%\rightarrow (x,y,x',y',\l _{2},\ldots ,\l _{d})$ then 
\begin{equation}
\label{eq:contialp}
|\alpha _{x,y,x'y',\l _{2},\ldots ,\l _{d}}'(z)|\leq C_{0}, \mbox{ and } 
\bigg|\frac{\partial \alpha _{x,y,x',y',\l _{2},\ldots ,\l _{d}}(z)}{\partial \l _{j}}\bigg|\leq C_{0}. 
%\mbox{ for each } 
%j=2,\ldots ,d.
%\alpha _{\tilde{x},\tilde{y},\tilde{x}',\tilde{y}',\tilde{\l }_{2},\ldots ,\tilde{\l }_{d}}
%\rightarrow \alpha _{x,y,x',y',\l _{2},\ldots ,\l _{d}} \mbox{ uniformly on } D_{2r_{0}}(0).
\end{equation}

%$$\sup \{ |(\alpha _{x,y,x',y',\l _{2},\ldots ,\l _{d}})'(z)|:  
%(x,y,x',y',\l _{2},\ldots ,\l _{d},z)\in A_{0}\times 
%D_{\delta }(\zeta _{2})\times \cdots \times D_{\delta }(\zeta _{d}) \times D_{r_{0}}(0)\} \leq C.$$
%For each $j=2,\ldots ,d$, let $\tau _{j}$ be the Lebesgue measure on $D_{\delta }(\zeta _{j})$. 
For every $(x,y,x',y')\in A_{0}$ and for every $r\in (0,r_{0})$, 
\begin{align*}
\ \ & \ \ \ \ \ \{ (\l _{1},\ldots ,\l _{d})\in \prod _{j=1}^{d} D_{\delta }(\zeta _{j}):  
|g_{x,y,x',y'}(\l _{1},\ldots ,\l _{d})|<r\} \\ 
& = \{ (\alpha _{x,y,x',y', \l _{2},\ldots ,\l _{d}}(z),\l _{2},\ldots ,\l _{d})
: (\l _{2},\ldots ,\l _{d})\in \prod _{j=2}^{d}D_{\delta }(\zeta _{j}), z\in D_{r}(0)\} \\ 
& =\Psi _{x,y,x'y'}(\prod _{j=2}^{d}D_{\delta }(\zeta _{j})\times D_{r}(0)),
\end{align*}
where $\Psi _{x,y,x',y'}(\l _{2},\ldots ,\l _{d},z):=
(\alpha _{x,y,x',y',\l _{2},\ldots ,\l _{d}}(z),\l _{2},\ldots ,\l _{d}).$ 
Let $A_{r}:=\prod _{j=2}^{d}D_{\delta }(\zeta _{j})\times D_{r}(0).$ 
Then there exists a constant $C_{1}>0$ such that 
for each $r>0$, 
$N_{r}(A_{r})\leq C_{1}(\frac{1}{r})^{2(d-1)}.$ 
Let $\{ E_{j}\} _{j=1}^{N_{r}(A_{r})}$ 
be a family of $r$-balls with 
$A_{r}\subset \bigcup _{j=1}^{N_{r}(A_{r})}E_{j}.$ 
By (\ref{eq:contialp}), 
there exists a constant $C_{2}>0$  such that 
for each $(x,y,x',y')\in A_{0}$, for each $r\in (0,r_{0})$ and for
each $j\in \{ 1,\ldots ,N_{r}(A_{r})\} ,$   
$\Psi _{x,y,x',y'}(E_{j})$ is included in a $C_{2}r$-ball. 
Therefore, there exists a constant $C_{3}>0$ such that 
for each $(x,y,x',y')\in A_{0}$ and $r\in (0,r_{0})$, 
$N_{r}(\Psi _{x,y,x'y'}(A_{r}))\leq C_{3}r^{2-2d}.$  
Hence, 
%by (\ref{eq:contialp}), 
we obtain 
$$N_{r}(\{ (\l _{1},\ldots ,\l _{d})\in \prod _{j=1}^{d}D_{\delta }(\zeta _{j}): 
|g_{x,y,x',y'}(\l _{1},\ldots ,\l _{d})|<r\} )\leq C_{3}r^{2-2d}.$$
%where $C>0$ is a constant not depending on 
%$(x,y,x',y')\in A_{0}.$ 
Therefore, for each non-empty relative compact open subset $U'$ of
$U$, the family $\{ f_{\l }\} _{\l\in U}$ 
satisfies the strong transversality condition and, consequently, the
transversality condition.   
%\begin{align*}
%\ & \ \ \ \ \ \mbox{Leb}_{2d}(\{ (\l _{1},\ldots ,\l _{d})\in \prod _{j=1}^{d} D_{\delta }(\rho _{j}): 
%|g_{x,y,x',y'}(\l _{1},\ldots ,\l _{d})|<r\} )\\ 
% & = \int _{\prod _{j=2}^{d}D_{\delta }(\rho _{j})}d\tau _{2}\cdots d\tau _{d}
%\int _{D_{\delta }(\rho _{1})}1_{\{ \l _{1}: |g_{x,y,x',y',\l _{2},\ldots ,\l _{d}}(\l _{1})|<r\} } d\tau _{1}\\ 
%& =\int _{\prod _{j=2}^{d}D_{\delta }(\rho _{j})}\tau _{1}(\alpha _{x,y,x',y',\l _{2},\ldots ,\l _{d}}(D_{r}(0))) d\tau _{2}\cdots d\tau _{d}\\ 
%& \leq \int _{\prod _{j=2}^{d}D_{\delta }(\rho _{j})}C^{2}\pi r^{2} d\tau _{2}\cdots d\tau _{d}\\ 
%& =C^{2}(\pi \delta ^{2})^{d-1}\pi r^{2}.
%\end{align*}
%Hence, the statement of our proposition holds. 
\end{proof}
%\end{dfn}
\begin{rem}
\label{r:stcatc}
If $d=1$ and the strong transversality condition holds (which is equivalent to 
that $\inf \{ \hat{\rho }(a,b) : a\in f_{\l, i}^{-1}(J(G_{\l })), b\in
f_{\l ,j}^{-1}(J(G_{\l })), \l \in U, i\neq j\}  
> 0\} $), then the analytic transversality condition is not satisfied. However, 
it is not known whether or not there exists a holomorphic
family of multimaps of rational maps (or contracting conformal  
IFSs on $\CC $) which satisfies the strong transversality condition
but fails to satisfy the analytic transversality condition. 
\end{rem}
Looking at Proposition~\ref{p:atctc} we see that in order to obtain a
sufficient condition for a holomorphic family  
$\{ f_{\l }\} _{\l \in U}$ in $\Exp (m)$ to satisfy the strong
transversality condition, it is important to calculate
$\frac{\partial g_{\om ,z,\om ',z'}(\l )}{\partial \l _{j}}.$ 
We give now several methods of doing this. 
\begin{lem}
\label{l:hfdiff}
Let $U$ be a bounded open set subset of $\CC $. 
Let $\l _{0}\in U.$  
Let $\{ f_{\l }\} _{\l \in U}=\{ f_{\l ,1},\ldots ,f_{\l ,m}\} _{\l
  \in U}$ be a holomorphic family in  
$\Exp(m).$ 
For each $\l \in U$, let $G_{\l }, h_{\l },\overline{h}_{\l }$ be as
in Setting $(\ast ).$  
Suppose that for each $\l \in U$, $J(G_{\l })\subset \CC .$ 
Then for each $(\om ,z)\in J(\tilde{f}_{\l _{0}})$,  
\begin{equation}
\label{eq:hfdiffeq}
\frac{\partial \overline{h}_{\l }(\om ,z)}{\partial \l }|_{\l =\l _{0}} 
=\sum _{n=1}^{\infty }\frac{1}{f_{\l _{0},\om |_{n}}'(z)}
\left(-\frac{\partial f_{\l ,\om _{n}}(f_{\l _{0},\om
    |_{n-1}}(z))}{\partial \l }\Big|_{\l =\l _{0}}\right),  
\end{equation}
where $f_{\l _{0},\omega |_{0}}$ is the identity map.  
\end{lem}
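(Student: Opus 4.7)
I would start from the iterated conjugacy relation obtained from Setting $(\ast)$,
\begin{equation*}
f_{\l,\om|_{n}}(\overline{h}_{\l}(\om,z)) \;=\; \overline{h}_{\l}(\sg^{n}\om,\,f_{\l_{0},\om|_{n}}(z)), \qquad n\ge 1,
\end{equation*}
and introduce holomorphic approximants
$P_{n}(\l):=\phi_{n}(\l,\,f_{\l_{0},\om|_{n}}(z))$, where $\phi_{n}(\l,\cdot)$ is the unique inverse branch of $f_{\l,\om|_{n}}$ satisfying $\phi_{n}(\l_{0},f_{\l_{0},\om|_{n}}(z))=z$. The fibrewise expansion of $\tilde{f}_{\l_{0}}$, together with openness of $\Exp(m)$ (Lemma~\ref{expopenlem}), guarantees that such a branch $\phi_{n}(\l,\cdot)$ exists and is holomorphic on a disk $V\ni\l_{0}$ chosen independently of $n$, with image of diameter $\lesssim (C\eta^{n})^{-1}$ by Koebe distortion.

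Using that $\overline{h}_{\l}$ is uniformly close to the identity on $J(\tilde{f}_{\l_{0}})$ for $\l\in V$, the point $\overline{h}_{\l}(\sg^{n}\om,f_{\l_{0},\om|_{n}}(z))$ lies in the range of $\phi_{n}(\l,\cdot)$, and the iterated conjugacy identity forces
$\overline{h}_{\l}(\om,z) = \phi_{n}\bigl(\l,\,\overline{h}_{\l}(\sg^{n}\om,f_{\l_{0},\om|_{n}}(z))\bigr)$. Consequently $P_{n}(\l)\to\overline{h}_{\l}(\om,z)$ uniformly on $V$, because the branch contracts distances by a factor $(C\eta^{n})^{-1}$. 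By the Weierstrass theorem, derivatives in $\l$ also converge, so it suffices to evaluate $\lim_{n\to\infty}P_{n}'(\l_{0})$.

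For the computation, implicit differentiation of $f_{\l,\om|_{n}}(P_{n}(\l))=f_{\l_{0},\om|_{n}}(z)$ at $\l=\l_{0}$ (noting $P_{n}(\l_{0})=z$) yields
\begin{equation*}
P_{n}'(\l_{0}) \;=\; -\,\frac{1}{f_{\l_{0},\om|_{n}}'(z)}\cdot \frac{\partial f_{\l,\om|_{n}}(z)}{\partial\l}\bigg|_{\l=\l_{0}}.
\end{equation*}
Writing $f_{\l,\om|_{n}}=f_{\l,\om_{n}}\circ f_{\l,\om|_{n-1}}$ and iterating the chain rule gives the telescoping identity
\begin{equation*}
\frac{\partial f_{\l,\om|_{n}}(z)}{\partial\l}\bigg|_{\l_{0}} \;=\; \sum_{k=1}^{n}\frac{f_{\l_{0},\om|_{n}}'(z)}{f_{\l_{0},\om|_{k}}'(z)}\cdot \frac{\partial f_{\l,\om_{k}}\bigl(f_{\l_{0},\om|_{k-1}}(z)\bigr)}{\partial\l}\bigg|_{\l_{0}},
\end{equation*}
after which the factor $f_{\l_{0},\om|_{n}}'(z)$ cancels, so that $P_{n}'(\l_{0})$ is precisely the $n$-th partial sum of the series on the right of (\ref{eq:hfdiffeq}). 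Absolute convergence of that series follows from the expansion bound $|f_{\l_{0},\om|_{k}}'(z)|\ge C\eta^{k}$ and the uniform boundedness (in $k$ and $\om$) of the partials $\partial f_{\l,\om_{k}}/\partial\l$ on $J(G_{\l_{0}})$.

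\textbf{Main obstacle.} The delicate step is producing a single disk $V$ and a single compact target region on which all inverse branches $\phi_{n}(\l,\cdot)$ live simultaneously, so that $P_{n}\to\overline{h}_{\l}(\om,z)$ holomorphically in $\l$ on $V$. This is handled by persistence of fibrewise expansion under small perturbations (openness of $\Exp(m)$, with uniform expansion constants on a compact parameter neighborhood of $\l_{0}$) combined with a Koebe-type distortion argument. Once this uniform convergence is in place, the interchange of $\lim$ and $\partial/\partial\l$ is routine, and the telescoping computation above completes the proof.
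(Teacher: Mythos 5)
Your proposal is correct in substance but takes a genuinely different route from the paper's. The paper simply differentiates the conjugacy relation $f_{\l ,\om _{1}}(\overline{h}_{\l }(\om ,z))=\overline{h}_{\l }(\sg (\om ),f_{\l _{0},\om _{1}}(z))$ with respect to $\l $ (holomorphy of $\l \mapsto \overline{h}_{\l }(\om ,z)$ being supplied by Remark~\ref{r:setting}, i.e.\ \cite{suetds1}), obtaining the one-step recursion (\ref{eq:hfdiffeq2}) and iterating it; the remainder after $n$ steps carries the factor $1/f_{\l _{0},\om |_{n}}'(z)$ against the uniformly bounded quantity $\frac{\partial }{\partial \l }\overline{h}_{\l }(\sg ^{n}(\om ),\cdot )|_{\l =\l _{0}}$ (Cauchy estimates on a bounded family), so expansion kills it and (\ref{eq:hfdiffeq}) follows. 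You never differentiate $\overline{h}$ directly: you build holomorphic approximants $P_{n}$ from inverse branches, prove uniform convergence $P_{n}(\l )\to \overline{h}_{\l }(\om ,z)$ on a fixed parameter disk, and pass derivatives through the limit by Weierstrass; your implicit differentiation and telescoping computation of $P_{n}'(\l _{0})$ are correct and give exactly the partial sums of (\ref{eq:hfdiffeq}). What your route buys is self-containedness (it re-derives, rather than quotes, the holomorphy of $\l \mapsto \overline{h}_{\l }(\om ,z)$), at the cost of redoing part of the construction of $h_{\l }$ --- uniform inverse branches of $f_{\l ,\om |_{n}}$ for all $\l $ in one disk --- which the paper gets for free from the cited result.

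One justification needs repair. To identify $\overline{h}_{\l }(\om ,z)$ with $\phi _{n}\bigl(\l ,\overline{h}_{\l }(\sg ^{n}(\om ),f_{\l _{0},\om |_{n}}(z))\bigr)$ you argue that, since $\overline{h}_{\l }$ is uniformly close to the identity, the relevant point ``lies in the range of $\phi _{n}(\l ,\cdot )$''; but the image of $\phi _{n}(\l ,\cdot )$ has diameter of order $\eta ^{-n}$, shrinking with $n$, while the closeness of $\overline{h}_{\l }$ to the identity is uniform only in $(\om ,z)$ for fixed $\l $, so for fixed $\l \neq \l _{0}$ and large $n$ this containment does not follow as stated. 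The identity is nevertheless true and standard: prove it by induction on $n$, one step at a time (distinct $f_{\l ,j}$-preimages of a point near $J(G_{\l _{0}})$ are uniformly separated, and both candidate preimages lie within a distance tending to $0$ as $\l \to \l _{0}$, uniformly over $J(\tilde{f}_{\l _{0}})$, of $\pi _{2}\tilde{f}_{\l _{0}}^{k}(\om ,z)$), or by an open-closed argument in the connected parameter disk, using that both sides are continuous in $\l $, agree at $\l _{0}$, and are preimages of the same point under $f_{\l ,\om |_{n}}$, whose derivative is nonvanishing along the fiber Julia sets. With that fix, and the routine remark that spherical and Euclidean derivatives are comparable on the compact set $J(G_{\l _{0}})\subset \CC $ so that $|f_{\l _{0},\om |_{k}}'(z)|\geq C'\eta ^{k}$, your argument is complete.
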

\begin{proof}
Since $\tilde{f_{\l }}\circ h_{\l }=h_{\l }\circ \tilde{f}_{\l _{0}}$, 
we have that 
for each $\l \in U$ and for each $(\om ,z)\in J(\tilde{f}_{\l _{0}})$,   
$f_{\l ,\om _{1}}(\overline{h}_{\l }(\om ,z))=\overline{h}_{\l
}(\sigma (\om ),f_{\l _{0},\om _{1}}(z)).$  
Hence 
$$ \frac{\partial f_{\l ,\om _{1}}}{\partial \l }(\overline{h}_{\l }(\om ,z))
+f_{\l ,\om _{1}}'(\overline{h}_{\l }(\om ,z))\frac{\partial
  \overline{h}_{\l }(\om ,z)}{\partial \l } 
=\frac{\partial \overline{h}_{\l }(\sigma (\om ), f_{\l _{0},\om _{1}}(z))}{\partial \l }.$$ 
Therefore, 
\begin{equation}
\label{eq:hfdiffeq2}
\frac{\partial \overline{h}_{\l }(\om ,z)}{\partial \l }|_{\l =\l _{0}}
=\frac{1}{f_{\l _{0},\om _{1}}'(z)}
\left( -\frac{ \partial f_{\l ,\om _{1}}(z)}{\partial \l }|_{\l =\l _{0}} 
+\frac{ \partial \overline{h}_{\l }(\sigma (\om ),f_{\l _{0},\om
    _{1}}(z))}{\partial \l }|_{\l =\l _{0}} 
\right).  
\end{equation}
Iterating this calculation,  since the right hand side of
(\ref{eq:hfdiffeq}) converges due to the expandingness of $G_{\l
_{0}}$, we obtain  equation (\ref{eq:hfdiffeq}). 
\end{proof}
We remark that the calculation like (\ref{eq:hfdiffeq}) is a well-known technique 
in contracting IFSs with overlaps (e.g. \cite{SSU}), though in Lemma~\ref{l:hfdiff} 
we deal with ``expanding'' semigroups in which each map may not be injective.

We now provide several corollaries of Lemma~\ref{l:hfdiff}. 
\begin{cor}
\label{c:hfdiff1}
Let $(g_{1},\ldots ,g_{m})\in \Exp (m).$ 
Let $U$ be a bounded open subset of $\CC .$ 
Let $\l _{0}\in U.$ For each $\l \in U$, 
let $\alpha _{\l }\in \mbox{{\em Aut}}(\CCI )$. We assume that 
the map $ \CCI \times U\ni(z,\l )\mapsto \alpha _{\l }(z)\in \CCI$
is holomorphic, and that $\alpha _{\l _{0}}=Id.$ For each $\l \in U$ let 
$$
f_{\l }:=(g_{1},\ldots, g_{m-1}, \alpha _{\l
}\circ g_{m}\circ \alpha _{\l }^{-1}).
$$  
Suppose that $\{ f_{\l }\} _{\l \in U}$ is a holomorphic family in
$\Exp (m)$ which satisfies the Setting $(\ast )$. Further, 
letting $G_{\l }, h_{\l },\overline{h}_{\l }$ be as in the Setting
$(\ast )$ assume that  
$U\ni\l \mapsto \overline{h}_{\l }(\om ,z)$ is holomorphic.  
Note that if $U$ is small enough, then we do not need any extra
hypotheses, namely, by Lemma~\ref{expopenlem} and
Remark~\ref{r:setting},  
$\{ f_{\l }\} _{\l \in U}$ is automatically a holomorphic family in $\Exp (m)$
satisfying Setting $(\ast )$, 
and the map $U\ni\l \mapsto \overline{h}_{\l }(\om ,z)$ is holomorphic.   
In any case we also extra assume that for each $\l \in U$, $J(G_{\l
})\subset \CC $ 
(see Remark~\ref{r:conjG}).   
For each $\om =(\om _{1},\ldots ,\om _{n})\in \Sigma _{m}^{\ast }$, let 
$g_{\om } =g_{\om _{n}}\circ \cdots \circ g_{\om _{1}}.$ 
%For each $\l \in U$, 
%let $h_{\l },\overline{h}_{\l }, G_{\l }$ be as in Setting $(\ast ).$ 
Then, we have all of the following. 
\begin{itemize}
\item[(1)]
For each $(\om ,z)\in J(\tilde{f}_{\l _{0}})$, 
$$\frac{\partial \overline{h}_{\l }(\om ,z)}{\partial \l }\bigg|_{\l =\l _{0}}
=\sum _{n=1}^{\infty }
\frac{1}{g_{\om |_{n}}'(z)}a_{n}(z),$$
where 
$$a_{n}(z):=\begin{cases}
0  \mbox{ if } \om _{n}=1,\ldots ,m-1\\ 
g_{m}'(g_{\om |_{n-1}}(z))(-\frac{\partial \alpha _{\l}(g_{\om
    |_{n-1}}(z))}{\partial \l }\big|_{\l =\l _{0}} ) 
+\frac{\partial \alpha _{\l }(g_{\om |_{n}}(z))}{\partial \l
}\big|_{\l=\l _{0}}       
\mbox{ if } \om _{n}=m. \ (g_{\om |_{0}}:= Id.)
\end{cases}
$$ 
\item[(2)] 
Let $j\neq m$, $\beta =jm^{\infty } $ and $\gamma =mj^{\infty }.$  
Then for each $z\in \CCI $ with $(\beta ,z)\in J(\tilde{f}_{\l _{0}})$, 
$$
\frac{\partial \overline{h}_{\l }(\beta ,z)}{\partial \l }\bigg|_{\l =\l _{0}}=
\frac{1}{g_{j}'(z)}\frac{\partial \alpha _{\l} (g_{j}(z))}{\partial \l
}\bigg|_{\l =\l _{0}},
$$ 
and for each $z\in \CCI $ with $(\gamma , z)\in J(\tilde{f}_{\l _{0}})$, 
$$ 
\frac{\partial \overline{h}_{\l }(\gamma ,z)}{\partial \l }\bigg|_{\l =\l _{0}}=
\frac{\partial \alpha _{\l }(z)}{\partial \l }\bigg|_{\l =\l _{0}}
-\frac{1}{g_{m}'(z)}\frac{\partial \alpha _{\l}(g_{m}(z))}{\partial \l
}\bigg|_{\l =\l _{0}}. 
$$
\end{itemize}
\end{cor}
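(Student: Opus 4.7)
The plan is to derive both parts as direct consequences of Lemma~\ref{l:hfdiff} applied to the specific family $f_\lambda=(g_1,\ldots,g_{m-1},\alpha_\lambda\circ g_m\circ\alpha_\lambda^{-1})$. Because only the last coordinate of $f_\lambda$ depends on $\lambda$, the term $\partial f_{\lambda,\omega_n}/\partial\lambda|_{\lambda_0}$ appearing in (\ref{eq:hfdiffeq}) vanishes identically whenever $\omega_n\neq m$, which immediately forces $a_n(z)=0$ in that case. The whole content of part (1) is therefore the evaluation of $\partial_\lambda f_{\lambda,m}(w)|_{\lambda_0}$ at the point $w=g_{\omega|_{n-1}}(z)$ (which, because $\omega_n=m$, satisfies $g_m(w)=g_{\omega|_n}(z)$). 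I would handle this by applying the chain rule to $F_\lambda(w):=\alpha_\lambda(g_m(\alpha_\lambda^{-1}(w)))$ and using $\alpha_{\lambda_0}=\mathrm{Id}$ to eliminate the factor $\alpha'_{\lambda_0}=1$; the only nontrivial ingredient is the derivative of the inner $\alpha_\lambda^{-1}$, which is obtained by differentiating $\alpha_\lambda(\alpha_\lambda^{-1}(w))=w$ in $\lambda$ and setting $\lambda=\lambda_0$, giving $\partial_\lambda\alpha_\lambda^{-1}(w)|_{\lambda_0}=-\partial_\lambda\alpha_\lambda(w)|_{\lambda_0}$. Combining these ingredients yields the closed-form expression for $a_n(z)$.

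For part (2) I would simply plug the two special sequences into the formula from (1). For $\omega=\gamma=mj^{\infty}$ one has $\omega_n=j\neq m$ for every $n\geq 2$, so only the $n=1$ term contributes; using $g_{\omega|_0}=\mathrm{Id}$ and $g_{\omega|_1}=g_m$, this single surviving term is already in the claimed two-term form. For $\omega=\beta=jm^{\infty}$ the $n=1$ term vanishes and every $n\geq 2$ satisfies $\omega_n=m$, so $g_{\omega|_n}'(z)=g_m'(g_{\omega|_{n-1}}(z))\,g_{\omega|_{n-1}}'(z)$. Inserting this relation into $a_n(z)/g_{\omega|_n}'(z)$ makes the two summands of $a_n$ align as consecutive terms of a telescoping series in $n$; the partial sum from $n=2$ to $N$ then collapses to a single boundary contribution of the form $(g_j'(z))^{-1}\partial_\lambda\alpha_\lambda(g_j(z))|_{\lambda_0}$ plus a tail $-(g_{\omega|_N}'(z))^{-1}\partial_\lambda\alpha_\lambda(g_{\omega|_N}(z))|_{\lambda_0}$.

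The main (and essentially only) point requiring care is the vanishing of this tail as $N\to\infty$. This follows from the expandingness of $\tilde f_{\lambda_0}$ on $J(\tilde f_{\lambda_0})$, which yields $|g_{\omega|_N}'(z)|\geq C\eta^N\to\infty$ for any $(\omega,z)\in J(\tilde f_{\lambda_0})$, combined with the boundedness of $\partial_\lambda\alpha_\lambda|_{\lambda_0}$ on the compact set $\pi_2(J(\tilde f_{\lambda_0}))\subset\CC$ (recall that the hypothesis $J(G_\lambda)\subset\CC$ keeps us away from $\infty$). The same expandingness estimate, already invoked in Lemma~\ref{l:hfdiff} to secure convergence of the series in (\ref{eq:hfdiffeq}), simultaneously gives absolute convergence of the series in (1) and legitimizes the telescoping manipulation above.
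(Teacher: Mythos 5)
Your part (1) is essentially the paper's own argument: the paper's identity (\ref{eq:conjdiff}) is exactly your chain-rule computation of $\partial_\lambda(\alpha_\lambda\circ g_m\circ\alpha_\lambda^{-1})|_{\lambda=\lambda_0}$ using $\partial_\lambda\alpha_\lambda^{-1}(w)|_{\lambda_0}=-\partial_\lambda\alpha_\lambda(w)|_{\lambda_0}$ and $\alpha_{\lambda_0}=\mathrm{Id}$, after which (1) follows from Lemma~\ref{l:hfdiff}. For part (2) you take a genuinely different route. The paper does not sum the series at all: it invokes the uniqueness of the conjugacy family (\cite[Theorem 4.9]{suetds1}) to identify $\overline{h}_\lambda$ on the two periodic fibers, namely $\overline{h}_\lambda(j^\infty,z)=z$ and $\overline{h}_\lambda(m^\infty,z)=\alpha_\lambda(z)$, and then reads off both formulas from the one-step recursion (\ref{eq:hfdiffeq2}) combined with (\ref{eq:conjdiff}). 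You instead specialize the series from (1): for $\gamma=mj^\infty$ only the $n=1$ term survives, and for $\beta=jm^\infty$ you telescope via $g_{\beta|_{n}}'(z)=g_m'(g_{\beta|_{n-1}}(z))\,g_{\beta|_{n-1}}'(z)$ and kill the tail $(g_{\beta|_{N}}'(z))^{-1}\partial_\lambda\alpha_\lambda(g_{\beta|_{N}}(z))|_{\lambda_0}$ by expandingness (spherical and Euclidean derivatives are comparable on the compact set $J(G_{\lambda_0})\subset\CC$, and $\partial_\lambda\alpha_\lambda|_{\lambda_0}$ is bounded there); this is correct and self-contained, avoiding the external uniqueness theorem at the cost of the telescoping/tail estimate, whereas the paper's route is shorter and makes transparent \emph{why} the two formulas have this shape (the fibers over $j^\infty$ and $m^\infty$ move by $\mathrm{Id}$ and by $\alpha_\lambda$, respectively). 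One bookkeeping caution: Lemma~\ref{l:hfdiff} contributes the factor $-\partial_\lambda f_{\lambda,\omega_n}|_{\lambda_0}$, so the summand consistent with the lemma is $g_m'(g_{\omega|_{n-1}}(z))\,\partial_\lambda\alpha_\lambda(g_{\omega|_{n-1}}(z))|_{\lambda_0}-\partial_\lambda\alpha_\lambda(g_{\omega|_{n}}(z))|_{\lambda_0}$, i.e.\ the negative of $a_n$ as printed in statement (1); your telescoped answer agrees with part (2) as stated (and as used later, e.g.\ in Proposition~\ref{p:d2pm}), so you are in fact carrying the lemma's sign correctly, but you should make this sign convention explicit when you write out $a_n$ in part (1).
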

\begin{proof}
It is easy to see that 
\begin{equation}
\label{eq:conjdiff}
\frac{\partial (\alpha _{\l }g_{m}\alpha _{\l }^{-1}(z))}{\partial \l
}\bigg|_{\l =\l _{0}} 
=g_{m}'(z)\left(-\frac{\partial \alpha _{\l }(z)}{\partial
    \l}\big|_{\l =\l _{0}}\right)+ 
\frac{\partial \alpha _{\l }(g_{m}(z))}{\partial \l }\big|_{\l =\l _{0}}.
\end{equation}
By Lemma~\ref{l:hfdiff} and (\ref{eq:conjdiff}), statement (1) holds. 
We now prove statement (2). 
By the uniqueness of the conjugacy map $h_{\l }$ (\cite[Theorem 4.9]{suetds1}), 
we have for each $\l $ close to $\l _{0}$ and for each $j\neq m$,  that
$\overline{h}_{\l }(j^{\infty },z)=z\ (z\in J_{j^{\infty
  }}(\tilde{f}_{\l _{0}})=J(g_{j}))$  
and $\overline{h}_{\l }(m^{\infty },z)=\alpha _{\l }(z)\ (z\in
J_{m^{\infty }}(\tilde{f}_{\l _{0}})=J(g_{m})).$  
Therefore, by (\ref{eq:hfdiffeq2}) and (\ref{eq:conjdiff}), 
statement (2) holds.    
\end{proof}

\begin{cor}
\label{c:hfdiff2}
Let $(g_{1},\ldots ,g_{m})\in \Exp (m)\cap (\mbox{{\em Aut}}(\CC )\cup {\mathcal P})^{m}.$ 
Let $U$ be a bounded open subset of $\CC $ with $0\in U.$   
Let $\l _{0}=0\in U.$ 
Let $j\in \NN \cup \{ 0\} $ with $0\leq j\leq \deg (g_{m}).$  
For each $\l \in U$, let 
$$
f_{\l }:=(g_{1},\ldots, g_{m-1}, g_{m}+\l z^{j}).
$$ 
Assume that $\{ f_{\l }\} _{\l \in U}$ is a holomorphic family in
$\Exp (m)$ satisfying the 
Setting $(\ast )$. Further, letting $G_{\l }, h_{\l },\overline{h}_{\l
}$ be as in the Setting $(\ast )$ suppose that the map
$U\ni\l \mapsto \overline{h}_{\l }(\om ,z)$ is holomorphic. 
Note that if the open set $U$ is small enough, then by
Lemma~\ref{expopenlem} and Remark~\ref{r:setting},  
$\{ f_{\l }\} _{\l \in U}$ is automatically a holomorphic family in
$\Exp (m)$ satisfying the Setting $(\ast )$ and the map
$U\ni\l \mapsto \overline{h}_{\l }(\om ,z)$ is holomorphic.
%We also suppose that for each $\l \in U$, $J(G_{\l })\subset \CC .$ 
For each $\om =(\om _{1},\ldots \om _{n})\in \Sigma _{m}^{\ast }$, let 
$g_{w} =g_{\om _{n}}\circ \cdots \circ g_{\om _{1}}.$ 
%For each $\l \in U$, 
%let $h_{\l },\overline{h}_{\l }, G_{\l }$ be as in Setting $(\ast ).$ 
Then, for each $(\om ,z)\in J(\tilde{f}_{\l _{0}})$, 
$$ 
\frac{\partial \overline{h}_{\l }(\om ,z)}{\partial \l }\bigg|_{\l =\l _{0}}
=\sum _{n=1}^{\infty }\frac{1}{g_{\om |_{n}}'(z)}a_{n}(z),
$$
where 
$$
a_{n}(z)=\begin{cases}
-(g_{\om |_{n-1}}(z))^{j} \mbox{ if } \om _{n}=m\\ 
0 \mbox{ if } \om _{n}\neq m.
\end{cases}
$$
\end{cor}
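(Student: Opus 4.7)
The plan is to apply Lemma~\ref{l:hfdiff} directly, noting that the perturbation here is particularly simple: only the $m$-th coordinate map depends on $\lambda$, and it does so in an affine way.

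First, I would verify the hypotheses of Lemma~\ref{l:hfdiff}: since $\{f_\lambda\}_{\lambda \in U}$ is a holomorphic family in $\Exp(m)$ satisfying Setting $(\ast)$ with $\lambda_0 = 0$, and $\lambda \mapsto \overline{h}_\lambda(\omega,z)$ is holomorphic, and $J(G_\lambda) \subset \CC$ (which can be arranged by shrinking $U$ and, if necessary, using Remark~\ref{r:conjG} to conjugate out $\infty$; for the unperturbed coordinate this is automatic since $g_i \in \mathrm{Aut}(\CC) \cup \mathcal{P}$ forces $\infty \in F(G_0)$), the conclusion of Lemma~\ref{l:hfdiff} applies. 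Thus
\[
\frac{\partial \overline{h}_\lambda(\omega,z)}{\partial \lambda}\bigg|_{\lambda=0}
= \sum_{n=1}^{\infty} \frac{1}{f_{0,\omega|_n}'(z)}
\left(-\frac{\partial f_{\lambda,\omega_n}(f_{0,\omega|_{n-1}}(z))}{\partial \lambda}\bigg|_{\lambda=0}\right).
\]

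Next, since $\lambda_0 = 0$ gives $f_{0,i} = g_i$ for all $i \in \{1,\ldots,m\}$, we have $f_{0,\omega|_n} = g_{\omega|_n}$, which already matches the denominator in the claimed formula. To identify the numerator $a_n(z)$, I would split into two cases according to the value of $\omega_n$. If $\omega_n \neq m$ then $f_{\lambda,\omega_n} = g_{\omega_n}$ is independent of $\lambda$, so the partial derivative vanishes and $a_n(z)=0$. If $\omega_n = m$ then $f_{\lambda,m}(w) = g_m(w) + \lambda w^j$, hence $\frac{\partial f_{\lambda,m}(w)}{\partial \lambda}\big|_{\lambda=0} = w^j$; evaluating at $w = f_{0,\omega|_{n-1}}(z) = g_{\omega|_{n-1}}(z)$ and carrying the minus sign yields $a_n(z) = -(g_{\omega|_{n-1}}(z))^j$, as claimed.

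There is essentially no obstacle here: convergence of the series and the validity of term-by-term differentiation are already built into Lemma~\ref{l:hfdiff} via the expandingness of $G_0$. The only mild point to watch is a consistent convention for $g_{\omega|_0} = \mathrm{Id}$ in the $n=1$ term, which matches the convention already stated in Lemma~\ref{l:hfdiff}. The corollary is therefore a direct specialization of the general formula.
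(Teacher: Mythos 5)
Your proposal is correct and follows exactly the paper's route: the paper proves this corollary by the single remark that it follows immediately from Lemma~\ref{l:hfdiff}, which is precisely the specialization you carry out (with $\partial f_{\l ,m}/\partial \l = w^{j}$ evaluated at $w=g_{\om |_{n-1}}(z)$ and the other coordinates $\l$-independent). Your additional checks of the hypotheses and the $g_{\om |_{0}}=\mathrm{Id}$ convention are consistent with the setup of Lemma~\ref{l:hfdiff}.
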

\begin{proof}
The proof follows immediately from Lemma~\ref{l:hfdiff}.
\end{proof}

\begin{cor}
\label{c:hfdiff3}
Let $(g_{1},\ldots ,g_{m})\in \Exp (m)\cap (\mbox{{\em Aut}}(\CC )\cup {\mathcal P})^{m}.$ 
Let $U$ be a bounded open subset of $\CC $ with $0\in U.$   
Let $\l _{0}=0\in U.$ 
%Let $j\in \NN $ with $0\leq j\leq \deg (g_{m}).$  
For each $\l \in U$, let 
$$
f_{\l }:=(g_{1},\ldots, g_{m-1}, g_{m}+\l g_{m}').
$$ 
Assume that $\{ f_{\l }\} _{\l \in U}$ is a holomorphic family in
$\Exp (m)$ satisfying the Setting $(\ast )$. Further,
letting $G_{\l }, h_{\l },\overline{h}_{\l }$ be as in Setting $(\ast
)$ suppose that $\l \mapsto \overline{h}_{\l }(\om ,z)$ is holomorphic.    
Note that if the open set $U$ is small enough, then by
Lemma~\ref{expopenlem} and Remark~\ref{r:setting},  
$\{ f_{\l }\} _{\l \in U}$ is automatically a holomorphic family in
$\Exp (m)$ satisfying Setting $(\ast )$ and the map
$U\ni\l \mapsto \overline{h}_{\l }(\om ,z)$ is holomorphic.   
%We also suppose that for each $\l \in U$, $J(G_{\l })\subset \CC .$ 
For each $\om =(\om _{1},\ldots \om _{n})\in \Sigma _{m}^{\ast }$, let 
$g_{w} =g_{\om _{n}}\circ \cdots \circ g_{\om _{1}}.$ 
%For each $\l \in U$, 
%let $h_{\l },\overline{h}_{\l }, G_{\l }$ be as in Setting $(\ast ).$ 
Then, for each $(\om ,z)\in J(\tilde{f}_{\l _{0}})$, 
$$ 
\frac{\partial \overline{h}_{\l }(\om ,z)}{\partial \l }\bigg|_{\l =\l _{0}}
=\sum _{n=1}^{\infty }\frac{1}{g_{\om |_{n}}'(z)}a_{n}(z),
$$
where 
$$
a_{n}(z)=\begin{cases}
-g_{m}'(g_{\om |_{n-1}}(z)) \mbox{ if } \om _{n}=m\\ 
0 \mbox{ if } \om _{n}\neq m.
\end{cases}
$$

\end{cor}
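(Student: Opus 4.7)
The plan is to derive Corollary~\ref{c:hfdiff3} as a direct specialization of Lemma~\ref{l:hfdiff}. First, I would observe that at the base parameter $\lambda_0 = 0$ we have $f_{\lambda_0,i} = g_i$ for every $i \in \{1,\ldots,m\}$, so the compositions $f_{\lambda_0,\omega|_n}$ in the master formula
\[
\frac{\partial \overline{h}_\lambda(\omega,z)}{\partial \lambda}\bigg|_{\lambda=\lambda_0}
= \sum_{n=1}^\infty \frac{1}{f_{\lambda_0,\omega|_n}'(z)}\left(-\frac{\partial f_{\lambda,\omega_n}(f_{\lambda_0,\omega|_{n-1}}(z))}{\partial \lambda}\bigg|_{\lambda=\lambda_0}\right)
\]
reduce to $g_{\omega|_n}$, which accounts for the prefactor $1/g_{\omega|_n}'(z)$ in the target formula.

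Next I would compute the parameter derivative of the coordinate maps of $f_\lambda$. Since only the $m$-th coordinate of $f_\lambda = (g_1,\ldots,g_{m-1}, g_m + \lambda g_m')$ depends on $\lambda$, and it depends on $\lambda$ linearly, we get
\[
\frac{\partial f_{\lambda,i}(w)}{\partial \lambda}\bigg|_{\lambda=0} = \begin{cases} g_m'(w) & \text{if } i = m, \\ 0 & \text{if } i \neq m. \end{cases}
\]
Substituting $w = g_{\omega|_{n-1}}(z) = f_{\lambda_0,\omega|_{n-1}}(z)$ and picking up the minus sign from the master formula yields precisely the piecewise expression for $a_n(z)$ stated in the corollary.

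The remaining ingredients are the hypotheses needed to legitimately invoke Lemma~\ref{l:hfdiff}: namely, that $\{f_\lambda\}_{\lambda \in U}$ is a holomorphic family in $\Exp(m)$ satisfying Setting $(\ast)$ with $\lambda \mapsto \overline{h}_\lambda(\omega,z)$ holomorphic, and that $J(G_\lambda) \subset \mathbb{C}$. The former is assumed (and automatic when $U$ is small, by Lemma~\ref{expopenlem} and Remark~\ref{r:setting}); the latter can be arranged after conjugating by a M\"obius transformation as in Remark~\ref{r:conjG}, but in fact Lemma~\ref{l:hfdiff}'s conclusion as an identity of partial derivatives does not require $J(G_\lambda) \subset \mathbb{C}$ globally — only enough regularity near $(\omega,z)$, which is guaranteed by the expansion estimate that makes the series converge. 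Since none of these verifications involve genuine obstacles, this corollary is essentially a one-line computation after Lemma~\ref{l:hfdiff}; the only point requiring minor care is confirming that the perturbation $g_m + \lambda g_m'$ is well defined in $\mathcal{P} \cup \operatorname{Aut}(\mathbb{C})$ and that the series continues to converge uniformly on compacta of the phase space by the expansion $\inf \|(\tilde f_{\lambda_0}^n)'\| \geq C\eta^n$, so that termwise differentiation is valid.
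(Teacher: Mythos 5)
Your proposal is correct and is exactly the paper's argument: the paper proves Corollary~\ref{c:hfdiff3} in one line by invoking Lemma~\ref{l:hfdiff}, and your computation of $\partial f_{\l ,m}(w)/\partial \l |_{\l =0}=g_{m}'(w)$ (zero for the other coordinates) is precisely the specialization that yields $a_{n}(z)$. Your extra remarks on $J(G_{\l })\subset \CC$ and convergence of the series are just the implicit hypotheses of Lemma~\ref{l:hfdiff} spelled out, so nothing differs in substance.
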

\begin{proof}
By Lemma~\ref{l:hfdiff}, our Corollary holds.
\end{proof}
\begin{lem}
\label{l:genprin}
Let $U$ be a bounded open set in $\CC ^{d}$. 
Let $\l _{0}\in U.$  
Let $\{ f_{\l }\} _{\l \in U}=\{ f_{\l ,1},\ldots ,f_{\l ,m}\} _{\l
  \in U}$ be a holomorphic family in  
$\Exp(m)$ satisfying Setting $(\ast )$. 
Letting $G_{\l }, h_{\l },\overline{h}_{\l }$ be as in Setting $(\ast )$ 
we suppose that $U\ni\l \mapsto \overline{h}_{\l }(\om ,z)$ is holomorphic. 
Note that if $U$ is small enough, then by Lemma~\ref{expopenlem} and
Remark~\ref{r:setting},  
$\{ f_{\l }\} _{\l \in U}$ is automatically a holomorphic family in
$\Exp (m)$ satisfying Setting $(\ast )$  
and $\l \mapsto \overline{h}_{\l }(\om ,z)$ is holomorphic. 
%For each $\l \in U$, let $G_{\l }, h_{\l },\overline{h}_{\l }$ be as
%in Setting $(\ast ).$  
Suppose that for each $\l \in U$, $J(G_{\l })\subset \CC .$ 
We also require all of the following conditions to hold. 
\begin{itemize}
\item[(i)]
For each $(i,j)$ with $i\neq j$ and 
$f_{\l _{0},i}^{-1}(J(G_{\l _{0}}))\cap f_{\l _{0},j}^{-1}(J(G_{\l _{0}}))\neq \emptyset $, 
there exists a number $\alpha _{ij}\in \{ 1,\ldots, m\} $ such that 
$f_{\l _{0},i}(f_{\l _{0},i}^{-1}(J(G_{\l _{0}}))\cap f_{\l _{0},j}^{-1}(J(G_{\l _{0}})))\subset J(f_{\l _{0},\alpha _{ij}}).$   

\item[(ii)] 
If $i,j,k$ are mutually distinct elements in $\{ 1,\ldots ,m\} $, 
then $$f_{\l _{0},k}(f_{\l _{0},i}^{-1}(J(G_{\l _{0}}))\cap f_{\l _{0},j}^{-1}(J(G_{\l _{0}})))\subset F(G_{\l _{0}}).$$
\item[(iii)] 
For each $(j,k)$ with $j\neq k$, 
$f_{\l _{0},k}(J(f_{\l _{0},j}))\subset F(G_{\l _{0}}).$ 
\item[(iv)] 
If $i\neq j$ and if $z\in f_{\l _{0},i}^{-1}(J(G_{\l _{0}}))\cap f_{\l _{0},j}^{-1}(J(G_{\l _{0}}))$ 
(note: for such $z$, by {\em (i)-- (iii)} we have $z\in J_{i\alpha _{ij}^{\infty }}(\tilde{f}_{\l _{0}})\cap 
J_{j\alpha _{ji}^{\infty }}(\tilde{f}_{\l _{0}})$),  
then 
$$ \nabla _{\l } (\overline{h}_{\l }(i\alpha _{ij}^{\infty },z)-\overline{h}_{\l }(j\alpha _{ji}^{\infty },z))|_{\l =\l _{0}}\neq 0.$$ 
\end{itemize} 
Then, there exists an open neighborhood $U_{0}$ of $\l _{0}$ in $U$ such that 
$\{ f_{\l }\} _{\l \in U_{0}}$ satisfies the analytic transversality condition, the strong transversality condition 
and the transversality condition. 
\end{lem}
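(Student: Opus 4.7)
The plan is to establish the analytic transversality condition (ATC) at the base point $\lambda_0$ first, using the combinatorial hypotheses (i)--(iv), and then extend it to a small open neighborhood of $\lambda_0$ by joint continuity and a limit-extraction argument; Proposition~\ref{p:atctc} will then deliver the strong transversality and transversality conditions on a relatively compact open subset.

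The core step is a tail-rigidity argument. Fix $(\omega,z,\omega',z') \in J(\tilde f_{\lambda_0})^2$ with $\omega_1 = i \neq j = \omega_1'$ and $g_{\omega,z,\omega',z'}(\lambda_0)=0$. Since $\overline h_{\lambda_0} = \mathrm{Id}$, this forces $z = z'$, and Lemma~\ref{l:jtfwo} identifies $z$ as a point of $f_{\lambda_0,i}^{-1}(J(G_{\lambda_0})) \cap f_{\lambda_0,j}^{-1}(J(G_{\lambda_0}))$, so hypothesis (i) yields $f_{\lambda_0,i}(z) \in J(f_{\lambda_0,\alpha_{ij}})$. Now $f_{\lambda_0,\omega_2}(f_{\lambda_0,i}(z)) \in J_{\sigma^2\omega} \subset J(G_{\lambda_0})$; if $\omega_2 \neq \alpha_{ij}$ then hypothesis (iii) applied to the pair $(\alpha_{ij},\omega_2)$ would place this point in $F(G_{\lambda_0})$, a contradiction. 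Iterating, using forward invariance of $J(f_{\lambda_0,\alpha_{ij}})$ under $f_{\lambda_0,\alpha_{ij}}$, one concludes $\omega = i\alpha_{ij}^\infty$; symmetrically $\omega' = j\alpha_{ji}^\infty$. Thus the bad pairs at $\lambda_0$ are exactly those to which hypothesis (iv) applies, so (iv) gives $\nabla_\lambda g \neq 0$ at $\lambda_0$ on the entire bad-set.

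To propagate ATC to a neighborhood of $\lambda_0$, for each $i \neq j$ set
$$
K_{ij} := \{(i\alpha_{ij}^\infty, z, j\alpha_{ji}^\infty, z) : z \in f_{\lambda_0,i}^{-1}(J(G_{\lambda_0})) \cap f_{\lambda_0,j}^{-1}(J(G_{\lambda_0}))\},
$$
which is a compact (possibly empty) subset of $J(\tilde f_{\lambda_0})^2$. Hypothesis (iv) and compactness produce a uniform lower bound $c_{ij}>0$ on $|\nabla_\lambda g(\lambda_0)|$ over $K_{ij}$; Cauchy's formula applied to the holomorphic map $\lambda \mapsto \overline h_\lambda(\omega,z)$, combined with the joint continuity from Setting $(\ast)$, upgrades this to joint continuity of $\nabla_\lambda g$ in all variables, so the bound $c_{ij}/2$ persists on an open neighborhood $V_{ij}$ of $K_{ij} \times \{\lambda_0\}$ in $J(\tilde f_{\lambda_0})^2 \times U$. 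A standard limit-extraction then shows that on a small enough $U_0 \ni \lambda_0$, every $(\omega,z,\omega',z',\lambda)$ with $\omega_1 \neq \omega_1'$, $\lambda \in U_0$, and $g(\lambda)=0$ lies in some $V_{ij}$: any violating sequence, by compactness of $J(\tilde f_{\lambda_0})^2$ and continuity of $g$, would accumulate to a bad pair at $\lambda_0$ outside $\bigcup_{i \neq j} K_{ij}$, contradicting the previous paragraph. This gives ATC on $U_0$, and Proposition~\ref{p:atctc} then yields STC and TC on any relatively compact open subset of $U_0$. The main obstacle will be the tail-rigidity induction, since (i)--(iii) only constrain single applications of the generators and the argument must propagate forward along the whole symbol sequence (uniformly in the unknown index $\alpha_{ij}$); the remaining pieces are a soft continuity-and-compactness bootstrap.
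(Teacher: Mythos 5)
Your proposal is correct and takes essentially the same route as the paper: hypotheses (i)--(iii) (your induction in fact only invokes (i) and (iii), together with Lemma~\ref{l:jtfwo} and the invariance properties in Remark~\ref{rem1}) force every zero of $g_{\om,z,\om',z'}$ at $\l_0$ with $\om_1\neq\om'_1$ to be of the form $(i\alpha_{ij}^{\infty},z,j\alpha_{ji}^{\infty},z)$, which is exactly the paper's inclusion (\ref{eq:g0gen}), and then (iv) plus the Cauchy-formula/compactness limit-extraction you describe gives the analytic transversality condition on a neighborhood $U_0$ of $\l_0$, with Proposition~\ref{p:atctc} supplying the strong transversality and transversality conditions after shrinking. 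The paper's own proof is simply a terser version of this, leaving the continuity-and-compactness propagation implicit.
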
 
\begin{proof}
By conditions (i),(ii), (iii), Lemma~\ref{l:jtfwo} and Remark~\ref{rem1}(1),  
we obtain that 
\begin{gather}
\label{eq:g0gen}
\begin{split}
\ & \ \ \ \   \{ (\om ,z,\om ',z')\in J(\tilde{f}_{\l _{0}})^{2}: \om _{1}\neq \om '_{1}, 
\overline{h}_{0}(\om ,z)-\overline{h}_{0}(\om ',z')=0\} \\  
& \subset \bigcup _{(i,j):i\neq j}\{ (i\alpha _{ij}^{\infty }, z, j\alpha _{ji}^{\infty },z')\in J(\tilde{f}_{\l _{0}})^{2}: 
z=z'\in f_{\l _{0},i}^{-1}(J(G_{\l _{0}}))\cap f_{\l _{0},j}^{-1}(J(G_{\l _{0}}))\} . 
\end{split} 
\end{gather}
From (\ref{eq:g0gen}) and condition (iv), 
we conclude that there exists an open neighborhood $U_{0}$ of $\l
_{0}$ in $U$ such that  
$\{ f_{\l }\} _{\l \in U_{0}}$ satisfies the analytic transversality condition. 
By Proposition~\ref{p:atctc}, shrinking $U_{0}$ if necessary, 
it follows that $\{ f_{\l }\} _{\l \in U_{0}}$ satisfies the strong transversality condition and 
the transversality condition. 
\end{proof}
\begin{lem}
\label{l:atcemb}
Let $d_{1},d_{2}\in \NN $ with $d_{1}\leq d_{2}.$ 
Let $U$ be a bounded open subset of $\CC^{d_{1}}$ and 
let $V$ be a bounded open subset of $\CC ^{d_{2}}.$  
Let $\{ f_{\l }\} _{\l \in U}$ be a holomorphic family in $\Exp (m)$ over $U$ with base point $\l _{0}$ 
satisfying the analytic transversality condition. 
Let $\{ g_{\g }\} _{\g \in V}$ be a holomorphic family in $\Exp(m)$ over $V$ and let $\g _{0}\in V.$ 
Suppose that there exists a holomorphic embedding $\eta :U\rightarrow V$ with $\eta (\l _{0})=\g _{0}$ 
such that $g_{\eta (\l )}=f_{\l }$ for each $\l \in U.$  
Then there exists an open neighborhood $W$ of $\g _{0}$ in $V$ such that 
$\{ g_{\g }\} _{\g \in W}$ is a holomorphic family in $\Exp (m)$ over $W$ with base point $\g _{0}$ 
satisfying the analytic transversality condition, the strong transversality condition, and the transversality condition.   
\end{lem}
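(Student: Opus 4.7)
\textbf{Proof proposal for Lemma~\ref{l:atcemb}.} The plan is to transport the analytic transversality condition from $U$ to a neighborhood of $\gamma_0$ in $V$ by exploiting the embedding $\eta$ together with a compactness argument, and then invoke Proposition~\ref{p:atctc} to upgrade the conclusion to the strong transversality condition and the transversality condition.

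First I would observe that, since $g_{\eta(\lambda)}=f_\lambda$ for every $\lambda\in U$, the uniqueness of the conjugacy maps in Setting $(\ast)$ (see \cite[Theorem 4.9]{suetds1} and Remark~\ref{r:setting}) gives $\bar h^{g}_{\eta(\lambda)}(\omega,z)=\bar h^{f}_{\lambda}(\omega,z)$ for all $(\omega,z)\in J(\tilde f_{\lambda_0})=J(\tilde g_{\gamma_0})$ and all $\lambda$ in a small neighborhood of $\lambda_0$. Setting
$$
\tilde g_{\omega,z,\omega',z'}(\gamma):=\bar h^{g}_{\gamma}(\omega,z)-\bar h^{g}_{\gamma}(\omega',z'),\qquad g^{f}_{\omega,z,\omega',z'}(\lambda):=\bar h^{f}_{\lambda}(\omega,z)-\bar h^{f}_{\lambda}(\omega',z'),
$$
this relation translates into the identity $\tilde g_{\omega,z,\omega',z'}\circ\eta=g^{f}_{\omega,z,\omega',z'}$ near $\lambda_0$. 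Differentiating at $\lambda_0$ gives
$$
\nabla_{\lambda}g^{f}_{\omega,z,\omega',z'}(\lambda_0)=D\tilde g_{\omega,z,\omega',z'}(\gamma_0)\circ D\eta(\lambda_0).
$$
If $\tilde g_{\omega,z,\omega',z'}(\gamma_0)=0$ and $\omega_1\neq\omega'_1$, then $g^{f}_{\omega,z,\omega',z'}(\lambda_0)=0$, so by the analytic transversality condition for $\{f_\lambda\}$ the left-hand side is nonzero; since $\eta$ is a holomorphic embedding, $D\eta(\lambda_0)$ is injective, and therefore $\nabla_{\gamma}\tilde g_{\omega,z,\omega',z'}(\gamma_0)\neq 0$.

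Next I would make this uniform in $(\omega,z,\omega',z')$ by compactness. The set
$$
K:=\{(\omega,z,\omega',z')\in J(\tilde g_{\gamma_0})^2:\omega_1\neq\omega'_1\}=\bigcup_{i\neq j}(\{i\}\times\Sigma_m\times J(\tilde g_{\gamma_0})\times\{j\}\times\Sigma_m\times J(\tilde g_{\gamma_0}))\cap J(\tilde g_{\gamma_0})^2
$$
is a finite union of compact sets and hence compact. Define the continuous function
$$
\Phi(\omega,z,\omega',z',\gamma):=|\tilde g_{\omega,z,\omega',z'}(\gamma)|+\|\nabla_{\gamma}\tilde g_{\omega,z,\omega',z'}(\gamma)\|
$$
on $K\times V$. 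By the previous paragraph $\Phi>0$ on the compact slice $K\times\{\gamma_0\}$, so there is $c>0$ with $\Phi\geq c$ there, and by continuity there is an open neighborhood $W_0\subset V$ of $\gamma_0$ on which $\Phi\geq c/2$. In particular, for every $(\omega,z,\omega',z',\gamma)\in K\times W_0$ with $\tilde g_{\omega,z,\omega',z'}(\gamma)=0$ one has $\nabla_{\gamma}\tilde g_{\omega,z,\omega',z'}(\gamma)\neq 0$, which is condition (b) of Definition~\ref{d:analtc} for the family $\{g_\gamma\}_{\gamma\in W_0}$.

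Finally, the continuity of the Julia set with respect to parameters (see Remark~\ref{r:conjG} and \cite[Lemma~4.1]{suetds1}), together with $J(G^{g}_{\gamma_0})=J(G^{f}_{\lambda_0})\subset\mathbb C$, lets us shrink $W_0$ to an open neighborhood $W$ of $\gamma_0$ on which $J(G^{g}_\gamma)\subset\mathbb C$, verifying condition (a) of Definition~\ref{d:analtc}. Thus $\{g_\gamma\}_{\gamma\in W}$ satisfies the analytic transversality condition, and Proposition~\ref{p:atctc} then yields the strong transversality condition and the transversality condition on any relatively compact subneighborhood, which we may take to be $W$ itself after one more shrinking. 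The main obstacle is producing a neighborhood that is uniform in the pair $(\omega,z,\omega',z')$, and that is precisely what the compactness argument via $\Phi$ accomplishes.
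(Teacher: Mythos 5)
Your proof is correct and takes essentially the same route as the paper's: identify the conjugacies via uniqueness ($h^{g}_{\eta(\lambda)}=h^{f}_{\lambda}$), transfer condition (b) of the analytic transversality condition to the slice $\{\gamma_{0}\}$ by the chain rule, shrink $W$ (your compactness argument with $\Phi$ just makes explicit what the paper dispatches with ``shrinking $W$ if necessary''), and conclude with Proposition~\ref{p:atctc}. Two cosmetic remarks: the nonvanishing of $\nabla_{\gamma}\tilde g_{\omega,z,\omega',z'}(\gamma_{0})$ already follows from the nonvanishing of the composition with $D\eta(\lambda_{0})$ (injectivity of $D\eta(\lambda_{0})$ is not what is used), and $\Phi$ should be defined on $K\times W_{1}$ for a neighborhood $W_{1}$ of $\gamma_{0}$ on which the conjugacy maps exist (Remark~\ref{r:setting}) and $J(G_{\gamma})\subset\CC$, rather than on all of $K\times V$.
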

\begin{proof}
By Remark~\ref{r:setting}, there exists an open neighborhood $W$ of $\g _{0}$ in $V$ such that 
$\{ g_{\g }\} _{\g \in W}$ satisfies Setting $(\ast )$ and 
letting $h_{\g }, \overline{h}_{\g }$ be as in Setting $(\ast )$, 
for each $(\om, z)\in J(\tilde{g}_{\g _{0}})$ 
the map $W\ni\g \mapsto \overline{h}_{\g }(\om ,z)$ is holomorphic. 
Let $h_{\l }^{0}(\om ,z)=(\om ,\overline{h}_{\l }^{0}(\om ,z))$ 
be the conjugacy map as in the Setting $(\ast )$ for the family $\{ f_{\l }\} _{\l \in U}.$ 
Then shrinking $U$ if necessary, by the uniqueness of the family of conjugacy maps (see Remark~\ref{r:setting}), 
we obtain $h_{\eta (\l )}=h_{\l }^{0}$ for each $\l\in U.$ 
Since $\{ f_{\l }\} _{\l \in U}$ satisfies the analytic transversality condition, 
shrinking $W$ if necessary,  it follows that $\{ g_{\g }\} _{\g \in W}$ satisfies the 
analytic transversality condition. 
By Proposition~\ref{p:atctc}, shrinking $W$ if necessary again, 
we obtain that $\{ g_{\g }\} _{\g \in W}$ satisfies the strong transversality condition and the transversality condition. 
\end{proof}

\begin{rem}
\label{r:atcex}
By Lemma~\ref{l:hfdiff}, Corollaries~\ref{c:hfdiff1},
\ref{c:hfdiff2},\ref{c:hfdiff3}, Lemmas~\ref{l:genprin},
\ref{l:atcemb},    
and Proposition~\ref{p:atctc},  
we can obtain many examples of holomorphic families $\{ f_{\l }\} _{\l
  \in U}$ in $\Exp(m)$ satisfying the  
analytic transversality conditions, the strong transversality
condition and the transversality condition.  
In the following section we will provide various kinds of 
examples of the holomorphic 
families satisfying the analytic transversality condition.  
\end{rem}
\section{Applications and Examples}
\label{Applications}
In this section, we apply the results of the previous one to describe
various examples and to solve a variety of emerging problems. 
For a polynomial $g\in {\mathcal P}$, we set 
$$
K(g):=\{ z\in \CC : \{ g^{n}(z)\} _{n\in \NN } \mbox{ is bounded in }
\CC \} 
$$ 
and we recall that $K(g)$ is commonly referred to as the filled in 
Julia set of the polynomial $g$.
\begin{thm}
\label{t:d1d2ex}
Let $(d_{1},d_{2})\in \NN ^{2}$ be such that  $d_{1},d_{2}\geq 2$ and
$(d_{1},d_{2})\neq (2,2).$  
Let $b=ue^{i\theta }\in \{ 0<|z|<1\}$, where $0<u<1$ and $\theta \in [0,2\pi ).$ 
Let $\alpha \in [0,2\pi )$ be a real number such that 
there exists an integer $n\in \ZZ $ with $d_{2}(\pi +\theta )+\alpha
=\theta +2n\pi .$    
Let $\b _{1}(z)=z^{d_{1}}.$ For each $t>0$, 
let $g_{t}(z)=te^{i\alpha }(z-b)^{d_{2}}+b.$ 
%For each $\l \in \CC \setminus \{ 0\} $, 
%let $f_{\l }=(z^{d_{1}},\l e^{i\alpha }(z-b)^{d_{2}}+b)\in {\mathcal P}^{2}.$ 
Then there exists a point $t_{1}\in (0,\infty )$ and an open 
neighborhood $U$ of $0$ in $\CC $ such that 
the family $\{ f_{\l }=(\b_{1},g_{t_{1}}+\l g_{t_{1}}')\} _{\l \in U}$ with $\l _{0}=0$
satisfies all the conditions {\em (i)--(iv)}. 
\begin{itemize}
\item[(i)] 
$\{ f_{\l }\} _{\l \in U}$ is a holomorphic family in $\Epb (2)$ satisfying the analytic transversality condition, 
the strong transversality condition  
and the transversality condition. 

\item[(ii)] 
For each $\l\in U$, $s(\l )<2$, where we recall that $s(\l )=\delta (f_{\l }).$  

\item[(iii)] 
%For {\em Leb}$_{2}$-a.e. $\l \in U$, 
There exists a subset $\Omega $ of $U$ with 
$\HD (U\setminus \Omega )<\HD (U)=2$ such that for each $\l \in \Omega $, 
$$1<\frac{\log (d_{1}+d_{2})}{\sum _{j=1}^{2}\frac{d_{i}}{d_{1}+d_{2}}\log (d_{i})}< \HD (J(G_{\l }))=s(\l )<2.$$   

\item[(iv)] 
$J(G_{\l _{0}})$ is connected and $\HD (J(G_{\l _{0}}))=s(\l _{0})<2.$ 
Moreover, $G_{\l _{0}}$ satisfies the open set condition.
Furthermore, for each $t\in (0,t_{1})$, 
$\langle \b_{1},g_{t}\rangle $ satisfies the separating open set condition, 
$\b _{1}^{-1}(J(\langle \b _{1},g_{t}\rangle ))\cap 
g_{t}^{-1}(J(\langle \b _{1},g_{t}\rangle ))=\emptyset $, 
$J(\langle \b _{1},g_{t}\rangle )$ is disconnected, 
and 
$$1<\frac{\log (d_{1}+d_{2})}{\sum _{j=1}^{2}\frac{d_{i}}{d_{1}+d_{2}}\log (d_{i})}< 
\HD (J(\langle \b _{1},g_{t}\rangle ))=\delta (\b _{1},g_{t})<2.$$ 
%\item[(v)] 
%For each neighborhood $V$ of $\l _{0}$ in $\CC $ there exists a non-empty open set $W$ in $V$ 
%such that for each $\l \in W$, 
%$J(G_{\l })$ is connected. 
\end{itemize}
Moreover, there exists an open connected neighborhood $Y$ of $(\b
_{1},g_{t_{1}})$ in ${\mathcal P}^{2}$  
such that the family $\{ \g=(\g_{1},\g_{2})\} _{\g\in Y} $ 
satisfies all the conditions {\em (v)--(viii)}.
\begin{itemize}
\item[(v)] 
$\{ \g=(\g_{1},\g_{2})\} _{\g\in Y} $ is a holomorphic family 
in $\Epb (2)$ satisfying the analytic transversality condition,
the strong transversality condition and
 the transversality condition.
\item[(vi)] 
For each $\g\in Y$, $\delta (\g)<2$. 
\item[(vii)] 
%For a.e. $\g =(\g_{1},\g_{2}) \in Y$ with respect to the Lebesgue measure on ${\mathcal P}^{2}$, 
There exists a subset $\Gamma $ of $Y$ with $\HD (Y\setminus \Gamma )<\HD (Y)=2(d_{1}+d_{2}+2)$ 
such that for each $\l \in \Gamma $, 
$$ 
1<\frac{\log (d_{1}+d_{2})}{\sum _{j=1}^{2}\frac{d_{i}}{d_{1}+d_{2}}\log (d_{i})}< 
\HD (J(\langle \g_{1} ,\g_{2}\rangle )=\delta (\g)<2.
$$ 
\item[(viii)] 
For each neighborhood $V$ of $(\b _{1},g_{t_{1}})$ in $Y$ there exists
a non-empty open set $W$ in $V$ such that 
for each $\g =(\g_{1},\g_{2})\in W$, we have that 
$\g _{1}^{-1}(J(\langle \g _{1},\g _{2}\rangle ))\cap \g _{2}^{-1}(J(\langle \g _{1},\g _{2}\rangle ))\neq \emptyset $ 
and that 
$J(\langle \g _{1},\g_{2}\rangle )$ is connected. 
\end{itemize}

\end{thm}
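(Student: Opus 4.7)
The plan is to first identify $t_1$ from the phase condition on $\alpha$, then to establish the dynamical properties at $t\le t_1$ yielding (iv), next to verify the analytic transversality condition for the family $\{f_\lambda\}$ via Lemma~\ref{l:genprin}, and finally to extend to the $\mathcal P^2$-family via Lemma~\ref{l:atcemb} together with a separate perturbation argument for (viii).

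\textit{Tangency value and proof of (iv).} A direct computation using $d_2(\pi+\theta)+\alpha\equiv\theta\pmod{2\pi}$ gives $g_t(-e^{i\theta})=(t(1+u)^{d_2}+u)e^{i\theta}$; I set $t_1:=(1-u)/(1+u)^{d_2}$, so that $g_{t_1}(-e^{i\theta})=e^{i\theta}$. Since $z\mapsto|z-b|$ attains its maximum on $\overline D$ uniquely at $-e^{i\theta}$, one has $g_t(\overline D)\subset\overline D$ for every $t\le t_1$, with $g_t(\overline D)\subset D$ strictly for $t<t_1$ and $g_{t_1}(\overline D)\cap\partial D=\{e^{i\theta}\}$. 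Because $0$ and $b$ are superattracting fixed points of $\beta_1$ and $g_t$ respectively, the planar postcritical set lies in $\overline D$, and combined with Remark~\ref{exphyplem} this yields $(\beta_1,g_t)\in\Epb(2)$. For $t<t_1$ the strict inclusion produces an open neighborhood $V\supset J(G_t)$ with $\overline{\beta_1^{-1}(V)}\cap\overline{g_t^{-1}(V)}=\emptyset$, giving the separating OSC and, via (\ref{bsseq}), disconnected $J(G_t)$. At $t=t_1$ the same construction still gives the (non-separating) OSC, while the discrete contact set glues the two pieces so $J(G_{\lambda_0})$ is connected by a standard self-similar-set argument. Theorem~\ref{t:deltaHD} gives $\HD(J)=\delta$; the strict lower bound $\log(d_1+d_2)/\sum p_j\log d_j$ with $p_j=d_j/(d_1+d_2)$ is the degree-based Bowen-parameter estimate of \cite{subowen,sumi2}, strict precisely when $(d_1,d_2)\neq(2,2)$.

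\textit{Analytic transversality and proof of (i)--(iii).} For sufficiently small $U\ni 0$, Lemmas~\ref{expopenlem} and \ref{l:epbopen} together with Remark~\ref{r:setting} show $\{f_\lambda=(\beta_1,g_{t_1}+\lambda g_{t_1}')\}_{\lambda\in U}$ is a holomorphic family in $\Epb(2)$ satisfying Setting~$(\ast)$. I verify the four hypotheses of Lemma~\ref{l:genprin}: conditions (i)--(iii) hold because the contact set $\beta_1^{-1}(J)\cap g_{t_1}^{-1}(J)$ is finite and both $\beta_1$ and $g_{t_1}$ map its points into $\partial D=J(\beta_1)=J(f_{\lambda_0,1})$, so $\alpha_{12}=\alpha_{21}=1$ works. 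Condition (iv) reduces to a clean single-term computation via Corollary~\ref{c:hfdiff3}: since $\overline h_\lambda(1^\infty,z)=z$ on $J(\beta_1)$ by uniqueness of the conjugacy, $\partial_\lambda\overline h_\lambda(1^\infty,z)|_0=0$; while for $\omega=21^\infty$ the series in Corollary~\ref{c:hfdiff3} collapses to its $n=1$ term and yields $\partial_\lambda\overline h_\lambda(21^\infty,z)|_0=-1$, so $\nabla_\lambda g_{1^\infty,z,21^\infty,z}|_{\lambda=0}=1\neq 0$. Lemma~\ref{l:genprin} then yields ATC on a shrunken $U$, and Proposition~\ref{p:atctc} upgrades to STC and TC, establishing (i). Continuity of $s(\lambda)$ (Theorem~\ref{t:delrp}) combined with $s(\lambda_0)<2$ from Step~1 gives (ii) after further shrinking $U$. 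For (iii), Theorem~\ref{t:stcmain} applied with $\xi=2$ bounds $\HD(\{\lambda\in U:\HD(J(G_\lambda))<\min\{2,s(\lambda)\}\})\le\sup_U s+2-2<2=\HD(U)$, and on the complement $\HD(J(G_\lambda))=s(\lambda)$ with the lower bound inherited by continuity of $s$.

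\textit{The $\mathcal P^2$-family, (v)--(viii), and main obstacle.} For (v)--(vii), I embed $U\hookrightarrow\mathcal P^2$ holomorphically by $\lambda\mapsto(\beta_1,g_{t_1}+\lambda g_{t_1}')$ and invoke Lemma~\ref{l:atcemb}: an open neighborhood $Y\subset\mathcal P^2$ of $(\beta_1,g_{t_1})$ inherits ATC, STC and TC as a holomorphic family of complex dimension $d_1+d_2+2$, and Theorem~\ref{t:stcmain} produces the set $\Gamma$. For (viii), given any neighborhood $V\subset Y$ of $(\beta_1,g_{t_1})$, I perturb transversally to the $\lambda$-subspace so that $g_{t_1}$ is replaced by a $\gamma_2$ with $\gamma_2(\overline D)\supsetneq\overline D$ slightly while keeping $b$ superattracting, so $(\gamma_1,\gamma_2)\in\Epb(2)$; then $\gamma_1^{-1}(J)\cap\gamma_2^{-1}(J)$ is a persistent non-empty arc rather than a discrete set, and $J(\langle\gamma_1,\gamma_2\rangle)$ is connected by the same gluing argument as in Step~1. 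The main obstacles are \emph{not} the transversality calculation (which is clean thanks to the triviality of the conjugacy on $J(\beta_1)$ and the collapse of the series in Corollary~\ref{c:hfdiff3}); rather they are (a) establishing the upper bound $s(\lambda_0)<2$ at the tangency, which underlies (ii) and the applicability of Theorem~\ref{t:stcmain} with $\xi=2$ and which requires a direct pressure/contraction-ratio estimate exploiting $|\beta_1'|\equiv d_1$ on $\partial D$ and $b\notin J(G_{\lambda_0})$, and (b) the careful analysis of the contact set at $t=t_1$ needed both to identify $\alpha_{12},\alpha_{21}$ in Lemma~\ref{l:genprin} and to prove connectedness of $J(G_{\lambda_0})$; both demand a detailed description of the dynamics of $\langle\beta_1,g_{t_1}\rangle$ on $\overline D$.
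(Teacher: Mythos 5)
The central gap is your identification of $t_1$. You take $t_1=(1-u)/(1+u)^{d_2}$, the parameter at which $g_t(\overline{D})$ becomes internally tangent to $\partial D$, i.e.\ at which $J(\beta_1)$ first meets $g_t^{-1}(J(\beta_1))$. The contact the theorem needs, however, is between $\beta_1^{-1}(J(g_t))$ and $g_t^{-1}(J(\beta_1))$ (the boundaries of the two preimage pieces inside $\mathrm{int}(K(g_t))\setminus K(\beta_1)$), and this occurs at a parameter which is at most your value and in general strictly smaller. Concretely, for $(d_1,d_2)=(3,2)$, $b=0.1$ (the case of Figure~\ref{fig:ConnOSC1}) your formula gives $t_1\approx 0.744$, but already at $t=0.7$ the closed curve $\beta_1^{-1}(J(g_t))=\{\,|z^{3}-0.1|=1/0.7\,\}$ contains the real point $z\approx -1.099$ with $|g_t(z)|\approx 1.11>1$ as well as points with $|g_t(z)|<1$, so it crosses $g_t^{-1}(J(\beta_1))$; hence $\beta_1^{-1}(J(\langle\beta_1,g_t\rangle))\cap g_t^{-1}(J(\langle\beta_1,g_t\rangle))\neq\emptyset$ for some $t<t_1$, contradicting your claims in (iv), and at your $t_1$ the open set condition and your description of the contact set are no longer available. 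The paper defines $t_1$ implicitly, as the supremum of $t$ such that $K(\beta_1)\subset \mathrm{int}(\beta_1^{-1}(K(g_c)))\subset\subset \mathrm{int}(g_c^{-1}(K(\beta_1)))\subset\subset \mathrm{int}(K(g_c))$ for all $c<t$, and proves (its Claims 1 and 2) that at this $t_1$ one still has $J(\beta_1)\cap J(g_{t_1})=\emptyset$ and only the middle inclusion degenerates. Your picture also invalidates the transversality step: placing the contact on $\partial D$ and taking $\alpha_{12}=\alpha_{21}=1$ would force $g_{t_1}(J(\beta_1))\cap J(G_{\lambda_0})\neq\emptyset$, violating hypothesis (iii) of Lemma~\ref{l:genprin}. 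At the true contact points one has $\beta_1(z)\in J(g_{t_1})$ and $g_{t_1}(z)\in J(\beta_1)$, so $\alpha_{12}=2$, $\alpha_{21}=1$, the relevant itineraries are $12^{\infty}$ and $21^{\infty}$, and Corollary~\ref{c:hfdiff3} gives a genuine infinite series for $\partial_\lambda\overline{h}_{\lambda}(12^{\infty},z)$ which must be estimated (its modulus is at most $\sum_{n\geq 2}d_2^{-(n-2)}d_1^{-1}|z|^{-(d_1-1)}<1$) before nonvanishing can be concluded; it is not the single-term computation you describe.

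Beyond this, the two points you defer are exactly where the specific hypotheses are used, and your proposed substitutes do not work. The inequality $s(\lambda_0)<2$ is not obtained by a pressure estimate: the paper shows via Green's functions that $\beta_1^{-1}(J(g_{t_1}))\neq g_{t_1}^{-1}(J(\beta_1))$ (its Claim 3 — the only place where $(d_1,d_2)\neq(2,2)$ is used, whereas the strictness of the lower bound on the Bowen parameter comes from $b\neq 0$ via \cite[Theorem 3.15]{subowen}, not from $(d_1,d_2)\neq(2,2)$), deduces that $J(G_{\lambda_0})$ is a proper subset of the closure of $\mathrm{int}(K(g_{t_1}))\setminus K(\beta_1)$, and concludes $\HD(J(G_{\lambda_0}))<2$ from \cite[Theorem 1.25]{sumi06}, with $\HD(J(G_{\lambda_0}))=\delta$ coming from the open set condition (Theorem~\ref{t:deltaHD}). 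Connectedness of $J(G_{\lambda_0})$, and of $J(\langle\gamma_1,\gamma_2\rangle)$ in (viii), is not a ``standard self-similar gluing'': the generators are non-injective and the pieces $\beta_1^{-1}(J)$, $g_{t_1}^{-1}(J)$ need not be connected; the paper invokes \cite[Theorems 1.5(2) and 1.7]{sumiintcoh} for postcritically bounded polynomial semigroups whose preimages of $J$ intersect. Finally, for (viii) a soft perturbation making ``$\gamma_2(\overline{D})\supsetneq\overline{D}$'' does not address the relevant intersection; the paper tracks a contact point by a holomorphic map $\eta(\gamma_2)=\gamma_2\circ\xi\circ\zeta(\gamma_2)$, shows $\eta$ is non-constant using the sub-critical parameters $t<t_1$, and combines the open mapping theorem with Claim 3 to produce an open set of parameters on which $\gamma_1^{-1}(J)\cap\gamma_2^{-1}(J)\neq\emptyset$.
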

\begin{proof}
%Let $h_{1}(z)=z^{d_{1}}.$ 
Let $z_{0}\in \{ |z|=1\} =J(\b_{1})$ be a point such that 
$|z_{0}-b|=\sup _{z\in J(\b_{1})}|z-b|.$ 
Then $z_{0}=e^{i(\pi +\theta )}.$  
Let $v:=|z_{0}-b|=1+|b|.$ 
%Then 
%$\{ z: |z-b|=v\} \setminus J(\b_{1})\neq \emptyset .$ 
Let $z_{1}:=2b-z_{0}. $ Then $z_{1}\in \{ z: |z-b|=v\} \setminus J(\b_{1})$.  
%For each $t>0$, let 
%$g_{t}(z)=te^{i\alpha }(z-b)^{d_{2}}+b.$ 
We note that 
\begin{equation}
\label{eq:gtnote}
g_{(\frac{1}{v})^{d_{2}-1}}(z_{0})=z_{1}.
\end{equation}
Let $r\in (1-u,1)$. Then $D(b,r)\subset \mbox{int}(K(\b_{1})).$ 
We also note that for each $t>0$, 
\begin{equation}
\label{eq:gtnote2}
g_{t}^{-1}(D(b,r))=D(b,(r/t)^{\frac{1}{d_{2}}}).
\end{equation}
Let $R\in \RR $ be any real number such that 
\begin{equation}
\label{eq:Rdef}
R>\exp \left(\frac{1}{d_{1}d_{2}-d_{1}-d_{2}}(-d_{1}\log
  r+d_{1}d_{2}\log 2)\right).  
\end{equation} 
We take $R$ satisfying (\ref{eq:Rdef}) so large that 
\begin{equation}
\label{eq:Rin}
D\left(b,\frac{3}{4}R^{\frac{1}{d_{1}}}\right)\subset \b_{1}^{-1}(D(b,R))
\subset D\left(b,\frac{3}{2}
  R^{\frac{1}{d_{1}}}\right)\subset \subset D(b,R), 
\end{equation}
where $A\subset \subset B$ denotes that $A$ is contained  in a compact
subset of $B.$  Let $a_R=1/R^{d_{2}-1}.$ 
By (\ref{eq:Rdef}), we obtain 
\begin{equation}
\label{eq:rt0d}
\left(\frac{r}{a_R}\right)^{\frac{1}{d_{2}}}>2R^{\frac{1}{d_{1}}}. 
\end{equation} 
We remark that 
\begin{equation}
\label{eq:gt0r}
J(g_{a_{R}})=\{ z: |z-b|=(1/a_R)^{\frac{1}{d_{2}-1}}\} =\{ z: |z-b|=R\} .
\end{equation}
We take a large $R$ so that 
\begin{equation}
\label{eq:dbrkh1}
D\left(b,\frac{1}{2}R^{\frac{1}{d_{1}}}\right)\supset K(\b_{1}).
\end{equation}
Then by (\ref{eq:dbrkh1}), (\ref{eq:Rin}), (\ref{eq:rt0d}),
(\ref{eq:gtnote2}) and (\ref{eq:gt0r}), we get that 
 \begin{equation}
\label{eq:kh1dbr}
\aligned
K(\b_{1}) &\subset D\left(b,\frac{1}{2}R^{\frac{1}{d_{1}}}\right)
\subset \subset D\left(b,\frac{3}{4}R^{\frac{1}{d_{1}}}\right)  
  \subset \b_{1}^{-1}(K(g_{a_R}))
  \subset D\left(b,\frac{3}{2}R^{\frac{1}{d_{1}}}\right)\\ 
& \subset \subset D(b,(r/a_R)^{\frac{1}{d_{2}}})=g_{a_R}^{-1}(D(b,r))
\subset g_{a_R}^{-1}(K(\b_{1})) \\
&\subset \subset \mbox{int}(K(g_{a_R})).
\endaligned
\end{equation}
Since the function $R\mapsto a_R$ is continuous and
$\lim_{R\to+\infty}a_R=0$, it follows from \eqref{eq:kh1dbr} that 
\begin{gather}
\label{eq:t1def}
\aligned
t_{1}:=
\sup \big\{ t\in [0,1/v^{d_{2}-1}]: &\forall c\in (0,t), 
K(\b_{1}) \subset \mbox{int}(\b_{1}^{-1}(K(g_{c}))) \\
&\subset \subset \mbox{int}(g_{c}^{-1}(K(\b_{1})))\subset \subset
\mbox{int}(K(g_{c}))\big\}>0.  
\endaligned
\end{gather}
By the definition of $t_{1}$, we get that 
\begin{equation}
\label{kh1h1}
K(\b_{1})\subset \b_{1}^{-1}(K(g_{t_{1}}))\subset
g_{t_{1}}^{-1}(K(\b_{1}))\subset K(g_{t_{1}}).  
\end{equation}
Therefore, by (\ref{eq:PG}), 
\begin{equation}
\label{eq:Pastlh}
P^{\ast }(\langle \b_{1},g_{t}\rangle )\subset K(\b_{1}) \mbox{ for
  each } t\in (0,t_{1}]. 
\end{equation} 
In addition, for each $t\in (0,t_{1})$, 
\begin{equation}
\label{eq:h1kgt}
\b_{1}^{-1}(K(g_{t})\setminus \mbox{int}(K(\b_{1})))
\amalg g_{t}^{-1}(K(g_{t})\setminus \mbox{int}(K(\b_{1})))
\subset K(g_{t})\setminus \mbox{int}(K(\b_{1})).
\end{equation}
In particular, for each $t\in (0,t_{1})$, the multimap
%$J(G_{t})$ is disconnected and 
$(\b _{1},g_{t})$ satisfies the separating open set condition with 
$A_{t}:=\mbox{int}(K(g_{t}))\setminus K(\b _{1}).$ 
Moreover, by (\ref{eq:h1kgt}), (\ref{bsseq}) and \cite[Corollary 3.2]{HM}, 
for each $t\in (0,t_{1})$, the Julia set $J(\langle \b
_{1},g_{t}\rangle )$ is disconnected.   
Furthermore, by the definition \eqref{eq:t1def} of $t_1$, for each
$t\in (0,t_{1})$, we have that
$g_{t}(K(\b _{1}))\subset \mbox{int}(K(\b _{1}))$. 
Therefore, by (\ref{eq:PG}), for every $t\in (0,t_{1})$, 
$P^{\ast }(\langle \b _{1},g_{t}\rangle )\subset 
\mbox{int}(K(\b _{1}))\subset F(\langle \b _{1},g_{t}\rangle ).$ 
Thus for each $t\in (0,t_{1})$, 
$(\b _{1},g_{t})\in \Epb (2).$ 
Since $(\b _{1},g_{t})$ satisfies the open set condition,
 \cite[Theorem 1.2]{sumi2} implies that for every $t\in (0,t_{1})$, 
 $\HD (J(\langle \b _{1},g_{t}\rangle ))=\delta (\b_{1},g_{t}).$ 
Moreover, by (\ref{eq:h1kgt}), \cite[Corollary 3.2]{HM}, and (\ref{bsseq}),  
$J(\langle \b_{1},g_{t}\rangle )$ is a proper subset of 
$\overline{A_{t}}$ for each $t\in (0,t_{1}).$ 
Thus by \cite[Theorem 1.25]{sumi06}, 
$\HD (J(\langle \b_{1},g_{t}\rangle ))<2$ for each $t\in (0,t_{1}).$

We now prove the following claim.\\ 
 Claim 1: We have $t_{1}< \frac{1}{v^{d_{2}-1}}.$ In particular, 
$J(\b_{1})\cap J(g_{t_{1}})=\emptyset .$ 

In order to prove this claim, 
suppose on the contrary that $t_{1}=\frac{1}{v^{d_{2}-1}}.$ Then 
$J(g_{t_{1}})=\{ z: |z-b|=v\} $ and 
$z_{0}\in J(\b_{1})\cap J(g_{t_{1}}).$ 
By (\ref{kh1h1}), $g_{t_{1}}(K(\b_{1}))\subset K(\b_{1}).$ 
Hence $g_{t_{1}}(z_{0})\in K(\b_{1})\cap J(g_{t_{1}}).$ 
Since $g_{t_{1}}(z_{0})=z_{1}\not\in J(\b_{1})$, 
we obtain $J(g_{t_{1}})\cap \mbox{int}(K(\b_{1}))\neq \emptyset .$ 
However, since $K(\b_{1})\subset K(g_{t_{1}})$ (see (\ref{kh1h1})), 
we obtain a contradiction. Thus, we have proved Claim 1.  

 We now prove the following claim.\\ 
Claim 2: We have $K(\b_{1})\subset \mbox{int}(\b_{1}^{-1}(K(g_{t_{1}})))$ and 
$g_{t_{1}}^{-1}(K(\b_{1}))\subset \mbox{int}(K(g_{t_{1}})).$ 
%$g_{t_{1}}^{-1}(J(h_{1}))\cap J(h_{1})=\emptyset .$ 
In particular, $K(\b_{1})\subset \mbox{int}(g_{t_{1}}^{-1}(K(\b_{1})))$ and 
$g_{t_{1}}(K(\b_{1}))\subset \mbox{int}(K(\b_{1})).$ 
 
 To prove Claim 2, suppose $J(\b_{1})\cap
 \b_{1}^{-1}(J(g_{t_{1}}))\neq \emptyset .$  
 Then $J(\b_{1})\cap J(g_{t_{1}})\neq \emptyset $, and this contradicts Claim 1. 
 Similarly, we must have that $g_{t_{1}}^{-1}(J(\b_{1}))\cap
 J(g_{t_{1}})=\emptyset .$  
 Therefore, we have proved Claim 2. 
% To prove Claim 2, suppose that there exists a point 
% $w\in g_{t_{1}}^{-1}(J(h_{1}))\cap J(h_{1}).$ 
% By (\ref{kh1h1}), 
% $w\in h_{1}^{-1}(K(g_{t_{1}})).$ 
% If $w\in h_{1}^{-1}(\mbox{int}(K(g_{t_{1}})))$, 
% (\ref{kh1h1}) implies $g_{t_{1}}^{-1}(\mbox{int}(K(h_{1}))).$ 
% However, since $w\in g_{t_{1}}^{-1}(J(h_{1}))$, we obtain a contradiction. 
% Hence, we must have that 
% $w\in h_{1}^{-1}(J(g_{t_{1}}))\cap J(h_{1}).$ Therefore, 
% $h_{1}(w)\in J(g_{t_{1}})\cap J(h_{1}).$ This contradicts Claim 1. 
 
 Since $g_{t_{1}}(K(\b_{1}))\subset \mbox{int}(K(\b_{1}))$ (Claim 2), 
from (\ref{eq:PG})  
 it is easy to see that $P^{\ast }(\langle \b_{1},g_{t_{1}}\rangle
 )\subset \mbox{int}(K(\b_{1})) 
\subset F(\langle \b_{1},g_{t_{1}}\rangle ).$ 
 Therefore, $(\b_{1},g_{t_{1}})\in \Epb(2).$ 
We now prove the third  claim.

\vspace{1mm}\noindent Claim 3: $\b_{1}^{-1}(J(g_{t_{1}}))\neq
g_{t_{1}}^{-1}(J(\b_{1})).$  

 To prove Claim 3, 
let $\varphi _{1}$ be Green's function on 
$\CCI \setminus K(\b_{1})$ (with pole at infinity) 
and $\varphi _{2}$ be Green's function on 
$\CCI \setminus K(g_{t_{1}}).$ 
Then $\varphi _{1}(z)=\log |z|$ and 
$\varphi _{2}(z)=\log |z| +\frac{1}{d_{2}-1}\log t_{1}+O(\frac{1}{|z|}).$ 
Note that since $J(\b_{1})\subset \mbox{int}(K(g_{t_{1}}))$ (Claim 2), 
we have $\frac{1}{d_{2}-1}\log t_{1}<0.$ 
It is easy to see that Green's function $\varphi _{3}$ on 
$\CCI \setminus g_{t_{1}}^{-1}(K(\b_{1}))$ 
satisfies 
$\varphi _{3}(z)=\frac{1}{d_{2}}(\varphi _{1}(g_{t_{1}}(z)))
=\log |z|+\frac{1}{d_{2}}\log t_{1}+O(\frac{1}{|z|}).$ 
Similarly, 
Green's function $\varphi _{4}$ on 
$\CCI \setminus \b_{1}^{-1}(K(g_{t_{1}}))$ 
satisfies 
$\varphi _{4}(z)=\frac{1}{d_{1}}\varphi _{2}(\b_{1}(z))
=\log |z|+\frac{1}{d_{1}(d_{2}-1)}\log t_{1}+O(1/|z|).$ 
Therefore, if $\b_{1}^{-1}(J(g_{t_{1}}))= g_{t_{1}}^{-1}(J(\b_{1}))$, 
then $\frac{1}{d_{2}}\log t_{1}=\frac{1}{d_{1}(d_{2}-1)}\log t_{1}.$ 
Since $(d_{1},d_{2})\neq (2,2)$, we obtain $\log t_{1}=0.$ 
However, this contradicts $\frac{1}{d_{2}-1}\log t_{1}<0.$ Thus 
we have proved Claim 3. 
 
Let $A:= \mbox{int}(K(g_{t_{1}}))\setminus K(\b_{1}).$ 
By (\ref{kh1h1}) and Claim 2, 
$A$ is a non-empty open set in $\CC $ and 
$\b_{1}^{-1}(A)\cup g_{t_{1}}^{-1}(A)\subset A $ 
and $\b_{1}^{-1}(A)\cap g_{t_{1}}^{-1}(A)=\emptyset .$ 
Hence $(\b_{1},g_{t_{1}})$ satisfies the open set condition with 
$A.$ Combining it with the expandingness of $\langle \b_{1},g_{t_{1}}\rangle $, 
\cite[Theorem 1.2]{sumi2} implies that $\HD (J(\langle
\b_{1},g_{t_{1}}\rangle ))=\delta (\b_{1},g_{t_{1}}).$  
Moreover, by Claim 3, 
we have that $\b_{1}^{-1}(\overline{A})\cup
g_{t_{1}}^{-1}(\overline{A})$ is a proper subset of $\overline{A}.$  
Therefore by \cite[Corollary 3.2]{HM} and (\ref{bsseq}),  $J(\langle
\b_{1},g_{t_{1}}\rangle )$ is a proper subset of $\overline{A}.$  
Combining it with the expandingness of $\langle \b_{1},g_{t_{1}}\rangle $ again and 
\cite[Theorem 1.25]{sumi06}, 
we obtain $\HD (J(\langle \b_{1},g_{t_{1}}\rangle ))<2.$ 
Hence, $\delta (\b_{1},g_{t_{1}}) =\HD (J(\langle \b_{1},g_{t_{1}}\rangle ))<2.$
By Lemma~\ref{l:epbopen} and Theorem~\ref{t:delrp}, 
there exists an open neighborhood $Y_{0}$ of 
$(\b_{1},g_{t_{1}})$ in ${\mathcal P}^{2}$ such that 
for each $\g=(\g_{1},\g_{2})\in Y_{0}$, 
$\g\in \Epb (2)$ and $\delta (\g)<2.$ 
 
 We now consider the holomorphic family 
 $\{ f_{\l }\} _{\l \in U}$ in $\Epb(2)$,  
 where $U$ is a small open neighborhood of $0.$  
 Let $\l _{0}=0.$ 
 Let $G_{\l },h_{\l },\overline{h}_{\l }$ be as in the Setting $(\ast )$ 
(see Remark~\ref{r:setting}).  
By (\ref{kh1h1}) and Claim 2, it is easy to see that $\{ f_{\l }\} _{\l \in U}$ satisfies 
conditions (i),(ii),(iii) in Lemma~\ref{l:genprin} with $\alpha _{12}=2,\alpha _{21}=1.$  
Let $z\in f_{\l _{0},1}^{-1}(J(G_{\l _{0}}))\cap f_{\l _{0},2}^{-1}(J(G_{\l _{0}}))
=\b _{1}^{-1}(J(g_{t_{1}}))\cap g_{t_{1}}^{-1}(J(\b _{1})).$ 
Then by Corollary~\ref{c:hfdiff3}, 
\begin{equation}
\label{eq:2112ex}
\frac{\partial (\overline{h}_{\l }(21^{\infty },z)-\overline{h}_{\l }(12^{\infty },z))}{\partial \l}\bigg|_{\l =\l _{0}}
=-1-\sum _{n=2}^{\infty }\frac{-g_{t_{1}}'(g_{t_{1}}^{n-2}(\b _{1}(z)))}{(g_{t_{1}}^{n-1}\circ \b _{1})'(z)}
=-1-\sum _{n=2}^{\infty }\frac{-1}{(g_{t_{1}}^{n-2}\circ \b _{1})'(z)}.
\end{equation}
Since $\sum _{n=2}^{\infty }|\frac{-1}{(g_{t_{1}}^{n-2}\circ \b _{1})'(z)}|
=\sum _{n=2}^{\infty }\frac{1}{|(g_{t_{1}}^{n-2})'(\b _{1}(z))||\b _{1}'(z)|}
= \sum _{n=2}^{\infty }\frac{1}{d_{2}^{n-2}d_{1}|z|^{d_{1}-1}}<1, $ 
it follows that 
$$\frac{\partial (\overline{h}_{\l }(21^{\infty },z)-\overline{h}_{\l
  }(12^{\infty },z))}{\partial \l}\Big|_{\l =\l _{0}}\neq 0. 
$$  
Therefore, by Lemma~\ref{l:genprin}, 
shrinking $U$ if necessary, we obtain that 
$\{ f_{\l }\} _{\l \in U}$ satisfies the analytic transversality condition,  
%By Proposition~\ref{p:atctc}, shrinking $U$ again, 
%$\{ f_{\l }\} _{\l \in U}$ satisfies 
the strong transversality
condition and the transversality condition.  
Since $\delta (\b _{1},g_{t_{1}})=s(\l _{0 })<2$ and 
$\l \mapsto s(\l )$ is continuous, 
shrinking $U$ if necessary, we obtain that 
for each $\l \in U$, 
$s(\l )<2.$ 
Therefore, by Theorems~\ref{t:stcmain} and \ref{t:fundfact1},  
%for Leb$_{2}$-a.e. $\l \in U$, 
there exists a subset $\Omega $ of $U$ with 
$\HD (U\setminus \Omega )<\HD (U)=2$ such that 
for each $\l \in \Omega $, 
$\HD (J(G_{\l }))=s(\l )<2.$ 

 By the definition of $t_{1}$, we have 
 $\b _{1}^{-1}(J(g_{t_{1}}))\cap g_{t_{1}}^{-1}(J(\b_{1}))\neq \emptyset .$ 
 In particular, 
$$
\b _{1}^{-1}(J(\langle \b_{1},g_{t_{1}}\rangle ))\cap
g_{t_{1}}^{-1}(J(\langle \b_{1},g_{t_{1}}\rangle ))\neq \emptyset .
$$ 
Combining this with the fact that the semigroup $\langle
\b_{1},g_{t_{1}}\rangle $ is postcritically bounded, 
\cite[Theorem 1.7, Theorem 1.5(2)]{sumiintcoh} implies that the Julia set
$J(\langle \b_{1},g_{t_{1}}\rangle )=J(G_{\l _{0}})$ is connected.  
Since $\{ f_{\l }\} _{\l \in U}$ satisfies the analytic transversality condition, 
by using Lemma~\ref{l:atcemb} and shrinking $Y_{0}$ if
necessary, we obtain that  
$\{ \g =(\g _{1},\g _{2})\} _{\g \in Y_{0}} $ satisfies the analytic
transversality condition,   
%Shrinking $Y_{0}$ again, by Proposition~\ref{p:atctc},  
%$\{ \g =(\g _{1},\g _{2})\}_{\g \in Y_{0}} $ satisfies 
the strong
transversality condition and  
the transversality condition. 
Since $\delta (\g)<2$ for each $\g\in Y_{0}$, 
%Theorem~\ref{t:tcdimj} implies that 
%for a.e. $\g\in Y_{0}$, 
 Theorems~\ref{t:stcmain}, \ref{t:fundfact1} and \ref{t:delrp} imply that   
%for Leb$_{2}$-a.e. $\l \in U$, 
there exists a subset $\Gamma $ of $Y_{0}$ with 
$\HD (Y_{0}\setminus \Gamma )<\HD (Y_{0})=2(d_{1}+d_{2}+2)$ such that 
for each $\gamma =(\gamma _{1},\gamma _{2}) \in \Gamma $, 
%$\HD (J(G_{\l }))=s(\l )<2.$ 
$\HD ( J(\langle \g _{1},\g _{2}\rangle ))=\delta (\g)<2.$ 
Let $c_{0}\in \b_{1}^{-1}(J(g_{t_{1}}))\cap g_{t_{1}}^{-1}(J(\b_{1}))$. 
Let $w_{0}=\b_{1}(c_{0})\in J(g_{t_{1}}).$ 
There exists an open neighborhood $Y_{1}$ of $g_{t_{1}}$ in ${\mathcal P}$ 
and a holomorphic map $\zeta :Y_{1}\rightarrow \CCI $ such that 
$\zeta (g_{t_{1}})=w_{0}$ and 
$\zeta (\g_{2})\in J(\g_{2})$ for each $\g_{2}\in Y_{1}.$ 
%For each $\g_{1}$ close to $\b_{1}$, 
Let $\xi $ be a well-defined inverse branch of $\b_{1}$ defined on a
neighborhood $D_{0}$  
of $w_{0}$ in $\CCI $ such that $\xi (w_{0})=c_{0}$.  
%$(\g_{1},z)\mapsto \xi _{\g_{1}}(z)$ is holomorphic.  
Let $\eta (\g_{2}):=\g_{2}\circ \xi \circ \zeta (\g_{2})$, 
which is defined on an open neighborhood $B_{0}$ of $g_{t_{1}}$ in $Y_{1}.$  
Then $\eta $ is holomorphic on $B_{0}.$ 
Moreover, $\eta (g_{t_{1}})\in J(\b_{1}).$ 
Furthermore, by the definition of $t_{1}$, 
for each $t$  close to $t_{1}$ with $t<t_{1}$, 
we have $\eta (g_{t})\not\in J(\b_{1}).$    
Hence $\eta $ is not constant on $B_{0}$. 
%':=\{ \g_{2}: (\b_{1},\g_{2})\in B_{0}.$
Therefore, for each neighborhood $V$ of $(\b _{1},g_{t_{1}})$ in $Y_{0}$,  
there exists an element $\overline{\g}_{2}$ with 
$(\b _{1}, \overline{\g}_{2})\in V$ such that 
$\eta (\overline{\g}_{2})\in \CC \setminus K(\b_{1}).$ 
In particular, 
\begin{equation}
\label{eq:b1-1j}
\b_{1}^{-1}(J(\overline{\g}_{2}))\cap \overline{\g}_{2}^{-1}(\CC \setminus K(\b _{1}))\neq \emptyset .
\end{equation}
Moreover, by (\ref{kh1h1}) and Claim 3, 
$\b_{1}^{-1}(J(g_{t_{1}}))\cap \mbox{int}(g_{t_{1}}^{-1}(K(\b_{1})))\neq \emptyset .$ 
Therefore, we may assume that 
\begin{equation}
\label{eq:b1-1g}
\b_{1}^{-1}(J(\overline{\g}_{2}))\cap \mbox{int}(\overline{\g}_{2}^{-1}(K(\b_{1})))\neq \emptyset .
\end{equation}
By (\ref{eq:b1-1j}) and (\ref{eq:b1-1g}), 
%$\b _{1}^{-1}(J(\overline{\g}_{2}))\cap \overline{\g}_{2}^{-1}(J(\b_{1}))\neq \emptyset .$ 
there exists an open neighborhood $W$ of $(\b_{1},\overline{\g}_{2})$ in $V$ 
such that for each $(\psi _{1},\psi _{2})\in W$, 
$$\psi _{1}^{-1}(J(\psi _{2}))\cap \psi _{2}^{-1}(J(\psi _{1}))\neq \emptyset .$$  
In particular, 
$$\psi _{1}^{-1}(J(\langle \psi _{1},\psi _{2}\rangle ))\cap \psi _{2}^{-1}(J(\langle \psi _{1},\psi _{2}\rangle ))\neq \emptyset .$$ 
Combining this with the fact that the semigroup $\langle \psi_{1},\psi
_{2}\rangle $ is postcritically bounded,  
\cite[Theorem 1.7, Theorem 1.5(2)]{sumiintcoh} implies that the Julia set 
$J(\langle \psi _{1},\psi _{2}\rangle )$ is connected for each $(\psi _{1},\psi _{2})\in W.$ 
  
Finally, we remark that by \cite[Theorem 3.15]{subowen}, 
for any $(\g _{1},\g_{2})\in \Epb(2) $ with $\deg (\g _{1})=d_{1},
\deg (\g _{2})=d_{2}$,  
if $\g _{1}(z)=z^{d_{1}} $ and $\g _{2}(z)=a(z-b)^{d_{2}}+b$ with $b\neq 0$, then 
we have 
$$1<\frac{\log (d_{1}+d_{2})}{\sum _{j=1}^{2}\frac{d_{i}}{d_{1}+d_{2}}\log (d_{i})}< 
\delta (\g _{1},\g _{2}).$$ 
   Thus we have proved Theorem~\ref{t:d1d2ex}. 
\end{proof}
Figure~\ref{fig:ConnOSC1} represents  the Julia set of the
$2$-generator polynomial semigroup  
$G_{\l _{0}}$ with $(d_{1},d_{2})=(3,2), b=0.1$. 
 For the relation between Theorem~\ref{t:d1d2ex} and 
random complex dynamics, see Remark~\ref{r:t0tinfty}.  

We now fix a complex number $a$ as required in the proposition below
and we consider a family of small perturbations of the multimap
$(z^{2},az^{2})$.  In the following we will see that 
for a typical value of the perturbation parameter, 
the $2$-dimensional Lebesgue measure of the Julia set of the
corresponding semigroup is positive.  
\begin{prop}
\label{p:d2pm}
Let $A:=\{ a\in \CC : |a|\neq 0,1, \mbox{ and } |2+a+\frac{1}{a}|\neq 4\} .$ 
Let $a\in A$ be a point. 
For each $b\in \CC $, let $f_{b,1}(z):=az^{2}$ (independent of $b$) and 
$f_{b,2}(z):= (z-b)^{2}+b$ and let  
$f_{b}:=(f_{b,1},f_{b,2})\in {\mathcal P}^{2}.$ 
For each $b\in \CC $, let $G_{b}:= \langle f_{b,1},f_{b,2}\rangle .$ 
Then there exists an open neighborhood $U$ of $0$ in $\CC $ such that 
$\{ f_{b}\} _{b \in U}$ is a holomorphic family in $\Epb(2)$
satisfying Setting $(\ast )$ with base point $0$  
and all of the following hold. 
\begin{itemize}
\item[(1)] 
The family $\{ f_{b}\} _{b\in U}$ satisfies the analytic
transversality condition,  
the strong transversality condition and
 the transversality condition.  
\item[(2)] 
For \mbox{{\em Leb}}$_{2}$-a.e. $b\in U$, 
{\em Leb}$_{2}(J(G_{b }))>0.$ 
\item[(3)] 
For each $b\in U$, 
let $h_{b}$ be 
the conjugacy map of the form 
$h_{b }(\om ,z)=(\om ,\overline{h}_{b}(\om ,z))$ 
between $\tilde{f}_{0}:J(\tilde{f}_{0})\rightarrow J(f_{0})$ 
and $\tilde{f}_{b}:J(\tilde{f}_{b})\rightarrow J(\tilde{f}_{b})$ as in
Setting ($\ast $).   
%(with $\l _{0}=0$).
Let $\mu $ be the $s(0)$-conformal measure on $J(\tilde{f}_{0})$ for
$\tilde{f}_{0}.$  
Then for \mbox{{\em Leb}}$_{2}$-a.e. $b\in U$, the Borel probability measure 
$(\overline{h}_{b})_{\ast }(\mu )$ on $J(G_{b})$ is absolutely
continuous with respect to  
{\em Leb}$_{2}$ with $L^{2}$ density.   
\end{itemize}
\end{prop}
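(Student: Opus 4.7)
The plan is to obtain part (1) by combining Lemma~\ref{l:genprin}, Corollary~\ref{c:hfdiff1}, and Proposition~\ref{p:atctc}, and to derive parts (2) and (3) from Theorem~\ref{t:tcdimj}(2) once $s(b) > 2$ has been established on $U$. First observe that $f_{b,2} = \alpha_b \circ (z^2) \circ \alpha_b^{-1}$ for the translation $\alpha_b(z) = z + b$, so the family fits exactly the framework of Corollary~\ref{c:hfdiff1} with $(g_1, g_2) = (az^2, z^2)$. At the base point $b = 0$, the Julia set $J(G_0)$ of $\langle az^2, z^2 \rangle$ is the closed annulus between $J(f_{0,2}) = \{|w| = 1\}$ and $J(f_{0,1}) = \{|w| = 1/|a|\}$; both critical points coincide with the superattracting fixed point $0 \in F(G_0)$, so $(az^2, z^2) \in \Epb(2)$, and Remark~\ref{r:setting} provides a neighborhood $U$ of $0$ in $\CC$ on which $\{f_b\}_{b \in U}$ is a holomorphic family in $\Epb(2)$ with holomorphic conjugacies $\overline{h}_b$.

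Next, to verify the hypotheses of Lemma~\ref{l:genprin} (the case $|a| > 1$ being symmetric under a change of roles of the generators), I would compute that the overlap set is $f_{0,1}^{-1}(J(G_0)) \cap f_{0,2}^{-1}(J(G_0)) = \{|z| = |a|^{-1/2}\}$, with $f_{0,1}$ mapping this circle onto $J(f_{0,2})$ and $f_{0,2}$ mapping it onto $J(f_{0,1})$. This gives conditions (i)--(iii) of Lemma~\ref{l:genprin} with $\alpha_{12} = 2$ and $\alpha_{21} = 1$, condition (ii) being vacuous for $m = 2$ and (iii) following from $f_{0,2}(J(f_{0,1})) = \{|w| = 1/|a|^2\} \subset F(G_0)$ and $f_{0,1}(J(f_{0,2})) = \{|w| = |a|\} \subset F(G_0)$. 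For condition (iv), Corollary~\ref{c:hfdiff1}(2) yields, for each $z$ in the overlap,
\begin{equation*}
\frac{\partial}{\partial b}\bigl(\overline{h}_b(12^\infty, z) - \overline{h}_b(21^\infty, z)\bigr)\Big|_{b=0} = \frac{1}{2az} - \left(1 - \frac{1}{2z}\right) = \frac{1+a}{2az} - 1,
\end{equation*}
which vanishes only at $z_\star = (1+a)/(2a)$. Since $|(1+a)^2/a| = |2 + a + 1/a|$, one has $|z_\star| = |a|^{-1/2}$ if and only if $|2 + a + 1/a| = 4$, a case excluded by the hypothesis $a \in A$. Thus condition (iv) holds on the entire overlap, and after possibly shrinking $U$, Lemma~\ref{l:genprin} together with Proposition~\ref{p:atctc} delivers the analytic, strong and ordinary transversality conditions, which is part (1).

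For parts (2) and (3), Theorem~\ref{t:tcdimj}(2) produces both $\mbox{Leb}_2(J(G_b)) > 0$ and the $L^2$ absolute continuity of $(\overline{h}_b)_*(\mu)$ for Leb$_2$-a.e.\ $b \in U$ as soon as $s(b) > 2$ throughout $U$; by continuity of $s$ (Theorem~\ref{t:delrp}), after a further shrinking of $U$ this reduces to verifying $s(0) > 2$. The main obstacle is precisely this strict inequality: since $J(G_0)$ has nonempty interior, Theorem~\ref{t:fundfact1} only yields $s(0) \geq 2$. The strict bound is not elementary and must be imported from \cite{subowen} --- the lower estimate on the Bowen parameter for the two-generator overlapping polynomial system $(az^2, z^2)$, obtained via thermodynamic formalism, potential theory on $J(G_0)$, and the Zdunik-type arguments of \cite{zdunik2} --- which ensures $\delta(az^2, z^2) > 2$ whenever $|a| \neq 0, 1$. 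Granting this input, parts (2) and (3) follow directly from Theorem~\ref{t:tcdimj}(2), and the proof is complete.
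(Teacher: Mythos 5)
Your treatment of part (1) is essentially the paper's argument: the paper also identifies the overlap set $\{|z|=|a|^{-1/2}\}$, notes that any coincidence $\overline{h}_0(\om,z)=\overline{h}_0(\om',z')$ with $\om_1\neq\om'_1$ can only occur for the pair of itineraries $(12^{\infty},21^{\infty})$ at points of that circle (you package this through the hypotheses of Lemma~\ref{l:genprin}, the paper writes the inclusion directly), and then applies Corollary~\ref{c:hfdiff1}(2) with $\alpha_b(z)=z+b$ to get the same derivative $1-\frac{1}{2z}-\frac{1}{2az}$ (your expression is its negative), whose non-vanishing on $\{|z|=|a|^{-1/2}\}$ is exactly the condition $|2+a+\frac1a|\neq 4$; Proposition~\ref{p:atctc} then yields the strong and ordinary transversality conditions. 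This part is correct.

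The gap is in your derivation of (2) and (3). You reduce to the claim $s(0)>2$, i.e.\ $\delta(az^{2},z^{2})>2$, and assert that the lower estimate of \cite{subowen} gives this for all $|a|\neq 0,1$. That is false: $(az^{2},z^{2})$ is of the form $(\alpha_{1}(z-c)^{2}+c,\alpha_{2}(z-c)^{2}+c)$ with $c=0$, and \cite[Corollary 3.19]{subowen} (as used in the proof of Corollary~\ref{c:d2pm3}) says that this family is precisely the locus in $\Epb(2)\cap{\mathcal P}_{2}^{2}$ where $\delta\leq 2$; combined with $\HD(J(G_{0}))=2\leq\delta(f_{0})$ (Theorem~\ref{t:fundfact1}, since $J(G_{0})$ is a closed annulus), one gets $s(0)=2$ exactly. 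Consequently your continuity argument cannot produce $s>2$ on any neighborhood of $0$, and as written the proof of (2)--(3) collapses. The correct route, which is the one the paper takes, is to apply \cite[Corollary 3.19]{subowen} to the perturbed maps: for $b\neq 0$ the pair $(az^{2},(z-b)^{2}+b)$ admits no common center $c$, hence lies outside the exceptional family, so $s(b)>2$ for every $b\in U\setminus\{0\}$. Since $\{b\in U:s(b)>2\}\supset U\setminus\{0\}$ has full Lebesgue measure in $U$, Theorem~\ref{t:tcdimj}(2) then gives (2) and (3) for Leb$_{2}$-a.e.\ $b\in U$; no shrinking of $U$ or continuity of $s$ is needed at this step.
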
 
\begin{proof}
It is easy to see that 
$P^{\ast }(G_{0})=\{ 0\} .$ Therefore $f_{0}\in \Epb (2).$ 
By Lemma~\ref{l:epbopen}, 
there exists an open neighborhood $U$ of $0$ such that 
for each $b\in U$, $f_{b}\in \Epb (2).$ 
By Remark~\ref{r:setting}, 
%\cite[Theorem 4.9]{suetds1}, 
shrinking $U$ if necessary, 
for each $b\in U$, there exists a unique conjugacy map 
$h_{b }$ of the form $h_{b}(\om ,z)=(\om ,\overline{h}_{b}(\om ,z))$ 
between $\tilde{f}_{0}: J(\tilde{f}_{0})\rightarrow J(\tilde{f}_{0})$ and 
$\tilde{f}_{b}:J(\tilde{f}_{b})\rightarrow J(\tilde{f}_{b})$ as in
Setting ($\ast $),   
%with $\l_{0}=0$, 
and $b\mapsto \overline{h}_{b}(\om ,z),b\in U,$ is
holomorphic for  
each $(\om ,z)\in J(\tilde{f}_{0}).$  
% (in fact, in \cite[Theorem 4.9]{suetds1}, 
%the assumption ``$f$ is simple'' is not needed). 
It is easy to see that 
$J(G_{0})$ is equal to the closed annulus between 
$J(f_{0,1})=\{ z\in \CC : |z|=1/|a|\} $ and 
$J(f_{0,2})=\{ z\in \CC : |z|=1\} $, and 
that 
$$
f_{0,1}^{-1}(J(G_{0}))\cap f_{0,2}^{-1}(J(G_{0}))=
\{ z\in \CC : |z|=|a|^{-\frac{1}{2}}\} 
=f_{0,1}^{-1}(J(f_{0,2}))=f_{0,2}^{-1}(J(f_{0,1})).
$$ 
Therefore, 
\begin{gather}
\label{eq:g0set}
\begin{split}
\ & \ \ \ \   \{ (\om ,z,\om ',z')\in J(\tilde{f}_{0})^{2}: \om
_{1}\neq \om '_{1},  
\overline{h}_{0}(\om ,z)-\overline{h}_{0}(\om ',z')=0\} \\  
& \subset \{ (12^{\infty }, z, 21^{\infty },z'): z=z'\in \{w\in \CC :
|w|=|a|^{-\frac{1}{2}}\} \}.   
\end{split} 
\end{gather}
%where $1^{\infty }:= (1,1,1,\ldots )$ and 
%$2^{\infty }:= (2,2,2,\ldots ).$ 
%By the uniqueness statement in \cite[Theorem 4.9]{suetds1}, 
%shrinking $U$ if necessary, 
%we obtain that for each $b\in U$,  
%(i) $\overline{h}_{b}(1^{\infty },z)=z $ for each $z\in J(f_{0,1})$, and 
%(ii) $\overline{h}_{b}(2^{\infty },z)=z+b$ for each $z\in J(f_{0,2})$.  
%where $1^{\infty }:= (1,1,1,\ldots )$ and $2^{\infty }:= (2,2,2,\ldots ).$ 
%Since $h_{b}$ is the conjugacy map between 
%$\tilde{f}_{0}$ and $\tilde{f}_{b}$, 
%we have that for each $b\in U $ and $z\in \{ |w|=|a|^{-\frac{1}{2}}\} $,   
%$$f_{b,1}(\overline{h}_{b}(12^{\infty}, z))=\overline{h}_{b}(2^{\infty },f_{0,1}(z)).$$ 
%Therefore, 
%$a(\overline{h}_{b}(12^{\infty },z))^{2}=az^{2}+b.$ 
%Thus, for each $z\in \{ |w|=|a|^{-\frac{1}{2}}\} $, 
%$$\frac{\partial \overline{h}_{b}(12^{\infty },z)}{\partial b}|_{b=0}
%=\frac{1}{2za}.$$
%Similarly, for each $b\in U $ and $z\in \{ |w|=|a|^{-\frac{1}{2}}\} $, 
%$$f_{b,2}(\overline{h}_{b}(21^{\infty },z))=\overline{h}_{b}(1^{\infty },f_{0,2}(z))=f_{0,2}(z).$$ 
%Therefore, 
%$$ \frac{\partial f_{b,2}}{\partial b}(\overline{h}_{b}(21^{\infty },z))
%+f_{b,2}'(\overline{h}_{b}(21^{\infty },z))\frac{\partial \overline{h}_{b}(21^{\infty },z)}{\partial b}=0.$$
%Thus, 
%$$\frac{\partial \overline{h}_{b}(21^{\infty },z)}{\partial b}|_{b=0}
%=1-\frac{1}{2z}.$$
%Hence, 
By Corollary~\ref{c:hfdiff1},  for each 
$z\in \{w\in \CC : |w|=|a|^{-\frac{1}{2}}\} $, 
$$
\frac{\partial (\overline{h}_{b}(21^{\infty
  },z)-\overline{h}_{b}(12^{\infty },z))}{\partial b}\bigg|_{b=0}= 
1-\frac{1}{2z}-\frac{1}{2za}.
$$ 
Since $a\in A$, it is easy to see that 
for each $z\in \{ w\in \CC : |w|=|a|^{-\frac{1}{2}}\} $, 
$1-\frac{1}{2z}-\frac{1}{2za}\neq 0.$ 
Therefore, for each $z\in \{w\in \CC : |w|=|a|^{-\frac{1}{2}}\} $, 
$$
\frac{\partial (\overline{h}_{b}(21^{\infty
  },z)-\overline{h}_{b}(12^{\infty },z))}{\partial b}\bigg|_{b=0}\neq 0.
$$ 
Combining this with (\ref{eq:g0set}), and shrinking $U$ if necessary, 
we obtain that the family $\{ f_{b}\} _{b\in U}$ satisfies the
analytic transversality condition.  
 By Proposition~\ref{p:atctc}, shrinking $U$ if necessary, 
the family $\{ f_{b}\} _{b\in U}$ satisfies the strong transversality
condition and  the transversality condition.  
By \cite[Corollary 3.19]{subowen}, 
for each $b\in U\setminus \{ 0\} $, 
$s(b)>2.$ 
Hence, by Theorem~\ref{t:tcdimj}, 
 statements (2) and (3) of our proposition hold. 
Thus, we have proved our proposition.  
\end{proof}

\begin{thm}
\label{t:d2pm2}
Let $a\in \CC $ with $|a|>1.$ For each $\l \in \CC $, let
$f_{\l,1}(z):=az^{2}$ (independent of $\l $) and  
$f_{\l,2}(z):= z^{2}+\l $ and let  
$f_{\l }:=(f_{\l,1},f_{\l,2})\in {\mathcal P}^{2}.$ 
For each $\l \in \CC $, let $G_{\l }:= \langle f_{\l ,1},f_{\l ,2}\rangle .$ 
Then there exists an open neighborhood $U$ of $0$ in $\CC $ such that 
$\{ f_{\l }\} _{\l  \in U}$ is a holomorphic family in $\Epb(2)$
satisfying Setting $(\ast )$ with base point $0$  
and all of the following hold. 
\begin{itemize}
\item[(1)] 
The family $\{ f_{\l }\} _{\l \in U}$ satisfies the analytic
transversality condition, the strong transversality condition and
 the transversality condition.  
\item[(2)] 
For \mbox{{\em Leb}}$_{2}$-a.e. $\l \in U$, 
{\em Leb}$_{2}(J(G_{\l }))>0.$ 
\item[(3)] 
For each $\l \in U$, 
let $h_{\l }$ be 
the conjugacy map of the form 
$h_{\l }(\om ,z)=(\om ,\overline{h}_{\l }(\om ,z))$ 
between $\tilde{f}_{0}:J(\tilde{f}_{0})\rightarrow J(f_{0})$ 
and $\tilde{f}_{\l }:J(\tilde{f}_{\l })\rightarrow J(\tilde{f}_{\l })$
as in Setting ($\ast $) (with $\l _{0}=0$).
Let $\mu $ be the $s(0)$-conformal measure on $J(\tilde{f}_{0})$ for
$\tilde{f}_{0}.$  
Then for \mbox{{\em Leb}}$_{2}$-a.e. $\l \in U$, the Borel probability measure 
$(\overline{h}_{\l})_{\ast }(\mu )$ on $J(G_{\l})$ is absolutely
continuous with respect to  {\em Leb}$_{2}$ with $L^{2}$ density.   
\end{itemize}
\end{thm}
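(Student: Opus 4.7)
The proof follows the template of Proposition~\ref{p:d2pm}, with Corollary~\ref{c:hfdiff2} replacing Corollary~\ref{c:hfdiff1} in the derivative calculation, since the perturbation $z^{2}+\lambda$ is additive rather than by conjugation. Both critical points $\{0,\infty\}$ are fixed by each of $az^{2}$ and $z^{2}$, so $P(G_{0})=\{0,\infty\}$ and $P^{\ast}(G_{0})=\{0\}\subset F(G_{0})$; hence $G_{0}$ is hyperbolic, and by Remark~\ref{exphyplem} expanding, so $f_{0}\in \Epb(2)$. Lemma~\ref{l:epbopen} supplies a neighborhood $U$ of $0$ with $f_{\lambda}\in \Epb(2)$ for every $\lambda\in U$, and Remark~\ref{r:setting} (after shrinking $U$) yields the conjugacies $h_{\lambda}(\omega,z)=(\omega,\overline{h}_{\lambda}(\omega,z))$ of Setting $(\ast)$ with $\lambda\mapsto\overline{h}_{\lambda}(\omega,z)$ holomorphic. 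Because $|a|>1$, elementary calculation gives $J(f_{0,1})=\{|z|=1/|a|\}$, $J(f_{0,2})=\{|z|=1\}$, and $J(G_{0})$ equal to the closed annulus $\{1/|a|\leq|z|\leq 1\}$; after shrinking $U$ once more we may assume $J(G_{\lambda})\subset\CC$ for every $\lambda\in U$.

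Direct computation of the first-generation preimages gives $f_{0,1}^{-1}(J(G_{0}))=\{1/|a|\leq|z|\leq|a|^{-1/2}\}$ and $f_{0,2}^{-1}(J(G_{0}))=\{|a|^{-1/2}\leq|z|\leq 1\}$, whose intersection is the circle $\{|z|=|a|^{-1/2}\}$. The symbolic-itinerary argument producing (\ref{eq:g0set}) in the proof of Proposition~\ref{p:d2pm} applies verbatim here and shows that the only pairs $(\omega,z),(\omega',z')\in J(\tilde{f}_{0})^{2}$ with $\omega_{1}\neq\omega'_{1}$ for which $\overline{h}_{0}(\omega,z)=\overline{h}_{0}(\omega',z')$ are those with $\omega=12^{\infty}$, $\omega'=21^{\infty}$ and $z=z'$ on this overlap circle.

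The key derivative calculation now applies Corollary~\ref{c:hfdiff2} with $m=2$ and $j=0$, yielding $a_{n}(z)=-1$ when $\omega_{n}=2$ and $a_{n}(z)=0$ otherwise. For $\omega=21^{\infty}$ only $n=1$ contributes, so $\partial_{\lambda}\overline{h}_{\lambda}(21^{\infty},z)|_{\lambda=0}=-1/(2z)$. For $\omega=12^{\infty}$, the identity $g_{12^{\infty}|_{n}}(z)=a^{2^{n-1}}z^{2^{n}}$ gives $\partial_{\lambda}\overline{h}_{\lambda}(12^{\infty},z)|_{\lambda=0}=-\sum_{n=2}^{\infty}(2^{n}a^{2^{n-1}}z^{2^{n}-1})^{-1}$. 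On $|z|=|a|^{-1/2}$ every term of this tail has modulus $(2^{n}\sqrt{|a|})^{-1}$, so the tail is bounded in absolute value by $1/(2\sqrt{|a|})$, while $|{-1/(2z)}|=\sqrt{|a|}/2$. Writing $\Phi(\lambda,z):=\overline{h}_{\lambda}(21^{\infty},z)-\overline{h}_{\lambda}(12^{\infty},z)$, the triangle inequality gives
\[
|(\partial_{\lambda}\Phi)(0,z)|\geq \frac{\sqrt{|a|}}{2}-\frac{1}{2\sqrt{|a|}}=\frac{|a|-1}{2\sqrt{|a|}}>0
\]
uniformly on the overlap circle, which is exactly the analytic transversality condition.

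Proposition~\ref{p:atctc} then delivers the strong transversality and transversality conditions after a final shrinking of $U$, proving (1). Since $J(G_{0})$ contains a nonempty open annulus we have $\HD(J(G_{0}))=2$, and by \cite[Corollary 3.19]{subowen} (applied exactly as in the proof of Proposition~\ref{p:d2pm}) one may further shrink $U$ so that $s(\lambda)>2$ for every $\lambda\in U\setminus\{0\}$. Theorem~\ref{t:tcdimj}(2) now produces (2), and Lemma~\ref{l:posmeas} produces (3). The main obstacle is the derivative estimate of the third paragraph: the hypothesis $|a|>1$ is exactly what makes the leading term $-1/(2z)$ dominate the geometric tail, and without it this direct comparison would fail; any relaxation of that hypothesis would require a more delicate treatment of the nonvanishing of $\partial_{\lambda}\Phi$ on the overlap circle.
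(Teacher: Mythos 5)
Your proposal is correct and follows essentially the same route as the paper: the same verification that $f_{0}\in \Epb(2)$ and identification of the overlap circle $\{|z|=|a|^{-1/2}\}$, the same application of Corollary~\ref{c:hfdiff2} giving $\partial_{\l}\overline{h}_{\l}(21^{\infty},z)|_{\l=0}=-1/(2z)$ of modulus $\tfrac12|a|^{1/2}$ against a tail for $12^{\infty}$ bounded by $\tfrac12|a|^{-1/2}$, and the same concluding citations of Proposition~\ref{p:atctc}, \cite[Corollary 3.19]{subowen}, and Theorem~\ref{t:tcdimj}. Your explicit closed form $g_{(12^{\infty})|_{n}}(z)=a^{2^{n-1}}z^{2^{n}}$ is just a transparent rewriting of the paper's product formula, not a genuinely different argument.
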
 

\begin{proof}
It is easy to see that 
$P^{\ast }(G_{0})=\{ 0\} \subset F(G_{0}).$ Therefore $f_{0}\in \Epb (2).$ 
By Lemma~\ref{l:epbopen}, 
there exists an open neighborhood $U$ of $0$ such that 
for each $\l \in U$, $f_{\l }\in \Epb (2).$ By Remark~\ref{r:setting}, 
%\cite[Theorem 4.9]{suetds1}, 
shrinking $U$ if necessary, 
for each $\l \in U$, there exists a unique conjugacy map 
$h_{\l }$ of the form $h_{\l }(\om ,z)=(\om ,\overline{h}_{\l }(\om ,z))$ 
between $\tilde{f}_{0}: J(\tilde{f}_{0})\rightarrow J(\tilde{f}_{0})$ and 
$\tilde{f}_{\l }:J(\tilde{f}_{\l })\rightarrow J(\tilde{f}_{\l })$ as
in Setting ($\ast $)  
with $\l_{0}=0$, and $\l \mapsto \overline{h}_{\l }(\om ,z)$ is holomorphic.  
% (in fact, in \cite[Theorem 4.9]{suetds1}, 
%the assumption ``$f$ is simple'' is not needed). 
It is easy to see that 
$J(G_{0})$ is equal to the closed annulus between 
$J(f_{0,1})=\{ z\in \CC : |z|=1/|a|\} $ and 
$J(f_{0,2})=\{ z\in \CC : |z|=1\} $, and 
that $f_{0,1}^{-1}(J(G_{0}))\cap f_{0,2}^{-1}(J(G_{0}))=
\{ z\in \CC : |z|=|a|^{-\frac{1}{2}}\} 
=f_{0,1}^{-1}(J(f_{0,2}))=f_{0,2}^{-1}(J(f_{0,1})).$ 
Therefore, 
\begin{gather}
\label{eq:g0set2}
\begin{split}
\ & \ \ \ \   \{ (\om ,z,\om ',z')\in J(\tilde{f}_{0})^{2}: \om
_{1}\neq \om '_{1},  
\overline{h}_{0}(\om ,z)-\overline{h}_{0}(\om ',z')=0\} \\  
& \subset \{ (12^{\infty }, z, 21^{\infty },z'): z=z'\in \{ w\in \CC :
|w|=|a|^{-\frac{1}{2}}\} \}.   
\end{split} 
\end{gather}
By Corollary~\ref{c:hfdiff2}, 
we obtain that for each $z\in J_{21^{\infty }}(\tilde{f}_{0})=\{w\in
\CC : |w|=|a|^{-\frac{1}{2}}\} $,  
$$ \frac{\partial \overline{h}_{\l }(21^{\infty },z)}{\partial \l }\bigg|_{\l =0} 
=\frac{-1}{2z},$$ 
 and for each $z\in J_{12^{\infty }}(\tilde{f}_{0})=\{w\in \CC :
 |w|=|a|^{-\frac{1}{2}}\} $,  
$$
 \frac{\partial \overline{h}_{\l }(12^{\infty },z)}{\partial \l }\bigg|_{\l =0} 
=\sum _{n=2}^{\infty }\frac{-1}{f_{0,(12^{\infty })|_{n}}'(z)}
=\sum _{n=2}^{\infty }\frac{-1}{2^{n}az\prod
  _{j=1}^{n-1}f_{0,(12^{\infty })_{j}}(z)}.
$$  
%\end{proof}
%\noindent 
Therefore, for each $z\in \{w\in \CC : |w|=|a|^{-\frac{1}{2}}\} $, 
$$
\left| \frac{\partial \overline{h}_{\l }(21^{\infty },z)}{\partial \l
  }\bigg|_{\l =0}\right| 
=\frac{1}{2}|a|^{\frac{1}{2}} \mbox{ and } 
\left| \frac{\partial \overline{h}_{\l }(12^{\infty },z)}{\partial \l
  }\bigg|_{\l =0}\right|  
\leq \frac{1}{2}|a|^{-\frac{1}{2}}.
$$
Thus, for each $z\in \{w\in \CC : |w|=|a|^{-\frac{1}{2}}\} $, 
$$ 
\frac{\partial \overline{h}_{\l }(21^{\infty },z)}{\partial \l }\bigg|_{\l =0}-
\frac{\partial \overline{h}_{\l }(12^{\infty },z)}{\partial \l }\bigg|_{\l
  =0}\neq 0.
$$ 
Combining it with (\ref{eq:g0set2}), and shrinking $U$ if necessary, 
we obtain that the family $\{ f_{\l }\} _{\l \in U}$ satisfies the
analytic transversality condition.  
 By Proposition~\ref{p:atctc}, shrinking $U$ if necessary, 
the family $\{ f_{\l }\} _{\l \in U}$ satisfies the strong transversality condition and the transversality condition. 
By \cite[Corollary 3.19]{subowen}, 
for each $\l \in U\setminus \{ 0\} $, 
$s(\l )>2.$ 
Hence, by Theorem~\ref{t:tcdimj}, 
 statements (2) and (3) of our theorem hold. 
Thus, we have proved our theorem.  
\end{proof} 
\begin{cor}
\label{c:d2pm3}
Let $a\in \CC $ with $|a|>1.$ 
Let $V$ be an open subset of $\CC ^{d}.$ 
Let $\l _{0}\in V.$ 
Let $\{ f_{\l }=(f_{\l ,1},f_{\l ,2})\} _{\l \in V}$ be a holomorphic family 
in $\Exp (2)\cap {\mathcal P}^{2}.$ Suppose that there exists an open neighborhood $W$ of $0$ in $\CC $ and  
a holomorphic embedding $\eta :W\rightarrow V$ with $\eta (0)=\l _{0}$  
such that for each $c\in W$, $f_{\eta (c)}(z)=(az^{2}, z^{2}+c).$ 
Then  
there exists an open neighborhood $U$ of $\l _{0}$ in $V $ such that 
$\{ f_{\l }\} _{\l  \in U}$ is a holomorphic family in $\Epb(2)$ satisfying Setting $(\ast )$ 
with base point $\l _{0}$  
and all of the following hold. 
\begin{itemize}
\item[(1)] 
The family $\{ f_{\l }\} _{\l \in U}$ satisfies the analytic transversality condition, 
the strong transversality condition and
 the transversality condition.  
\item[(2)] 
For \mbox{{\em Leb}}$_{2d}$-a.e. $\l \in U$, 
{\em Leb}$_{2}(J(G_{\l }))>0.$ 
\item[(3)] 
For each $\l \in U$, 
let $h_{\l }$ be 
the conjugacy map of the form 
$h_{\l }(\om ,z)=(\om ,\overline{h}_{\l }(\om ,z))$ 
between $\tilde{f}_{\l _{0}}:J(\tilde{f}_{\l _{0}})\rightarrow J(\tilde{f}_{\l _{0}})$ 
and $\tilde{f}_{\l }:J(\tilde{f}_{\l })\rightarrow J(\tilde{f}_{\l })$ as in Setting ($\ast $).  
%(with $\l _{0}=0$).
Let $\mu $ be the $s(\l _{0})$-conformal measure on $J(\tilde{f}_{\l _{0}})$ for $\tilde{f}_{\l _{0}}.$ 
Then for \mbox{{\em Leb}}$_{2d}$-a.e. $\l \in U$, the Borel probability measure 
$(\overline{h}_{\l})_{\ast }(\mu )$ on $J(G_{\l})$ is absolutely continuous with respect to 
{\em Leb}$_{2}$ with $L^{2}$ density.   
\end{itemize}
\end{cor}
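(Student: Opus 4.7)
The plan is to reduce Corollary~\ref{c:d2pm3} to the one-parameter Theorem~\ref{t:d2pm2} by invoking the embedding lemma Lemma~\ref{l:atcemb}, which transports the analytic transversality condition from a lower-dimensional embedded family to the ambient parameter space. First, I would apply Theorem~\ref{t:d2pm2} to the one-parameter holomorphic family $\{g_c := (az^2, z^2 + c)\}_{c \in W_0}$, where $W_0 \subset W$ is a sufficiently small neighborhood of $0 \in \CC$. This yields that $\{g_c\}_{c \in W_0}$ is a holomorphic family in $\Epb(2)$ satisfying ATC, STC, and TC, and moreover that $s_g(c) > 2$ for every $c \in W_0 \setminus \{0\}$ (via \cite[Corollary 3.19]{subowen}, exactly as in the proof of Theorem~\ref{t:d2pm2}).

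Next, I would invoke Lemma~\ref{l:atcemb} with parameter dimensions $d_1 = 1$, $d_2 = d$, source family $\{g_c\}_{c \in W_0}$ at base point $0$, target family $\{f_\l\}_{\l \in V}$ at base point $\l_0$, and the given holomorphic embedding $\eta|_{W_0}$. The hypothesis $f_{\eta(c)} = g_c$ is precisely what that lemma requires, so it produces an open neighborhood $U$ of $\l_0$ in $V$ on which $\{f_\l\}_{\l \in U}$ satisfies ATC, STC, and TC; after shrinking $U$ further to remain in $\Epb(2)$ via Lemma~\ref{l:epbopen}, claim (1) is established. For claims (2) and (3), I then apply Theorem~\ref{t:tcdimj}(2) to $\{f_\l\}_{\l \in U}$. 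What is needed for an a.e.\ statement on all of $U$ is that $\{\l \in U : s(\l) \le 2\}$ has $\mbox{Leb}_{2d}$-measure zero; the cleanest route is an appeal to \cite[Corollary 3.19]{subowen} applied to general perturbations of $(az^2, z^2)$ in $\Epb(2)$, yielding $s(\l) > 2$ off a negligible subset of $U$. Granted this, Theorem~\ref{t:tcdimj}(2) delivers both $\mbox{Leb}_2(J(G_\l)) > 0$ and the $L^2$-density absolute continuity of $(\overline{h}_\l)_\ast(\mu)$ on a full-measure subset of $U$.

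The main obstacle will be verifying the measure-zero claim for $\{\l \in U : s(\l) \le 2\}$. In the one-parameter setting of Theorem~\ref{t:d2pm2} this is trivial, since the exceptional set is contained in $\{0\}$, but for $d$ complex parameters one must rule out an open region off the embedded curve $\eta$ on which the Bowen parameter drops to $\le 2$. Real-analyticity and plurisubharmonicity of $s$ (Theorem~\ref{t:delrp}), combined with the strict positivity $s > 2$ along $\eta(W_0 \setminus \{0\})$ established in the first step, should force $\{s \le 2\}$ to have empty interior, from which with some care one can then conclude that $\{s \le 2\}$ is $\mbox{Leb}_{2d}$-null.
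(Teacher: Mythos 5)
Your first two steps coincide with the paper's proof: apply Theorem~\ref{t:d2pm2} to the curve $c\mapsto (az^{2},z^{2}+c)$ and then transport the analytic transversality condition to the ambient $d$-parameter family via Lemma~\ref{l:atcemb} (with Lemma~\ref{l:epbopen} to stay in $\Epb(2)$), after which Theorem~\ref{t:tcdimj}(2) gives (2) and (3) once one knows $\mbox{Leb}_{2d}(\{\l\in U: s(\l )\leq 2\})=0$. The gap is exactly where you locate it, and your proposed fix does not close it. Plurisubharmonic and real-analytic functions obey no minimum principle, so neither property of $s$ (Theorem~\ref{t:delrp}), even combined with $s(\eta (c))>2$ for $c\neq 0$, rules out $s<2$ on a nonempty open subset of $U$ away from the embedded curve; note also that $s(\l _{0})=\delta (az^{2},z^{2})=2$ (the pair $(az^{2},z^{2})$ has the form $(\alpha _{1}(z-b)^{2}+b,\alpha _{2}(z-b)^{2}+b)$ with $b=0$), so continuity gives no lower bound $>2$ near the base point. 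Your ``cleanest route'' sentence asserts the needed nullity but does not say how \cite[Corollary 3.19]{subowen} produces it for an arbitrary $d$-parameter family, which is precisely the content to be supplied.

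The paper's resolution uses the \emph{exact} characterization in \cite[Corollary 3.19]{subowen}: inside $\Epb(2)\cap {\mathcal P}_{2}^{2}$ the set $\{ g : \delta (g)\leq 2\}$ equals $A:=\{ (\alpha _{1}(z-b)^{2}+b,\alpha _{2}(z-b)^{2}+b): \alpha _{1},\alpha _{2}\in \CC \setminus \{0\}, b\in \CC \}$, a holomorphic subvariety. Writing $\Psi (\l )=f_{\l }$ (holomorphic, with values in ${\mathcal P}_{2}^{2}$ near $\l _{0}$), one gets $\{\l \in U: s(\l )\leq 2\}=\Psi ^{-1}(A)$, an analytic subvariety of the connected neighborhood $U$; it is a \emph{proper} subvariety because $\Psi (\eta (c))=(az^{2},z^{2}+c)\notin A$ for small $c\neq 0$, and a proper analytic subvariety has $\mbox{Leb}_{2d}$-measure zero. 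This algebraic-geometric step (identity of sets, not just an inequality, plus properness witnessed along the embedded curve) is the missing idea; without it your argument only controls $s$ on the one-dimensional slice $\eta (W)$ and cannot exclude a positive-measure region of parameters with $s(\l )\leq 2$.
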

\begin{proof}
By Theorem~\ref{t:d2pm2}, 
there exists an open neighborhood $W_{1}$ of $0$ in $\CC $ such that 
$\{ (az^{2},z^{2}+c)\} _{c\in W_{1}}$ is a holomorphic family in $\Epb (2)$ satisfying the analytic transversality condition. 
Hence, by Lemma~\ref{l:atcemb}, there exists an open disk neighborhood $U$ of $\l _{0}$ in $\CC ^{d}$ such that 
$\{ f_{\l }\} _{\l \in U}$ is a holomorphic family in $\Epb (2)$ satisfying the analytic transversality condition,  
%By Proposition~\ref{p:atctc}, shrinking $U$ if necessary, 
%the family $\{ f_{\l }\} _{\l \in U}$ satisfies 
the strong transversality condition and the transversality condition. 
%Let ${\mathcal P}_{2}:= \{ g\in {\mathcal P}: \deg (g)=2\} .$ 
For each $\l \in U$, we set $\Psi (\l ) = f_{\l }\in \Epb(2)\cap {\mathcal P}_{2}^{2}.$ 
By \cite[Corollary 3.19]{subowen}, 
\begin{align*}
\ & \ \ \  \ \{ g=(g_{1},g_{2})\in \Epb(2)\cap {\mathcal P}_{2}^{2}: \delta (g)\leq 2\} \\ 
 & =\{ (\alpha _{1}(z-b)^{2}+b, \alpha _{2}(z-b)^{2}+b): \alpha _{1},\alpha _{2}\in \CC \setminus \{ 0\} ,b\in \CC\} .
\end{align*}   
Let $A:= \{  (\alpha _{1}(z-b)^{2}+b, \alpha _{2}(z-b)^{2}+b): \alpha _{1},\alpha _{2}\in \CC \setminus \{ 0\} ,b\in \CC \} .$ 
Then $A$ is a holomorphic subvariety of $\Epb (2)\cap {\mathcal P}_{2}^{2}.$ 
Hence $\Psi ^{-1}(A)$ is a proper holomorphic subvariety of $U.$ 
Therefore $\mbox{Leb} _{2d} (\{ \l \in U: s(\l )\leq 2\} )=0. $ 
Thus, by Theorem~\ref{t:tcdimj}, 
statements (2) and (3) of our corollary hold. 
\end{proof}
From Corollary~\ref{c:d2pm3} we immediately obtain the following. 
\begin{cor}
\label{c:pmz2az2}
 For each $a\in \CC $ with $|a|\neq 0,1$, 
there exists an open neighborhood $Y_{a}$ of $(az^{2}, z^{2})$ in 
${\mathcal P}^{2}$ such that 
$\{ g=(g_{1},g_{2})\} _{g\in Y_{a}} $ is a holomorphic family 
in $\Epb (2)\cap {\mathcal P}_{2}^{2}$ satisfying Setting $(\ast )$ 
with base point $(az^{2},z^{2})$ and all of the following hold.
\begin{itemize}
\item[(1)] 
The family $\{ g=(g_{1},g_{2})\} _{g \in Y_{a}}$ satisfies the analytic transversality condition, 
the strong transversality condition and
 the transversality condition.  
\item[(2)] 
For a.e. $g =(g_{1},g_{2})\in Y_{a}$ with respect to the Lebesgue measure on ${\mathcal P}_{2}^{2}$, \\  
{\em Leb}$_{2}(J(\langle g_{1},g_{2}\rangle ))>0.$  
\item[(3)] 
Let $\l _{0}=(az^{2},z^{2})\in Y_{a}$ and 
for each $g =(g_{1},g_{2})\in Y_{a}$, 
let $h_{g }$ be 
the conjugacy map of the form 
$h_{g }(\om ,z)=(\om ,\overline{h}_{g }(\om ,z))$ 
between $\tilde{f}_{\l _{0}}:J(\tilde{f}_{\l _{0}})\rightarrow J(\tilde{f}_{\l _{0}})$ 
and $\tilde{f}_{g }:J(\tilde{f}_{g })\rightarrow J(\tilde{f}_{g })$ as in Setting ($\ast $).  
%(with $\l _{0}=0$).
Let $\mu $ be the $s(\l _{0})$-conformal measure on $J(\tilde{f}_{\l _{0}})$ for $\tilde{f}_{\l _{0}}.$ 
Then for a.e. $g \in Y_{a}$ with respect to the Lebesgue measure on ${\mathcal P}^{2}_{2}$, the Borel probability measure 
$(\overline{h}_{g})_{\ast }(\mu )$ on $J(\langle g_{1},g_{2}\rangle )$ is absolutely continuous with respect to 
{\em Leb}$_{2}$ with $L^{2}$ density.   
\end{itemize}
%satisfying the analytic transversality condition, 
%the strong transversality condition and the transversality condition 
%and 
%for a.e. $g=(g_{1},g_{2})\in Y_{a}$ with respect to the Lebesgue measure on 
%${\mathcal P}_{2}^{2}$, 
%$\mbox{{\em Leb}}_{2}(J(\langle g_{1},g_{2}\rangle ))>0.$  
\end{cor}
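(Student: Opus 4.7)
The plan is to derive Corollary~\ref{c:pmz2az2} as a direct specialization of Corollary~\ref{c:d2pm3} applied to the tautological family on a small neighborhood of $(az^{2},z^{2})$ inside $\mathcal{P}_{2}^{2}$. Recall that $\mathcal{P}_{2}\cong (\CC\setminus\{0\})\times\CC^{2}$ via the coefficient chart, so $\mathcal{P}_{2}^{2}$ carries a natural complex manifold structure of complex dimension $6$. Using this chart, any small neighborhood of $(az^{2},z^{2})$ in $\mathcal{P}_{2}^{2}$ can be identified with an open subset $V$ of $\CC^{6}$, and the tautological assignment $\lambda\mapsto f_{\lambda}:=\lambda$ is a holomorphic family in $\mathcal{P}^{2}$ over $V$, to which the machinery applies.

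First I would check that $(az^{2},z^{2})\in\Epb(2)$, so that the openness lemmas provide a bona fide open neighborhood $V$ contained in $\Epb(2)\cap\mathcal{P}_{2}^{2}$. Since $|a|\neq 0,1$, the Julia sets of $az^{2}$ and $z^{2}$ are two disjoint concentric circles $\{|z|=1/|a|\}$ and $\{|z|=1\}$. Each generator has $0$ as its only finite critical point and as its only finite critical value, and both generators fix $0$ with $0$ in the Fatou set of the semigroup; hence $P^{\ast}(\langle az^{2},z^{2}\rangle)=\{0\}\subset F(\langle az^{2},z^{2}\rangle)$, so the semigroup is postcritically bounded and hyperbolic. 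By Remark~\ref{exphyplem} hyperbolicity is equivalent to expandingness here, so $(az^{2},z^{2})\in\Epb(2)$. Lemma~\ref{l:epbopen} then lets me shrink $V$ so that $V\subset\Epb(2)\cap\mathcal{P}_{2}^{2}$.

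Next, define a holomorphic embedding $\eta\colon W\to V$ on a sufficiently small disk $W\subset\CC$ centred at $0$ by $\eta(c):=(az^{2},z^{2}+c)$. Clearly $\eta(0)=(az^{2},z^{2})=:\lambda_{0}$, and $f_{\eta(c)}(z)=(az^{2},z^{2}+c)$ matches exactly the hypothesis of Corollary~\ref{c:d2pm3}. Applying that corollary yields an open neighborhood $U$ of $\lambda_{0}$ in $V$ for which $\{f_{\lambda}\}_{\lambda\in U}$ satisfies the analytic, strong and ordinary transversality conditions (giving (1)), for which $\mbox{Leb}_{12}$-a.e.\ $\lambda\in U$ enjoys $\mbox{Leb}_{2}(J(G_{\lambda}))>0$ (giving (2)), and for which $(\overline{h}_{\lambda})_{\ast}(\mu)$ is absolutely continuous with $L^{2}$ density with respect to $\mbox{Leb}_{2}$ (giving (3)). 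Transporting $U$ back to $\mathcal{P}_{2}^{2}$ via the coefficient chart and setting $Y_{a}:=U$ completes the argument; the one routine check is that ``Lebesgue measure on $\mathcal{P}_{2}^{2}$'' agrees in its null sets with $\mbox{Leb}_{12}$ on the chart, which is immediate because the chart is a biholomorphism onto an open subset of $\CC^{6}$.

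There is essentially no new obstacle: the real content was already packaged into Corollary~\ref{c:d2pm3}, whose force is the transversality computation performed at $(az^{2},z^{2})$ in the proof of Theorem~\ref{t:d2pm2} (the key inequality comes from the explicit formulas $|\partial\overline{h}_{\l}(21^{\infty},z)/\partial\l|_{\l=0}|=\tfrac{1}{2}|a|^{1/2}$ versus $|\partial\overline{h}_{\l}(12^{\infty},z)/\partial\l|_{\l=0}|\leq\tfrac{1}{2}|a|^{-1/2}$), combined with the lower bound $s(\lambda)>2$ off a proper analytic subvariety obtained from \cite[Corollary 3.19]{subowen}, and with Theorem~\ref{t:tcdimj} to convert transversality into positive planar Lebesgue measure. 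The present corollary is the transfer of those facts from the one-parameter slice $\{(az^{2},z^{2}+c)\}_{c\in W}$ to the full twelve-real-dimensional parameter space $\mathcal{P}_{2}^{2}$, a transfer already built into Corollary~\ref{c:d2pm3} via Lemma~\ref{l:atcemb}.
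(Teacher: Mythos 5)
Your route is exactly the paper's: the authors also obtain Corollary~\ref{c:pmz2az2} by applying Corollary~\ref{c:d2pm3} to the tautological family on a coefficient-chart neighborhood of $(az^{2},z^{2})$ in ${\mathcal P}_{2}^{2}$, with the slice $\eta(c)=(az^{2},z^{2}+c)$, and for $|a|>1$ your argument is complete and correct (the $\Epb(2)$ check, the identification ${\mathcal P}_{2}^{2}\cong$ open subset of $\CC^{6}$, and the null-set compatibility of the chart are all fine).

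There is, however, one genuine gap: Corollary~\ref{c:d2pm3} (and Theorem~\ref{t:d2pm2} behind it) is hypothesized only for $|a|>1$, while the statement you are proving asserts the conclusion for all $|a|\neq 0,1$. Your slice $\eta(c)=(az^{2},z^{2}+c)$ therefore does not ``match exactly the hypothesis'' of Corollary~\ref{c:d2pm3} when $0<|a|<1$, and this is not a cosmetic restriction: the transversality estimate in the proof of Theorem~\ref{t:d2pm2} separates $\bigl|\partial\overline{h}_{\l }(21^{\infty },z)/\partial \l \bigr|=\tfrac12|a|^{1/2}$ from $\bigl|\partial\overline{h}_{\l }(12^{\infty },z)/\partial \l \bigr|\leq\tfrac12|a|^{-1/2}$, which only works because $|a|>1$. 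To cover $0<|a|<1$ you need an extra (easy) reduction: let $\psi (z)=z/a$ and consider the biholomorphism $\Phi (g_{1},g_{2}):=(\psi ^{-1}\circ g_{2}\circ \psi ,\ \psi ^{-1}\circ g_{1}\circ \psi )$ of ${\mathcal P}_{2}^{2}$, which sends $(az^{2},z^{2})$ to $(a^{-1}z^{2},z^{2})$ with $|a^{-1}|>1$; since $J(\langle \psi ^{-1}g_{2}\psi ,\psi ^{-1}g_{1}\psi \rangle )=\psi ^{-1}(J(\langle g_{1},g_{2}\rangle ))$ and $\psi $ is affine, positivity of $\mbox{Leb}_{2}$, absolute continuity with $L^{2}$ density, Setting $(\ast )$ and the three transversality conditions all transfer through $\Phi$, so the case $0<|a|<1$ follows from the case $|a|>1$ (alternatively, redo the derivative computation of Theorem~\ref{t:d2pm2} with the roles of the two generators exchanged). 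Adding this reduction closes the gap; otherwise your proof only establishes the corollary for $|a|>1$.
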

%\section{Examples}
%\label{Examples}
\begin{rem}
\label{r:Jholes}
For an $a\in \CC $ with $|a|\neq 0,1$, 
$J(\langle az^{2},z^{2}\rangle )$ is equal to the closed annulus between 
$\{ w\in \CC :|w|=1\} $ and $\{ w\in \CC: |w|=|a|^{-1}\} $, thus 
$\mbox{{\em int}}(J(\langle az^{2}, z^{2}\rangle ))\neq \emptyset .$ 
However, regarding Corollary~\ref{c:pmz2az2},  
it is an open problem to determine  for any other  
parameter value $(g_{1}, g_{2})\in Y_{a}$ with $\mbox{{\em Leb}}_{2}(J(\langle g_{1},g_{2}\rangle ))>0$, 
whether $\mbox{{\em int}}(J(\langle g_{1},g_{2}\rangle ))= \emptyset $ or not. 
(By \cite[Theorem 2.15]{sumid1}, at least we know that for each 
$(\g _{1},\g _{2})\in Y_{a}$, $J(\langle \g _{1},\g _{2}\rangle )$ is connected.) 
Let $a\in (0,1)\subset \RR .$ 
It is easy to see that for a small $\epsilon >0$, 
setting $g_{1,\epsilon }(z)=a(z+\epsilon )^{2}-\epsilon $ and 
$g_{2}(z)=z^{2}$, 
we have $J(g_{1,\epsilon })=\{ w\in \CC : |w+\epsilon |=a^{-1}\} $, 
$J(g_{2})=\{ z\in \CC : |w|=1\} $, 
$g_{2}|_{\{ x>0\} }^{-1}(a^{-1}-\epsilon )<g_{1,\epsilon }|_{\{ x>0\} }^{-1}(1)$ 
and $g_{2}|_{\{ x>0\} }^{-1}([1,a^{-1}-\epsilon ])\amalg g_{1,\epsilon }|_{\{ x>0\} }^{-1}([1,a^{-1}-\epsilon ])\subset [1,a^{-1}-\epsilon ].$   
Thus for each $n\in \NN $ with $n\geq 3$ 
there exists a small neighborhood $V_{n}$ of the above $(g_{1,\epsilon }, g_{2})$ in $Y_{a}$ 
such that for each $(\g _{1},\g _{2})\in V$, 
$F(\langle \g _{1},\g _{2}\rangle )$ has at least $n$ 
connected components and  
$J(\langle \g _{1},\g _{2}\rangle )$ is not a closed annulus.  
Since $\epsilon >0$ can be taken arbitrary small, we can deduce that 
for any $a\in \RR $ with $a>0,a\neq 1$, 
for each neighborhood $W$ of $(az^{2},z^{2})$ in $Y_{a}$ and for each $n\in \NN $ with $n\geq 3$, 
there exists a non-empty open subset 
$W_{n}$ of $W$ such that for each $(\g _{1},\g_{2})\in W_{n}$, 
$F(\langle \g _{1}, \g _{2}\rangle )$ has at least $n$ connected components and 
$J(\langle \g _{1},\g _{2}\rangle )$ is not a closed annulus. 
A similar argument shows that for any $a\in \CC $ with $|a|\neq 0,1$, 
for each neighborhood $W$ of $(az^{2},z^{2})$ in $Y_{a}$ there exists a non-empty open subset 
$\tilde{W}$ of $W$ such that 
for each $(\g _{1},\g _{2})\in \tilde{W}$, 
$F(\langle \g _{1},\g _{2}\rangle )$ has at least three connected components and 
$J(\langle \g _{1},\g _{2}\rangle )$ is not a closed annulus. 
\end{rem}

We now consider families of systems of affine maps.  
\begin{rem}
\label{r:Jssset}
Let $m\geq 2.$ 
For each $j=1,\ldots ,m$, 
let $g_{j}(z)=a_{j}z+b_{j}$, 
where $a_{j},b_{j}\in \CC , |a_{j}|>1.$ 
Let $G=\langle g_{1},\ldots ,g_{m}\rangle .$ 
Since $|a_{j}|>1$, $\infty \in F(G).$ 
Hence, by (\ref{bsseq}), 
$J(G)$ is a compact subset of $\CC $ which satisfies 
$J(G)=\bigcup _{j=1}^{m}g_{j}^{-1}(J(G)).$ 
Since $g_{j}^{-1}$ is a contracting similitude on $\CC $, 
it follows that 
$J(G)$ is equal to the self-similar set constructed by the 
family $\{ g_{1}^{-1},\ldots ,g_{m}^{-1}\}$ of contracting similitudes. 
For the definition of self-similar sets, see \cite{F,F0,Ki}. 
Note that $\delta (g_{1},\ldots ,g_{m})$ is equal to the unique solution of 
the equation $\sum _{i=1}^{m}|a_{i}|^{-t}=1, t\geq 0$. 
Thus $\delta (g_{1},\ldots ,g_{m})$ is the similarity dimension of 
$\{ g_{1}^{-1},\ldots ,g_{m}^{-1}\} .$ 
Conversely, any self-similar set constructed by 
a finite family $\{ h_{1},\ldots ,h_{m}\} $ of contracting similitudes on $\CC $ is equal to 
the Julia set of the rational semigroup $\langle h_{1}^{-1},\ldots ,h_{m}^{-1}\rangle .$    
\end{rem}
\begin{thm}
\label{t:autc}
Let $m\in \NN $ with $m\geq 2.$ 
For each $i=1,\ldots, m$,  
let $g_{i}(z)=a_{i}z+b_{i}, $ where 
$a_{i}\in \CC ,|a_{i}|>1$, $b_{i}\in \CC $. 
Let $G:=\langle g_{1},\ldots ,g_{m}\rangle .$ 
We suppose all of the following conditions.
\begin{itemize}
\item[(i)]
For each $(i,j)$ with $i\neq j$ and $g_{i}^{-1}(J(G))\cap
g_{j}^{-1}(J(G))\neq \emptyset ,$   
there exists a number $\alpha _{ij}\in \{ 1,\ldots ,m\}$ such that 
$$
g_{i}(g_{i}^{-1}(J(G))\cap g_{j}^{-1}(J(G)))\subset \left\{
\frac{-b_{\alpha _{ij}}}{a_{\alpha _{ij}}-1}\right\} .
$$
%J(g_{\alpha _{ij}}).$

\item[(ii)]
If $i,j,k$ are mutually distinct elements in $\{ 1,\ldots ,m\} $, 
then 
$$ g_{k}(g_{i}^{-1}(J(G))\cap g_{j}^{-1}(J(G)))\subset F(G).$$ 

\item[(iii)]
For each $(j,k)$ with $j\neq k$, 
$g_{k}\left(\frac{-b_{j}}{a_{j}-1}\right)\in F(G).$ 
\end{itemize}
Then, there exists an open neighborhood $U$ of 
$(g_{1},\ldots ,g_{m})\in (\mbox{{\em Aut}}(\CC ))^{m}$ 
such that 
$\{ \g =(\g _{1},\ldots ,\g _{m})\} _{ \g \in U} $ 
is a holomorphic family in $\Exp(m)$ satisfying the analytic
transversality condition,  
the strong transversality condition and the transversality condition. 
\end{thm}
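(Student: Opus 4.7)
The plan is to deduce Theorem~\ref{t:autc} as an application of Lemma~\ref{l:genprin} applied to the family obtained by perturbing the coefficients of the affine maps $g_i(z)=a_iz+b_i$ in a small neighborhood $U_0$ of $(g_1,\ldots,g_m)$ inside $(\mbox{Aut}(\CC))^m\cong((\CC\setminus\{0\})\times\CC)^m$. First I would note that since $|a_i|>1$, each $g_i$ has $\infty$ as an attracting fixed point on $\oc$ and $p_i:=-b_i/(a_i-1)$ as its unique repelling fixed point, so $J(g_i)=\{p_i\}$ and $J(G)\subset\CC$; by Lemma~\ref{expopenlem} and Remark~\ref{r:setting}, shrinking $U_0$ we obtain a holomorphic family in $\Exp(m)$ satisfying Setting $(\ast)$. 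Conditions (i), (ii), (iii) of Lemma~\ref{l:genprin} translate exactly into hypotheses (i), (ii), (iii) of the theorem once we identify $J(f_{\l_0,\alpha_{ij}})=\{p_{\alpha_{ij}}\}$, so only the non-vanishing gradient condition (iv) requires genuine work.

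For condition (iv), the key step is to derive an explicit closed-form expression for the conjugacy map on the relevant ``eventually constant'' sequences. For $\omega=i\alpha_{ij}^{\infty}$ and $z\in g_i^{-1}(J(G))\cap g_j^{-1}(J(G))$, hypothesis (i) gives $f_{\l_0,i}(z)=p_{\alpha_{ij}}$, and iterating the conjugacy relation $\overline{h}_\l\circ\tilde{f}_{\l_0}=\tilde{f}_\l\circ\overline{h}_\l$ shows that $\overline{h}_\l(\alpha_{ij}^{\infty},p_{\alpha_{ij}})$ is a fixed point of $f_{\l,\alpha_{ij}}$, which by uniqueness must be $p_{\l,\alpha_{ij}}:=-b_{\l,\alpha_{ij}}/(a_{\l,\alpha_{ij}}-1)$; applying $f_{\l,i}^{-1}$ then yields
\[
\overline{h}_\l(i\alpha_{ij}^{\infty},z)=g_{\l,i}^{-1}(p_{\l,\alpha_{ij}})=\frac{p_{\l,\alpha_{ij}}-b_{\l,i}}{a_{\l,i}},
\]
and analogously for $j\alpha_{ji}^{\infty}$. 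In particular the difference
\[
D(\l):=\overline{h}_\l(i\alpha_{ij}^{\infty},z)-\overline{h}_\l(j\alpha_{ji}^{\infty},z)=\frac{p_{\l,\alpha_{ij}}-b_{\l,i}}{a_{\l,i}}-\frac{p_{\l,\alpha_{ji}}-b_{\l,j}}{a_{\l,j}}
\]
is a rational function of the parameters that does not depend on $z$.

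Next I would compute $\partial D/\partial b_i$ and $\partial D/\partial b_j$ at $\l=\l_0$ in a short case analysis based on whether $\alpha_{ij}$ and $\alpha_{ji}$ equal $i$ or not. Using $\partial p_{\l,k}/\partial b_k=-1/(a_k-1)$ and $|a_k|>1$ (so $a_k\neq 0,1$), in each of the four combinations $\alpha_{ij}\in\{i,\mbox{other}\}$, $\alpha_{ji}\in\{i,\mbox{other}\}$ one verifies directly that at least one of $\partial D/\partial b_i$ and $\partial D/\partial b_j$ is nonzero; for example, in the ``generic'' case $\alpha_{ij}\neq i$, $\alpha_{ji}\neq i$ one gets $\partial D/\partial b_i=-1/a_i\neq 0$, while in the borderline case $\alpha_{ij}=\alpha_{ji}=i$ one gets $\partial D/\partial b_i=-(a_j-1)/(a_j(a_i-1))\neq 0$. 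This gives $\nabla_\l D(\l_0)\neq 0$ for every admissible overlap pair, verifying condition (iv) of Lemma~\ref{l:genprin}.

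Having checked (i)--(iv) of Lemma~\ref{l:genprin}, we conclude that on a possibly smaller neighborhood $U\subset U_0$ the family $\{\gamma\}_{\gamma\in U}$ satisfies the analytic transversality condition, and then Proposition~\ref{p:atctc} yields the strong transversality condition and the transversality condition. The only real obstacle is the bookkeeping for the gradient in condition (iv): one must handle all four sub-cases of $(\alpha_{ij},\alpha_{ji})$ relative to $\{i,j\}$ and verify non-degeneracy in each, but each sub-case reduces to a one-line computation exploiting $a_k\neq 0,1$; the rest of the argument is a direct application of the machinery already developed in Lemmas~\ref{l:hfdiff}--\ref{l:genprin} and Proposition~\ref{p:atctc}.
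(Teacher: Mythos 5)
Your overall strategy (check hypotheses (i)--(iii) of Lemma~\ref{l:genprin} directly, then verify the gradient condition (iv)) matches the paper, and your closed-form formula $\overline{h}_\l(i\alpha_{ij}^{\infty},z)=g_{\l,i}^{-1}(p_{\l,\alpha_{ij}})$ is correct and is a legitimate alternative to the paper's route (the paper instead conjugates so that $b_i=0$ and differentiates along the single direction $g_i\mapsto g_i+\l z$ via Corollary~\ref{c:hfdiff2}). However, your verification of (iv) has a genuine gap: it is not true that one of $\partial D/\partial b_i$, $\partial D/\partial b_j$ always survives. Writing $p_k=-b_k/(a_k-1)$, in the configuration $\alpha_{ij}=j$, $\alpha_{ji}=i$ one gets
\begin{equation*}
\frac{\partial D}{\partial b_i}=-\frac{1}{a_i}+\frac{1}{a_j(a_i-1)},\qquad
\frac{\partial D}{\partial b_j}=-\frac{1}{a_i(a_j-1)}+\frac{1}{a_j},
\end{equation*}
and both vanish simultaneously exactly when $a_ia_j=a_i+a_j$, e.g.\ $a_i=a_j=2$. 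This is not a pathological corner: it is precisely the situation in Example~\ref{ex:m2autatc} ($g_1(z)=2z$, $g_2(z)=2z-1$, where $g_1(1/2)=p_2$ and $g_2(1/2)=p_1$) and in the Sierpinski gasket Example~\ref{ex:Sierpatc}, i.e.\ the main examples the theorem is built to cover. So your claimed four-case check based only on the $b$-derivatives cannot close; moreover the case $\alpha_{ij}=i$ that you call ``borderline'' is in fact impossible (conditions (i) and (iii) force $\alpha_{ij}\neq i$, as the paper notes), while the genuinely dangerous case $\alpha_{ji}=i$, $\alpha_{ij}=j$ is the one you dismiss as a one-liner.

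The argument can be repaired within your framework by also using the $a$-directions: for instance $\partial D/\partial a_i=-(p_{\alpha_{ij}}-b_i)/a_i^{2}-a_j^{-1}\,\partial p_{\alpha_{ji}}/\partial a_i$, which in the degenerate example above equals $-1/4\neq 0$, so the full gradient in $(\mbox{Aut}(\CC))^m$ is still nonzero. The paper avoids the case analysis altogether: after an affine conjugation making $b_i=0$ (so $p_i=0$), it perturbs only $g_i$ by $\l z$ and applies Corollary~\ref{c:hfdiff2}; then the derivative of $\overline{h}_\l(i\alpha_{ij}^{\infty},z_0)$ is $-z_0/a_i$ while that of $\overline{h}_\l(j\alpha_{ji}^{\infty},z_0)$ is $0$ (all correction terms carry a factor $g_{\om|_{n-1}}(z_0)=p_i=0$ even if $\alpha_{ji}=i$), and $z_0\neq 0$ by (iii), giving nonvanishing in every case at once. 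You should either adopt that normalization or complete your computation with the $a$-derivatives in the degenerate configuration.
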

\begin{proof}
We first note that for each $j$, 
$J(g_{j})=\{ \frac{-b_{j}}{a_{j}-1}\} .$ 
By conditions (i) and (iii), $\alpha _{ij}\neq i$ for each 
$(i,j)$ with $i\neq j.$ 
By Lemma~\ref{expopenlem} and Remark~\ref{r:setting}, 
there exists a small open neighborhood $U$ of $(g_{1},\ldots ,g_{m})$ in 
$(\mbox{Aut}(\CC ))^{m}$ such that 
$\{ \g \} _{\g \in U} $ is a holomorphic family in $\Exp(m)$ satisfying Setting $(\ast )$ with base 
point $\g _{0}=(g_{1},\ldots ,g_{m})$ and letting $h_{\g },\overline{h}_{\g }, G_{\g }$ be as in Setting $(\ast )$,  
the map $\g \mapsto \overline{h}_{\g }(\om ,z), \g \in U,$ is holomorphic.   
%Let $\g _{0}=(g_{1},\ldots ,g_{m})\in U.$  
%Let $h_{\g },\overline{h}_{\g }, G_{\g }$ be as in the 
%Setting $(\ast ).$ 
We shall prove the following claim.

\vspace{2mm} Claim 1:  
If $i\neq j$ and  
$z_{0}\in g_{i}^{-1}(J(G))\cap g_{j}^{-1}(J(G))$, then 
\begin{equation}
\label{eq:autc1}
\nabla _{\g }(\overline{h}_{\g }(i\alpha _{ij}^{\infty },z_{0})-\overline{h}_{\g }(j\alpha _{ji}^{\infty },z_{0}))|
_{\g =\g _{0}} \neq 0.
\end{equation}
In order to prove Claim 1, let $i\neq j$ and  
$z_{0}\in g_{i}^{-1}(J(G))\cap g_{j}^{-1}(J(G))$. 
To show (\ref{eq:autc1}), 
by conjugating $G$ by a map $z\mapsto z-\frac{-b_{i}}{a_{i}-1}$, 
we may assume that $b_{i}=0.$ 
Let $V$ be a small open neighborhood of $0$ in $\CC $ and 
let ${\mathcal A}:= \{(g_{1},\ldots g_{i-1},g_{i}+\l z, g_{i+1},\ldots ,g_{m})\} _{\l \in V}.$ 
For this holomorphic family in $\Exp(m)$, 
let $h_{\l }^{0},\overline{h}_{\l }^{0}$ be the conjugating maps as in
Setting $(\ast )$ with base point $\l _{0}=0.$     
By Corollary~\ref{c:hfdiff2} and that $b_{i}=0$, we have  
$$
\frac{\partial \overline{h}_{\l }^{0}(i\alpha _{ij}^{\infty },
  z_{0})}{\partial \l}\bigg|_{\l =0} 
=\frac{-z_{0}}{a_{i}} \mbox{ and } 
\frac{\partial \overline{h}_{\l }^{0}(j\alpha _{ji}^{\infty
  },z_{0})}{\partial \l}\bigg|_{\l =0}=0.
$$ 
%Since $\alpha _{ij}\neq i$, 
By (iii), 
we have $z_{0}\neq 0.$ 
Therefore, 
$$ 
\frac{\partial \overline{h}_{\l }^{0}(i\alpha _{ij}^{\infty },
  z_{0})}{\partial \l}\bigg|_{\l =0} 
- 
\frac{\partial \overline{h}_{\l }^{0}(j\alpha _{ji}^{\infty },
  z_{0})}{\partial \l}\bigg|_{\l =0}\neq 0.
$$ 
Thus, we have proved Claim 1. From this claim and from Lemma~\ref{l:genprin}, 
 shrinking $U$ if necessary, we obtain that 
 $\{ \g \} _{\g \in U} $ satisfies the analytic transversality condition, 
 the strong transversality condition and the transversality condition. 
 Thus we have proved Theorem~\ref{t:autc}. 
\end{proof}
\begin{rem}
\label{r:autc-cci}
Regarding Theorem~\ref{t:autc}, 
even if we replace ``$\mbox{\em Aut}(\CC )$'' by ``$\mbox{{\em Aut}}(\CCI )$'', 
we obtain similar results by using Lemma~\ref{l:atcemb}. 
\end{rem}

We give some examples to which we can apply Theorem~\ref{t:autc}. 
It seems true that those examples have not been dealt with explicitly in any literature of contracting IFSs with overlaps. 
\begin{ex}
\label{ex:m2autatc}
Let $g_{1}(z)=2z$ and $g_{2}(z)=2z-1.$ Let $G=\langle g_{1},g_{2}\rangle .$ Then 
$J(G)=[0,1].$ 
It is easy to see that $(g_{1},g_{2})$ satisfies the assumptions of Theorem~\ref{t:autc}. 
Moreover, $\delta (g_{1},g_{2})=\HD (J(G))=1<2.$ 
By Theorems~\ref{t:autc}, \ref{t:stcmain} and \ref{t:fundfact1}, 
there exists an open neighborhood $U$ of $(g_{1},g_{2})$ 
in $(\mbox{{\em Aut}}(\CC ))^{2}$ and a subset $A$ of $U$ 
with $\HD (U\setminus A)<\HD (U)=8$ 
such that 
{\em (1)} $\{ \g =(\g _{1},\g_{2})\} _{\g \in U}$ is a holomorphic family in $\Exp (2)$ 
satisfying the analytic transversality condition, the strong transversality condition and 
the transversality condition, and {\em (2)} for each $\g =(\g _{1},\g _{2})\in A$, 
$\HD (J(\langle \g _{1},  \g _{2}\rangle ))=\delta (\g _{1},\g _{2})<2.$  
\end{ex}
\begin{ex}
\label{ex:Sierpatc}
Let $p_{1},p_{2},p_{3}\in \CC $ be such that 
$p_{1}p_{2}p_{3}$ makes an equilateral triangle.  
For each $i=1,2,3$, let $g_{i}(z)=2(z-p_{i})+p_{i}.$ 
Let $G=\langle g_{1},g_{2},g_{3}\rangle .$ Then 
$J(G)$ is equal to the Sierpinski gasket.  
It is easy to see that $(g_{1},g_{2},g_{3})$ satisfies the assumptions of Theorem~\ref{t:autc}. 
Moreover, $\delta (g_{1},g_{2},g_{3})=\HD (J(G))=\frac{\log 3}{\log 2}<2.$ 
By Theorems~\ref{t:autc}, \ref{t:stcmain} and \ref{t:fundfact1}, 
there exists an open neighborhood $U$ of $(g_{1},g_{2},g_{3})$ 
in $(\mbox{{\em Aut}}(\CC ))^{3}$ and a subset $A$ of $U$ 
with $\HD (U\setminus A)<\HD (U)=12$ 
such that 
{\em (1)} $\{ \g =(\g _{1},\g_{2},\g _{3})\} _{\g \in U}$ is a holomorphic family in $\Exp (3)$ 
satisfying the analytic transversality condition, the strong transversality condition and 
the transversality condition, and {\em (2)} for each $\g =(\g _{1},\g _{2},\g_{3})\in A$, 
$\HD (J(\langle \g _{1},  \g _{2}, \g_{3}\rangle ))=\delta (\g _{1},\g _{2},\g_{3})<2.$  
\end{ex}
\begin{rem}
\label{r:Sierpatc}
Regarding Example~\ref{ex:Sierpatc}, 
for each open neighborhood $U$ of $(g_{1},g_{2},g_{3})$ in $(\mbox{{\em Aut}}(\CC ))^{3}$,  
there exists an open set $V$ in $U$ such that for each $\gamma =(\g _{1},\g_{2},\g_{3})\in V$, 
$\HD (J(\langle \g _{1},\g_{2},\g_{3}\rangle ))=\delta (\g _{1},\g_{2},\g_{3})<2.$ 
However, 
%the authors conjecture 
we can show 
that for each open neighborhood $U$ of $(g_{1},g_{2},g_{3})$ in $(\mbox{{\em Aut}}(\CC ))^{3}$,  
%there exists  an open set $W$ in $U$ such that 
%$$\HD (\{ \g =(\g _{1},\g _{2},\g _{3})\in W: \HD (J(\langle \g _{1},\g_{2},\g_{3}\rangle ))\neq \delta (\g _{1},\g %_{2},\g_{3})\} ) >4.$$
$$\HD (\{ \g =(\g _{1},\g _{2},\g _{3})\in U: \HD (J(\langle \g _{1},\g_{2},\g_{3}\rangle ))\neq \delta (\g _{1},\g _{2},\g_{3})\} ) \geq 10.$$
\end{rem}
\begin{ex}
\label{ex:SFatc}
For each $j=1,\ldots ,6$, let 
$p_{j}:=\exp (2j\pi \sqrt{-1}/6).$ Let $p_{7}:=0.$ 
For each $j=1,\ldots ,7$, 
let $g_{j}(z)=3(z-p_{j})+p_{j}.$ 
Let $G=\langle g_{1},\ldots ,g_{7}\rangle .$ 
Then $J(G)$ is equal to the Snowflake (see \cite[Example 3.8.12]{Ki}, Figure~\ref{fig:snowflake2}). 
It is easy to see that $(g_{1},\ldots ,g_{7})$ satisfies the assumptions of Theorem~\ref{t:autc} 
(see Figure~\ref{fig:snowflake2}). 
Moreover, $\delta (g_{1},\ldots ,g_{7})=\HD (J(G))=\frac{\log 7}{\log 3}<2.$ 
By Theorems~\ref{t:autc}, \ref{t:stcmain} and \ref{t:fundfact1}, 
there exists an open neighborhood $U$ of $(g_{1},\ldots ,g_{7})$ 
in $(\mbox{{\em Aut}}(\CC ))^{7}$ and a subset $A$ of $U$ 
with $\HD (U\setminus A)<\HD (U)=28$ 
such that 
{\em (1)} $\{ \g =(\g _{1},\ldots \g_{7})\} _{\g \in U}$ is a holomorphic family in $\Exp (7)$ 
satisfying the analytic transversality condition, the strong transversality condition and 
the transversality condition, and {\em (2)} for each $\g =(\g _{1},\ldots ,\g_{7})\in A$, 
$\HD (J(\langle \g _{1},  \ldots , \g_{7}\rangle ))=\delta (\g _{1},\ldots ,\g_{7})<2.$  
 
\end{ex}
\begin{ex}
\label{ex:Pentaatc}
For each $j=1,\ldots ,5$, 
let $p_{j}:=\exp(2j\pi \sqrt{-1}/5).$ 
For each $j=1,\ldots ,5$, let 
$g_{j}(z)=\frac{2}{3-\sqrt{5}}(z-p_{j})+p_{j}.$ 
Let $G=\langle g_{1},\ldots ,g_{5}\rangle .$ 
Then $J(G)$ is equal to the Pentakun (\cite[Example 3.8.11]{Ki}, Figure~\ref{fig:snowflake2}). 
It is easy to see that $(g_{1},\ldots ,g_{5})$ satisfies the assumptions of Theorem~\ref{t:autc} 
(see Figure~\ref{fig:snowflake2}). 
Moreover, $\delta (g_{1},\ldots ,g_{5})=\HD (J(G))=\frac{\log 5}{\log (\frac{2}{3-\sqrt{5}})}<2.$ 
By Theorems~\ref{t:autc}, \ref{t:stcmain} and \ref{t:fundfact1}, 
there exists an open neighborhood $U$ of $(g_{1},\ldots ,g_{5})$ 
in $(\mbox{{\em Aut}}(\CC ))^{5}$ and a subset $A$ of $U$ 
with $\HD (U\setminus A)<\HD (U)=20$ 
such that 
{\em (1)} $\{ \g =(\g _{1},\ldots \g_{5})\} _{\g \in U}$ is a holomorphic family in $\Exp (5)$ 
satisfying the analytic transversality condition, the strong transversality condition and 
the transversality condition, and {\em (2)} for each $\g =(\g _{1},\ldots ,\g_{5})\in A$, 
$\HD (J(\langle \g _{1},  \ldots , \g_{5}\rangle ))=\delta (\g _{1},\ldots ,\g_{5})<2.$  
\end{ex}
\begin{ex}
\label{ex:nkun}
There are infinitely many analogues of Sierpinski gasket or Pentakun which are called 
Hexakun, Heptakun, Octakun and so on (see \cite[page 119]{Ki}). 
As in Example~\ref{ex:Pentaatc}, for each such analogue, 
we obtain similar results on the family of small perturbations of the system of the analogue.  
\end{ex}
\begin{figure}[htbp]
\caption{(From left to right) Snowflake, Pentakun}    
\ \ \ \ 
%\ \ \ \  \ \ \ \ \ \ \ \ \ \ \ \ \
\includegraphics[width=1.8cm,width=1.8cm]{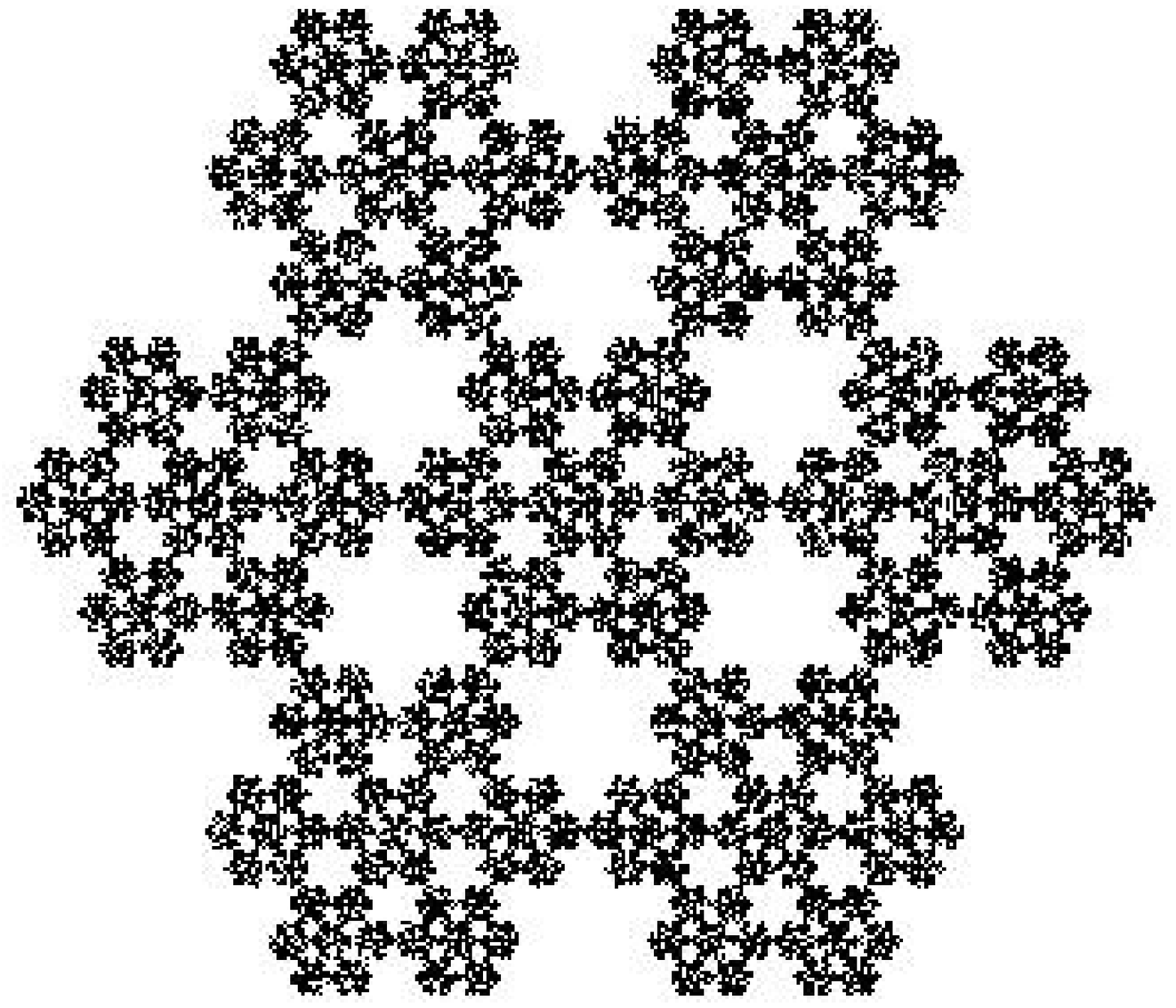}
\includegraphics[width=2.3cm, width=2.3cm,angle=17.5,totalheight=2.7cm]{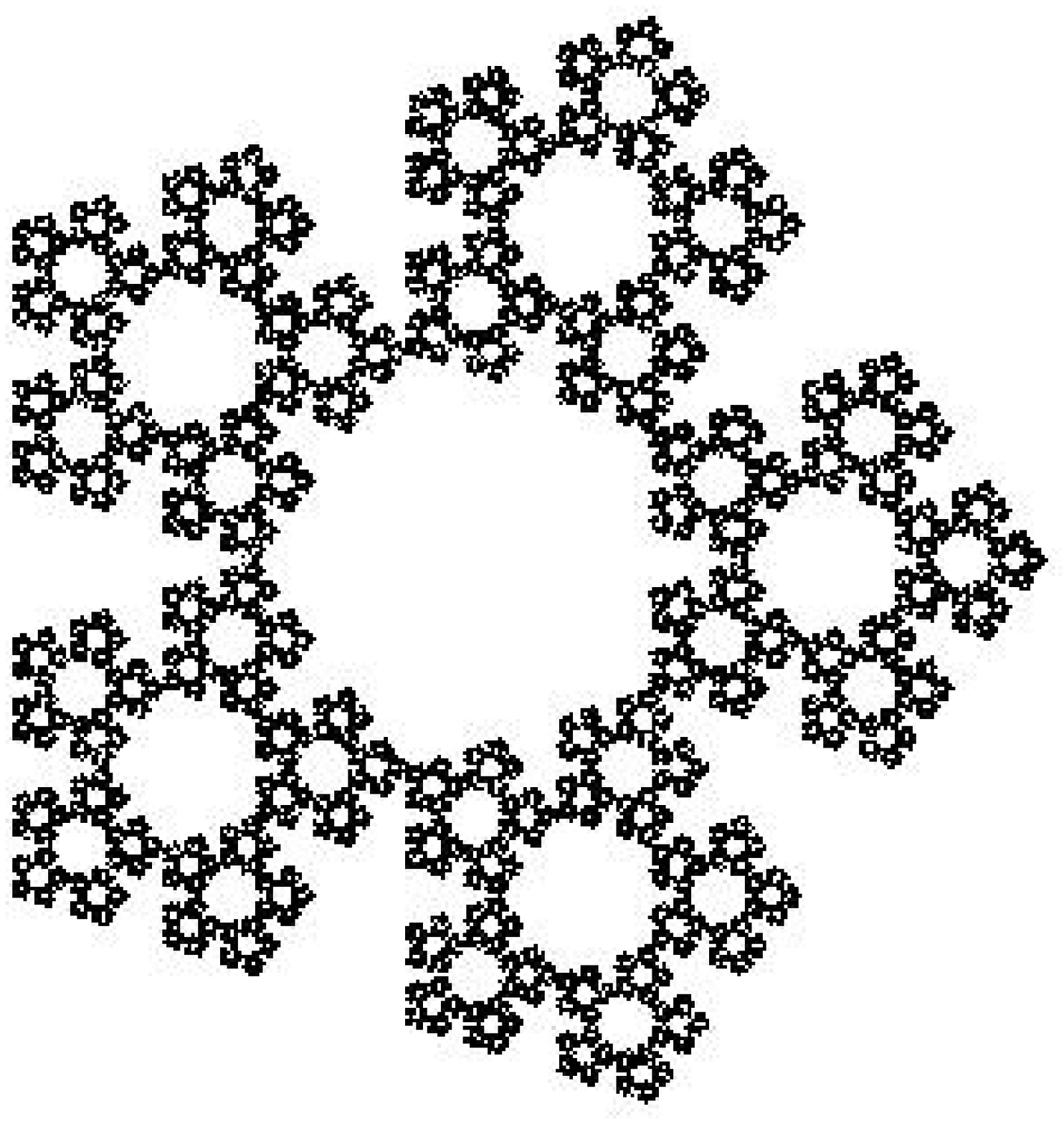}
\label{fig:snowflake2}
\end{figure}
\begin{rem}
\label{r:autcci}
Regarding Examples~\ref{ex:m2autatc}--\ref{ex:nkun}, 
even if we replace ``$\mbox{{\em Aut}}(\CC )$'' by 
``$\mbox{{\em Aut}}(\CCI )$'', 
we obtain similar results by using Lemma~\ref{l:atcemb}.

\end{rem}

As we see in Examples~\ref{ex:m2autatc}--\ref{ex:nkun} and Remark~\ref{r:autcci}, 
we have many examples to which we can apply Theorem~\ref{t:autc}. 
\section{Remarks}
\label{Remarks}
We finally give a remark. 
\begin{rem}
\label{r:gdCIFS}
We can prove similar results to those in sections~\ref{Results}, \ref{Applications} 
(especially Theorems~\ref{t:tcdimj}, \ref{t:stcmain}, Proposition~\ref{p:atctc}, Lemma~\ref{l:hfdiff}, 
Theorem~\ref{t:autc}) 
for a family 
$\{ \Phi ^{\l }\} _{\l \in U}=\{ \{ \varphi _{i}^{\l }\} _{i\in I} \} _{\l \in U} $ of 
hyperbolic conformal iterated function systems (CIFSs) 
 on an open subset $V$ of  
$\RR ^{p} (p\in \NN )$ without the open set condition, 
where $\varphi _{i}^{\l }:V\rightarrow V$ is a contracting conformal map, and 
$U$ is a bounded open subset of $\RR ^{d}, d\geq p.$  
For each $\l \in U$, we consider the limit set $J(\Phi ^{\l })$  of $\Phi ^{\l }.$ 
In the above setting, the definition of the transversality condition is modified such that 
the right hand side of (\ref{eq:tc}) is replaced by $C_{1}r^{p}$. 
The definition of the strong transversality condition is modified such that 
the right hand side of (\ref{eq:stc}) is replaced by 
$C_{1}'r^{p-d}.$ If $p=2$ and each $\varphi _{i}^{\l }$ is a holomorphic map, 
then we can define ``analytic transversality family'' just like
Definition~\ref{d:analtc}.    
The number $``2''$ (which represents the dimension of the phase space $\CCI $)  
in results of the previous sections are replaced by the number $p.$ 
These results will be stated and will be proved in the authors'
upcoming paper \cite{su}.  
\end{rem}

\end{document}